\theoremstyle{plain}
\newtheorem{theorem}{Theorem}[section]
\newtheorem{proposition}{Proposition}[section]
\newtheorem{corollary}{Corollary}[section]
\newtheorem{lemma}{Lemma}[section]
\newtheorem{remark}{\it Remark}[section]
\theoremstyle{definition}
\newtheorem{definition}{Definition}[section]
\date{Submitted: November 3, 2009. Revised: October 22, 2010. {\bf {To appear in:}} {\it Communications in Pure and Applied
Mathematics.}}
\begin{document}
\title[ Regularization mechanism for the
periodic KdV equation]{On the
regularization mechanism for the periodic Korteweg--de
Vries equation}
\author{}
\maketitle

\begin{center}
\textsc{Anatoli V. Babin\footnote{%
Department of Mathematics, University of California,
Irvine, California 92697, USA, E-mail:
ababine@math.uci.edu},
 Alexei A. Ilyin\footnote{%
\noindent Keldysh Institute of Applied Mathematics, Russian
Academy of Sciences, Miusskaya Sq. 4, 125047 Moscow,
Russia, E-mail: ilyin@keldysh.ru }
}and\textsc{\ Edriss S. Titi\footnote{%
Department of Mathematics and Department of Mechanical and
Aerospace Engineering, University of California, Irvine,
California 92697, USA, E-mail: etiti@math.uci.edu. Also:
Department of Computer Science and Applied Mathematics,
Weizmann Institute of Science, P.O. Box 26, Rehovot, 76100,
Israel, E-mail: edriss.titi@weizmann.ac.il}} \ \
\medskip

This paper is dedicated to the memory of Professor Basil Nicolaenko
\end{center}

\medskip

\begin{quote}
{\normalfont\fontsize{8}{10}\selectfont {\bfseries Abstract.} In
this paper we develop and use successive  averaging methods for
explaining the regularization mechanism in the the periodic
Korteweg--de Vries (KdV) equation in the homogeneous Sobolev spaces
$\dot{H}^s$, for $s\ge0$. Specifically, we prove the
 global existence, uniqueness, and
Lipschitz continuous dependence on the initial data of the solutions
of the periodic KdV. For the case  where the initial data is in
$L_2$ we also show the Lipschitz continuous dependence of these
solutions with respect to the initial data as maps from $\dot{H}^s$
to $\dot{H}^s$, for $s\in(-1,0]$.

\medskip }
\end{quote}

{\bf Keywords:} Korteweg--de Vries equation, dispersive equations,
averaging method, nonlinear resonance.

{\bf MSC:} 35B34,35Q53.

\setcounter{equation}{0}

\section{Introduction}

\label{S:Intro}

This paper is motivated by works on global regularity of solutions
of 3D problems in hydrodynamics (Navier-Stokes or Boussinesq system
with periodic boundary conditions) in the presence of fast rotation
or strong stratification, see, e.g., \cite{Awad}, \cite{BMN97},
\cite{BMN99}, \cite{BMN99a}, \cite{Embid}, \cite{G1}, \cite{gren},
\cite{Liu-Tadmor}, \cite{sch}, \cite{Ziane}, and references therein.
The results are based on the presence of high-frequency waves that
lead to destructive interference and effectively weaken the
nonlinearity through time averaging allowing to prove global
regularity of solutions for these problems, at the limit of infinite
rotation. The above mentioned hydrodynamical problems are rather
complicated (Boussinesq system, for example, involves five unknown
functions of three spatial variables and time, the equations are
coupled through a quadratic nonlinearity). Therefore, the purpose of
this paper is to apply this averaging approach to a slightly simpler
problem to make these ideas more transparent. Here we consider the
Korteweg--de Vries equation with one spatial variable subjected to
periodic boundary condition. The averaging effects are strong
because on high Fourier modes the linear term generates
high-frequency oscillations which weakens the nonlinearity and makes
it milder. In this paper we show connections between the smoothness
properties of solutions of the Korteweg--de Vries (KdV) equation and
the algebraic structure of the nonlinear resonances between the
high-frequency oscillations. Our main goal is to make the relations
between time-averaging effects and smoothness issues more explicit,
rather than to obtain global regularity results under minimal
restrictions (see, e.g., \cite{Bourgain97}, \cite{Colliander03},
\cite{K-P-V} and the references therein). In particular, our
approach and aim are completely different than the machinery and
harmonic analysis tools that were developed over the past decade and
half for investigating dispersive partial differential equations
(see the discussion and references mentioned below). Moreover,  we
also remark that the tools and ideas that are developed in this
contribution can be easily applied to multi-dimensional equations
and multi-component systems, a subject of future investigation.

It is worth mentioning that similar approach, taking advantage of
time averaging, has also been applied to the two-dimensional
rotating inviscid Burgers equations in \cite{Awad} and
\cite{Liu-Tadmor} to show that fast rotation prevents the formation
of a finite time singularity in the two-dimensional Burgers system.
We will use this warmup toy model, and in particular the approach in
\cite{Awad}, to fix ideas and to illustrate in
section~\ref{S:Burgers} the averaging mechanism, which is
instrumental in regularizing the solutions and in preventing  the
formation of a finite time singularity for the two-dimensional
inviscid Burgers system.

Our approach consists of three steps. First, we rewrite the problem
as a system of ODE for  time-dependent Fourier coefficients. This is
a standard approach to the periodic problem which is commonly used.
Second, to make  the effects of time averaging explicit  we single
out oscillating factors and do several integration by parts, with
respect to time, to obtain several generations of equations for
slowly varying coefficients. Resonances reveal themselves as
obstacles to the integrations by parts and produce resonant terms in
the equations, integrated terms become more and  more regular.
Higher generations of equations allow solutions with less
regularity. This algorithm is simple and straightforward, it does
not use fine methods such as analytic reduction to linear forms and
analysis of self similar solutions or the construction of conserved
or almost conserved quantities.     The third step is analysis of
the obtained equations. To show their regularity we use
straightforward estimates of multilinear operators, energy estimates
and the Contraction principle. To use the contraction principle in
negative Sobolev spaces we use splitting to high and low Fourier
modes and exploit averaging-induced squeezing of higher modes. In
order to justify our estimates we use a Galerkin approximations
procedure. We try to use methods which can be easily adjusted to
general multidimensional problems such as those mentioned above in
hydrodynamic problems. We do not use in our analysis the specific
properties of the KdV equation such as complete integrability or
special conserved quantities. Moreover, one can expect that in order
to obtain more delicate information one has to resort to the use of
higher generations of equations, i.e. more integrations by part.

The paper is organized as follows. In section~\ref{S:Trans} we
reformulate the KdV equation in terms of its Fourier components and
introduce the notion of weak solution; we also make two types of
formal transformations of the equation which will be used for
proving the relevant well-posedness results. In section~\ref{S:Uniq}
we use the Galerkin procedure for proving the global existence
 in the Sobolev spaces $\dot{H}^s$ for
$s>0$.  In section~\ref{S:Uniq1/2} we use the Contraction Principle
to show that the solutions established in section~\ref{S:Uniq} are
unique and Lipschitz continuous, with respect to the initial data,
in the Sobolev spaces $\dot{H}^s$ for $s\ge1/2$. A more technical
section~\ref{Sec:nregin} is devoted to proving similar results for
$s\in[0,1/2]$, where the case $s=0$ is dealt with separately.
Finally, for solutions with initial data in $L_2$ we show the
Lipschitz continuous dependence of these solutions, with respect to
the initial data, as maps from $\dot{H}^s$ to $\dot{H}^s$ for all
$s\in(-1,0]$.

There is a long history of the KdV equations in the periodical
setting. We do not attempt to give here a complete survey of this
long history, but we are  satisfied by mentioning, below, some of
the key landmark results. First Bourgain \cite{Bourgain93} proved
local well-posedness in $L_2$. We observe that the result of
Bourgain also implies global existence for the real valued KdV in
$L_2$. Later the local well-posedness in the Sobolev space $H^s$,
for $s>-1/2$,  was established for the real and complex valued KdV
by Kenig, Ponce, and Vega \cite{K-P-V}. Later global well-posedness
in the Sobolev space $H^s$, for $s>-1/2$,  was established for the
real valued KdV by Colliander, Keel, Staffilani, Takaoka and Tao
\cite{Colliander03}. Using the fact that the KdV equation is an
integrable system, and by implementing the inverse method, Kappeler
and Topalov \cite{Kappeler-Topalov} proved  global well-posedness in
the Sobolev space $H^s$, for $s>-1$.

Even though the results presented in this contribution are weaker
than the state-of-the-art results mentioned above, our approach is
substantially different and easy to master. Specifically, and as we
have stressed above, our approach is based on systematic application
of time averaging. It is worth mentioning that since we rewrite the
KdV equation in different forms, based on the differentiation by
parts with respect to the time variable,  there is superficial
similarity with the reduction to normal forms (cf. \cite{Shatah}).  The difference is
that the consecutive forms of the KdV, which we derive, are based
not on the geometric properties such as a reduction to a linear form
or a reduction to invariant tori, but exclusively on time averaging
properties. In order to stress the effects, which are due solely to
the time averaging, we do not use energy conservation or modified
energy functionals as in the I-method. We use only the $L_2$  norm
conservation to mimic the situation which occurs in the
three-dimensional Euler equations and other hydrodynamical systems.

Moreover, we believe that the averaging approach presented here can
be pushed further to achieve the best known results concerning the
KdV equation. Furthermore, different nonlinear dispersive and wave
equations, such as the m-KdV, the Klein-Gordon equation and
water wave equations
(see for example, \cite{GMS1}, \cite{Shatah},
\cite{Wu1}-\cite{Wu3},
and references therein),
can possibly be studied quite similarly, a subject of future
research.

\setcounter{equation}{0}

\section{The Complex Burgers equation with fast rotation --- a paradigm for
the averaging mechanism} \label{S:Burgers}

In this section we consider the complex Burgers equation with fast
rotation as a warmup toy model to demonstrate the averaging
mechanism for preventing the formation of singularity.

The one-dimensional Burgers equation
\[
u_t+u u_x=0
\]
is known to develop singularity in finite time. Considering the
complex version of this equation:
\begin{equation}\label{Bur}
u_t+u u_z=0, \qquad u(z,0)= \varphi(z),
\end{equation}
where $u\in \mathbb{C}$, and the initial value, $\varphi(z)$, is a
bounded complex analytic function with a bounded derivative in the
complex strip $D=\{z\in\mathbb{C}: ~ |z| < d\}$. Using the usual
characteristic method one can show that the solution of~(\ref{Bur})
is given by the implicit relation:
$$
u(z,t)=\varphi(z-t u(z,t)).
$$
It is easy to show that the above relation defines a unique complex
analytic solution $u(z,t)$ in a smaller sub-strip of $D$, for small
values of $|t|$, depending on the $\sup_{z\in D} |\varphi(z)|$ and
$\sup_{z\in D} |\varphi'(z)|$. Moreover, the solution of~(\ref{Bur})
satisfies:
\begin{equation}\label{D-Bur}
\partial_z u = \varphi'(z-t u(z,t))\left(1+ t \varphi'(z-t
u(z,t))\right)^{-1}.
\end{equation}
Equation (\ref{D-Bur}) shows that for a large class of
analytic initial data, $\varphi$, the solution
of~(\ref{Bur}) develops a singularity in a finite time. In
particular, and as for the case of the real Burgers
equation, the solution develops a finite time singularity
if the initial data, $\varphi$, maps $\mathbb{R}$ into
$\mathbb{R}$ and is a monotonic decreasing function in some
interval of $\mathbb{R}$. Moreover, it also follows
from~(\ref{D-Bur}) and the Picard little theorem for entire
functions that if the initial data $\varphi$ is an entire
function then the solution of~(\ref{Bur}) instantaneously
ceases to be an entire function, unless
$\varphi'(z)=constant$, i.e., $\varphi(z)= az+b$ for some
constants $a, b \in \mathbb{C}$.

Adding a complex rotation term to~(\ref{Bur}) one obtains the
rotating complex Burgers equation
\begin{equation}\label{Bur-R}
u_t+u u_z=i \Omega u, \qquad u(z,0)= \varphi(z),
\end{equation}
in the strip $D$, with rotation rate $\Omega \in \mathbb{R}$. Making
the change of variables $v=e^{-i \Omega t} u$ gives the equivalent
equation
\begin{equation}\label{Bur-R-v}
v_t+ e^{i \Omega t} v v_z=0, \qquad v(z,0)= \varphi(z),
\end{equation}
which can also be solved by the characteristic method to obtain the
solution in the implicit form
\begin{equation}\label{Sol-v}
v(z,t)=\varphi(z- \frac{e^{i \Omega t} - 1}{i \Omega}v(z,t)).
\end{equation}
Equation (\ref{Sol-v}) has a unique complex analytic solution, with
respect to the spatial complex variable $z$, for small values of
$|t|$ and in a fixed smaller sub-strip of $D$, whose size depends on
$|\Omega|$, $\sup_{z\in D} |\varphi(z)|$ and $\sup_{z\in D}
|\varphi'(z)|$. Observe that
\begin{equation}\label{D-Bur-R}
\partial_z v(z,t) = \varphi'(z- \frac{e^{i \Omega t} - 1}{i
\Omega}v(z,t)) \left(1+\frac{e^{i \Omega t} - 1}{i
\Omega}\varphi'(z- \frac{e^{i \Omega t} - 1}{i
\Omega}v(z,t))\right)^{-1}.
\end{equation}
As a result, it is clear that  if we choose $|\Omega|$ large enough,
such that
$$
|\Omega| \ge 2 \sup_{z\in D} |\varphi'(z)|,
$$
then $|\partial_z v(z,t)|$ remains finite for all $t\in \mathbb{R}$,
and the solution  exists globally in time. Consequently, we have
just demonstrated that for fast rotation, which depends on the
initial data, the unique solution remains regular in a fixed
sub-strip of $D$, for all $t\in \mathbb{R}$. It is worth observing,
however, that, again, by virtue of~(\ref{D-Bur-R}) and the Picard
little theorem the solution of~(\ref{Bur-R}) instantaneously ceases
to be an entire function even if the initial data  $\varphi$ is an
entire function, unless $\varphi(z)=a z +b$, for some constants $a,
b \in \mathbb{C}$.

Observe that  equation~(\ref{Bur-R}) can be rewritten in the
integrated form:
$$
v(z,t) = \varphi(z) - \int_0^t e^{i \Omega \tau} v(z,\tau)
\partial_z v(z,\tau) d\tau.
$$
This form exhibits the role that is played by the time averaging
process in prolonging the life-span of the solutions for this kind
of equations.  Indeed, the averaging against the fast oscillating
term $e^{i \Omega t}$, for $|\Omega|$  large enough, reduces the
``strength" of  the nonlinear term and makes it milder, which is the
underlined mechanism for  prolonging the life-span of the solutions.

Setting $u=u_1+ i u_2$ and $z= x+iy$, we observe that, thanks to the
Cauchy-Riemann equations, equation~(\ref{Bur-R}) is equivalent the
two-dimensional Burgers equations with rotation, for the vector
field $(u_1(x,y), u_2(x,y))$. This  is the same  system  that  was
investigated in \cite{Liu-Tadmor}; and similar results, to the ones
mentioned above, were proved using completely different tools.

\setcounter{equation}{0}

\section{Transformations of the Korteweg--de
Vries equation} \label{S:Trans}

The toy model presented in section \ref{S:Burgers} is only an
illustrative example to demonstrate the underlying main idea of the
averaging mechanism. Here, we consider the Korteweg--de Vries (KdV)
equation with space-periodic boundary condition
\begin{equation}
\partial _{t}u=u\partial _{x}u+\partial _{x}^{3}u,\qquad u(0,x)=u^{0}(x),
\label{KdV}
\end{equation}
where $x\in \mathbb{S}^{1}=[0,2\pi ]$ and $u(t,0)=u(t,2\pi )$. Let
$u(x,t)$ be a smooth solution of equation~(\ref{KdV}).
 Writing the
nonlinear term as $u\partial _{x}u=\frac{1}{2}\partial _{x}(u^{2})$
and integrating with respect to $x$ we see that $\int_{0}^{2\pi
}u(t,x)dx=\int_{0}^{2\pi }u(0,x)$. Therefore, without loss of
generality we can (and shall) assume that the initial data and the
solution both have zero spatial mean
\begin{equation}  \label{zpm}
\int_{0}^{2\pi }u(t,x)dx=\int_{0}^{2\pi }u^{0}(x) dx=0.
\end{equation}
Multiplying (\ref{KdV}) by $u$ and integrating with respect to $x$
we formally obtain the well-known $L_{2}$-norm conservation
property:
\begin{equation*}
\|u(t)\|_{L_{2}}=\|u^{0}\|_{L_{2}}.
\end{equation*}

We use the Fourier series in $x$:
\begin{equation}
u(t,x)=\sum_{k\in \mathbb{Z}_0}u_{k}(t)e^{ikx},\quad
u_{k}\in \mathbb{C},\quad u_{k}(t)=\frac{1}{2\pi }
\int_0^{2\pi }u(t,x)e^{-ikx}dx, \label{Four}
\end{equation}
where, according to (\ref{zpm}) $u_{0}(t) \equiv 0$, hence the
summation is over $\mathbb{Z}_{0}=\mathbb{Z}\setminus \{0\}$, and we
have, in addition, that $u_{-k}=\bar{u}_{k}$, since $u$ is
real-valued.

It will be convenient below to normalize the $L_{2}$ norm
so that
\begin{equation*}
\|u\|^{2}_{\dot{H}^0}=\|u\|_{L_{2}}^{2}=
\frac{1}{2\pi}\int_{0}^{2\pi }u(x)^{2}dx=\sum_{k\in
\mathbb{Z}_{0}}|u_{k}|^{2}.
\end{equation*}
Accordingly, the norms in the Sobolev spaces $H^{s}$ are
defined as follows. We set
\begin{equation}
\Vert u\Vert _{H^{s}}^{2}=|u_{0}|^{2}+\Vert u\Vert
_{\dot{H}^{s}}^{2}, \label{Sobolev}
\end{equation}%
where $\Vert u\Vert _{\dot{H}^{s}}$ is the norm in the homogeneous space $%
\dot{H}^{s}$ of functions with mean value zero
\begin{equation*}
\Vert u\Vert _{\dot{H}^{s}}^{2}=\Vert \{|k|^{s}u_{k}\}\Vert
_{l_{2}}^{2}=\sum_{k\in \mathbb{Z}_{0}}|k|^{2s}|u_{k}|^{2},\quad s\in \mathbb{R%
}.
\end{equation*}
We will use the above definition for a norm of a function
$u(x)$ as well as for a norm of a sequence
$\left\{u_{k}\right\}$.

Using the Fourier representation of $u$ (a smooth solution
of~(\ref{KdV}))
 we write  equation~(\ref{KdV}) as
the infinite coupled system of ordinary differential
equations for the coefficients~$u_{k}(t)$:
\begin{equation}
\partial _{t}u_{k}=\frac{1}{2}ik%
\sum_{k_{1}+k_{2}=k}u_{k_{1}}u_{k_{2}}-ik^{3}u_{k},\quad
u_{k}(0)=u_{k}^{0},\quad k\in \mathbb{Z}_{0}.  \label{KdVu}
\end{equation}%
Next, we use the following transformation of variables
\begin{equation}
u_{k}(t)=e^{-ik^{3}t}v_{k}(t),\qquad k\in \mathbb{Z}_{0}.
\label{utov}
\end{equation}
Since $v_{-k}(t)\equiv \bar{v}_{k}(t)$ for all $k\in \mathbb{Z}_{0}$
then the functions $v(t,x)$ is real-valued along with $u(t,x)$,  and
for every $s$ the Sobolev norm is preserved:
\begin{equation*}
\Vert u(t)\Vert _{\dot{H}^{s}}=\Vert v(t)\Vert
_{\dot{H}^{s}}.
\end{equation*}
We observe that this  change of variables is similar to the one
introduced for~(\ref{Bur-R}), and has also been used in
\cite{BMN97}, \cite{BMN99} for proving global regularity of the 3D
rotating Navier--Stokes equations, see also \cite{BMN99a},
\cite{MoiseZiane} and \cite{Ziane}. In the first place, the
substitution~(\ref{utov}) eliminates the linear term $ik^{3}$ \
in~(\ref{KdVu}), which has the highest, namely cubic, order of
growth as $|k|\to\infty $ and, most importantly, introduces
oscillating exponentials into the nonlinear term.
Substituting~(\ref{utov}) in~(\ref{KdVu}), multiplying by
$e^{ik^{3}t}$, and using the identity
\begin{equation} \label{k1k2}
(k_{1}+k_{2})^{3}-k_{1}^{3}-k_{2}^{3}=
3(k_{1}+k_{2})k_{1}k_{2},
\end{equation}
we obtain the equivalent  coupled system of equations
\begin{equation}\label{v}
\partial _{t}v_{k}=\frac{1}{2}ik%
\sum_{k_{1}+k_{2}=k}e^{i3kk_{1}k_{2}t}v_{k_{1}}v_{k_{2}},\quad
v_{k}(0)=v_{k}^{0}=u_{k}^{0},\quad k\in \mathbb{Z}_{0}.
\end{equation}
Since our
techniques are based on Fourier expansions, we take
equation (\ref{v}) as our primary form of the KdV,
which is equivalent to the classical KdV
equation~(\ref{KdV}) for smooth solutions.
\begin{definition}\label{D:defmain}
A function $v$ is called a solution of~(\ref{v})
over the time interval $[0,T]$
if  $v\in L_\infty([0,T];\dot{H}^0)$ and the integrated
equation~(\ref{v})
\begin{equation} \label{vint}
v_{k}\left( t\right) -v_{k}\left( 0\right) =\frac{1}{2}ik
\int_{0}^{t}\sum_{k_{1}+k_{2}=k}
e^{i3kk_{1}k_{2}t^{\prime}}v_{k_{1}}(t')v_{k_{2}}(t')
dt^{\prime }, \quad k\in \mathbb{Z}_{0}
\end{equation}
is satisfied for every $k\in\mathbb{Z}_{0}$.
Accordingly, $u(x,t)$ with $u_k(t)=e^{-ik^3t}v_k(t)$
will then be called a weak solution of the KdV.
\end{definition}

\begin{remark}\label{R:regularity}
\rm{
We observe that if $v$ is a solution in the sense
of Definition~\ref{D:defmain}, then by setting
$d_k(t)=\sum_{k_{1}+k_{2}=k}
e^{i3kk_{1}k_{2}t}v_{k_{1}}(t)v_{k_{2}}(t)$
we have
\begin{equation}\label{d}
\sup_{t\in[0,T]}|d_k(t)|
\le
\|v\|_{L_\infty([0,T];\dot{H}^0)}^2,
\end{equation}
and hence we automatically get from~(\ref{vint}) that $v_k(t)$ is an
absolutely continuous function over $[0,T]$  for each $k$, and
consequently (\ref{v}) is satisfied a.\,e.\ in $[0,T]$. Moreover, by
Lemma~\ref{L:B1}  we have $B_1(v,v)\in
L_\infty([0,T];\dot{H}^{-\theta})$, for $\theta>3/2$, where
\begin{equation}\label{B1}
B_1(\varphi,\psi)_k=
\frac{1}{2}ik
\sum_{k_{1}+k_{2}=k}e^{i3kk_{1}k_{2}t}\varphi_{k_{1}}
\psi_{k_{2}},
\quad k\in \mathbb{Z}_{0}.
\end{equation}
Therefore, satisfying (\ref{v}) in the sense of
Definition~\ref{D:defmain} implies satisfying the functional
equation
\begin{equation}\label{vneg}
    \partial_t v=B_1(v,v),\quad v(0)=v^0,
\end{equation}
 in the sense of
$L_p([0,T];\dot{H}^{-\theta})$, where $\theta>3/2$ and
$1\le p\le\infty$.
}
\end{remark}

Assume we have a smooth enough solution of the KdV
equation~(\ref{KdV}), and hence of equation~(\ref{v}). We will do
all kinds of manipulations to get various forms of the equations.
This will be done by differentiation by parts procedure and the
repeated use of equation~(\ref{v}). Notice that formally the
solutions of~(\ref{v}) will satisfy the newly derived equations. We
will derive these equations formally in order to motivate the use of
the corresponding  Galerkin truncated versions of these equations,
which will play a major role in the rigorous justification of the
steps of our proofs.

\subsubsection*{\textbf{{\rm 3.1.} First differentiation by parts in time.}}

Equation (\ref{v}) \ does not involve $ik^{3}$ as\ in~(\ref{KdVu}),
but still involves explicitly the factor $\frac{1}{2}ik$, which
tends to infinity as $|k|\to\infty $.  To obtain a system for the
Fourier coefficients that has coefficients uniformly bounded in $k$
we rewrite (\ref{v}) in a different form. The formal transformation,
which is valid for smooth enough solutions,  corresponds to the
integration by parts in~(\ref{vint}), but since we rewrite the
differential equation (\ref{v}),
 we call it differentiation by parts. Since $v_{0}(t)\equiv0$,
we can assume in~(\ref{v}) that $k_{1},k_{2}\neq 0$ so that
$\partial _{t}((i3kk_{1}k_{2})^{-1}e^{i3kk_{1}k_{2}t})=
e^{i3kk_{1}k_{2}t}$, and therefore
\begin{equation}\label{intby1}
\aligned
\partial _{t}v_{k}=\partial _{t}\left( \frac{1}{2}%
ik\sum_{k_{1}+k_{2}=k}\frac{e^{i3kk_{1}k_{2}t}v_{k_{1}}v_{k_{2}}}{%
i3kk_{1}k_{2}}\right) -\frac{1}{2}ik\sum_{k_{1}+k_{2}=k}\frac{%
e^{i3kk_{1}k_{2}t}}{i3kk_{1}k_{2}}\partial _{t}(v_{k_{1}}v_{k_{2}})=
 \\
\frac{1}{6}\partial _{t}\left( \sum_{k_{1}+k_{2}=k}\frac{%
e^{i3kk_{1}k_{2}t}v_{k_{1}}v_{k_{2}}}{k_{1}k_{2}}\right) -\frac{1}{6}%
\sum_{k_{1}+k_{2}=k}\frac{e^{i3kk_{1}k_{2}t}}{k_{1}k_{2}}(v_{k_{2}}\partial
_{t}v_{k_{1}}+v_{k_{1}}\partial _{t}v_{k_{2}}).
\endaligned
\end{equation}
The last two terms are symmetric with respect to $k_{1}$ and $k_{2}$, and it
suffices to con\-si\-der one of them. For the term containing, say, $%
v_{k_{2}}\partial _{t}v_{k_{1}}$ we use~(\ref{v}) for $\partial
_{t}v_{k_{1}} $ and obtain
\begin{gather*}
\aligned\sum_{k_{1}+k_{2}=k}\frac{e^{i3kk_{1}k_{2}t}}{k_{1}k_{2}}%
v_{k_{2}}\partial _{t}v_{k_{1}}=\frac{1}{2}i\sum_{k_{1}+k_{2}=k}\frac{%
e^{i3kk_{1}k_{2}t}}{k_{2}}v_{k_{2}}\sum_{\alpha +\beta
=k_{1}}e^{i3k_{1}\alpha \beta t}v_{\alpha }v_{\beta }= \\
\frac{1}{2}i\sum_{\alpha +\beta +k_{2}=k}\frac{e^{i3(k(\alpha +\beta
)k_{2}+(\alpha +\beta )\alpha \beta )t}}{k_{2}}v_{k_{2}}v_{\alpha }v_{\beta
} =\frac{1}{2}i\sum_{k_{1}+k_{2}+k_{3}=k}\frac{%
e^{i3(k_{1}+k_{2})(k_{2}+k_{3})(k_{3}+k_{1})t}}{k_{2}}%
v_{k_{1}}v_{k_{2}}v_{k_{3}},\endaligned
\end{gather*}%
where we have factorized the exponent as follows
\begin{equation*}
kk_{2}+\alpha \beta =(\alpha +\beta +k_{2})k_{2}+\alpha \beta =(k_{2}+\alpha
)(k_{2}+\beta ).
\end{equation*}

\begin{remark}\label{R:3-identity}
\rm{
This is, of course, just the identity
\begin{equation}\label{k1k2k3}
(k_1+k_2+k_3)^3-k_1^3-k_2^3-k_3^3=
3(k_1+k_2)(k_2+k_3)(k_3+k_1).
\end{equation}
We also observe that the cubic identities \eqref{k1k2} and
\eqref{k1k2k3} are known to be very important in the
analysis of the KdV equation both on
$\mathbb{R}$
and in the periodic setting \cite{Bourgain97},
\cite{Colliander03}.
}
\end{remark}

Since the expression for the term containing
$v_{k_{1}}\partial_{t}v_{k_{2}}$ is exactly the same, we finally
obtain the following   form of the KdV
\begin{equation}\label{one}
\partial _{t}\left( v_{k}-\frac{1}{6}B_{2}(v,v)_{k}\right) =
\frac{i}{6}R_{3}(v,v,v)_{k},\qquad k\in \mathbb{Z}_{0},
\end{equation}%
where
\begin{equation}
B_{2}(u,v)_{k}=B_{2}(u,v,t)_{k}=\sum_{k_{1}+k_{2}=k}
\frac{e^{i3kk_{1}k_{2}t}u_{k_{1}}v_{k_{2}}}{k_{1}k_{2}},
\qquad k\in \mathbb{Z}_{0},
\label{B2}
\end{equation}%
and
\begin{equation}
R_{3}(u,v,w)_{k}=\sum_{k_{1}+k_{2}+k_{3}=k}\frac{%
e^{i3(k_{1}+k_{2})(k_{2}+k_{3})(k_{3}+k_{1})t}}{k_{1}}%
u_{k_{1}}v_{k_{2}}w_{k_{3}},
\qquad k\in \mathbb{Z}_{0}.
\label{R3}
\end{equation}%
Recall that the transformation between the variables $u $ and $v$ is
an isometry between the Sobolev spaces $\dot{H}^s$. Moreover, we
will show later that all the terms involved in the above form of the
KdV are bounded operators, in the appropriate functional spaces.
Consequently, the above form of the KdV is a much milder form than
the original forms in equations~(\ref{KdV}) and~(\ref{v}). Since we
will introduce later another form of this system using one more
differentiation by parts, we also call (\ref{one}) {\it the first
 form of the KdV}. We integrate~(\ref{one}) with respect to $t$ and
arrive at the following {\it integrated first   form of the  KdV}:
\begin{equation} \label{defofsol}
v_k(t)-\frac{1}{6}B_{2}(v(t),v(t))_k=v_k(0)-\frac{1}{6}
 B_{2}(v^0,v^0)_k+\frac{i}{6}\int_{0}^{t}
 R_{3}(( v(\tau ))^{3})_k d\tau,\quad v_k(0)=v^{0}_k,
\end{equation}%
where $R_{3}\left( v^{3}\right)=R_{3}(v,v,v)$ is given
by~(\ref{R3}) and $B_{2}(v,v)$ is given by~(\ref{B2}).
\begin{remark}\label{R:one-oper}
\rm{ Let $v$ be a solution of~(\ref{v}) in the sense of
Definition~\ref{D:defmain}. Then we observe that in view of
Lemma~\ref{L:minR3} (with $\beta=0$) the trilinear operator $R_3$ is
a bounded map from $(\dot{H}^0)^3$ into $\dot{H}^{-\theta}$, for
$\theta>1/2$, so that   equation~(\ref{one}) in the functional form
holds in $L_\infty([0,T];\dot{H}^{-\theta})$. }
\end{remark}

Here we will go from~(\ref{one}) along
two different paths depending on the purpose:
\begin{itemize}
    \item \textit{a priori} estimates
    (second differentiation by parts in time);
    \item uniqueness and Lipschitz continuous dependence of the
    solutions on the initial data.
\end{itemize}

\subsubsection*{\textbf{{\rm 3.2.}~Second differentiation
 by parts in time }}

We prove later in Lemma~\ref{L:R3} that the trilinear operator
$R_{3}(u,v,w)$ is a bounded map from the Sobolev space $(
\dot{H}^{s})^{3}$ into $\dot{H}^{s}$ for any $s>1/2$. The bilinear
operator $B_2(u,v)$ has nicer continuity properties and is bounded
from $( \dot{H}^{s})^{2}$ into $\dot{H}^{s}$ for any $s>-1/2$, see
Lemma~\ref{L:B2}. Therefore, from  the continuity properties of
$R_3$ we are unable to use~(\ref{one}) to establish the required
\textit{a priori} estimates for $s\ge0$. For this reason we  use
again the idea of differentiation by parts, and once again represent
the exponential in~(\ref{R3}) as the time derivative. But before
doing that we have to take care of the resonances which are the
obstruction to the integration by parts procedure.

\subsubsection*{\textit{Resonances.}}

We single out the terms in~(\ref{R3}) for which
\begin{equation}  \label{rescond}
(k_1+k_2)(k_2+k_3)(k_3+k_1)=0, \qquad k_1+k_2+k_3=k\in
\mathbb{Z}_0.
\end{equation}
Accordingly, we have
\begin{equation}\label{R3res}
\aligned
&R_{3}(v,v,v)_{k}=R_{3\text{res}}(v^{3})_{k}+
R_{3\text{nres}}(v^{3})_{k},
 \\
&R_{3\text{res}}(v^{3})_{k}=\sum_{k_{1}+k_{2}+k_{3}=k}^{\text{res}}\frac{%
v_{k_{1}}v_{k_{2}}v_{k_{3}}}{k_{1}},
\\
&R_{3\text{nres}}(v^{3})_{k}=%
\sum_{k_{1}+k_{2}+k_{3}=k}^{\text{nonres}}
\frac{e^{i3(k_{1}+k_{2})(k_{2}+k_{3})(k_{3}+k_{1})t}}{k_{1}}%
v_{k_{1}}v_{k_{2}}v_{k_{3}},
\endaligned
\end{equation}%
where the first summation is carried out over the set
subscripts $k_{1},k_{2},k_{3}$ satisfying~(\ref{rescond})
(the resonance), while in the second summation
$(k_{1}+k_{2})(k_{2}+k_{3})(k_{3}+k_{1})\neq 0$ (the
non-resonant terms).

Since it easy to see that not all three factors
in~(\ref{rescond}) can be
zero at the same time, it follows that the set of
$k_{1},k_{2},k_{3}$
satisfying~(\ref{rescond}) is the union of six disjoint sets
$S_{1},\dots,S_{6}$:
\begin{gather*}
\aligned
&S_{1}=\{k_{1}+k_{2}=0\}\cap \{k_{2}+k_{3}=0\}\
\Leftrightarrow
k_{1}=k,k_{2}=-k,k_{3}=k, \\
&S_{2}=\{k_{1}+k_{2}=0\}\cap \{k_{3}+k_{1}=0\}\
\Leftrightarrow
k_{1}=-k,k_{2}=k,k_{3}=k, \\
&S_{3}=\{k_{2}+k_{3}=0\}\cap \{k_{3}+k_{1}=0\}\
\Leftrightarrow
k_{1}=k,k_{2}=k,k_{3}=-k, \\
&S_{4}=\{k_{1}+k_{2}=0\}\cap \{k_{2}+k_{3}\neq 0\}\cap
\{k_{3}+k_{1}\neq 0\}\
\Leftrightarrow k_{1}=j,k_{2}=-j,k_{3}=k,|j|\neq k, \\
&S_{5}=\{k_{2}+k_{3}=0\}\cap \{k_{1}+k_{2}\neq 0\}\cap
\{k_{3}+k_{1}\neq 0\}\
\Leftrightarrow k_{1}=k,k_{2}=j,k_{3}=-j,|j|\neq k, \\
&S_{6}=\{k_{3}+k_{1}=0\}\cap \{k_{1}+k_{2}\neq 0\}\cap
\{k_{2}+k_{3}\neq 0\}\
\Leftrightarrow k_{1}=j,k_{2}=k,k_{3}=-j,|j|\neq k,
\endaligned
\end{gather*}%
where $j\in \mathbb{Z}_{0}$. Therefore,
\begin{gather}
\aligned
R_{3\text{res}}(v^{3})_{k}=\sum_{m=1}^{6}\sum_{S_{m}}
\frac{v_{k_{1}}v_{k_{2}}v_{k_{3}}}{k_{1}}=
\frac{v_{k}v_{-k}v_{k}}{k}+\frac{%
v_{-k}v_{k}v_{k}}{-k}+\frac{v_{k}v_{k}v_{-k}}{k}+
\label{Ares} \\
v_{k}\sum_{j\in \mathbb{Z}_{0},|j|\neq k}
\frac{v_{j}v_{-j}}{j}+\frac{v_{k}}{k}
\sum_{j\in \mathbb{Z}_{0},|j|\neq k}|v_{j}|^{2}+v_{k}
\sum_{j\in \mathbb{Z}_{0},|j|\neq k}\frac{v_{j}v_{-j}}{j}= \\
\frac{v_{k}}{k}\left( |v_{k}|^{2}+\sum_{j\in
\mathbb{Z}_{0}}|v_{j}|^{2}-|v_{k}|^{2}-|v_{-k}|^{2}\right) =
\frac{v_{k}}{k}(\|v\| _{L_{2}}^{2}-|v_{k}|^{2})=:
A_{\mathrm{res}}(v)_{k},
\endaligned
\end{gather}%
where we used the fact that the first two terms add up to zero, the fourth
and the sixth terms are both zero by symmetry
$j\to -j$.
\begin{remark}\label{R:conservation}
{\rm
We observe that if the energy is conserved,
$\|v(t)\|_{L_2}=\|v(0)\|_{L_2}=\|v^0\|_{L_2}=\|u^0\|_{L_2}$,
then clearly
\begin{equation}\label{R3resconserv}
R_{3\text{res}}(v^{3})_{k}=
\frac{v_{k}}{k}(\|v\| _{L_{2}}^{2}-|v_{k}|^{2})=
\frac{v_{k}}{k}(\| v^{0}\| _{L_{2}}^{2}-|v_{k}|^{2})=:
A_{\mathrm{res}}(v)_{k}.
\end{equation}
}
\end{remark}

For smooth enough solutions equation~(\ref{one}) can now be written
in the form
\begin{equation}
\aligned
\partial _{t}\left( v_{k}-\frac{1}{6}B_{2}(v,v)_{k}\right) =
\frac{i}{6}A_{\mathrm{res}}(v)_{k}+\frac{i}{6}
R_{3\text{nres}}(v^{3}).
\endaligned
\label{oneres}
\end{equation}
Since the exponent in the last term on the right-hand side does not
vanish, we can differentiate by parts with respect to $t$ a second
time:
\begin{equation}  \label{second}
\aligned
R_{3\text{nres}}(v^{3})_{k}=
\sum_{k_{1}+k_{2}+k_{3}=k}^{\text{nonres}}\frac{%
e^{i3(k_{1}+k_{2})(k_{2}+k_{3})(k_{3}+k_{1})t}}{k_{1}}%
v_{k_{1}}v_{k_{2}}v_{k_{3}}=\frac{1}{3i}\partial
_{t}B_{3}(v,v,v)_{k}-\\
\frac{1}{3i}\sum_{k_{1}+k_{2}+k_{3}=k}^{\text{nonres}}\frac{%
e^{i3(k_{1}+k_{2})(k_{2}+k_{3})(k_{3}+k_{1})t}}{%
k_{1}(k_{1}+k_{2})(k_{2}+k_{3})(k_{3}+k_{1})}(\partial
_{t}v_{k_{1}}v_{k_{2}}v_{k_{3}}+v_{k_{1}}\partial
_{t}v_{k_{2}}v_{k_{3}}+v_{k_{1}}v_{k_{2}}
\partial _{t}v_{k_{3}}),
\endaligned
\end{equation}
where
\begin{equation}
B_{3}(u,v,w)_{k}=\sum_{k_{1}+k_{2}+k_{3}=k}^{\text{nonres}}\frac{%
e^{i3(k_{1}+k_{2})(k_{2}+k_{3})(k_{3}+k_{1})t}}{%
k_{1}(k_{1}+k_{2})(k_{2}+k_{3})(k_{3}+k_{1})}u_{k_{1}}v_{k_{2}}w_{k_{3}}.
\label{B3}
\end{equation}

As before, we express the time derivatives in the last term on the
right-hand side in~(\ref{second}) by means of equation~(\ref{v}).
The terms containing $\partial _{t}v_{k_{2}}$ and $\partial
_{t}v_{k_{3}}$ produce the same two expressions and after a straight
forward calculation we obtain
\begin{gather}\label{B4}
\aligned\sum_{k_{1}+k_{2}+k_{3}=k}^{\text{nonres}}
\frac{e^{i3(k_{1}+k_{2})(k_{2}+k_{3})(k_{3}+k_{1})t}}
{k_{1}(k_{1}+k_{2})(k_{2}+k_{3})(k_{3}+k_{1})}
&(\partial_{t}v_{k_{1}}v_{k_{2}}v_{k_{3}}+v_{k_{1}}\partial
_{t}v_{k_{2}}v_{k_{3}}+v_{k_{1}}v_{k_{2}}
\partial _{t}v_{k_{3}})=
\\
=&iB_{4}(v,v,v,v)_{k},
\endaligned
\end{gather}
where
\begin{equation}\label{B4sum}
B_{4}(u,v,w,\varphi )_{k}=\frac{1}{2}B_{4}^{1}(u,v,w,\varphi
)_{k}+B_{4}^{2}(u,v,w,\varphi )_{k}
\end{equation}%
and the term corresponding to $\partial _{t}v_{k_{1}}$ is $B_{4}^{1}$:
\begin{equation} \label{B41}
B_{4}^{1}(u,v,w,\varphi )_{k}=
\sum_{k_1+k_2+k_3+k_4=k}^{\text{nonres}}
\frac{e^{i\Phi (k_1,k_2,k_3,k_4)t}}
{(k_1+k_2)(k_1+k_3+k_4)(k_2+k_3+k_4)}
u_{k_1}v_{k_2}w_{k_3}\varphi_{k_4},
\end{equation}%
and the sum of the terms corresponding to $\partial _{t}v_{k_{2}}$ and $%
\partial _{t}v_{k_{3}}$ is $B_{4}^{2}$:
\begin{equation}  \label{B42}
B_{4}^{2}(u,v,w,\varphi )_{k}=\sum_{k_{1}+k_{2}+k_{3}+k_{4}=k}^{\text{nonres}%
}\frac{e^{i\Phi (k_{1},k_{2},k_{3},k_{4})t}\ (k_{3}+k_{4})}{%
k_{1}(k_{1}+k_{2})(k_{1}+k_{3}+k_{4})(k_{2}+k_{3}+k_{4})}%
u_{k_{1}}v_{k_{2}}w_{k_{3}}\varphi _{k_{4}}.
\end{equation}%
The phase function $\Phi $ here is
\begin{equation*}
\Phi
(k_{1},k_{2},k_{3},k_{4})=(k_{1}+k_{2}+k_{3}+k_{4})^{3}-k_{1}^{3}-k_{2}^{3}-k_{3}^{3}-k_{4}^{3}.
\end{equation*}%
Unlike (\ref{k1k2}) or (\ref{k1k2k3}), the above phase
function does not have a nice factorization in the general
case. However, the particular analytic
expression for $\Phi $ is not used in our subsequent analysis.

Hence, we have for $R_{3\text{nres}}(v^{3})_{k}$:
\begin{equation}\label{R3nres}
R_{3\text{nres}}(v^{3})_{k}=\frac{1}{3i}\partial _{t}
B_{3}(v,v,v)_{k}-\frac{1}{3}
\left( \frac{1}{2}
B_{4}^{1}(v,v,v,v )_{k}+B_{4}^{2}(v,v,v,v )_{k}\right).
\end{equation}

Combining~(\ref{oneres})--(\ref{B3}),
(\ref{B4sum}) and (\ref{R3nres}) we finally formally deduce
the following equivalent form of the original equation:
\begin{equation} \label{fin2}
\partial _{t}\left( v_{k}-\frac{1}{6}B_{2}(v,v)_{k}-\frac{1}{18}%
B_{3}(v,v,v)_{k}\right) =\frac{i}{6}A_{\mathrm{res}}(v)_{k}+\frac{i}{18}%
B_{4}(v,v,v,v)_{k},\ \
k\in \mathbb{Z}_{0},
\end{equation}%
where  $B_{2}$ is defined in~(\ref{B2}), $B_{3}$ is defined in
(\ref{B3}), $A_{\mathrm{res}}$ is defined in~(\ref{Ares}), and
$B_{4}$ is defined in~(\ref{B4}). We call this equation {\it the
second form of the KdV}. The smoothing properties of these
multi-linear operators, involved in~(\ref{fin2}) are established in
section~\ref{S:App}. The integrated equation of~(\ref{fin2}) takes
the fom
\begin{equation}\label{fin3}
\aligned
\left( v_{k}-\frac{1}{6}B_{2}((v)^{2})_{k}-\frac{1}{18}
B_{3}((v)^{3})_{k}\right)&(t)-\left( v_{k}-\frac{1}{6}B_{2}
((v) ^{2})_{k}-\frac{1}{18}B_{3}(( v)^{3})_{k}\right)
(0)=\\
&=\int_{0}^{t}\left( \frac{i}{6}A_{\mathrm{res}}(v)_{k}
( t^{\prime}) +\frac{i}{18}B_{4}((v)^{4})_{k}(t^{\prime})
\right) dt^{\prime }.
\endaligned
\end{equation}
\begin{remark}\label{R:two-oper}
\rm{
Let $v$ be a solution of~(\ref{v}) in the sense of
Definition~\ref{D:defmain}. Then in view of Lemma~\ref{L:B4}
and  Lemma~\ref{L:Ares} equation~(\ref{fin2}), as a
functional equation, holds in $L_\infty([0,T];\dot{H}^{0})$.
}
\end{remark}

\subsubsection*{\textbf{{\rm 3.3.}
Uniqueness and Lipschitz continuous
dependence  on the initial data}}

As will be shown in section~\ref{S:Uniq} for the case of regular
initial data, i.e.  $v^0\in\dot{H}^{s}$ for $s>1/2$, one will be
able to use the continuity properties of the trilinear operator
$R_3$ to show the uniqueness and Lipschitz continuous dependence on
the initial data of the solutions of~(\ref{v}) in the sense of
Definition~\ref{D:defmain} by the direct use of the
equation~(\ref{one}).

However, for a less regular class of initial data, i.e.
$v^0\in\dot{H}^{s}$ for $s\in[0,1/2]$ ({\it hereafter this class is
called non-regular}), we are unable to use  equation~(\ref{one}), as
in the case of regular initial data, to show the uniqueness and
Lipschitz continuity of the solutions of~(\ref{v}). Therefore, we
are required to provide a more delicate analysis to show the
uniqueness. In particular, we will use certain family of equations
for this task which are even more technical to present here and will
be the subject of section~\ref{Sec:nregin} (see
equations~(\ref{fin4}) parameterized by $n\in\mathbb{N}$).

\setcounter{equation}{0}
\section{Truncated system,
{\it a priori} estimates and global existence for $s>0$.
\label{S:Uniq}}

In this section we establish the global existence (without
uniqueness) of solutions of equation~(\ref{v}) in the sense of
Definition~\ref{D:defmain} for initial data $v^0\in\dot{H}^s$, where
$s>0$. Moreover, we will show that the established solutions
conserve the energy, namely, they satisfy
$\|v(t)\|_{L_2}=\|v^0\|_{L_2}$. For this purpose we introduce a
Galerkin version of equation~(\ref{v}), namely we replace~(\ref{v})
by an equation, with \textit{truncated} nonlinearity, for the
approximate solution $v=v^{(m)}$ for $m\in \mathbb{N}$ (in what
follows we omit the superscript $(m)$ where it causes no ambiguity)
\begin{equation} \label{vm0}
\partial _t v_{k}^{(m)}=
\frac12 ik\Pi _m\sum_{k_1+k_2=k} e^{i3kk_1k_2t}(\Pi_mv_{k_1}^{(m)})
(\Pi _m v_{k_2}^{(m)}) \qquad v_k^{(m)}(0)=v_{k}^{0}= u_{k}^{0},
\end{equation}
where $k\in\mathbb{Z}_{0}$. Here
 the truncation (projection) operator $\Pi _{m}$
acts on the sequence of  Fourier coefficients $\varphi=\{ \varphi_{k}\} $ as follows:
\begin{equation}
\Pi _{m}\varphi_{k}=(\Pi _{m}\varphi)_{k}=\begin{cases}
\varphi_{k}&\text{ if }\left\vert k\right\vert \leq m \\
0&\text{ if }\left\vert k\right\vert >m
\end{cases} ~~,  \label{Pm}
\end{equation}
so that $\partial_tv_k^{(m)}\equiv0$ for $|k|>m$.
Consequently, system~(\ref{vm0}) is in principle
a finite system of ODEs.
\begin{theorem}\label{T:locah}
Let $s_0 \geq 0$ and $v(0)=v^{0}\in \dot{H}^{s_0}$. Let
$m\in\mathbb{N}$ be fixed. Then there exits $T^*>0$, which
might depend on m, such that system~$(\ref{vm0})$ has a
unique solution on the time interval $[0,T^{\ast }]$. This
solution $v=v^{(m)}(t)$ can be extended to the maximal
interval $[ 0,T_{\max}^{\ast })$ such that either
$T_{\max }^{\ast }=+\infty $ or
$\limsup_{t\to T_{\max}^*-0}\|v^{(m)}(t)\|_{\dot{H}^0}=
+\infty$.
\end{theorem}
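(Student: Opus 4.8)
The plan is to recognize that, after the two inner truncations $\Pi_m$ in the nonlinearity together with the outer $\Pi_m$, system~(\ref{vm0}) is a closed, finite-dimensional system of ODEs for the finitely many unknowns $\{v_k^{(m)}\}_{|k|\le m}$, and then to apply the classical Picard--Lindel\"of theory together with the standard continuation (blow-up) alternative.

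First I would make the decoupling explicit. For $|k|>m$ the outer projection forces $\partial_t v_k^{(m)}\equiv 0$, so these modes are frozen at their initial values, $v_k^{(m)}(t)=v_k^0$, and since $v^0\in\dot H^{s_0}\subseteq\dot H^0$ their total contribution $\sum_{|k|>m}|v_k^0|^2$ is a finite constant. Because of the inner projections, the right-hand side for $|k|\le m$ depends only on $\{v_j^{(m)}\}_{|j|\le m}$; hence the low modes satisfy a self-contained system $\partial_t V=F(t,V)$, where $V=(v_k^{(m)})_{|k|\le m}\in\mathbb{C}^{N}$ with $N=2m$.

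Next I would verify the hypotheses of the Cauchy--Lipschitz theorem for $F$. Its components are finite sums of terms $\tfrac12 ik\,e^{i3kk_1k_2t}v_{k_1}v_{k_2}$ with $|k|,|k_1|,|k_2|\le m$; each is continuous (indeed real-analytic) in $t$, uniformly bounded on bounded $t$-intervals since $|e^{i3kk_1k_2t}|=1$, and a quadratic polynomial in $V$, hence locally Lipschitz in $V$ with a Lipschitz constant controlled by $m$ and $|V|$. Picard--Lindel\"of then yields a unique solution on some interval $[0,T^*]$, with $T^*$ depending on $m$ through the factor $k$ and the local Lipschitz bound, consistent with the statement.

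Finally I would invoke the standard continuation argument: the solution extends to a maximal interval $[0,T^*_{\max})$, and if $T^*_{\max}<\infty$ the classical blow-up alternative forces $V(t)$ to leave every compact subset of $\mathbb{C}^N$, i.e. $\limsup_{t\to T^*_{\max}-0}|V(t)|=+\infty$. Since all norms on $\mathbb{C}^N$ are equivalent and $\|v^{(m)}(t)\|_{\dot H^0}^2=|V(t)|^2+\sum_{|k|>m}|v_k^0|^2$ differs from $|V(t)|^2$ only by a fixed finite constant, this is precisely $\limsup_{t\to T^*_{\max}-0}\|v^{(m)}(t)\|_{\dot H^0}=+\infty$. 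The only point requiring care --- the closest thing to an obstacle in an otherwise routine argument --- is phrasing the blow-up in the $\dot H^0$ norm rather than in an abstract Euclidean norm, which is resolved exactly by the equivalence of norms in finite dimensions together with the finiteness of the frozen high-mode tail.
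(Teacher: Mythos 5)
Your proposal is correct and takes essentially the same approach as the paper, whose proof simply observes that system~(\ref{vm0}) is a finite system of ODEs with quadratic nonlinearity and invokes standard local existence, uniqueness, and the continuation alternative. Your write-up merely fills in the details the paper leaves implicit --- the freezing of the modes with $|k|>m$, the verification of the Picard--Lindel\"of hypotheses, and the identification of Euclidean blow-up with blow-up of the $\dot H^0$ norm via the fixed finite high-mode tail.
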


\begin{proof}
We observe that  system~(\ref{vm0}) is essentially a finite system
of ordinary differential equations with quadratic nonlinearity.
Therefore one can guarantee short time existence and uniqueness of
solution on an interval $[0,T^*]$ and the maximal interval of
existence $[0,T^*_{\max})$. Observe, however, that in principle
$T^*$ and $T^*_{\max}$ may depend on $m$. But later we show that
this is not the case.
\end{proof}

Next we show that $T_{\max }^{\ast }=+\infty $. To establish this
 and to be able to pass to the
limit, as $m\rightarrow \infty $,  we need to use global
\textit{a-priori} estimates for the solutions of~(\ref{vm0}).

\begin{proposition}\label{L:L2m}
Let $v^0\in\dot{H}^0$. Then for every $m\in\mathbb{N}$
the solution $v=v^{(m)}(t)$ of $(\ref{vm0})$
exists globally in time. Moreover,  the $\dot{H}^{0}$-norm
$($that is, the $L_2$-norm$)$ of
 $v$ is constant in time:
\begin{equation}\label{L2m0}
\| v^{(m)}( t)\| _{\dot{H}^{0}}^{2}= \| v( 0)\|_{\dot{H}^{0}}^{2}=
\| v^0\|_{\dot{H}^{0}}^{2}\text{ \ for all }\ t\geq 0.
\end{equation}
\end{proposition}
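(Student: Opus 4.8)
The plan is to prove the conservation law $\|v^{(m)}(t)\|_{\dot{H}^0}^2=\|v^0\|_{\dot{H}^0}^2$ on the maximal interval of existence $[0,T^*_{\max})$ and then to read off global existence from the blow-up alternative in Theorem~\ref{T:locah}. Indeed, since the conserved quantity is exactly the $\dot{H}^0$-norm, its constancy rules out the possibility $\limsup_{t\to T^*_{\max}-0}\|v^{(m)}(t)\|_{\dot{H}^0}=+\infty$, and hence forces $T^*_{\max}=+\infty$; the stated identity then holds for all $t\ge0$.

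To obtain the conservation law I would differentiate $\|v^{(m)}(t)\|_{\dot{H}^0}^2=\sum_{k\in\mathbb{Z}_0}|v_k^{(m)}(t)|^2$ in time. For fixed $m$, system~(\ref{vm0}) is a finite system of ODEs with smooth right-hand side, so the solution is $C^1$ in $t$ and all term-by-term manipulations are justified. First I note that the reality condition $v_{-k}=\bar v_k$, which holds for the initial data, is preserved by~(\ref{vm0}) because $\Pi_m$ commutes with $k\mapsto-k$; using it gives
\[
\frac{d}{dt}\|v^{(m)}\|_{\dot{H}^0}^2
=2\,\mathrm{Re}\sum_{k\in\mathbb{Z}_0}\bar v_k\,\partial_t v_k .
\]
Because of the truncation, $\partial_t v_k\equiv0$ for $|k|>m$, so only the modes $|k|\le m$ survive, and on that range $\Pi_m v_{k_i}=v_{k_i}$. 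Substituting~(\ref{vm0}), writing $\bar v_k=v_{-k}$, and setting $k_3=-k$ reduces the right-hand side to a constant multiple of
\[
\mathrm{Re}\;i\sum_{\substack{k_1+k_2+k_3=0\\ |k_1|,|k_2|,|k_3|\le m}}(-k_3)\,e^{-i3k_1k_2k_3t}\,v_{k_1}v_{k_2}v_{k_3}.
\]

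The heart of the argument is a symmetrization. The factor $S(k_1,k_2,k_3):=e^{-i3k_1k_2k_3t}v_{k_1}v_{k_2}v_{k_3}$ is symmetric under permutations of $(k_1,k_2,k_3)$, and the index set $\{\,k_1+k_2+k_3=0,\ |k_i|\le m\,\}$ is itself permutation-invariant; hence the weighted sums $\sum k_1 S$, $\sum k_2 S$ and $\sum k_3 S$ over this set all coincide. Averaging the three and using $k_1+k_2+k_3=0$ on the domain shows that their common value vanishes, so the cubic sum above is identically zero. Therefore $\frac{d}{dt}\|v^{(m)}\|_{\dot{H}^0}^2=0$, which is the desired conservation. (Equivalently, one may return to $u_k=e^{-ik^3t}v_k$, under which~(\ref{vm0}) becomes the Galerkin-truncated KdV; the linear term contributes $2\,\mathrm{Re}\,(-i\sum_k k^3|u_k|^2)=0$, and the nonlinear term contributes zero by the classical cancellation $\int \tilde u^2\tilde u_x\,dx=0$ with $\tilde u=\Pi_m u$, using that $\Pi_m$ is self-adjoint.)

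The only point requiring care---mild in this finite-dimensional setting---is the bookkeeping around $\Pi_m$: one must verify that the truncation breaks neither the reality condition nor the permutation symmetry of the summation domain, both of which hold because $|k|\le m$ is invariant under $k\mapsto-k$ and under permuting the indices. The frozen high modes $|k|>m$ stay at their initial values and simply contribute the constant $\sum_{|k|>m}|v_k^0|^2$ to the norm, so they pose no obstacle.
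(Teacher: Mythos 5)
Your proposal is correct and is essentially the paper's own proof: your symmetrization (the weighted sums $\sum k_1S$, $\sum k_2S$, $\sum k_3S$ over the permutation-invariant set $\{k_1+k_2+k_3=0,\ |k_i|\le m\}$ coincide and average to zero since $k_1+k_2+k_3=0$) is exactly the index-exchange argument by which the paper passes from \eqref{pm1} to \eqref{pm2} and concludes that the derivative vanishes. The surrounding structure --- frozen high modes, reality of $v$, conservation on the maximal interval, and then global existence via the blow-up alternative of Theorem~\ref{T:locah} --- is likewise identical to the paper's.
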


\begin{proof}
First we establish~(\ref{L2m0}) for $t\in [0,T^*_{\max})$ (see
Theorem~\ref{T:locah}). We observe that $\partial_tv_k^{(m)}=0$, for
all $|k|>m$, and hence $\|(I-\Pi_m)v^{(m)}(t)\|_{\dot{H}^0}^2=
\|(I-\Pi_m)v^{(m)}(0)\|_{\dot{H}^0}^2$, for $t\in [0,T^*_{\max})$.
Next we show that
\begin{equation}\label{pmconst}
\| \Pi_mv( t)\| _{\dot{H}^{0}}^{2}= \| \Pi_mv( 0)\|
_{\dot{H}^{0}}^{2}~, \text{ \ for  }\ t\in [0,T^*_{\max}).
\end{equation}
For each $|k|\le m$, we multiply (\ref{vm0})  by $\Pi
_{m}\bar{v}_{k}=\Pi _{m}v_{-k}$ and the complex conjugate of
equation~(\ref{vm0}) by $\Pi_mv_{k}$ and sum over all $|k|\le m$:
\begin{gather}
\partial _{t}\|\Pi _{m}v\| _{\dot{H}^{0}}^{2}=
\sum_{|k| \leq m}\partial _{t}v_{k}\bar{v}_{k}+
v_{k}\partial _{t}\bar{v}_{k}=\notag\\
\sum_{k\in \mathbb{Z}_{0}}(\Pi _{m}\bar{v}%
_{k})\frac{ik}{2}\sum_{k_{1}+k_{2}=k,}e^{i3kk_{1}k_{2}t}\Pi
_{m}v_{k_{1}}\Pi
_{m}v_{k_{2}}-\sum_{k\in \mathbb{Z}_{0}}(\Pi _{m}v_{k})\frac{ik}{2}%
\sum_{k_{1}+k_{2}=k,}e^{-i3kk_{1}k_{2}t}\Pi _{m}\bar{v}_{k_{1}}\Pi _{m}%
\bar{v}_{k_{2}}.\notag
\end{gather}%
Elementary transformations yield
\begin{gather*}
\partial _{t}\|\Pi _{m}v\| _{\dot{H}^{0}}^{2}=\\
\sum_{k_{1}+k_{2}-k=0}\frac{ik}{2}e^{i3kk_{1}k_{2}t} \Pi
_{m}v_{-k}\Pi_{m}v_{k_{1}}\Pi _{m}v_{k_{2}}-
\sum_{k_{1}+k_{2}-k=0}\frac{ik}{2}e^{-i3kk_{1}k_{2}t} \Pi
_{m}v_{k}\Pi _{m}v_{-k_{1}}\Pi _{m}v_{-k_{2}}.
\end{gather*}%
Setting $k=-k_{3}$ we obtain
\begin{gather*}
\partial _{t}\| \Pi _{m}v\| _{\dot{H}^{0}}^{2}=-
\sum_{k_{1}+k_{2}+k_{3}=0}\frac{1}{2}
ik_{3}e^{-i3k_{3}k_{1}k_{2}t}\Pi_{m}v_{k_{3}}
\Pi _{m}v_{k_{1}}\Pi _{m}v_{k_{2}}+\\
\sum_{k_{1}+k_{2}+k_{3}=0}
\frac{1}{2}ik_{3}e^{i3k_{3}k_{1}k_{2}t}\Pi _{m}v_{-k_{3}}
\Pi_{m}v_{-k_{1}}\Pi _{m}v_{-k_{2}},
\end{gather*}%
and changing in the second sum $k_{j}$ to $-k_{j}$ \ yields
\begin{equation}\label{pm1}
\partial _{t}\| \Pi _{m}v\| _{\dot{H}^{0}}^{2}=-%
\sum_{k_{1}+k_{2}+k_{3}=0}ik_{3}e^{-i3k_{3}k_{1}k_{2}t}
\Pi _{m}v_{k_{3}}\Pi
_{m}v_{k_{1}}\Pi _{m}v_{k_{2}}.
\end{equation}%
Since $k_3=-k_1-k_2$, this equation is equivalent to
\begin{gather*}
\partial _{t}\| \Pi _{m}v\| _{\dot{H}^{0}}^{2}=\\
\sum_{k_{1}+k_{2}+k_{3}=0}ik_{1}e^{-i3k_{3}k_{1}k_{2}t}
\Pi _{m}v_{k_{3}}\Pi
_{m}v_{k_{1}}\Pi
_{m}v_{k_{2}}+\sum_{k_{1}+k_{2}+k_{3}=0}
ik_{2}e^{-i3k_{3}k_{1}k_{2}t}\Pi
_{m}v_{k_{3}}\Pi _{m}v_{k_{1}}\Pi _{m}v_{k_{2}}.
\end{gather*}%
Exchanging $k_{1}$ and $k_{3}$  in the first term and $k_{2}$ and $k_{3}$
in the second term we get
\begin{equation}\label{pm2}
\partial _{t}\| \Pi _{m}v\|_{\dot{H}^{0}}^{2}=2
\sum_{k_{1}+k_{2}+k_{3}=0}ik_{3}e^{-i3k_{3}k_{1}k_{2}t}
\Pi _{m}v_{k_{3}}\Pi_{m}v_{k_{1}}\Pi _{m}v_{k_{2}}.
\end{equation}
Obviously, (\ref{pm1}) and (\ref{pm2}) imply that
$\partial _{t}\|\Pi _{m}v\| _{\dot{H}^{0}}^{2}=0$, which gives
(\ref{pmconst}) and, consequently,
\begin{equation*}
\| v( t)\| _{\dot{H}^{0}}^{2}=
\| v( 0)\| _{\dot{H}^{0}}^{2}\text{ \ for  }\ t\in [0,T^*_{\max}).
\end{equation*}
Therefore, based on Theorem~\ref{T:locah}, $T^*_{\max}=\infty$,
which completes the proof.
\end{proof}

\begin{remark}\label{R:L-2_conservation}
\rm{ We remark that a somewhat similar argument is used in
\cite{Colliander03} for the Fourier proof of the $L_2$-conservation
property for the KdV equation on $\mathbb{R}$. }
\end{remark}

We now derive higher order Sobolev norm estimates for the solutions
$v=v^{(m)}(t)$ of the truncated system~(\ref{vm0}). For this purpose
we carry out the first and the second differentiation by parts for
the truncated system~(\ref{vm0}), as we have done above in section
3.1. Similarly to~(\ref{intby1}) we obtain that solution
$v^{(m)}(t)$, which was constructed in Theorem~\ref{T:locah} and
Proposition~\ref{L:L2m}, satisfies the system
\begin{equation}\label{onem}
\partial _{t}\left( v_{k}^{(m)}-\frac{1}{6}{B}_{2,m}
(v^{(m)},v^{(m)})_{k}\right) =%
\frac{i}{6}{R}_{3,m}(v^{(m)},v^{(m)},v^{(m)})_{k},
\quad k\in \mathbb{Z}_{0},
\end{equation}
where
\begin{equation}\label{B2m}
{B}_{2,m}(v,v)_{k}=\Pi_{m}{B}_{2}(\Pi _{m}v,\Pi_{m}v)_{k}=\Pi_{m}
\sum_{k_{1}+k_{2}=k} \frac{e^{i3kk_{1}k_{2}t}\Pi_{m}v_{k_{1}}\Pi
_{m}v_{k_{2}}}{k_{1}k_{2}}\,,
\end{equation}
and
\begin{equation}  \label{R3m}
R_{3,m}(v,v,v)_{k}=\Pi_{m}\sum_{k_{1}+k_{2}+k_{3}=k}
\frac{e^{i3(k_{1}+k_{2})(k_{2}+k_{3})(k_{3}+k_{1})t}}{k_{1}}
\Pi _{m}v_{k_{1}} \Pi _{m}v_{k_{2}}\Pi _{m}v_{k_{3}}.
\end{equation}
After the second differentiation by parts we see
that $v^{(m)}(t)$ satisfies the following
finite dimensional analog of~(\ref{fin2}):
\begin{equation}  \label{fin2m}
\partial _{t}\left( v_{k}^{(m)}-\frac{1}{6}
B_{2,m}((v^{(m)})^2)_{k}-\frac{1}{18}%
B_{3,m}((v^{(m)})^3)_{k}\right) =
\frac{i}{6}A_{\mathrm{res},m}(v^{(m)})_{k}+\frac{i}{18}%
B_{4,m}((v^{(m)})^4)_{k},
\end{equation}
where
\begin{equation}\label{trm}
B_{3,m}(v^3)=\Pi_mB_3((\Pi_mv)^3),
\quad
B_{4,m}(v^4)=\Pi_mB_4((\Pi_mv)^4).
\end{equation}
Concerning the resonant operator, by Proposition~\ref{L:L2m} we have
the conservation of energy for the truncated system~(\ref{vm0});
hence, arguing as in Remark~\ref{R:conservation} and similar
to~(\ref{R3resconserv}) we have
\begin{equation}\label{Aresm}
 A_{\mathrm{res},m}(v)_k=
 \left\{%
\begin{array}{ll}
\frac{1}{k}(\Pi_m v_k)(\|v^0\|_{\dot{H}^0}^2-|\Pi_mv_k|^2)
 & \hbox{$|k|<m$} \\
    0 & \hbox{$|k|\ge m$}
\end{array}%
\right.\, .
\end{equation}

We now establish estimates for the solutions $v^{(m)}$
in higher order Sobolev spaces.

\begin{theorem} \label{T:mainc}
Let $M_0$ be given and let $s_{0}\geq 0$. Assume that
$v^{(m)}(0)=v^0\in\dot{H}^{s_0}$ and suppose further that
$\|v^{(m)}(0)\|_{\dot{H}^{0}}\le M_0$.
Let $T>0$  and let
$v^{(m)}(t)$
be a solution of $(\ref{vm0})$ over $[0,T]$.
Then $v^{(m)}(t)$ solves~$(\ref{fin2m})$ and,
uniformly in $m$,  satisfies the estimate
\begin{equation}\label{ms}
\|v^{(m)}(t)\|_{\dot{H}^{s_{0}}}\leq M_{s_{0}}\text{ \ \ for
\ }0\leq t\leq T,
\end{equation}
where $M_{s_0}$ depends only on $M_0$, $T$,
$\|v(0)\|_{\dot{H}^{s_0}}$ and $s_0$.
\end{theorem}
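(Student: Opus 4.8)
The plan is to establish the a priori bound~\eqref{ms} by an energy-type estimate applied to the second form~\eqref{fin2m} of the truncated KdV, rather than to the first form~\eqref{onem}. The reason for preferring~\eqref{fin2m} is exactly the one flagged in the text: the trilinear operator $R_3$ appearing in~\eqref{onem} is only bounded from $(\dot H^s)^3$ into $\dot H^s$ for $s>1/2$ (Lemma~\ref{L:R3}), so it cannot control the case $s_0\in[0,1/2]$; by contrast, the operators $B_2,B_3,B_4$ and the resonant term $A_{\mathrm{res}}$ that survive after the second differentiation by parts are bounded on $\dot H^{s_0}$ for all $s_0\ge 0$ (Lemmas~\ref{L:B2}, \ref{L:B4}, \ref{L:Ares}). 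The first step is therefore to introduce the modified quantity
\[
w_k^{(m)}(t)=v_k^{(m)}(t)-\tfrac16 B_{2,m}((v^{(m)})^2)_k-\tfrac1{18}B_{3,m}((v^{(m)})^3)_k,
\]
whose time derivative is, by~\eqref{fin2m}, the bounded right-hand side $\tfrac{i}{6}A_{\mathrm{res},m}+\tfrac{i}{18}B_{4,m}$.

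The second step is to run the $\dot H^{s_0}$ energy estimate on $w^{(m)}$. Differentiating $\tfrac12\|w^{(m)}\|_{\dot H^{s_0}}^2$ in time and pairing against $\partial_t w^{(m)}$ yields, via Cauchy--Schwarz, a bound of the form
\[
\tfrac{d}{dt}\|w^{(m)}\|_{\dot H^{s_0}}\le \tfrac16\|A_{\mathrm{res},m}(v^{(m)})\|_{\dot H^{s_0}}+\tfrac1{18}\|B_{4,m}((v^{(m)})^4)\|_{\dot H^{s_0}}.
\]
Here I would invoke the boundedness lemmas to control each term. The operator $A_{\mathrm{res}}$ is essentially multiplication of $v_k$ by $\tfrac1k(\|v^0\|_{L_2}^2-|v_k|^2)$, so using the conserved $L_2$-norm $\|v^{(m)}\|_{L_2}=\|v^0\|_{L_2}\le M_0$ (Proposition~\ref{L:L2m}), Lemma~\ref{L:Ares} gives $\|A_{\mathrm{res},m}(v^{(m)})\|_{\dot H^{s_0}}\le C M_0^2\|v^{(m)}\|_{\dot H^{s_0}}$. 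Similarly Lemma~\ref{L:B4} controls the quartic term $B_4$ by $C\|v^{(m)}\|_{\dot H^{s_0}}\|v^{(m)}\|_{L_2}^3\le CM_0^3\|v^{(m)}\|_{\dot H^{s_0}}$, since the $B_4$ estimate should place only one factor in $\dot H^{s_0}$ and the rest in $L_2$ (this low-high structure is what makes the whole scheme work at $s_0=0$). Crucially, both bounds are linear in $\|v^{(m)}\|_{\dot H^{s_0}}$, with coefficients depending only on $M_0$.

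The third step is to convert the estimate on $\|w^{(m)}\|_{\dot H^{s_0}}$ back into one on $\|v^{(m)}\|_{\dot H^{s_0}}$. Since $B_2$ and $B_3$ are bounded on $\dot H^{s_0}$ with at least one low-mode factor (Lemmas~\ref{L:B2}, \ref{L:B4}), one has $\|w^{(m)}-v^{(m)}\|_{\dot H^{s_0}}\le C(M_0+M_0^2)\|v^{(m)}\|_{\dot H^{s_0}}$, hence $\|v^{(m)}\|_{\dot H^{s_0}}$ and $\|w^{(m)}\|_{\dot H^{s_0}}$ are comparable up to constants depending on $M_0$, at least once we know a rough bound; more cleanly, I would write the inequality as a differential inequality $\tfrac{d}{dt}\|w^{(m)}\|_{\dot H^{s_0}}\le C(M_0)\,(\|w^{(m)}\|_{\dot H^{s_0}}+1)$ after absorbing the $B_2,B_3$ corrections, and then apply Gr\"onwall's inequality to obtain $\|w^{(m)}(t)\|_{\dot H^{s_0}}\le (\|w^{(m)}(0)\|_{\dot H^{s_0}}+1)e^{C(M_0)T}$ on $[0,T]$. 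Finally, translating back through the comparability relation yields $\|v^{(m)}(t)\|_{\dot H^{s_0}}\le M_{s_0}$ with $M_{s_0}$ depending only on $M_0$, $T$, $\|v^0\|_{\dot H^{s_0}}$ and $s_0$, uniformly in $m$, as claimed.

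The main obstacle I anticipate is the circularity in the third step: the comparability between $\|v^{(m)}\|_{\dot H^{s_0}}$ and $\|w^{(m)}\|_{\dot H^{s_0}}$ holds with a constant multiplying $\|v^{(m)}\|_{\dot H^{s_0}}$, so if that constant is not small the relation $\|v^{(m)}\|\le \|w^{(m)}\|+C(M_0)\|v^{(m)}\|$ cannot simply be rearranged. The clean way around this is to observe that $B_2,B_3$ are quadratic and cubic in $v^{(m)}$ with \emph{all but one} factor measured in $L_2$, so the correction is in fact $\|w^{(m)}-v^{(m)}\|_{\dot H^{s_0}}\le C(\|v^{(m)}\|_{L_2}+\|v^{(m)}\|_{L_2}^2)\|v^{(m)}\|_{\dot H^{s_0}}=C(M_0+M_0^2)\|v^{(m)}\|_{\dot H^{s_0}}$ — still not small for large $M_0$. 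One therefore cannot decouple the norms algebraically for arbitrary $M_0$; instead I would keep the Gr\"onwall argument phrased entirely in terms of $\|w^{(m)}\|_{\dot H^{s_0}}$, bounding the right-hand side operators by $\|w^{(m)}\|_{\dot H^{s_0}}$ plus a controlled remainder (again using that the difference $v^{(m)}-w^{(m)}$ carries extra $L_2$ factors bounded by $M_0$), so that the nonlinear feedback enters only through the conserved, and hence fixed, quantity $M_0$. Verifying that every operator estimate genuinely isolates a single high-frequency factor in $\dot H^{s_0}$ while the remaining factors sit in the conserved $L_2$ norm is the delicate point on which the uniform-in-$m$, globally-linear Gr\"onwall bound — and thus the whole theorem — rests.
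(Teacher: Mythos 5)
Your overall strategy---pass to the second form \eqref{fin2m}, introduce $z^{(m)}=v^{(m)}-\tfrac16 B_{2,m}((v^{(m)})^2)-\tfrac1{18}B_{3,m}((v^{(m)})^3)$ (your $w^{(m)}$), and run a Gronwall estimate driven by boundedness of $A_{\mathrm{res},m}$ and $B_{4,m}$---is indeed the paper's starting point, but your treatment of the quartic term contains a genuine gap. You claim that Lemma~\ref{L:B4} bounds $\|B_{4,m}((v^{(m)})^4)\|_{\dot H^{s_0}}$ by $C\|v^{(m)}\|_{\dot H^{s_0}}\|v^{(m)}\|_{L_2}^3$, i.e.\ with one high factor and three factors in the conserved $L_2$ norm, for all $s_0\ge 0$. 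Lemma~\ref{L:B4} says nothing of the sort: it places \emph{all four} factors in $\dot H^{s}$ and gains $\varepsilon<1/2$ derivatives, so used at level $s=s_0$ it gives only $\|B_{4,m}((v^{(m)})^4)\|_{\dot H^{s_0}}\le c_4\|v^{(m)}\|_{\dot H^{s_0}}^4$. That turns your differential inequality into one with quartic right-hand side, and a quartic Gronwall bound blows up in finite time depending on the data, so it cannot deliver \eqref{ms} on an arbitrary fixed $[0,T]$. Estimates of the low-high type you want do exist in the paper---Lemmas~\ref{L:B41n} and~\ref{L:B42n}---but with $L_2$ low factors they hold only for $s<1/2$; for $s\ge 1/2$ they require the three low factors in $\dot H^{\theta_0}$ with $\theta_0>s-1/2$, strictly more than the conserved quantity. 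So your one-pass argument is salvageable (with the correct citations) only for $s_0<1/2$, while the theorem is asserted for every $s_0\ge 0$; the regime $s_0\ge 1/2$ is exactly where the "delicate point" you flag at the end actually fails.

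The idea missing from your proposal, and the heart of the paper's proof, is a bootstrap in regularity. Fix $n_0$ with $\varepsilon_0=s_0/n_0<1/2$. The smoothing estimate \eqref{EE11} bounds $Q_m=\tfrac{i}{6}A_{\mathrm{res},m}+\tfrac{i}{18}B_{4,m}$ in $\dot H^{s+\varepsilon_0}$ by a (quartic) expression in $\|v^{(m)}\|_{\dot H^{s}}$, i.e.\ by the norm one level \emph{below}. Starting from the conserved bound $\|v^{(m)}(t)\|_{\dot H^0}\le M_0$, the right-hand side $Q_m$ is bounded in $\dot H^{\varepsilon_0}$ by a constant depending only on $M_0$, uniformly in $t$ and $m$; a linear Gronwall inequality then bounds $\Pi_m z^{(m)}$, hence $\Pi_m v^{(m)}$ (using that $B_{2,m}$ and $B_{3,m}$ of an $L_2$ function land in $\dot H^1$ and $\dot H^2$), in $\dot H^{\varepsilon_0}$ on all of $[0,T]$; the tail $(I-\Pi_m)v^{(m)}$ is constant in time, a point your proposal should also address. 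Repeating, the $\dot H^{\varepsilon_0}$ bound makes $Q_m$ bounded in $\dot H^{2\varepsilon_0}$, and after $n_0$ steps one reaches $\dot H^{s_0}$. Because at every stage the nonlinearity is evaluated at the regularity already controlled at the previous stage, each Gronwall step has a time-uniform constant right-hand side, and the resulting bound depends only on $M_0$, $T$, $\|v^0\|_{\dot H^{s_0}}$ and $s_0$, uniformly in $m$. Your scheme could be repaired in the same spirit---induct on regularity in increments below $1/2$, invoking Lemmas~\ref{L:B41n}--\ref{L:B42n} with $\theta_0$ equal to the previously attained level---but as written, resting on a single application of a low-high estimate that Lemma~\ref{L:B4} does not provide, the proof does not go through for $s_0\ge 1/2$.
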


\begin{proof}
Since we are dealing with finite systems of ordinary differential
equations and the sums are carried out over finite number of
indices, it is clear that the solution of $(\ref{vm0})$
solves~$(\ref{fin2m})$. Moreover, thanks to Proposition~\ref{L:L2m}
we have
\begin{equation}\label{m0}
\|v^{(m)}(t)\|_{\dot{H}^{0}}=
\|v^{(m)}(0)\|_{\dot{H}^{0}}\leq M_{0}.
\end{equation}
Since $\partial_tv_k^{(m)}=0$ for $|k|>m$, it follows that
\begin{equation}\label{Qv}
\|(I-\Pi_m)v^{(m)}(t)\|_{\dot{H}^{s_0}}=
\|(I-\Pi_m)v^{(m)}(0)\|_{\dot{H}^{s_0}}\le
\|v^0\|_{\dot{H}^{s_0}}.
\end{equation}
Therefore, we need to focus on estimating
$\|\Pi_mv^{(m)}\|_{\dot{H}^{s_0}}$. We set in~(\ref{fin2m})
$$
z_{k}^{(m)}=v_{k}^{(m)}-w_{k}^{(m)}(v^{(m)})-
\varphi_{k}^{(m)}(v^{(m)}),\ k\in \mathbb{Z}_{0},
$$
where
$$
w^{(m)}(v^{(m)})=\frac{1}{6}B_{2,m}(v^{(m)},v^{(m)}),\ \
\varphi^{(m)}(v^{(m)})=
\frac{1}{18}B_{3,m}(v^{(m)},v^{(m)},v^{(m)})
$$
and
\begin{equation*}
Q_m(v^{(m)})_{k}=\frac{i}{6}A_{\mathrm{res,m}}(v^{(m)})_{k}+
\frac{i}{18}B_{4,m}(v^{(m)},v^{(m)},v^{(m)},v^{(m)})_{k},
\quad k\in \mathbb{Z}_{0}.
\end{equation*}
Then~(\ref{fin2m}) goes over to
\begin{equation}\label{apreq}
\partial _{t}z_{k}^{(m)}=Q_m(v^{(m)})_{k}=
Q_m(z^{(m)}+w^{(m)}(v^{(m)})+\varphi^{(m)}(v^{(m)}))_{k}
\end{equation}
with $z^{(m)}(0)=z^{0}=v^{0}-w^{(m)}(v^{0})-\varphi^{(m)}(v^{0})$,
where $z^{(m)}(0)\in \dot{H}^{s_{0}}$ in view of Lemmas~\ref{L:B2}
and \ref{L:B3}. We observe that the operators $B_{2,m}$, $B_{3,m}$
and $B_{4,m}$ in~(\ref{fin2m}) satisfy the same estimates in the
same spaces as $B_2$, $B_{3}$  and $B_{4}$, respectively, and the
estimates are uniform in~$m$. We fix a positive integer $n_{0}$ so
that $s_{0}/n_{0}=\varepsilon _{0}<1/2$. Our goal is to establish an
explicit bound for $\sup_{0\le t\le
T}\|\Pi_mv^{(m)}(t)\|_{\dot{H}^{s_{0}}}$. This is done in $n_{0}$
steps. By Lemmas~\ref{L:B4} and~\ref{L:Ares} below, $Q_m$ is bounded
from $\dot{H}^{s}$ to $\dot{H}^{s+\varepsilon }$, for $s\geq 0$ and
$0 < \varepsilon <1/2$:
\begin{equation}\label{EE11}
\|Q_m(v^{(m)})\|_{\dot{H}^{s+\varepsilon }}\leq \frac{1}{18}
c_{4}(s,\varepsilon )\|v^{(m)}\|_{\dot{H}^{s}}^{4}+
\frac{1}{6}c_{5}(s)\|v^{(m)}\|_{\dot{H}^{s}}.
\end{equation}
In view of~(\ref{m0}) and~(\ref{EE11})
 the right-hand side in~(\ref{apreq}) is bounded in
$\dot{H}^{\varepsilon _{0}}$ (we take $s=0$ and
$\varepsilon=\varepsilon_0$ in~(\ref{EE11})):
\begin{equation*}
\|Q_m(v^{(m)}(t))\|_{\dot{H}^{\varepsilon _{0}}}\leq
C_{\varepsilon_{0}}=C_{\varepsilon _{0}}(M_0).
\end{equation*}%
We take the scalar product of~(\ref{apreq}) and $\Pi_mz(t)$ in
$\dot{H}^{\varepsilon _{0}}$, that is, we multiply the
equation~(\ref{apreq}) by $|k|^{2\varepsilon _{0}}\bar{z}_{k}$ and
sum the results over all $|k| \le m$. We obtain
\begin{equation*}
\partial_{t}\|\Pi_mz^{(m)}\|_{\dot{H}^{\varepsilon _{0}}}^{2}=
2(Q_m(v^{(m)}),\Pi_mz^{(m)})_{\dot{H}^{\varepsilon _{0}}}\leq
\|Q_m(v^{(m)})\|_{\dot{H}^{\varepsilon_{0}}}^{2}+
\|\Pi_mz^{(m)}\|_{\dot{H}^{\varepsilon _{0}}}^{2},
\end{equation*}%
where $\Pi_mz(0)$ is bounded in
$\dot{H}^{s_{0}}\subseteq \dot{H}^{\varepsilon_{0}}$ since
$s_{0}\geq \varepsilon _{0}$. By the Gronwall inequality
\begin{equation*}
\|\Pi_mz^{(m)}(t)\|_{\dot{H}^{\varepsilon _{0}}}\leq
C_{\varepsilon_{0}}(T,M_0),\quad t\in \lbrack 0,T],
\end{equation*}%
and therefore $\Pi_mv^{(m)}(t)=\Pi_mz^{(m)}(t)+w^{(m)}(v^{(m)}(t))+
\varphi^{(m)}(v^{(m)}(t))$ is bounded in $\dot{H}^{\varepsilon
_{0}}$ for $t\in[0,T]$ with bound that depends only on $T$, $M_0$
and $\|v^0\|_{\dot{H}^{s_0}}$ (note that $w^{(m)}(v^{(m)})$ and
$\varphi^{(m)}(v^{(m)})$ for $v^{(m)}\in L_{2}$ are bounded in
$\dot{H}^{1}$ and $\dot{H}^{2}$, respectively, see Lemmas~\ref{L:B2}
and~\ref{L:B3} below; and thus they are bounded in
$\dot{H}^{\varepsilon _{0}}$ since $\varepsilon _{0} < 1/2$).
Consequently, $Q_m(v^{(m)}(t))$ on the right-hand side
in~(\ref{apreq}) is bounded in $\dot{H}^{2\varepsilon_{0}}$ (thanks
to~(\ref{EE11})) and taking the scalar product of~(\ref{apreq}) and
$\Pi_mz^{(m)}$ in $\dot{H}^{2\varepsilon _{0}}$ we obtain, as above,
that $\Pi_mz^{(m)}(t)$ and, hence, $\Pi_mv^{(m)}(t)$ are bounded in
$H^{2\varepsilon_{0}}$ uniformly for $t\in[0,T]$ with bound that
depends only on $T$, $M_0$ and $\|v^0\|_{\dot{H}^{s_0}}$. After
$n_{0}$ steps we get on the interval $[0,T]$
\begin{equation}\label{aprest}
\|\Pi_mv^{(m)}(t)\|_{\dot{H}^{s_{0}}}\leq C_{s_{0}}=
C_{s_{0}}(T,\|v^{0}\|_{\dot{H}^{s_{0}}},M_0),
\end{equation}
which together with~(\ref{Qv}) completes the proof
of the theorem.
\end{proof}

\begin{proposition}
\label{L:ests1m} Let $T>0$, $s_{0}\ge 0$ and $S>3/2$. Then  the
solutions $v=v^{(m)}$ of~$(\ref{vm0})$ with $v(0)\in\dot{H}^{s_{0}}$
satisfy, uniformly in $m$, $T$, and $s_0$,  the estimate
\begin{equation}\label{dtv}
\| \partial _{t}v^{(m)}(t)\|_{\dot{H}^{-S}}\le
 c_1(S-1)\| v(0)\|_{\dot{H}^{0}}^2.
\end{equation}
\end{proposition}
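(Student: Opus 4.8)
The plan is to recognize that the truncated system~(\ref{vm0}) is nothing but the identity
\[
\partial_t v^{(m)} = B_{1,m}(v^{(m)},v^{(m)}),
\]
where $B_{1,m}$ is the Galerkin truncation of the bilinear operator $B_1$ from~(\ref{B1}) (insert a $\Pi_m$ on each factor and one outer $\Pi_m$). Hence the estimate~(\ref{dtv}) reduces to a single continuity bound for $B_1$ as a map from $(\dot{H}^{0})^2$ into $\dot{H}^{-S}$, combined with the $L_2$-conservation property already established in Proposition~\ref{L:L2m}. Since every exponential factor in~(\ref{B1}) has modulus one, all constants below will be independent of $t$ (hence of $T$); and since only the $\dot{H}^{0}$ norm will enter, they will also be independent of $s_0$.

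The core is the following elementary estimate, which is the content of Lemma~\ref{L:B1}. For $\varphi,\psi\in\dot{H}^{0}$ and any $k\in\mathbb{Z}_0$, Cauchy--Schwarz applied to the convolution gives
\[
\Bigl|\sum_{k_1+k_2=k}e^{i3kk_1k_2t}\varphi_{k_1}\psi_{k_2}\Bigr|
\le \sum_{k_1+k_2=k}|\varphi_{k_1}|\,|\psi_{k_2}|
\le \|\varphi\|_{\dot{H}^{0}}\|\psi\|_{\dot{H}^{0}},
\]
so that $|B_1(\varphi,\psi)_k|\le \tfrac12|k|\,\|\varphi\|_{\dot{H}^{0}}\|\psi\|_{\dot{H}^{0}}$. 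Therefore
\[
\|B_1(\varphi,\psi)\|_{\dot{H}^{-S}}^2
=\sum_{k\in\mathbb{Z}_0}|k|^{-2S}\,|B_1(\varphi,\psi)_k|^2
\le \frac14\,\|\varphi\|_{\dot{H}^{0}}^2\|\psi\|_{\dot{H}^{0}}^2
\sum_{k\in\mathbb{Z}_0}|k|^{2-2S}.
\]
The last series equals $2\zeta(2(S-1))$ and converges precisely when $2(S-1)>1$, i.e.\ when $S>3/2$, which is exactly the hypothesis; this dictates the dependence of the resulting constant on $S-1$ and explains the notation $c_1(S-1)$.

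It remains to transfer this to the truncated operator and to insert the conservation law. Because $\Pi_m$ is an orthogonal projection, $\|\Pi_m\varphi\|_{\dot{H}^{0}}\le\|\varphi\|_{\dot{H}^{0}}$ and the outer projection does not increase the $\dot{H}^{-S}$ norm, so $B_{1,m}$ obeys the same bound with the same constant $c_1(S-1)$, uniformly in $m$. Applying this with $\varphi=\psi=v^{(m)}(t)$ and then using Proposition~\ref{L:L2m}, namely $\|v^{(m)}(t)\|_{\dot{H}^{0}}=\|v(0)\|_{\dot{H}^{0}}$, yields
\[
\|\partial_t v^{(m)}(t)\|_{\dot{H}^{-S}}
=\|B_{1,m}(v^{(m)}(t),v^{(m)}(t))\|_{\dot{H}^{-S}}
\le c_1(S-1)\,\|v^{(m)}(t)\|_{\dot{H}^{0}}^2
= c_1(S-1)\,\|v(0)\|_{\dot{H}^{0}}^2,
\]
which is~(\ref{dtv}). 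There is no genuine obstacle here: the only points requiring care are the convergence of $\sum_{k}|k|^{2-2S}$, which is what forces $S>3/2$, and the verification that the constant is simultaneously independent of $m$, $T$ and $s_0$ --- the $T$-independence being a direct consequence of the oscillatory factors $e^{i3kk_1k_2t}$ having modulus one.
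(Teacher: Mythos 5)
Your proof is correct and follows essentially the same route as the paper: the paper's own proof simply cites Lemma~\ref{L:B1} (whose proof is exactly your Cauchy--Schwarz bound $|B_1(\varphi,\psi)_k|\le\tfrac12|k|\,\|\varphi\|_{\dot H^0}\|\psi\|_{\dot H^0}$ followed by summing $\sum_k|k|^{2-2S}$) together with the energy conservation of Proposition~\ref{L:L2m}. You have merely inlined the lemma's proof and made the harmless observations about the projections $\Pi_m$ and the uniformity in $m$, $T$, $s_0$ explicit.
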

\begin{proof}
This immediately follows from Lemma~\ref{L:B1} and the conservation
of energy of the  solutions $v^{(m)}$ of~(\ref{vm0}), see
Proposition~\ref{L:L2m}. The constant $c_1(S-1)$ is also defined in
Lemma~\ref{L:B1}.
\end{proof}

We are now ready to establish the existence of weak solutions to
equation~{(\ref{v}) as the limit of a subsequence of $v^{(m)}(t)$,
the solutions of  the truncated system~(\ref{vm0}). We first deal
with the case $s>0$. The case $s=0$ is considered in
Theorem~\ref{T:globs=0} below.

\begin{theorem}\label{T:gal}
 Let $s_0>0$, $v(0)\in\dot{H}^{s_0}$ and let $T>0$ be fixed.
Let $\sigma$ be fixed
 so that $0<\sigma<s_0$. Then there exists a
subsequence of~$v^{(m)}(t)$ $($which we still label by $(m)$$)$ of
solutions of~$(\ref{vm0})$,
 with the same  initial
data $v(0)\in\dot{H}^{s_0}$,  such that $v^{(m)}(t)$ converges
strongly in $L_p([0,T];H^{\sigma})$, for any fixed $1<p<\infty$, and
$*$-weakly in $L_{\infty }([0,T];\dot{H}^{s_0})$ to $v^{\infty
}(t)$; and $v^{\infty}(t)$ is a solution of~$(\ref{v})$ in the sense
of Definition~$\ref{D:defmain}$ and is bounded in
$L_\infty([0,T],\dot{H}^{s_0})$ with norm
\begin{equation}\label{vinftynorm}
\|v^\infty\|_{L_\infty([0,T],\dot{H}^{s_0})}\le
M_{s_0},
\end{equation}
where $M_{s_0}$ is as in $(\ref{ms})$. Moreover,
\begin{equation}\label{Energy-infinity}
\|v^\infty(t)\|_{\dot{H}^0}=\|v^0\|_{\dot{H}^0} \quad \text {\it
a.e. in} \quad [0,T].
\end{equation}
\end{theorem}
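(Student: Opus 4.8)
The plan is to run a standard Galerkin/compactness argument, using the uniform bounds already established for the truncated solutions $v^{(m)}$ together with a bound on their time derivatives to extract a convergent subsequence, and then to pass to the limit in the integrated equation \eqref{vint}.

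First I would collect the a priori bounds. By Theorem~\ref{T:mainc} the sequence $\{v^{(m)}\}$ is bounded in $L_\infty([0,T];\dot{H}^{s_0})$ by $M_{s_0}$, uniformly in $m$, and by Proposition~\ref{L:ests1m} the time derivatives $\{\partial_t v^{(m)}\}$ are bounded in $L_\infty([0,T];\dot{H}^{-S})$ for any $S>3/2$, again uniformly in $m$ (using the conserved $L_2$ norm from Proposition~\ref{L:L2m}). Since the embedding $\dot{H}^{s_0}\hookrightarrow\dot{H}^{\sigma}$ is compact for $0<\sigma<s_0$ (Rellich on the torus) and $\dot{H}^{\sigma}\hookrightarrow\dot{H}^{-S}$ continuously, the Aubin--Lions--Simon lemma yields relative compactness of $\{v^{(m)}\}$ in $C([0,T];\dot{H}^{\sigma})$, and in particular in $L_p([0,T];H^{\sigma})$ for every $1<p<\infty$ (recall that $H^{\sigma}$ and $\dot{H}^{\sigma}$ coincide on the zero-mean functions under consideration). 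Combining this with the Banach--Alaoglu theorem applied to the $L_\infty([0,T];\dot{H}^{s_0})$ bound, I would pass to a single subsequence, still labelled $(m)$, that converges strongly in $L_p([0,T];H^{\sigma})$ and $\ast$-weakly in $L_\infty([0,T];\dot{H}^{s_0})$ to a limit $v^{\infty}$. The bound \eqref{vinftynorm} then follows from the weak-$\ast$ lower semicontinuity of the norm.

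Next I would show that $v^{\infty}$ solves \eqref{v} in the sense of Definition~\ref{D:defmain}, i.e.\ that it satisfies \eqref{vint} for every fixed $k\in\mathbb{Z}_0$. The linear terms are immediate from the convergence; the only real issue is the quadratic term, and this is the step I expect to be the main obstacle. For a fixed $k$ and $m>|k|$ the outer projection acts as the identity on the $k$-th mode, while the inner truncation satisfies $\Pi_m v^{(m)}\to v^{\infty}$ strongly in $L_p([0,T];H^{\sigma})$ because the frozen tail $(I-\Pi_m)v^{(m)}=(I-\Pi_m)v^0$ tends to $0$ in $\dot{H}^{s_0}$. To control the oscillatory quadratic sum I would pass back to the original variables via $v_k=e^{ik^3t}u_k$, which is an $\dot{H}^{\sigma}$-isometry at each fixed $t$ (so the strong convergence carries over to $u^{(m)}$): using the identity \eqref{k1k2} the phases collapse, so that $\sum_{k_1+k_2=k}e^{i3kk_1k_2t}v_{k_1}v_{k_2}=e^{ik^3t}\sum_{k_1+k_2=k}u_{k_1}u_{k_2}$ is simply $e^{ik^3t}$ times the $k$-th Fourier coefficient of $u^2$. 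Since strong convergence in $L_2([0,T];H^{\sigma})\subset L_2([0,T];L_2)$ gives $(u^{(m)})^2\to(u^{\infty})^2$ in $L_1([0,T];L_1)$ (write the difference as $(u^{(m)}-u^{\infty})(u^{(m)}+u^{\infty})$ and use Cauchy--Schwarz in space and time), each fixed Fourier coefficient of the nonlinearity converges in $L_1([0,T])$, which is enough to pass to the limit inside the time integral in \eqref{vint}.

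Finally, for the energy identity \eqref{Energy-infinity} I would use that $\sigma>0$ implies $H^{\sigma}\hookrightarrow L_2$, so the strong $L_p([0,T];H^{\sigma})$ convergence gives $v^{(m)}\to v^{\infty}$ in $L_p([0,T];L_2)$; along a further subsequence $\|v^{(m)}(t)\|_{L_2}\to\|v^{\infty}(t)\|_{L_2}$ for a.e.\ $t$, and since $\|v^{(m)}(t)\|_{L_2}=\|v^0\|_{L_2}$ for all $t$ by Proposition~\ref{L:L2m}, the identity $\|v^{\infty}(t)\|_{\dot{H}^0}=\|v^0\|_{\dot{H}^0}$ holds a.e.\ on $[0,T]$. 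Together with the membership $v^{\infty}\in L_\infty([0,T];\dot{H}^{s_0})$ and the norm bound established above, this completes the verification that $v^{\infty}$ is a solution in the sense of Definition~\ref{D:defmain}.
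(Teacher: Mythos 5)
Your proposal is correct, and its overall skeleton (uniform bounds from Theorem~\ref{T:mainc} and Proposition~\ref{L:ests1m}, Aubin-type compactness, weak-$*$ limit with lower semicontinuity for~(\ref{vinftynorm}), and a.e.\ strong $L_2$ convergence for the energy identity~(\ref{Energy-infinity})) is the same as the paper's. Where you genuinely depart from the paper is the limit passage in the nonlinear term. The paper stays in the $v$-variables: it writes the truncated equation through the operator $B_1$ as in~(\ref{B1m}), and then uses the algebraic decomposition~(\ref{passlim}), estimating the error terms with the bilinear bound of Lemma~\ref{L:B1} in $\dot{H}^{-\theta}$, $\theta>3/2$, together with the frozen-tail observation $\Pi_{-m}v^{(m)}(t)=\Pi_{-m}v^0$ (which you also use). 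You instead undo the oscillatory substitution~(\ref{utov}): by the identity~(\ref{k1k2}) the phases collapse, so the truncated nonlinearity is $\frac{ik}{2}e^{ik^3t}$ times the $k$-th Fourier coefficient of $(\Pi_m u^{(m)})^2$, and then plain Cauchy--Schwarz gives $L_1([0,T];L_1)$ convergence of the squares, hence uniform-in-$t$ convergence of the time integrals. This is more elementary --- it avoids any multilinear estimate in the limit passage --- but it exploits the exact product structure of the KdV nonlinearity in the original variables; the paper's $B_1$-based argument is the one that transfers verbatim to the later limit passages for the transformed equations~(\ref{defofsol}), (\ref{fin3}) and~(\ref{fin4int}) (Corollary~\ref{C:one-two}, Theorems~\ref{T:globs>0} and~\ref{T:globs=0}), where the phases no longer collapse because of the resonance decomposition. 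One small point you should make explicit: your argument yields~(\ref{vint}) only for a.e.\ $t$ (the left-hand side converges along a further subsequence for a.e.\ $t$); as in Remark~\ref{R:regularity}, since the right-hand side of~(\ref{vint}) is absolutely continuous in $t$, the continuous representative of $v^\infty_k$ satisfies the equation for every $t$, which is what Definition~\ref{D:defmain} requires.
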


\begin{proof}
We first observe that based on Proposition~\ref{L:L2m} the solution
$v^{(m)}(t)$ of~(\ref{vm0}) exists globally in time. Next we set
$\theta=-S$, where $S>3/2$ is as in Proposition~\ref{L:ests1m}. It
follows from (\ref{ms}) and (\ref{dtv}) that $v^{(m)}$ is bounded in
$L_\infty([0,T];H^{s_0})\subset L_p([0,T];H^{s_0})$; while the time
derivative $\partial_tv^{(m)}$ is bounded in
$L_\infty([0,T];H^\theta)\subset L_p([0,T];H^\theta)$ uniformly with
respect to $m$. Since the imbedding $\dot{H}^{\sigma}\subset
\dot{H}^{\theta }$ is compact, the existence of a subsequence that
converges strongly in $L_p([0,T];H^{\sigma})$
 and $*$-weakly in
$L_{\infty }([0,T];\dot{H}^{s_0})$ to a function $v^{\infty }$
follows from the classical compactness theorem (Aubin compactness
theorem). Furthermore, $v^{\infty}$ is bounded and continuous with
values in $\dot{H}^\theta$ and is bounded and weakly continuous with
values in $\dot{H}^{\gamma}$ for any $\gamma$, $\theta\le\gamma\le
s_0$
(see, for instance, \cite{CF88}, \cite{TNS}).

Since $v^{(m)}$ converges to $v^{\infty}$ strongly in
$L_p([0,T],\dot{H}^\sigma)$ with $\sigma>0$, it follows that there
is a subsequence, also denoted by $v^{(m)}(t)$, which converges to
$v^{\infty}(t)$ strongly in $\dot{H}^\sigma$ and hence strongly in
$\dot{H}^0$ for almost every $t\in[0,T]$. However,
$\|v^{(m)}(t)\|_{\dot{H}^0}=\|v(0)\|_{\dot{H}^0}$ for every $t$.
Therefore $\|v^{\infty}(t)\|_{\dot{H}^0}=\|v(0)\|_{\dot{H}^0}$ for
almost every $t\in [0,T]$, which proves~(\ref{Energy-infinity}).

Every $v_{k}^{(m)}(t)$ is a solution of~(\ref{vm0}),
which we write, using~(\ref{B1}), as follows
\begin{equation}\label{B1m}
v^{(m)}_k(t)-v_k(0)=
\int_0^t
\Pi_mB_1(\Pi_mv^{(m)}(\tau),\Pi_mv^{(m)}(\tau))_kd\tau.
\end{equation}
Therefore, using the symmetry of $B_1$ and setting
$\Pi_{-m}=I-\Pi_m$ we obtain
\begin{equation}\label{passlim}
\aligned
v^{(m)}_k(t)-v^\infty_k(t)+v^\infty_k(t)-v_k(0)=\\
\int_0^t
\Pi_mB_1\bigl(\Pi_m(v^{(m)}(\tau)-v^\infty(\tau)),
\Pi_m(v^{(m)}(\tau)+v^\infty(\tau))\bigr)_kd\tau-\\
\int_0^t
\Pi_mB_1\bigl(\Pi_{-m}v^\infty(\tau),
\Pi_mv^{\infty}(\tau)+v^\infty(\tau)\bigr)_kd\tau-\\
\int_0^t \Pi_{-m}B_1\bigl(v^\infty(\tau),
v^\infty(\tau)\bigr)_kd\tau+\\
\int_0^t
B_1\bigl(v^\infty(\tau),
v^\infty(\tau)\bigr)_kd\tau.
\endaligned
\end{equation}
By  Lemma~\ref{L:B1} and the readily established
convergence of $v^{(m)}\to v^\infty$ in
$L_p([0,T];\dot{H}^\sigma)$ and the fact that
$v^\infty\in L_\infty([0,T];\dot{H}^{s_0})$ we
observe that the first three terms on the
right-hand side converge to zero. For example, in
view of Lemma~\ref{L:B1} the first term on the
right-hand side is less in absolute value than
$$
\aligned
|k|^{-\theta}
\int_0^t
\|B_1\bigl(\Pi_m(v^{(m)}(\tau)-v^\infty(\tau)),
\Pi_m(v^{(m)}(\tau)+v^\infty(\tau))\bigr)\|_{\dot{H}^\theta}
d\tau\le\\
c_1(\theta)|k|^{-\theta}\int_0^t
\|v^{(m)}(\tau)-v^\infty(\tau)\|_{\dot{H}^0}
\|v^{(m)}(\tau)+v^\infty(\tau)\|_{\dot{H}^0}d\tau\le\\
2c_1(\theta)|k|^{-\theta}\|v(0)\|_{\dot{H}^0}
\|v^{(m)}-v^\infty\|_{L_1([0,T],\dot{H}^0)}
\to 0\ \text{ as }\ m\to\infty.
\endaligned
$$
The second term is treated similarly. By the
definition of $\Pi_{-m}$  the third term is
identically zero for $m\ge|k|$. Since
$v_{k}^{(m)}(t) \to
v_{k}^{\infty }(t)$, as $m\to\infty$,
  for each $k\in \mathbb{Z}_{0}$ and almost every $t\in[0,T]$,
 passing to the limit in (\ref{passlim})
  we obtain the equality
\begin{equation}\label{B1infty}
v^{\infty}_k(t)-v_k(0)=
\int_0^t
B_1(v^{\infty}(\tau),v^{\infty}(\tau))_kd\tau,
\end{equation}
which holds for almost every $t$.
As in Remark~\ref{R:regularity} we see that
(\ref{B1infty}) holds for every $t$ and $v^{\infty}$
is a solution of~$(\ref{vint})$ in the sense
of Definition~\ref{D:defmain}.
\end{proof}
\begin{corollary}\label{C:one-two}
Let $s_0>0$, $v(0)\in\dot{H}^{s_0}$. The solution $v^{\infty}\in
L_\infty([0,T];\dot{H}^{s_0})$ constructed in Theorem~$\ref{T:gal}$
also satisfies the integrated first form of the
KdV~$(\ref{defofsol})$ and the integrated second  form of the KdV
equation~$(\ref{fin3})$.
\end{corollary}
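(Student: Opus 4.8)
The plan is to reuse, essentially verbatim, the compactness and limiting argument already carried out in the proof of Theorem~\ref{T:gal}, but now applied to the integrated versions of the truncated first and second forms~(\ref{onem}) and~(\ref{fin2m}). Since the Galerkin system~(\ref{vm0}) is a finite system of ordinary differential equations, the two differentiations by parts that led to~(\ref{onem}) and~(\ref{fin2m}) involve only finite sums and are therefore completely rigorous for $v^{(m)}$; integrating them in time gives, for every $k\in\mathbb{Z}_0$ and every $t\in[0,T]$,
\begin{equation*}
v^{(m)}_k(t)-\frac16 B_{2,m}(v^{(m)},v^{(m)})_k(t)=
v^0_k-\frac16 B_{2,m}(v^0,v^0)_k+\frac{i}{6}\int_0^t R_{3,m}((v^{(m)})^3)_k\,d\tau,
\end{equation*}
together with the analogous identity obtained from~(\ref{fin2m}) involving $B_{3,m}$, $A_{\mathrm{res},m}$ and $B_{4,m}$. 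The corollary will follow once I let $m\to\infty$ in these two identities, coefficientwise in $k$.

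First I would record the convergences available from Theorem~\ref{T:gal}: along the chosen subsequence $v^{(m)}_k(t)\to v^\infty_k(t)$ for each $k$ and a.e.\ $t$, and $v^{(m)}\to v^\infty$ strongly in $L_p([0,T];\dot{H}^\sigma)$ with $0<\sigma<s_0$, hence (since $\dot{H}^\sigma\hookrightarrow\dot{H}^0$) strongly in $L_p([0,T];\dot{H}^0)$ and, after a further subsequence, in $\dot{H}^0$ for a.e.\ $t$. Moreover the high modes are frozen, $(I-\Pi_m)v^{(m)}(t)=(I-\Pi_m)v^0$, so $(I-\Pi_m)v^{(m)}(t)\to0$ in $\dot{H}^{s_0}\subset\dot{H}^0$ and consequently $\Pi_mv^{(m)}(t)\to v^\infty(t)$ in $\dot{H}^0$ for a.e.\ $t$. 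This last fact is exactly what removes the inner truncations sitting inside $B_{2,m},B_{3,m},R_{3,m},A_{\mathrm{res},m},B_{4,m}$ in the limit, while by~(\ref{B2m}), (\ref{R3m}) and~(\ref{trm}) the outer projection $\Pi_m$ is simply the identity on the $k$-th component as soon as $m\ge|k|$.

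The next step is to pass to the limit in each term for a fixed $k$. For the boundary and initial-data terms this is immediate from the per-coefficient continuity of the multilinear operators: by Lemmas~\ref{L:B2}, \ref{L:B3}, \ref{L:minR3} and~\ref{L:B4} each of $B_2,B_3,R_3,B_4$ maps $(\dot{H}^0)^j$ boundedly into some Sobolev space, so via $\ell_2\hookrightarrow\ell_\infty$ the $k$-th coefficient obeys a bound $|(\,\cdot\,)_k|\le C_k\|\,\cdot\,\|_{\dot{H}^0}^{\,j}$ (uniform in $t$, since $|e^{i\Phi t}|=1$), and, being multilinear, is continuous under $\dot{H}^0$-convergence of its arguments by telescoping the difference into terms each linear in $\Pi_mv^{(m)}-v^\infty$. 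Hence $B_{2,m}(v^{(m)},v^{(m)})_k(t)\to B_2(v^\infty,v^\infty)_k(t)$, and likewise for the $B_{3,m}$ boundary term and for the initial-data terms (where $\Pi_mv^0\to v^0$ in $\dot{H}^0$). For the time-integral terms I would invoke dominated convergence: by energy conservation (Proposition~\ref{L:L2m}) $\|v^{(m)}(\tau)\|_{\dot{H}^0}=\|v^0\|_{\dot{H}^0}$, so the integrands $R_{3,m}((v^{(m)})^3)_k$, $A_{\mathrm{res},m}(v^{(m)})_k$ and $B_{4,m}((v^{(m)})^4)_k$ are bounded by $C_k\|v^0\|_{\dot{H}^0}^{\,j}$, a constant in $\tau$ hence integrable on $[0,T]$, while they converge pointwise a.e.\ in $\tau$ by the argument just described (for $A_{\mathrm{res},m}$ one also uses the explicit form~(\ref{Aresm}) together with $\|v^\infty(\tau)\|_{\dot{H}^0}=\|v^0\|_{\dot{H}^0}$). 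Passing to the limit therefore yields~(\ref{defofsol}) and~(\ref{fin3}) for a.e.\ $t$, and then for every $t$, since by Remark~\ref{R:regularity} $v^\infty_k$ and hence both sides are continuous in $t$.

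The main obstacle I expect is precisely the bookkeeping of the two families of truncations $\Pi_m$ that appear, through~(\ref{B2m}), (\ref{R3m}) and~(\ref{trm}), both outside and inside the multilinear forms: one must verify that the single-coefficient maps $\varphi\mapsto R_3(\varphi,\varphi,\varphi)_k$, and their analogues, really are continuous for the $\dot{H}^0$ topology and that the convergence $\Pi_mv^{(m)}\to v^\infty$ in $\dot{H}^0$ suffices, despite $R_3$ being $\dot{H}^s\to\dot{H}^s$ bounded only for $s>1/2$. The resolution is that I never need $\dot{H}^0\to\dot{H}^0$ boundedness: Lemma~\ref{L:minR3} with $\beta=0$ gives $R_3:(\dot{H}^0)^3\to\dot{H}^{-\theta}$ for $\theta>1/2$, which already controls a single Fourier coefficient. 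The second, more quantitative, point is securing a uniform-in-$m$ integrable dominating function for the integral terms, and this is exactly what the $L_2$-norm conservation of the truncated system (Proposition~\ref{L:L2m}) supplies. Finally, once~(\ref{defofsol}) and~(\ref{fin3}) are established in this way, they are consistent with the functional identities already recorded in Remarks~\ref{R:one-oper} and~\ref{R:two-oper}.
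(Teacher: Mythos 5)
Your proposal is correct and is essentially the paper's own argument: the paper likewise observes that the Galerkin solutions $v^{(m)}$ rigorously satisfy the truncated integrated forms of (\ref{defofsol}) and (\ref{fin3}) (these are finite systems of ODEs), and then passes to the limit along the subsequence of Theorem~\ref{T:gal}, invoking the boundedness and continuity properties of $B_2$, $B_3$, $R_3$, $A_{\mathrm{res}}$ and $B_4$ established in the Appendix. The only difference is cosmetic: you justify the limit of the time-integral terms by coefficientwise dominated convergence using the uniform energy bound of Proposition~\ref{L:L2m}, whereas the paper reuses the $\Pi_m$/$\Pi_{-m}$ splitting and integrated estimates of (\ref{passlim}); both implementations rest on exactly the same convergences and lemmas.
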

\begin{proof}
Applying the formal steps introduced in section~\ref{S:Trans} to the
solutions $v^{(m)}(t)$ of equation~(\ref{vm0}) one can rigorously
show that $v^{(m)}$ is the solution of the corresponding Galerkin
versions of (\ref{defofsol}) and  (\ref{fin3}). Therefore we can
pass to the limit along a subsequence, as $m\to\infty$, in the same
way as we did in~(\ref{passlim}). We also observe that by the
continuity properties of the operators $B_2$ and $R_3$ in
(\ref{defofsol}) and $B_2$, $B_3$, $B_4$, and $A_{\mathrm{res}}$ in
(\ref{fin3}), respectively, all the terms in (\ref{defofsol}) are
bounded from $\dot{H}^s$ to $\dot{H}^s$, for $s>1/2$, while  the
terms in (\ref{fin3}) are bounded from $\dot{H}^s$ to $\dot{H}^s$
for $s\ge0$.
\end{proof}

\setcounter{equation}{0}
\section{ Global well-posedness and Lipschitz
continuity
(regular case, $s>1/2$)\label{S:Uniq1/2}}

Here we will consider regular initial data in $\dot{H}^{s}$, for
$s>1/2$, and will work with  equation~(\ref{defofsol}). Less regular
data will be considered in section~\ref{Sec:nregin}.

By Corollary~\ref{C:one-two} there exists a solution
of~(\ref{defofsol}).
For the proof of the  uniqueness
we need the following auxiliary result.
We introduce the projection $\Pi _{0}$ on  the
zero Fourier mode,
$\Pi _{0}( \{ g_{k},k\in \mathbb{Z}\})=g_{0}$.

\begin{lemma}
\label{L:linB2} Let $\varphi \in \dot{H}^{s-1}$, $s>1/2$. Then the
linear operator $L_{\varphi }:\dot{H}^{\theta }\rightarrow
\dot{H}^{\theta}$, for $\theta >-1/2$,
\begin{equation}
L_{\varphi }v=v-B_{2}(\varphi ,v)
,\qquad
(L_{\varphi }v)_{k}=v_{k}-B_{2}(\varphi ,v)_{k}
\quad k\in \mathbb{Z}%
_{0},  \label{B2lin}
\end{equation}%
where $B_{2}(u,v)_{k}=B_{2}(u,v,t)_{k}$  is defined
in~$(\ref{B2})$, has range $\dot{H}^{\theta }$
for every fixed $t$. Moreover, for every
$f\in \dot{H}^{\theta}$ the equation
\begin{equation*}
L_{\varphi }v=f
\end{equation*}
has a unique solution
$v=L_{\varphi }^{-1}f\in \dot{H}^{\theta}$,
and the following estimate holds
\begin{equation}
\|v\|_{\dot{H}^\theta}\leq\|L_{\varphi
}^{-1}\|_{\mathcal{L}(\dot{H}^\theta)}\|f\|_{\dot{H}^\theta}.
\label{B2linest}
\end{equation}

Furthermore, for every $\varphi \in \dot{H}^{-1}\cap\dot{H}^{s-2}$,
with $s> 1/2$, the inverse operator $L_{\varphi }^{-1}$ can be
extended as a bounded linear map from $\dot{H}^{\theta }$ into
itself, for $ \theta \in [-1,s]$, with norm satisfying the estimate
\begin{equation}
\Vert L_{\varphi }^{-1}\Vert _{\mathcal{L}(\dot{H}^\theta)}\leq
F(\Vert \varphi \Vert _{\dot{H}^{-1}},
 \Vert \varphi \Vert _{\dot{H}^{s-2}}),  \label{B2estofnorm}
\end{equation}
where $F$ is a monotonic increasing function in each argument,
which is independent of $t$.
\end{lemma}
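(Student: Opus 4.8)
The plan is to treat the two assertions separately: (I) the qualitative statement that $L_{\varphi}=I-B_2(\varphi,\cdot)$ is a bijection of $\dot H^{\theta}$ with bounded inverse for $\theta>-1/2$ and $\varphi\in\dot H^{s-1}$, which I would obtain from Fredholm theory; and (II) the quantitative bound (\ref{B2estofnorm}) together with the extension of $L_{\varphi}^{-1}$ to the wider range $\theta\in[-1,s]$ and the weaker class $\varphi\in\dot H^{-1}\cap\dot H^{s-2}$, which I would obtain by reducing the equation $L_{\varphi}v=f$ to a first-order linear ODE in physical space. In (I) the estimate (\ref{B2linest}) is nothing but the definition of $\|L_{\varphi}^{-1}\|_{\mathcal L(\dot H^{\theta})}$, so the only content there is bijectivity.

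For (I), I would first check that $K_{\varphi}:=B_2(\varphi,\cdot)$ is \emph{compact} on $\dot H^{\theta}$. The $v$-slot of $B_2$ carries the factor $1/k_2$, so a small refinement of the bilinear estimate of Lemma~\ref{L:B2} shows that, for $\varphi\in\dot H^{s-1}$ with $s>1/2$, the map $K_{\varphi}$ sends $\dot H^{\theta}$ boundedly into $\dot H^{\theta'}$ for some $\theta'>\theta$; compactness then follows from the compact Sobolev embedding on $\mathbb S^1$. Hence $L_{\varphi}=I-K_{\varphi}$ is Fredholm of index zero, and bijectivity is equivalent to injectivity.

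The crux is injectivity, and here I would exploit the algebraic structure of the phase. Using (\ref{k1k2}) to write $e^{i3kk_1k_2t}=e^{ik^{3}t}e^{-ik_1^{3}t}e^{-ik_2^{3}t}$ and setting $a_k=e^{-ik^{3}t}\varphi_k/k$, $b_k=e^{-ik^{3}t}v_k/k$, one obtains $B_2(\varphi,v)_k=e^{ik^{3}t}\widehat{(PQ)}_k$, where $P,Q$ are the periodic functions with Fourier coefficients $\{a_k\}$, $\{b_k\}$. Since $\varphi\in\dot H^{s-1}$ and $v\in\dot H^{\theta}$ with $\theta>-1/2$, both $P\in\dot H^{s}$ and $Q\in\dot H^{\theta+1}$ are continuous, and the reality of $u$ forces $P=ip$ with $p$ real and of zero mean. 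A direct computation then turns the homogeneous equation $v=B_2(\varphi,v)$ into $Q'+pQ=-c_0$, where the constant $c_0$ appears only because the resulting identity for $\widehat{(PQ)}_k$ is imposed solely on the modes $k\neq0$. Introducing the strictly positive periodic integrating factor $\mu=\exp(\int_0^{x}p)$, the general solution is $Q=\mu^{-1}(A-c_0\int_0^{x}\mu)$; periodicity forces $c_0=0$, and then the zero-mean constraint $\widehat Q(0)=0$ forces $A=0$, so $Q\equiv0$ and $v\equiv0$. This gives injectivity, hence (I).

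For (II) I would run the same reduction on $L_{\varphi}v=f$, which becomes $Q'+pQ=G'-c_0$ with $G$ the function with coefficients $e^{-ik^{3}t}f_k/k$, and read off $v=L_{\varphi}^{-1}f$ from the integrating-factor solution. Two features make the weak norms appear: $t$ enters only through unimodular phases, so no Sobolev norm depends on $t$; and $\|p\|_{L_2}=\|\varphi\|_{\dot H^{-1}}$, $\|p\|_{\dot H^{s-1}}=\|\varphi\|_{\dot H^{s-2}}$, while (using the Poincar\'e inequality for the mean-zero primitive $\int_0^{x}p$ and the fact that $H^{s}$ is an algebra for $s>1/2$) the multipliers $\mu^{\pm1}$ are controlled in $\dot H^{s}$ by a monotone function of exactly these two quantities. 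At the bottom endpoint $\theta=-1$ the bounds $\|\mu^{\pm1}\|_{L_\infty}\le\exp(C\|\varphi\|_{\dot H^{-1}})$ already yield the $L_2$ estimate, and one then propagates regularity through the relation $Q'=G'-c_0-pQ$. I expect the main obstacle to be precisely this propagation across the \emph{whole} range $\theta\in[-1,s]$: the product $pQ$ with $p\in\dot H^{s-1}$ (where $s-1$ may lie below $1/2$) is delicate in the upper part of the range, so closing the estimate up to $\theta=s$ with a constant depending only on $\|\varphi\|_{\dot H^{-1}}$ and $\|\varphi\|_{\dot H^{s-2}}$ (and \emph{not} on the stronger norm $\|\varphi\|_{\dot H^{s-1}}$) will require careful product and commutator estimates adapted to the integrating factor, after which the extension to $\theta\in[-1,-1/2]$ and to $\varphi\in\dot H^{-1}\cap\dot H^{s-2}$ follows by density.
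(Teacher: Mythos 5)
Your overall route coincides with the paper's: part (I) is proved there exactly as you propose --- compactness of $B_{2}(\varphi,\cdot)$ via Lemma~\ref{L:B2} plus the Fredholm alternative, with injectivity obtained by undoing the phases through the identity (\ref{k1k2}) and reducing the homogeneous equation to the periodic ODE $w'+\xi w=\Pi_{0}(\xi w)$, whose periodicity and zero-mean constraints force the trivial solution. Your explicit use of reality to obtain a real, strictly positive integrating factor is in fact slightly more careful than the paper's wording, which tacitly uses positivity of $e^{\Xi}$ at the step ``only the first term is periodic.'' Part (I) of your proposal is complete.

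The gap is in part (II), and it sits exactly where you flagged it. Your plan --- get the $\theta=-1$ bound from $\|\mu^{\pm1}\|_{L_{\infty}}$ and then bootstrap through $Q'=G'-c_{0}-pQ$ --- cannot be run as stated: for $1/2<s<1$ the coefficient $p$ is controlled only in $\dot H^{s-1}$ with $s-1<0$ (recall you may use only $\|p\|_{\dot H^{0}}=\|\varphi\|_{\dot H^{-1}}$ and $\|p\|_{\dot H^{s-1}}=\|\varphi\|_{\dot H^{s-2}}$), so the product $pQ$ with $Q$ merely in $L_{2}$ is not controlled by any admissible bilinear estimate, and no commutator structure is identified that would repair this. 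The paper needs neither a bootstrap nor commutator estimates; the missing idea is a single integration by parts inside the variation-of-parameters integral, namely (\ref{gbp}): $\int_{0}^{x}g(r)e^{\Xi(r)}dr=G(x)e^{\Xi(x)}-\int_{0}^{x}G(r)\xi(r)e^{\Xi(r)}dr$ (equivalently, $p\mu=\mu'$, so the rough factor is moved off the datum and onto the integrating factor). This yields the representation (\ref{w1}), $w=G+\bigl(C-\int_{0}^{x}G\xi e^{\Xi}dr+\tilde c\int_{0}^{x}e^{\Xi}dr\bigr)e^{-\Xi}$, in which the datum enters only through $G$ (one derivative smoother than $g$) and through bracket terms bounded pointwise by Cauchy--Schwarz using precisely $\|G\|_{\dot H^{0}}\le\|g\|_{\dot H^{-1}}$ and $\|\xi\|_{\dot H^{0}}=\|\varphi\|_{\dot H^{-1}}$, while $e^{\pm\Xi}$ is controlled in $H^{s}$ through the algebra property (\ref{product}) by $\|\varphi\|_{\dot H^{s-2}}$. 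The estimate (\ref{w4}), and hence (\ref{B2estofnorm}) uniformly in $t$, then follows in one stroke for the whole range of $\theta$. So you have the correct reduction and the correct list of controlled quantities, but the step you defer to ``careful product and commutator estimates'' is the actual content of the paper's proof, and it is resolved by this one identity rather than by further harmonic analysis.
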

\begin{remark}
\label{R:s-and-theta}
{\rm
Note that if $s\le1$, then clearly
$F(\Vert \varphi \Vert _{\dot{H}^{-1}},
 \Vert \varphi \Vert _{\dot{H}^{s-2}})\le
 F(\Vert \varphi \Vert _{\dot{H}^{-1}},
 \Vert \varphi \Vert _{\dot{H}^{-1}})$.
}
\end{remark}

\begin{proof}
We first show that the homogeneous equation
\begin{equation}
L_{\varphi }v=v-B_{2}(\varphi ,v)
=0,\quad v\in \dot{H}^{\theta }\ \
\text{for}\ \
\theta >-1/2
\label{bhom}
\end{equation}
has only the trivial solution $v=0$. Note that according to
Lemma~\ref{L:B2} the  bilinear operator $B_{2}(\varphi ,v)$ is
bounded from $\dot{H}^{\theta}\times \dot{H}^{\theta }$ into
$\dot{H}^{\theta +1}$, for $\theta >-1/2$. Therefore, if $v\in
\dot{H}^{\theta }$ is a solution of~(\ref{bhom}), then $v\in
\dot{H}^{\, \min(\theta,s-1) +1}$. Hence, it is sufficient to
consider solutions $v$  in $\dot{H}^{\sigma}$, for $\sigma>1/2$.

In terms of the Fourier coefficients~(\ref{bhom}) reads
\begin{equation*}
v_{k}-\sum_{k_{1}+k_{2}=k}\frac{e^{i3kk_{1}k_{2}t}\varphi _{k_{1}}v_{k_{2}}}{%
k_{1}k_{2}}=0,\quad k\in \mathbb{Z}_{0}.
\end{equation*}
Setting $u_{k}=e^{-ik^{3}t}v_{k}$ and $\psi _{k}=e^{-ik^{3}t}\varphi _{k}$
and taking into account~(\ref{k1k2}) we obtain
\begin{equation}
u_{k}-\sum_{k_{1}+k_{2}=k}
\frac{\psi _{k_{1}}u_{k_{2}}}{k_{1}k_{2}}=0
\label{omu}
\end{equation}
with $\|u\| _{\dot{H}^{s}}=\| v\| _{\dot{H}^{s}}$.
We now define $w(x)$ and $\xi(x)$ by  setting
\begin{equation*}
w_{k}=\frac{{u_{k}}}{{ik}},
\quad
\xi _{k}=\frac{\psi _{k}}{ik},\
k\in\mathbb{Z}_0,
\quad
w_0=\xi _{0}=0.
\end{equation*}
By definition, both $w$ and $\xi$ have
 mean value zero:
$\int_{0}^{2\pi }w(x)dx=\int_{0}^{2\pi }\xi(x)dx=0$ and,
in addition, $w'(x)=u(x)$, $\xi'(x)=\psi(x)$
so that
 $\|u\| _{\dot{H}^{s}}=\| w\| _{\dot{H}^{s+1}}$,
  $\|\psi\| _{\dot{H}^{s}}=\|\xi\| _{\dot{H}^{s+1}}$.
Multiplying~(\ref{omu}) by $e^{ikx}$ and summing with respect to
\textit{all} $k\in \mathbb{Z}$ we see that
\begin{equation*}
w(x)=\sum_{k\in \mathbb{Z}_{0}}\frac{u_{k}}{ik}\,e^{ikx},
\qquad w^{\prime}(x)=u(x),
\end{equation*}
 satisfies the
following boundary value problem:
\begin{equation}
\aligned w^{\prime }(x)+\xi (x)w(x)=\frac{1}{2\pi }\int_{0}^{2\pi }\xi
(x)w(x)dx,  \label{bvp} \\
w\text{ is periodic, with period } 2\pi, \quad \text{and} \quad
\int_{0}^{2\pi }w(x)dx=0,\endaligned
\end{equation}%
in the classical sense since it is given that $\xi\in H^{s+1}\subset
C^{1}$, for $s>1/2$, and we are looking for a solution $w\in
H^{\theta+1}\subset C^{1}$, for $\theta>1/2$. Note that the
right-hand side of (\ref{bvp}) is the projection $\Pi _{0}(\xi w)$.
Then
\begin{equation*}
w^{\prime }(x)+\xi (x)w(x)=\tilde{c},
\end{equation*}
where
\begin{equation}
\tilde{c}=
\Pi _{0}(\xi w)=
\frac{1}{2\pi }\int_{0}^{2\pi }\xi (x)w(x)dx  \label{tildec}
\end{equation}
and we can treat this equation as a first-order linear ordinary differential
equation with general solution
\begin{equation*}
w(x)=Ce^{-\int_{0}^{x}\xi (\sigma )d\sigma }+\tilde{c}\int_{0}^{x}e^{%
\int_{x}^{r}\xi (\sigma )d\sigma }dr,
\quad C\in\mathbb{R}.
\end{equation*}
Since only the first term on the right-hand side is periodic, it
follows that $w(x)$ is periodic if and only if $\tilde{c}=0$, and
then in this case $w(x)$ has mean value zero if and only if $C=0$.
Hence, $w=0$ and, therefore, $u=0$ and $v=0$.

Now we prove that the range $L_{\varphi }\dot{H}^{\theta }$ of
$L_{\varphi }$  coincides with $\dot{H}^{\theta }$, and that
$L_{\varphi }$ has  a bounded inverse. For $s> \theta$ this follows
from the Fredholm theory. Observe that the embedding
$\dot{H}^{\sigma }\to\dot{H}^{\sigma-\varepsilon }$ is compact if
$\varepsilon >0$. By Lemma~\ref{L:B2}, $B_{2}(\varphi ,v)$ is
bounded from $\dot{H}^{s-1 }\times \dot{H}^{\theta }$ into
$\dot{H}^{\theta+1}$ for $\theta\le s-1$, while for $\theta> s-1$ it
is bounded from $\dot{H}^{s-1 }\times \dot{H}^{\theta }$ into
$\dot{H}^{s}$. Hence $B_2(\varphi,\cdot)$ is compact from
$\dot{H}^{\theta }$ into itself and $L_{\varphi }=I-B_{2}(\varphi
,\cdot )$ is a Fredholm operator in $\dot{H}^{\theta }$, the index
(dimension of nullspace minus codimension of range) of this operator
is zero. Therefore, the range $L_{\varphi }\dot{H}^{\theta }$ is a
closed subspace in $\dot{H}^{\theta }$ with finite codimension, and
the codimension equals dimension of the nullspace of $L_{\varphi }$.
We already have proven that the nullspace is trivial, therefore the
codimension is zero and $L_{\varphi }\dot{H}^{\theta
}=\dot{H}^{\theta }$.
 Hence, the inverse operator $L_{\varphi }^{-1}$ exists and
is bounded.

Note that $L_{\varphi }$ depends on $t$ as well as on $\varphi $ and
we have boundedness of $L_{\varphi }^{-1}$ for every given $t$ and
$\varphi $. Next we prove a uniform estimate for $L_{\varphi }^{-1}$
for $t\in [0, T]$ and $\varphi$ satisfying $\|\varphi
\|_{\dot{H}^{s-1}}\leq C$.
 To estimate the norm of $L_{\varphi }^{-1}$ is sufficient to
consider it on a dense set $\dot{H}^{s}$, $s>1/2$, of smooth
enough functions~$v$.  For such functions we write inverse
operator $L_{\varphi }^{-1}$ explicitly
and obtain the explicit norm estimate~(\ref{B2estofnorm}).
To prove~(\ref{B2linest}) and (\ref{B2estofnorm}) we
consider for $\varphi\in \dot{H}^{s-1}$ and
$f\in \dot{H}^{s}$ the nonhomogeneous equation
\begin{equation*}
L_{\varphi }v=v-B_{2}(\varphi ,v)=f.
\end{equation*}
For $k\in\mathbb{Z}_0$ we set $u_{k}=e^{-ik^{3}t}v_{k}$,
$\psi _{k}=e^{-ik^{3}t}\varphi _{k}$,
$g_{k}=e^{-ik^{3}t}f_{k}$ and write the equation for~$w$:
\begin{equation*}
w^{\prime }+\xi w=g+\tilde{c},
\end{equation*}%
where, as before, $w^{\prime }=u$, $\xi ^{\prime }=\psi $ and
$w$, $\xi $, $g$ depend on $t$ as a parameter. Since $g$ has
mean value zero, the constant $\tilde{c}=\tilde{c}(w)$ as
before satisfies~(\ref{tildec}). The general solution is
\begin{equation}
w(x)=\left( \int_{0}^{x}(g(r)+\tilde{c})e^{\Xi(r) }dr+C\right)
e^{-\Xi(x)},
\quad
\Xi (x) =\int_{0}^{x}\xi (\sigma)d\sigma,
\label{w}
\end{equation}
where $C\in\mathbb{R}$ is a constant. Since $\xi $ is
periodic with mean value zero, $\Xi (x) $ is also periodic.
The condition on $w$ to be periodic gives
\begin{equation*}
\int_{0}^{2\pi }(g(r)+\tilde{c})e^{\Xi(r)}dr=0,
\end{equation*}
which uniquely defines $\tilde{c}$ in terms of $g$ and $\xi $:
\begin{equation}
\tilde{c}=-\frac{\int_{0}^{2\pi }g(r)e^{\Xi (r)}dr}
{\int_{0}^{2\pi }e^{\Xi (r) }dr}\,.
\label{cc0}
\end{equation}%
We observe that $w$ defined in~(\ref{w})
satisfies~(\ref{tildec}) (for any $C$). The condition
$\int_{0}^{2\pi }w(x)dx=0$ uniquely defines the constant $C$:
\begin{equation}
C=-\frac{\int_{0}^{2\pi }e^{-\Xi \left( x\right) }\int_{0}^{x}(g(r)+\tilde{c}%
)e^{\Xi \left( r\right) }drdx}{\int_{0}^{2\pi }e^{-\Xi \left( x\right) }dx}%
\,,  \label{cc}
\end{equation}%
and, hence, the solution $w(x)$ of~(\ref{w}) is uniquely defined. We
denote
\begin{equation*}
G(r)=\int_{0}^{r}g(r_{1})dr_{1},
\end{equation*}%
since $g$ is periodic with mean value zero, $G(r)$ is also periodic,
with $G(0)=G(2\pi)$. Integrating by parts we get
\begin{equation}
\int_{0}^{x}g(r)e^{\Xi (r) }dr=\int_{0}^{x}e^{\Xi (r)}dG(r)=
G(x)e^{\Xi(x)}-\int_{0}^{x}G(r)\xi (r)e^{\Xi(r)}dr
\label{gbp}
\end{equation}
and rewrite (\ref{w})  in the form
\begin{equation}
w(x)=G(x)+\left( C-\int_{0}^{x}G(r)\xi (r)e^{\Xi \left( r\right) }dr+\tilde{c%
}\int_{0}^{x}e^{\Xi \left( r\right) }dr\right) e^{-\Xi \left( x\right) }.
\label{w1}
\end{equation}%

Using (\ref{gbp}) and $G(2\pi)=0$
we write (\ref{cc0}) and (\ref{cc})
as follows:
\begin{gather}
\tilde{c}=\frac{\int_{0}^{2\pi }G(r)\xi (r)
e^{\Xi (r) }dr}{%
\int_{0}^{2\pi }e^{\Xi ( r) }dr},\notag\\
C=
\frac{\int_{0}^{2\pi }e^{-\Xi ( x) }
\int_{0}^{x}G(r)\xi
(r)e^{\Xi ( r) }drdx-\tilde{c}\int_{0}^{2\pi }
e^{-\Xi(x)}
\int_{0}^{x}e^{\Xi(r) }drdx-\int_0^{2\pi}G(x)dx}
{\int_{0}^{2\pi }e^{-\Xi
(x) }dx}\,.\notag
\end{gather}

According to the Sobolev embedding theorem, since $s>1/2$
\begin{equation*}
-c(s)\Vert \xi \Vert _{\dot{H}^{s-1}}\leq
\Xi ( r)
\leq c(s)\Vert \xi
\Vert _{\dot{H}^{s-1}},
\quad
\Vert \xi \Vert _{\dot{H}^{s-1}}=
\Vert \varphi
\Vert _{\dot{H}^{s-2}}.
\end{equation*}%
Therefore,
$e^{\vert \Xi( x) \vert }
\leq e^{c(s)\Vert \varphi \Vert
_{\dot{H}^{s-2}}}$, $e^{-\vert \Xi ( x)
\vert }\geq
e^{-c(s)\Vert \varphi \Vert _{\dot{H}^{s-2}}}$ and the
denominators in (\ref{cc0}) and (\ref{cc}) are
bounded away from zero and we obtain
by the Cauchy--Schwarz inequality
(recalling that the norms are normalized)
\begin{equation*}\label{tildecest}
\begin{aligned}
\left\vert \tilde{c}\right\vert \leq \frac{1}{2\pi }
e^{2c(s)\Vert \varphi \Vert_{\dot{H}^{s-2}}}
\int_{0}^{2\pi }\!G(r)\xi (r)dr\leq&\\
e^{2c(s)\Vert \varphi \Vert_{\dot{H}^{s-2}}}
\Vert G\Vert _{\dot{H}^{0}}\Vert \xi
\Vert _{\dot{H}^{0}}=&
e^{2c(s)\Vert \varphi \Vert _{\dot{H}^{s-2}}}\Vert g\Vert _{\dot{H}%
^{-1}}\Vert \varphi \Vert _{\dot{H}^{-1}},
\end{aligned}
\end{equation*}
and using this
we get the following estimate for $C$
\begin{gather}
|C|\leq
\frac{1}{2\pi }
e^{c(s)\Vert \varphi \Vert _{\dot{H}^{s-2}}}
\biggl( \int_{0}^{2\pi }e^{-\Xi \left( x\right)
}\int_{0}^{x}\left\vert G(r) \xi (r)
\right\vert e^{\Xi
\left( r\right) }drdx+\notag\\
\left\vert \tilde{c}\right\vert
\int_{0}^{2\pi}e^{-\Xi \left( x\right) }
\int_{0}^{x}e^{\Xi \left( r\right) }drdx
+2\pi\|G\|_{\dot{H}^0}\biggr)
\leq
\notag
\\
e^{c(s)\|\varphi \|_{\dot{H}^{s-2}}}
\biggl( e^{2c(s)\|\varphi\|_{\dot{H}^{s-2}}}
\|G\|_{\dot{H}^0}\|\xi\|_{\dot{H}^0}
+
|\tilde{c}| 2\pi
e^{2c(s)\|\varphi\|_{\dot{H}^{s-2}}}
+\|G\|_{\dot{H}^0}\biggr) \leq
\notag\\
\leq 2(\pi+1)
e^{5c(s)\|\varphi\|_{\dot{H}^{s-2}}}\|g\|_{\dot{H}^{-1}}
(\|\varphi\|_{\dot{H}^{-1}}+1).\notag
\end{gather}%
To estimate the operator norm
we use the  well-known Banach algebra property~(\ref{product}) of
the Sobolev spaces $H^{s}$ for $s>1/2$
(see also Corollary~\ref{C:B2}) and expanding the
exponential into Taylor series we have
\begin{equation*}
\bigr\|e^{\pm \Xi \left( x\right) }\bigr\|_{H^{s}}\leq K_{1}(s)^{-1}\bigl(%
e^{K_{1}(s)\Vert \xi \Vert _{\dot{H}^{s-1}}}+K_{1}(s)-1\bigr)%
=K_{1}(s)^{-1}\left( e^{K_{1}(s)\Vert \varphi \Vert _{\dot{H}%
^{s-2}}}+K_{1}(s)-1\right).
\end{equation*}%
Therefore, from (\ref{w1}) we get \ for $0\leq s_{0}\leq s$
\begin{equation*}
\Vert w\Vert _{\dot{H}^{s_{0}}}\leq \Vert G\Vert _{\dot{H}%
^{s_{0}}}+\max_{0\leq x\leq 2\pi }\left\vert C-\int_{0}^{x}G(r)\xi (r)e^{\Xi
\left( r\right) }dr+\tilde{c}\int_{0}^{x}e^{\Xi \left( r\right)
}dr\right\vert \Vert e^{-\Xi \left( x\right) }\Vert _{\dot{H}^{s}},
\end{equation*}
and from the previous estimates we infer that
\begin{gather}
\Vert v\Vert _{\dot{H}^{s_{0}-1}}=
\Vert w\Vert _{\dot{H}^{s_{0}}}\leq \notag\\
\Vert g\Vert _{\dot{H}^{s_{0}-1}}+(6\pi+2)
e^{5c(s)\Vert \varphi \Vert _{\dot{H}^{s-2}}}
\Vert g\Vert _{\dot{H}^{-1}}
(\Vert \varphi \Vert _{\dot{H}^{-1}}+1)
K_{1}(s)^{-1}\left( e^{K_{1}(s)
\Vert \varphi \Vert _{\dot{H}^{s-2}}}+K_{1}(s)-1\right),
\notag
\end{gather}%
or in a more concise form
\begin{equation}
\| v\|_{\dot{H}^{s_{0}-1}}=\|w\|_{\dot{H}^{s_{0}}}\leq
\|g\|_{\dot{H}^{s_{0}-1}}+\|g\|_{\dot{H}^{-1}}
F_{1}\left( \|\varphi \| _{\dot{H}^{-1}},
 \|\varphi \| _{\dot{H}^{s-2}}\right) ,
\label{w4}
\end{equation}%
for some (explicitly known) function $F_{1}$.
Since $\|g\|_{\dot H^{-1}}\le
\|g\|_{\dot H^{s_0-1}}$
and
$\|f\|_{\dot H^{\theta}}=\|g\|_{\dot H^{\theta}}$, we
obtain~(\ref{B2estofnorm}) with $\theta =s_{0}-1$.
The proof is complete.
\end{proof}

\begin{remark}
\label{R:Z0}
{\rm
 The estimate (\ref{B2estofnorm}) for negative values of  $\theta$
 will be used in Theorem~\ref{T:negllip}. Otherwise,
 for $\theta\ge0$ 
 a simpler form of this estimate, i.e., $\| L_{\varphi
}^{-1}\|_{\mathcal{L}(\dot{H}^{\theta})}\leq F(\|\varphi
\|_{\dot{H}^{0}})$,
 satisfies all our needs.
}
\end{remark}

\begin{remark}
\label{R:Miura}
{\rm
In terms of $w$ this lemma establishes the
invertibility of the linearization of the Miura transform
$M[w]=w'+w^2$. See \cite{Colliander05}
for the results on the invertibility
of the Miura transform itself.
}
\end{remark}

We now prove global well-posedness in $\dot{H}^s$
for $s>1/2$.

\begin{theorem}
\label{T:loca} Let $s>1/2$, $v(0)=v^{0}\in \dot{H}^{s}$, and let $T
>0$ be  fixed. Then the solution $v=v^\infty$ of~$(\ref{vint})$, in the sense of
Definition~$\ref{D:defmain}$, which was constructed in
Theorem~$\ref{T:gal}$ is unique. Moreover, the solution is of the
class $C([0,T];\dot{H}^s)$, and depends Lipschitz continuously on
the initial data in the sense that is described in~$(\ref{E1})$
below. Furthermore, $
\|v^\infty(t)\|_{\dot{H}^0}=\|v(0)\|_{\dot{H}^0}$ for all $t \in
[0,T]$.
\end{theorem}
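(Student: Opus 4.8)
The plan is to carry out the entire argument on the integrated first form of the KdV~(\ref{defofsol}). By Corollary~\ref{C:one-two} the constructed solution $v=v^\infty$ satisfies~(\ref{defofsol}) and, by~(\ref{vinftynorm}), lies in $L_\infty([0,T];\dot H^{s})$ with $\|v\|_{L_\infty\dot H^s}\le M_s$; moreover the differentiation-by-parts reduction of Section~3.1 is rigorous for any solution in $L_\infty([0,T];\dot H^{s})$ with $s>1/2$, since $B_2$ and $R_3$ are then bounded on $\dot H^{s}$ (Lemmas~\ref{L:B2} and~\ref{L:R3}), so any two such solutions $v,\tilde v$ satisfy~(\ref{defofsol}). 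The crucial observation is that the quadratic left-hand side of~(\ref{defofsol}) linearizes under subtraction. Setting $w=v-\tilde v$ and $\varphi=\tfrac16(v+\tilde v)$, and using the bilinearity and symmetry of $B_2$ together with the trilinearity of $R_3$, the difference of the two copies of~(\ref{defofsol}) reads, for $k\in\mathbb Z_0$,
\begin{equation*}
w_k(t)-B_2(\varphi(t),w(t))_k
= w_k(0)-\tfrac16 B_2\bigl(w(0),v^0+\tilde v^0\bigr)_k
+\tfrac{i}{6}\int_0^t\bigl(R_3(v^3)-R_3(\tilde v^3)\bigr)_k\,d\tau,
\end{equation*}
where $B_2(v,v)-B_2(\tilde v,\tilde v)=B_2(v+\tilde v,w)$ and, by telescoping, $R_3(v^3)-R_3(\tilde v^3)=R_3(w,v,v)+R_3(\tilde v,w,v)+R_3(\tilde v,\tilde v,w)$. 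The left-hand side is exactly $L_{\varphi(t)}w(t)$ in the notation of~(\ref{B2lin}).

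Since $\varphi\in L_\infty([0,T];\dot H^{s})\subset L_\infty([0,T];\dot H^{s-1})$ with $s>1/2$, Lemma~\ref{L:linB2} applies: $L_{\varphi(t)}$ is boundedly invertible on $\dot H^{s}$ for each $t$, and by Remark~\ref{R:Z0} (with $\theta=s\ge0$) its inverse obeys $\|L_{\varphi(t)}^{-1}\|_{\mathcal L(\dot H^{s})}\le F(\|\varphi(t)\|_{\dot H^0})$. By the energy conservation~(\ref{Energy-infinity}) we have $\|\varphi(t)\|_{\dot H^0}\le\tfrac13\|v^0\|_{\dot H^0}$, so this operator norm is bounded by a constant $K$ uniform in $t\in[0,T]$. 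Applying $L_{\varphi(t)}^{-1}$, estimating each telescoped term by $CM_s^2\|w(\tau)\|_{\dot H^s}$ via Lemma~\ref{L:R3} and the data term by Lemma~\ref{L:B2}, we obtain
\begin{equation*}
\|w(t)\|_{\dot H^{s}}\le K\Bigl(C_0\,\|w(0)\|_{\dot H^{s}}
+C_1 M_s^2\int_0^t\|w(\tau)\|_{\dot H^{s}}\,d\tau\Bigr),
\quad 0\le t\le T ,
\end{equation*}
with $C_0=1+C\|v^0+\tilde v^0\|_{\dot H^s}$. Gronwall's inequality then yields $\|w(t)\|_{\dot H^{s}}\le C(T,M_s)\,\|w(0)\|_{\dot H^{s}}$, which is the asserted Lipschitz continuous dependence on the initial data; taking $v^0=\tilde v^0$ forces $w\equiv0$ and gives uniqueness.

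It remains to upgrade the $L_\infty$ information to $C([0,T];\dot H^{s})$ and to promote energy conservation to every $t$. For continuity, fix $t_0$ and apply the same linearization to the single solution $v^\infty$, writing $w=v(t)-v(t_0)$ and $\varphi=\tfrac16(v(t)+v(t_0))$. From~(\ref{defofsol}) one gets $L_{\varphi(t)}w=r(t)$, where $r(t)$ collects the integral increment $\tfrac{i}{6}\int_{t_0}^tR_3(v^3)\,d\tau$, which tends to $0$ in $\dot H^{s}$ since the integrand is bounded in $\dot H^{s}$ by Lemma~\ref{L:R3}, together with the increment $\tfrac16\bigl(B_2(v(t_0),v(t_0),t)-B_2(v(t_0),v(t_0),t_0)\bigr)$ arising solely from the oscillatory factor $e^{i3kk_1k_2 t}$ in~(\ref{B2}). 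The latter tends to $0$ in $\dot H^{s}$ as $t\to t_0$ by dominated convergence, the majorant being furnished by the boundedness of $B_2$ into $\dot H^{s+1}\subset \dot H^s$ (Lemma~\ref{L:B2}). Inverting $L_{\varphi(t)}$ with the uniform bound $K$ gives $\|v(t)-v(t_0)\|_{\dot H^{s}}\le K\|r(t)\|_{\dot H^{s}}\to0$, so $v^\infty\in C([0,T];\dot H^{s})$. Finally, since $v^\infty\in C([0,T];\dot H^{s})\subset C([0,T];\dot H^0)$, the map $t\mapsto\|v^\infty(t)\|_{\dot H^0}$ is continuous and equals the constant $\|v^0\|_{\dot H^0}$ for a.e.\ $t$ by~(\ref{Energy-infinity}); hence the equality holds for every $t\in[0,T]$.

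The main obstacle is not the Gronwall loop but the correct deployment of Lemma~\ref{L:linB2}: one must recognize the differenced quadratic term as $L_{\varphi}w$ with $\varphi$ controlled uniformly on $[0,T]$ in a norm for which the inverse bound is available, and, for the continuity statement, verify that the explicit time dependence hidden in the phases of $B_2$ contributes an increment that is small in the full $\dot H^{s}$ norm rather than only in a weaker topology. The gain of one derivative in Lemma~\ref{L:B2} and the $\dot H^s$-boundedness of $R_3$ in Lemma~\ref{L:R3} are exactly what make both points go through.
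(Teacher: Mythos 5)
Your proof is correct in substance, but it follows a genuinely different route from the paper's. The paper linearizes about the \emph{fixed initial datum}: it inverts $L_{v^0}$ (respectively $L_{w^0}$), recasts the problem as a fixed-point equation $y=\mathcal F(y)$ in $C_0([0,T^*];\dot H^s)$, applies the Banach contraction principle on a short interval, extends to a maximal interval using the \textit{a priori} bound~(\ref{vinftynorm}) to rule out blow-up before $T$, and obtains the Lipschitz estimate~(\ref{E1}) by iterating the short-time bound $N=[T/T^*]+1$ times. You instead linearize about the \emph{time-dependent average} $\varphi(t)=\tfrac16\bigl(v(t)+\tilde v(t)\bigr)$, observing that the differenced left-hand side of~(\ref{defofsol}) is exactly $L_{\varphi(t)}w(t)$, and then run a single Gronwall argument over all of $[0,T]$; both proofs ultimately rest on the same ingredients (Lemma~\ref{L:linB2} with Remark~\ref{R:Z0} for the uniform inverse bound via energy conservation, Lemmas~\ref{L:B2} and~\ref{L:R3}, and the bound~(\ref{vinftynorm})). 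Your route buys several things: uniqueness and Lipschitz dependence come out in one pass with no contraction, no smallness of $A$ and $T^*$, and no iteration; the argument applies to any two solutions of~(\ref{defofsol}) in $L_\infty([0,T];\dot H^s)$, which sidesteps the paper's somewhat delicate step of identifying the merely-$L_\infty$ function $v^\infty$ with the continuous fixed point on the maximal interval; and your Lipschitz constant is of exponential type in $T$, which is stronger than~(\ref{E1}) (compare Remark~\ref{R:HstoH0}). The price is that time-continuity no longer comes for free from the contraction space, so you must prove $v\in C([0,T];\dot H^s)$ separately --- which you do correctly, including the point most easily overlooked, namely that the increment of $B_2$ coming solely from the oscillatory phases $e^{i3kk_1k_2t}$ must be shown small in the full $\dot H^s$ norm (your dominated-convergence argument, with the majorant supplied by the proof of Lemma~\ref{L:B2}, handles this). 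Two small points deserve more care than you give them: first, your opening claim that \emph{every} solution of~(\ref{vint}) in $L_\infty([0,T];\dot H^s)$, $s>1/2$, satisfies~(\ref{defofsol}) needs a Fubini/absolute-convergence justification of the differentiation by parts (it is true for $s>1/2$, but the paper avoids the issue by doing the manipulation on Galerkin approximations and citing Corollary~\ref{C:one-two}); second, the bounds $\|\varphi(t)\|_{\dot H^0}\le\tfrac13\|v^0\|_{\dot H^0}$ and $\|v(t)\|_{\dot H^s}\le M_s$ are a priori only almost-everywhere statements, and since your difference equation and inversion of $L_{\varphi(t)}$ are performed at \emph{every} $t$, you should upgrade them to all $t$ (e.g.\ by Fatou's lemma, using the continuity of each Fourier coefficient $v_k(t)$ from Remark~\ref{R:regularity}). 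Neither point is a genuine gap.
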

\begin{proof}
In view of Corollary~\ref{C:one-two} the solution $v=v^\infty$ of
equation~$(\ref{vint})$,  which was constructed in
Theorem~\ref{T:gal}, also satisfies equation (\ref{defofsol}) for
all $t\in[0,T]$.
 Setting
\begin{equation}
y(t)=v(t)-v^{0},\qquad y(0)=0 , \label{yv}
\end{equation}%
then by Theorem~\ref{T:gal} $y\in L_{\infty}([0,T];\dot{H}^s)$.
Using the symmetry of $B_{2}$, $B_{2}(u,v)=B_{2}(v,u)$,
 we have from~(\ref{defofsol})
\begin{equation} \label{soly}
y(t)-\frac{1}{3}B_{2}(v^{0},y(t))=
\frac{1}{6}B_{2}(y(t),y(t))+\frac{i}{6}
\int_{0}^{t}R_{3}(\left( y(\tau )+v^{0}\right) ^{3})d\tau.
\end{equation}%
Setting $ L_{v^{0}}\,y=y-\frac{1}{3}B_{2}(v^{0},y)$, then by virtue
of Lemma~\ref{L:linB2}, Lemma~\ref{L:B2} and Lemma~\ref{L:R3} we
have
\begin{equation} \label{invinteq}
y(t)=L_{v^{0}}^{-1}\left( t\right) \biggl(\frac{1}{6}B_{2}(y(t),y(t))+\frac{i%
}{6}\int_{0}^{t}R_{3}(\left( y(\tau )+v^{0}\right) ^{3})d\tau \biggr)=:%
\mathcal{F}(y)(t).
\end{equation}
Let $T^*\in (0,T]$, to be determined later. We consider the Banach
space $C([0,T^{\ast }];\dot{H}^{s})$ and the subspace
$C_{0}([0,T^*];\dot{H}^{s})= \{y\in C([0,T^*];\dot{H}^{s}):\
y(0)=0\}$. Next, we show that the nonlinear operator $\mathcal{F}$
maps the ball of radius $A$, that is, $\{y\in
C_0([0,T^*];\dot{H}^{s}):\ \|y\|_{C([0,T^*];\dot{H}^{s})}\le A\}$,
into itself and is a contraction map provided that $A$ and $T^*$ are
small enough. In fact, for $A$ and $T^*$ small enough we obtain by
Lemma~\ref{L:linB2}, Lemma~\ref{L:B2} and Lemma~\ref{L:R3} that
\begin{equation}\label{contr1}
\aligned \|\mathcal{F}(y)(t)\|_{\dot H^s}&\le c_7(s)
\|L_{v^0}^{-1}\|_{\mathcal{L}(\dot H^s)}
\left(A^2+T^*(A+\|v^0\|_{\dot H^s})^3\right)\le \frac A4,
\\
\|\mathcal{F}(y_1)(t)-\mathcal{F}(y_2)(t)\|_{\dot H^s}&\le c_7(s)
\|L_{v^0}^{-1}\|_{\mathcal{L}(\dot H^s)} \left(A+T^*(A+\|v^0\|_{\dot
H^s})^2\right)
\|y_1-y_2\|_{C([0,T^*];\dot H^s)}\le\\
&\le\frac12\|y_1-y_2\|_{C([0,T^*];\dot H^s)},
\endaligned
\end{equation}
where $c_{7}(s)=(5/3)\max (c_{2}(s),c_{6}(s))$.
We now fix  $A$ and $T^*$ small enough such that the above
inequality holds.
(Note that
$A$ depends only on~$\|v^{0}\| _{\dot{H}^{s}}$ and
therefore
$T^{\ast }$ also depends only on
 $\|v^{0}\| _{\dot{H}^{s}}$.)
 Hence by the
Banach Contraction Principle there exists a unique solution
$y(t)$ of~(\ref{invinteq}) on the
interval $[0,T^{\ast }]$.

Denote by $[0,T_{\max}^*)$ the maximal interval of existence of the
solutions of~(\ref{invinteq}). By short time existence so obtained
we have $T^*_{\max}>0$. If  $T^*_{\max} > T$ we are done with the
proof. However, if $T^*_{\max}\le T$ then  by standard arguments one
can show that the $\limsup_{t\to
T^*_{\max}-0}\|v(t)\|_{\dot{H}^s}=\infty$. Based on
Theorem~\ref{T:gal} and the uniqueness of solutions established
above we have $v^\infty(t)=v(t)=y(t)+v_0$ for all
$t\in[0,T^*_{\max})$. By virtue of~(\ref{vinftynorm}) $\limsup_{t\to
T^*_{\max}-0}\|v(t)\|_{\dot{H}^s}<\infty$. Consequently, $T^*_{\max}
> T$. Hence $v=v^\infty\in C([0,T],\dot{H}^s)$ is unique on $[0,T]$.
In particular, thanks to~(\ref{Energy-infinity}) and the above we
have $\|v^\infty(t)\|_{\dot{H}^0}=\|v(0)\|_{\dot{H}^0}$, for all
$t\in [0,T]$, i.e.,  $v^\infty(t)$ conserves the energy for all
$t\in[0,T]$.

To prove the continuous dependence on the initial data (in fact,
Lipschitz continuous) we consider two solutions $w(t)$ and
$v(t)$
evolving from two close initial points $w(0)=w^{0}$ and
$v(0)=v^{0}$. Then $z(t)=w(t)-w^{0}$ and $y(t)=v(t)-v^{0}$
satisfy on $[0,T^*]$, where $T^*$ to be chosen later,
 the equations
\begin{gather*}
\aligned
z(t)=\frac{1}{6}B_{2}(z(t),z(t))+
\frac{1}{3}B_{2}(w^{0},z(t))+
\frac{i}{6}\int_{0}^{t}{R}_{3}((z(\tau )+w^{0})^3)d\tau,
\\
y(t)=\frac{1}{6}B_{2}(y(t),y(t))+\frac{1}{3}B_{2}(v^{0},y(t))+\frac{i}{6}%
\int_{0}^{t}{R}_{3}((y(\tau )+v^{0})^3)d\tau.
\endaligned
\end{gather*}%
Therefore $\varphi (t)=z(t)-y(t)$ satisfies
\begin{gather*}
\aligned\varphi (t)-\frac{1}{3}B_{2}(w^{0},\varphi (t))=
\frac{1}{6}B_{2}(z(t)+y(t),\varphi (t))+\frac{1}{3}
B_{2}(w^{0}-v^{0},y(t))+
\\
\frac{i}{6}\int_{0}^{t}({R}_{3}((z(\tau )+w^{0})^3)-
{R}_{3}((y(\tau )+v^{0})^3)d\tau.
\endaligned
\end{gather*}
Inverting the operator $L_{w^{0}}$,
$L_{w^{0}}\varphi (t)=\varphi (t)-
\frac{1}{3}B_{2}(w^{0},\varphi (t))$,
arguing as above and using the estimate~(\ref{vinftynorm})
for $v$ and $w$, we obtain on
$[0,T^*]$
\begin{gather*}
\|\varphi (t)\|_{\dot{H}^{s}}\le
c_{7}(s)\|L_{w^{0}}^{-1}\|_{\mathcal{L}(\dot{H}^s)} \bigl(A\Vert
\varphi (t)\Vert _{\dot{H}^{s}}+ A\Vert w^{0}-v^{0}\Vert
_{\dot{H}^{s}}+ M_s^2
\int_{0}^{t}\Vert \varphi (\tau )\Vert _{%
\dot{H}^{s}}d\tau \bigr),
\end{gather*}
where $A=\max(\|y\|_{C([0,T^*];\dot{H}^{s})},
\|z\|_{C([0,T^*];\dot{H}^{s})})$
and $M_s=
M_s(T,\|v^{0}\|_{\dot{H}^{s}}+\|w^{0}\|_{\dot{H}^{s}})$
is as in~~(\ref{vinftynorm}). Hence
\begin{gather*}
\| \varphi\|_{C([0,T^*];\dot{H}^{s})}\le
 c_{7}(s)\|L_{w^{0}}^{-1}\|_{\mathcal{L}(\dot{H}^s)}\bigl(A+T^*
M_s^2\bigr) \| \varphi\|_{C([0,T^*];\dot{H}^{s})} +C(s,A)
\|w^{0}-v^{0}\|_{\dot{H}^{s}}.
\end{gather*}
Notice that since $w(t)$ conserves energy then by Remark~\ref{R:Z0}
we have $\|L_{w(t)}^{-1}\|_{\mathcal{L}(\dot{H}^s)}\le
F(\|w^0\|_{\dot{H}^{0}})$, for all $t\in [0,T]$.
 Taking $A$ and $T^*$  small enough (both are
depending now only on $\|w^0\|_{\dot{H}^{s}}+\|w^0\|_{\dot{H}^{s}}$
and $T$) such  that
$$
c_{7}(s)\|L_{w^{0}}^{-1}\|_{\mathcal{L}(\dot{H}^s)}\bigl(A+T^*
M_s^2\bigr)
 \le c_{7}(s) F(\|w^0\|_{\dot{H}^{0}}) \bigl(A+T^*
M_s^2\bigr)\le \frac12
 $$ we obtain
\begin{equation*}
\| \varphi\|_{C([0,T];\dot{H}^{s})}\le
2C(s,A)
\|w^{0}-v^{0}\|_{\dot{H}^{s}}=
C'(s,T,\|v^0\|_{\dot{H}^{s}},\|w^0\|_{\dot{H}^{s}})
\|w^{0}-v^{0}\|_{\dot{H}^{s}},
\end{equation*}
which gives for
$w(t)-v(t)=\varphi(t)-(w^0-v^0)$
\begin{equation*}
\|w(t)-v(t)\|_{\dot{H}^{s}}\leq
\bigl(C'(s,T,\|v^0\|_{\dot{H}^{s}},\|w^0\|_{\dot{H}^{s}})+1
\bigr)
\|w^{0}-v^{0}\|_{\dot{H}^{s}},
\quad t\in[0,T^*].
\end{equation*}%
 Therefore, after $N$  steps, where $N=[T/T^*]+1$,
we obtain the Lipschitz estimate
\begin{equation} \label{E1}
\|w(t)-v(t)\|_{\dot{H}^{s}}\leq
\bigl(C'(s,T,\|v^0\|_{\dot{H}^{s}}+
\|w^0\|_{\dot{H}^{s}})+1\bigr)^N
\|w^{0}-v^{0}\|_{\dot{H}^{s}},
\quad t\in \lbrack 0,T].
\end{equation}
\end{proof}

\begin{remark}
\label{R:HstoH0} {\rm Generally speaking  the Lipschitz constant
in~(\ref{E1}) may grow with respect to $T$ at a rate higher than
exponential. In section~\ref{Sec:nregin}, and  for $s \in [0,1/2)$,
the Lipschitz estimate~(\ref{E1}) will be proved in a stronger
 form
 \begin{equation} \label{E1111}
\|w(t)-v(t)\|_{\dot{H}^{s}}\leq
\bigl(C(s,\|v^0\|_{\dot{H}^{0}}+
\|w^0\|_{\dot{H}^{0}})+1\bigr)^T
\|w^{0}-v^{0}\|_{\dot{H}^{s}},
\quad t\in \lbrack 0,T].
\end{equation}
}
\end{remark}

\setcounter{equation}{0}

\section{Uniqueness of solutions with non-regular
initial data ($0\le s\le1/2$)
 and Lipschitz dependence in weaker norms}
\label{Sec:nregin}

Here we will show the uniqueness and Lipschitz continuous dependence
on the initial data for the class of solutions of~(\ref{v}) in the
sense of Definition~\ref{D:defmain} with initial data
$v^0\in\dot{H}^s$ for $s\in[0,1/2]$. The existence of such
solutions, for any $s>0$, has been established in
Theorem~\ref{T:gal}; and we observe that so far we have not proved
the {\it existence} of solutions with initial data in $\dot{H}^{0}$.
The existence and Lipschitz continuity of such solutions, when
$s=0$, will be proved in Theorem~\ref{T:globs=0}
 at the end of this section.
We also study the Lipschitz dependence on the initial
data in the norm of $\dot{H}^{\theta }$, for $\theta >-1$ of
the solutions of~(\ref{v}) with initial data bounded in
$\dot{H}^{0}$.

The main role in the proof is played by {\it time averaging induced
squeezing}, which is described later in this section. First, we give
a sketch of the subsequent treatment. Our strategy to prove
uniqueness and the Lipschitz continuous dependence of $v\in
L_{\infty }([ 0,T],\dot{H}^{\theta })$, with $v(0)\in
\dot{H}^{\theta }$, for $\theta \ge 0$,  is as follows. Let $v$, $w$
be two solutions with initial data in $\dot{H}^{\theta }$ with
$v(0)=w(0)$;  and let $[0,T_{1}]$ be the maximal interval on which
they coincide (thanks to Remark \ref{R:regularity} such a maximal
interval is closed; also it is possible that $T_1=0$). If $T_{1}<T$
we may take $v(T_{1})=w(T_{1})=v^{0}$ as a new initial data and
consider~(\ref{v}) on $[T_{1},T_{1}+\tau]$ with a small $\tau$.
Similarly to the proof of Theorem~\ref{T:loca} we want to transform
the problem to an equation in
$L_{\infty}([T_{1},T_{1}+\tau],\dot{H}^{\theta })$ for $y(t)$, where
$v(t)=v^{0}+y(t)$,  of the form
\begin{equation}\label{Ftaun}
y=\mathcal{F}_{\tau ,n}(y,v^{0}),
\end{equation}
where $\mathcal{F}_{\tau ,n}$ is a Lipschitz map in $L_{\infty}([
T_{1},T_{1}+\tau],\dot{H}^{\theta })$ with a  Lipschitz constant
less than one. Accordingly, equation~(\ref{Ftaun}) will have a
unique small solution $y$ in $L_{\infty}([
T_{1},T_{1}+\tau],\dot{H}^{\theta })$. The parameter $n$ describes
the construction of the operator $\mathcal{F}_{\tau,n}$, which
involves the splitting of the Fourier modes  $y_{k}$ of the solution
$y$ into high modes (with $|k|>n$) and low modes (with $|k|\le n$).
The Lipschitz  estimate for $y(t)=v(t)-v^0$ and $z(t)=w(t)-w^0$ will
have the form
\begin{equation} \label{Lipf}
\aligned
\|\mathcal{F}_{\tau,n}(y,v^{0})-
\mathcal{F}_{\tau,n}(z,w^{0})\|_{L_{\infty }
([ T_{1},T_{1}+\tau],\dot{H}^{\theta })}&\leq
\\
 C( C_{0})(F_{1}( n) +\tau F_{2}( n))
\biggl(\|y-z\|_{L_{\infty}
([ T_{1},T_{1}+\tau],\dot{H}^{\theta})}+&
\|v^{0}-w^{0}\| _{\dot{H}^{\theta }}\biggr),
\endaligned
\end{equation}%
where $C( C_{0}) $ is bounded provided that the solutions are
uniformly bounded, over the interval  $[T_1,T_1+\tau]$, in the
$\dot{H}^{\theta_0}$ norm, for some $\theta_0 \ge 0$ (we show below
that we can take $\theta_0=0$ when $\theta \in (0,1/2)$)
\begin{equation}\label{C(C0)}
\|v\|_{L_{\infty }([ T_{1},T_{1}+\tau],\dot{H}^{\theta_0})}+
\|w\|_{L_{\infty}([T_{1},T_{1}+\tau],\dot{H}^{\theta_0})}\le
 C_{0}.
\end{equation}
 Moreover, the time-independent part of the Lipschitz
estimate will enjoy the property
\begin{equation}\label{F1n}
F_{1}\left( n\right) \to 0\text{ \ as \ }n\to\infty.
\end{equation}%
Obviously, if we have two solutions $y$, $z$
in $L_{\infty }([0,T],\dot{H}^{\theta_0})$, we can take
\begin{equation}\label{C_0}
C_{0}=C_0(T)=\|v\|_{L_{\infty }([ 0,T],\dot{H}^{\theta_0})}+
\|w\|_{L_{\infty}([0,T],\dot{H}^{\theta_0})}
<\infty.
\end{equation}%
Based on this property, we will  first choose $n$ large enough  such
that
\begin{equation*}
C( C_{0}) F_{1}(n) \leq 1/4,
\end{equation*}%
and then choose $\tau $ small enough so that
\begin{equation*}
\tau \leq \frac{1}{4C( C_{0}) F_{2}(n)}\,.
\end{equation*}%
Together, the above   implies
\begin{equation}\label{ftnl}
\|\mathcal{F}_{\tau ,n}(y)-\mathcal{F}_{\tau,n}(z)\|_
{L_{\infty}([ T_{1},T_{1}+\tau ],\dot{H}^{\theta })}\le
\frac12\|y-z\|_{L_{\infty}([T_{1},T_{1}+\tau],\dot{H}^{\theta})}
+C\|v^{0}-w^{0}\|_{\dot{H}^{\theta }}.
\end{equation}%
Therefore the two solutions of~(\ref{Ftaun}) satisfy the
Lipschitz estimate
\begin{equation*}
\|y-z\|_
{L_{\infty}([T_{1},T_{1}+\tau],\dot{H}^{\theta })}\leq
2C\|v^{0}-w^{0}\|_{\dot{H}^{\theta }},
\end{equation*}
for some $C=C(T)$,
where $\tau$ depends on the norm of $y,z$
in $L_{\infty}([0,T],\dot{H}^{\theta_0})$, and hence for
$v(t)=v^{0}+y(t)$ and $w(t)=w^{0}+z(t)$ this gives
\begin{equation*}
\|v-w\|_{L_{\infty}([T_{1},T_{1}+\tau],\dot{H}^{\theta })}
\le
(2C(T)+1)\|v^{0}-w^{0}\|_{\dot{H}^{\theta }}.
\end{equation*}
If (\ref{C_0}) holds, we can iterate the
estimate and obtain
\begin{equation} \label{ylip}
\|v-w\|_{L_{\infty }([0,T],\dot{H}^{\theta })}\le
C^{\prime }
(2C(T)+1)^{[T/\tau]+1}
\|v^0-w^0\|_{\dot{H}^{\theta }}.
\end{equation}%
Since the $\dot{H}^{0}$-norm estimates of the solutions constructed
in Theorem~\ref{T:gal} are uniform in $T$, it follows that in the
case when we can take $\theta_0=0$, the above Lipschitz continuity
estimate can be written in the usual form (see also
Remark~\ref{R:HstoH0})
\begin{equation} \label{ylip0}
\|v-w\|_{L_{\infty }([0,T],\dot{H}^{\theta })}\le
C^{\prime\prime }
(2C+1)^{T}
\|v^0-w^0\|_{\dot{H}^{\theta }}.
\end{equation}%

Note that two terms in (\ref{Lipf}) control the size of the
Lipschitz constant. The first is the Picard short time factor
$\tau$, which ensures solvability of ordinary differential equations
locally in time. The second term $F_{1}(n)$ is small thanks to the
time averaging induced squeezing which is crucial for the continuous
dependence we prove here, and which will be described in detail
below.

For the discussion below the reader is referred to the relevant
estimates in section~\ref{S:App}. Recall that by
Corollary~\ref{C:one-two} every solution $v(t)$ of~(\ref{v}) in the
sense of Definition~\ref{D:defmain} with $v^0\in\dot{H}^s$, $s>0$
also satisfies~(\ref{defofsol}). Therefore $y(t)=v(t)-v^0$
satisfies~(\ref{soly}) which we would like to  transform
to~(\ref{invinteq}). Unfortunately, the operator
$\int_{0}^{t}{R}_{3}dt^{\prime }$ in (\ref{soly}) is Lipschitz in a
rather narrow space $\dot{H}^{s}$, $s>1/2$, which is out of the
range of our interest, namely, $s\in[0,1/2]$. Therefore we want to
use an equation similar to~(\ref{invinteq}), but  defined in a wider
space. A possible candidate is  equation~(\ref{fin2}) with the
integrated form~(\ref{fin3}). The right-hand side of~(\ref{fin3})
has a  small Lipschitz constant in $L_{\infty }([
T_{1},T_{1}+\tau],\dot{H}^{\theta })$ if $\tau $ is small. But this
equation is difficult to use directly for the proof of uniqueness,
since the linearization of the left-hand side about $v=v^{0}$  may
be not invertible. But if we try to invert the linearization of the
operator $I-B_2$ about $v^0$ then for $y(t)$, where
$v(t)=y(t)+v^{0}$ with $y(0)=0$, we obtain from~(\ref{fin3}) the
equation
\begin{equation*}\label{yvnnn}
\aligned
 y(t)-\frac{1}{3}B_{2}(v^{0},y(t))=
 \frac{1}{6}B_{2}(y(t),y(t))
+\frac{1}{18}\biggl(
B_3\left((y(t)+v^{0})^{3}\right)-
B_3\left((v^{0})^{3}\right)
\biggr)
+
\\
+
\frac{i}{6}\int_{0}^{t}\biggl( A_{\text{res}}
(( y(t')+v^{0})^{3})
+\frac{1}{3}B_4(( y(t')+v^{0})^{4})
 \biggr)dt'.
\endaligned
\end{equation*}
Here we can invert $I-\frac{1}{3}B_{2}(v^{0},y)$ on
the left-hand side with norm of the inverse operator
 depending on
$\|v^0\|_{\dot{H}^{\theta_0}}$, where $\theta_0\ge-1$,
see Lemma~\ref{L:linB2}, but the Lipschitz
constant in $L_{\infty }([ T_{1},T_{1}+\tau],\dot{H}^{\theta })$ of
the nonlinear operator so obtained is not small because of the
second term on the right-hand side (notice that the Lipschitz
constant of the first term is small since it is quadratic and $y$ is
small, the third term has a  small Lipschitz constant because of a
short time interval in the integral).

Therefore we derive a modified version of~(\ref{fin3})
(in fact, a family of equations parameterized
by $n\in\mathbb{N}$) with
the  same invertible operator on left-hand side and
the second term (corresponding to $B_3$) having
a small Lipschitz constant as
$n\to\infty$. To this end we
use a proper splitting of a solution into high and low
Fourier modes using the projection $\Pi _{n}$
defined by (\ref{Pm}), where we choose  $n$ later
to be large enough depending on
$\|v^{0}\|_{\dot{H}^{\theta_0}}$.

Now we present a detailed discussion and proofs. We need to
introduce some notation.
We use projections $\Pi_{n}$  with integer $n\geq 0$ defined
in~(\ref{Pm})  and we also set
\begin{equation*}
\Pi _{-n}=I-\Pi _{n}.
\end{equation*}%
Obviously,
\begin{equation}\label{Pz}
u=\Pi _{n}u+\Pi _{-n}u=\sum_{\zeta =\pm 1}\Pi _{\zeta n}u.
\end{equation}%
We can rewrite
\begin{equation}  \label{R3sum}
R_{3\text{nres}}(u,v,w)_{k}=
\sum_{\vec{\zeta}\in\{ -1,1\}^3}R_{3\text{nres}}
(\Pi _{\vec{\zeta}}(u,v,w))_{k},
\end{equation}
where
\begin{equation*}
\Pi _{\vec{\zeta}}(u,v,w)=( \Pi _{\zeta _{1}n}u,
\Pi _{\zeta _{2}n}v,\Pi_{\zeta _{3}n}w),\ \
\vec{\zeta}=( \zeta _{1},\zeta _{2},\zeta_{3}) \in
\{ -1,1\}^{3},\ \ \zeta _{j}=\pm 1,
\end{equation*}%
and  where (see~(\ref{R3res}))
\begin{equation*}
R_{3\text{nres}}(u,v,w)_{k}=
\sum_{k_{1}+k_{2}+k_{3}=k}^{\text{nonres}}
\frac
{e^{i3(k_{1}+k_{2})(k_{2}+k_{3})(k_{3}+k_{1})t}}
{k_{1}}u_{k_{1}}v_{k_{2}}w_{k_{3}}.
\end{equation*}%
Note that in~(\ref{R3sum}) the terms with $\zeta _{j}=+1$ smooth
(filter) the $j-$th
argument, also the first argument is always smoothed by the
factor ${1}/{k_{1}}$. Therefore the only term which has only
one smoothed argument (factor) corresponds to
$\vec{\zeta}=( \pm 1,-1,-1)$, when the already smoothed
first factor is multiplied by $\Pi _{\pm n}$ and the
remaining two are multiplied by $\Pi_{-n}$. The remaining
6
terms in~(\ref{R3sum}) have at least two smoothed
factors. Hence, we have for $R_{3\text{nres}}(u,v,w)$
\begin{equation}\label{R3sum1}
R_{3\text{nres}}(u,v,w)=R_{3\text{nres}0}^{(n)}(u,v,w)+
R_{3\text{nres}1}^{(n)}(u,v,w)
\end{equation}
with
\begin{equation}\label{nR1}
\aligned
&R_{3\text{nres}0}^{(n)}(u,v,w)=R_{3\text{nres}}
(\Pi _{\vec{\zeta}_{0}}(u,v,w))+R_{3\text{nres}}
(\Pi _{\vec{\zeta}_{1}}(u,v,w)),\\
&R_{3\text{nres}1}^{(n)}(u,v,w)=\sum_{\vec{\zeta}\in
\{ -1,1\} ^{3},\vec{\zeta}\neq \vec{\zeta}_{0},
\vec{\zeta}\neq \vec{\zeta}_{1}}R_{3\text{nres}}
(\Pi _{\vec{\zeta}}(u,v,w)),
\endaligned
\end{equation}
where
$\vec{\zeta}_{0}=( +1,-1,-1)$,
$\vec{\zeta}_{1}=( -1,-1,-1)$.
Since $\Pi _n+\Pi _{-n}=I$, we obviously have
\begin{equation} \label{R3nr0}
R_{3\text{nres}0}^{(n)}(u,v,w)_k=
\sum_{k_{1}+k_{2}+k_{3}=k}^{\text{nonres}}
\frac
{e^{i3(k_{1}+k_{2})(k_{2}+k_{3})(k_{3}+k_{1})t}}
{k_{1}}u_{k_{1}}\Pi_{-n}v_{k_{2}}\Pi _{-n}w_{k_{3}}.
\end{equation}
The operator $R_{3\text{nres}1}$ has better boundedness properties
than $R_{3}$, as it is shown in the following lemma. However, the
corresponding constant is increasing, as $n\to\infty$, and will play
the role of the constant $F_2(n)$ in~(\ref{Lipf}).

\begin{lemma}
\label{L:R31} Let $0\leq s\leq 1$, $\alpha \ge 0$. Then the
 operator $R_{3\mathrm{nres}1}^{(n)}$ in
 $(\ref{nR1})$ satisfies the estimate
\begin{equation}  \label{nr3s}
\| R_{3\mathrm{nres}1}^{(n)}(u,v,w)\|_{\dot{H}^{s}}\leq
c_{4}n^{s+1+\alpha }\|u\|_{\dot{H}^{0}}
\|v\|_{\dot{H}^{-\alpha }}
\|w\|_{\dot{H}^{0}}+c_{4}n^{1+\alpha}\|u\|_{\dot{H}^{0}}
\|v\|_{\dot{H}^{-\alpha }}
\|w\|_{\dot{H}^{s}}.
\end{equation}
\end{lemma}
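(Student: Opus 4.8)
The plan is to estimate each of the six summands of $R_{3\mathrm{nres}1}^{(n)}$ in $(\ref{nR1})$ separately and add the results, so it suffices to bound one generic term $R_{3\mathrm{nres}}(\Pi_{\zeta_1 n}u,\Pi_{\zeta_2 n}v,\Pi_{\zeta_3 n}w)$, keeping in mind the defining feature of $R_{3\mathrm{nres}1}^{(n)}$: every one of its terms has at least two \emph{smoothed} factors, namely the first (which always carries $|k_1|^{-1}$) together with at least one low-pass projection $\Pi_n$ among the three arguments. Since $|e^{i3(k_1+k_2)(k_2+k_3)(k_3+k_1)t}|=1$, I would first pass to absolute values and dominate $|R_{3\mathrm{nres}}(\dots)_k|$ by the positive convolution $\sum_{k_1+k_2+k_3=k}|k_1|^{-1}|(\Pi_{\zeta_1 n}u)_{k_1}|\,|(\Pi_{\zeta_2 n}v)_{k_2}|\,|(\Pi_{\zeta_3 n}w)_{k_3}|$, which is the $k$-th Fourier coefficient of a product $UVW$ of three functions with non-negative coefficients, where $U_{k_1}=|k_1|^{-1}|(\Pi_{\zeta_1 n}u)_{k_1}|$ and $V,W$ are the moduli of the projected $v,w$. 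Thus $\|R_{3\mathrm{nres}}(\dots)\|_{\dot{H}^{s}}\le\|UVW\|_{\dot{H}^{s}}$, and the oscillation and the resonance bookkeeping are no longer needed.

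To produce the Sobolev weight I would use the elementary inequality $|k|^{s}\le C_{s}(|k_1|^{s}+|k_2|^{s}+|k_3|^{s})$, valid for $0\le s\le1$ when $k=k_1+k_2+k_3$, which splits $\||k|^{s}(UVW)_k\|_{\ell_2}$ into three convolution sums according to where the derivative lands. The sum in which the derivative falls on the first factor is harmless, since $|k_1|^{s}|k_1|^{-1}=|k_1|^{s-1}\le1$, so that piece is no worse than the undifferentiated one and, moreover, $\sum_{k_1}|k_1|^{-2}<\infty$ lets the first factor be absorbed in $\ell_1$ with no power of $n$. The remaining two sums place the weight $|k_2|^{s}$ or $|k_3|^{s}$ on the second or third factor.

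The powers of $n$ are then extracted from the low-pass projections by Bernstein-type inequalities: $\|\Pi_n f\|_{\ell_1}\le Cn^{1/2}\|f\|_{\dot{H}^{0}}$ and $\|\Pi_n f\|_{\dot{H}^{a}}\le n^{a}\|f\|_{\dot{H}^{0}}$ for $a\ge0$, and, for the factor measured in the weak norm, $\|\Pi_n v\|_{\dot{H}^{0}}\le n^{\alpha}\|v\|_{\dot{H}^{-\alpha}}$ together with $\|\Pi_n v\|_{\ell_1}\le Cn^{\alpha+1/2}\|v\|_{\dot{H}^{-\alpha}}$. Applying Young's inequality $\|UVW\|_{\ell_2}\le\|\cdot\|_{\ell_1}\|\cdot\|_{\ell_1}\|\cdot\|_{\ell_2}$ with the two $\ell_1$ slots assigned to the two smoothed (low-pass) factors and the $\ell_2$ slot to the remaining rough factor, the two $\ell_1$ sums over low modes contribute $n^{1/2}\cdot n^{1/2}=n$, the conversion of the weak factor contributes $n^{\alpha}$, and a derivative landing on a low-pass factor costs an extra $n^{s}$; this produces the first term $c_4 n^{s+1+\alpha}\|u\|_{\dot{H}^{0}}\|v\|_{\dot{H}^{-\alpha}}\|w\|_{\dot{H}^{0}}$. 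When instead the derivative is kept on $w$ and absorbed directly into $\|w\|_{\dot{H}^{s}}$ (via the $\ell_2$ slot), no factor $n^{s}$ is paid and one obtains the second term $c_4 n^{1+\alpha}\|u\|_{\dot{H}^{0}}\|v\|_{\dot{H}^{-\alpha}}\|w\|_{\dot{H}^{s}}$.

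The main obstacle is the bookkeeping of the derivative distribution together with the $\ell_1/\ell_2$ slot assignment, arranged so that the argument carrying the weak norm $\dot{H}^{-\alpha}$ always sits in a low-pass factor and is never forced to absorb the weight $|k|^{s}$ while simultaneously carrying high modes; here I would exploit the symmetry of $R_{3\mathrm{nres}}$ in its last two arguments to route the weak argument into a $\Pi_n$ slot. The tightest cases are the terms with \emph{exactly} two smoothed factors, and it is precisely these that dictate the exponents $n^{s+1+\alpha}$ and $n^{1+\alpha}$; verifying that all six terms obey the claimed bound, and that the weak factor can in each of them be kept in its natural $\dot{H}^{-\alpha}$ norm, is the delicate part of the argument, whereas the remaining terms (with three smoothed factors) are strictly better and cause no difficulty.
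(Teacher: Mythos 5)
Your route --- majorize by a positive convolution, split the weight $|k|^s\le C_s(|k_1|^s+|k_2|^s+|k_3|^s)$, and combine Bernstein inequalities with Young's $\ell_1*\ell_1*\ell_2\subset\ell_2$ --- is a genuine alternative to the paper's proof (which argues by duality, pulls out $\sup_{0<|k_2|\le n}|v_{k_2}|\le n^{\alpha}\|v\|_{\dot{H}^{-\alpha}}$, sums trivially over the at most $2n$ low modes, and applies Cauchy--Schwarz in $(k_1,k_3)$ against the weight $|k_1|^{-2}$), and for $s<1/2$ it does handle the representative term $u\,\Pi_n v\,\Pi_{-n}w$. But there is a genuine gap at the step you declare harmless: when the weight lands on the first factor and $1/2\le s\le 1$, the bound $|k_1|^{s-1}\le 1$ only places $|k_1|^{s-1}|u_{k_1}|$ in $\ell_2$; putting it in $\ell_1$ by Cauchy--Schwarz requires $\sum_{k_1}|k_1|^{2s-2}<\infty$, i.e.\ $s<1/2$ (the undifferentiated factor is summable precisely because it still carries the full decay $|k_1|^{-1}$, which $|k_1|^{s}$ has consumed). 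In the tight terms, which contain a genuinely high-pass factor $\Pi_{-n}w$ that can only occupy the $\ell_2$ slot, you are then left with $\ell_2*\ell_1*\ell_2$, which Young does not map into $\ell_2$. The missing idea is to compare $|k_1|$ with the high frequency $|k_3|>n$: if $|k_1|\le|k_3|$ then $|k_1|^{s-1}\le|k_3|^{s}/|k_1|$, while if $|k_1|>|k_3|$ then $|k_1|^{s-1}\le|k_3|^{s-1}=|k_3|^{s}/|k_3|$; in both cases the weight moves onto the $w$-factor (yielding $\|w\|_{\dot{H}^{s}}$, the second term of (\ref{nr3s})) and a summable $1/|k_1|$ or $1/|k_3|$ survives for an $\ell_1$ slot. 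This transfer is what produces the $\|u\|_{\dot{H}^{0}}\|w\|_{\dot{H}^{s}}$ combination in the paper's Cauchy--Schwarz step, and it is absent from your scheme.

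Second, your device for the terms in which the weak-norm argument is high-passed is not a fix. In (\ref{nR1}) the projections are attached to the arguments, $\Pi_{\vec\zeta}(u,v,w)=(\Pi_{\zeta_1 n}u,\Pi_{\zeta_2 n}v,\Pi_{\zeta_3 n}w)$, so the symmetry $R_{3\mathrm{nres}}(a,b,c)=R_{3\mathrm{nres}}(a,c,b)$ permutes slots together with their projections and cannot route $v$ into a $\Pi_n$ factor: in the term with $\vec\zeta=(+1,-1,+1)$ the function $v$ lives on modes $|k_2|>n$, where the conversion $|v_{k_2}|\le n^{\alpha}\|v\|_{\dot{H}^{-\alpha}}$ is simply unavailable. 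In fact no bookkeeping can close this case with the norms pinned as in (\ref{nr3s}): take $u,w$ supported on modes $\pm1$ and $v$ on modes $\pm N$ with $N\gg n$; the only surviving term of $R^{(n)}_{3\mathrm{nres}1}(u,v,w)$ is $R_{3\mathrm{nres}}(u,v,w)$, whose $\dot{H}^{s}$ norm is at least $N^{s}$, while the right-hand side of (\ref{nr3s}) is $O\bigl(n^{s+1+\alpha}N^{-\alpha}\bigr)$, so the stated inequality fails for these terms once $s>0$ or $\alpha>0$. (The paper sidesteps this by proving only the favorable term, where $\Pi_n$ sits on the $\dot{H}^{-\alpha}$-argument, and asserting the remaining terms are estimated ``in exactly the same way''; that is accurate only if the norms on the right-hand side are permuted along with the arguments, which suffices for its applications, where the lemma is invoked with $u=v=w$.) So the part you correctly single out as the delicate one is, with your fixed norm placement, not provable at all; any complete argument must let the $\dot{H}^{-\alpha}$ and $\dot{H}^{s}$ norms follow the projections.
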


\begin{proof}
We consider one of the terms in the second formula in (\ref{nR1}),
namely, the sum of  terms with $\vec{\zeta}=(-1,+1,-1)$ and
$\vec{\zeta}=(1,+1,-1)$. We set
\begin{equation*}
\tilde{R}^{(n)}_{3\text{nres}1}(u,v,w)_{k}:=
\sum_{k_{1}+k_{2}+k_{3}=k}^{\mathrm{nonres}}
\frac{e^{i3(k_{1}+k_{2})(k_{2}+k_{3})
(k_{3}+k_{1})t}}{k_{1}}u_{k_{1}}\Pi_{n}v_{k_{2}} \Pi _{-n}w_{k_{3}};
\end{equation*}%
the four remaining terms are estimated in exactly the same
way.  We use duality
\begin{gather*}
|(\tilde{R}^{(n) }_{3\mathrm{nres1}}(u,v,w),z)|\leq
\sum_{k\in\mathbb{Z}_0} \sum_{k_{1}+k_{2}+k_{3}=k}^{\text{nonres}}
\frac{|u_{k_{1}}||\Pi_{n}v_{k_{2}}|| \Pi _{-n}w_{k_{3}}||z_{k}|}
{|k_{1}|}\le
\\
\sum_{k\in\mathbb{Z}_0}\sum_{k_{1}+k_{2}+k_{3}=k}\frac{|u_{k_{1}}||
\Pi_{n}v_{k_{2}}||\Pi _{-n}w_{k_{3}}| |z_{k}|}{|k_{1}|}=
\sum_{k_{1},k_{2},k_{3}}\frac{|u_{k_{1}}|
|\Pi_{n}v_{k_{2}}||\Pi _{-n}w_{k_{3}}|
|z_{k_{1}+k_{2}+k_{3}}|}{|k_{1}|}\leq
\\
 \sum_{| k_{1}|>0}\sum_{0<| k_{2}| \leq n}
 \sum_{|k_{3}|>n}\frac{|u_{k_{1}}||v_{k_{2}}|
|w_{k_{3}}| |z_{k_{1}+k_{2}+k_{3}}|}{|k_{1}|}\,.
\end{gather*}
For $0\leq s\leq 1$ we set $\tilde{z}_{k}=|z_{k}|/|k|^{s}$
and obtain
\begin{gather*}
|(\tilde{R}^{(n) }_{3\text{nres}1}(u,v,w),z)|\leq \sum_{|k_{1}|>0}
\sum_{0<|k_{2}|\leq n}\sum_{|k_{3}|>n} \frac{|u_{k_{1}}||v_{k_{2}}|
|w_{k_{3}}||k_{1}+k_{2}+k_{3}|^{s}|\tilde{z}_{k}|} {|k_{1}|}\le
\\
\sup_{0<|k_{2}|\leq n}|v_{k_{2}}|\sum_{0<|k_{2}|\leq
n}\sum_{|k_{1}|>0}\sum_{|k_{3}|>n}%
\frac{|u_{k_{1}}||w_{k_{3}}|( |k_{1}|^{s}+|k_{2}|^{s}+
|k_{3}|^{s})
\tilde{z}_{k_{1}+k_{2}+k_{3}}}{|k_{1}|}\leq
\\
\|v\|_{\dot{H}^{-\alpha }}n^{\alpha }
\sum_{0<|k_{2}|\leq n}\sum_{|k_{1}|>0}
\sum_{|k_{3}|>n}\frac{|u_{k_{1}}||w_{k_{3}}|(|k_{1}|^{s}+
|k_{2}|^{s}+|k_{3}|^{s})
\tilde{z}_{k_1+k_2+k_3}}{|k_{1}|}.  \notag
\end{gather*}%
We estimate the sum in $k_{1}$ and $k_{3}$ as follows:
\begin{gather*}
\sum_{|k_{1}|>0}\sum_{|k_{3}|>n}
\frac{|u_{k_{1}}||w_{k_{3}}|(|k_{1}|^{s}+|k_{2}|^{s}+
|k_{3}|^{s}) \tilde{z}_{k_{1}+k_{2}+k_{3}}}{|k_{1}|}\leq
 \\
n^{s}\sum_{|k_{1}|>0}\sum_{|k_{3}|>n}
\frac{|u_{k_{1}}||w_{k_{3}}|\tilde{z}_{k_{1}+k_{2}+k_{3}}}
{|k_{1}|}+
\sum_{|k_{1}|>0}\sum_{|k_{3}| >n}
\frac{|u_{k_{1}}||w_{k_{3}}||k_{1}|^{s}
\tilde{z}_{k_{1}+k_{2}+k_{3}}}{|k_{1}|}+
\\
\sum_{|k_{1}|>0}\sum_{| k_{3}| >n}\frac{|u_{k_{1}}|
|w_{k_{3}}|| k_{3}|^{s}
\tilde{z}_{k_{1}+k_{2}+k_{3}}}{|k_{1}|} \le
\\
n^{s}\left( \sum_{k_{1}}\frac{1}{|k_{1}|^{2}}
\sum_{k_{3}}\tilde{z}_{k_{1}+k_{2}+k_{3}}^{2}\right)^{1/2}
\left(
\sum_{k_1}\sum_{k_3}|u_{k_1}|^{2}|w_{k_3}|^2\right)^{1/2}+
\\
2\left( \sum_{k_{1}}\frac{1}{| k_{1}|^{2}}%
\sum_{k_{3}}\tilde{z}_{k_{1}+k_{2}+k_{3}}^{2}\right)^{1/2}
\left(
\sum_{k_{1}}\sum_{k_{3}}|w_{k_{3}}|^{2}|k_{3}|^{2s}
|v_{k_{1}}|^{2}\right) ^{1/2} \le
\\
c_{3}n^{s}\|v\|_{\dot{H}^{0}}\|w\|_{\dot{H}^{0}}
\|z\|_{\dot{H}^{-s}}+2c_{3}\|v\|_{\dot{H}^{0}}
\|w\|_{\dot{H}^{s}}\|z\|_{\dot{H}^{-s}},
\end{gather*}
where $c_3\le\pi/\sqrt{3}$.
Hence, after a finite summation in $k_{2}$ we obtain
\begin{equation*}
\aligned |(\tilde{R}^{(n)}_{3\text{nres}1}(u,v,w),z)|\leq&
\\
 c_{4}n^{s+1+\alpha}\|u\|_{\dot{H}^{0}}&
\|v\|_{\dot{H}^{-\alpha }}
\|w\|_{\dot{H}^{0}}\|z\|_{\dot{H}^{-s}}+c_{4}n^{1+\alpha}
\|u\|_{\dot{H}^{0}}\|v\|_{\dot{H}^{-\alpha }}
\|w\|_{\dot{H}^{s}}\|z\|_{\dot{H}^{-s}}
\endaligned
\end{equation*}
and (\ref{nr3s}) is proven.
\end{proof}

We now consider the operator $R_{3\text{nres}0}^{(n)}$ defined in
 (\ref{R3nr0}):
\begin{equation*}
R_{3\text{nres}0}^{(n)}(v^{3})_k=
\sum_{k_{1}+k_{2}+k_{3}=k}^{\text{nonres}}
\frac
{e^{i3(k_{1}+k_{2})(k_{2}+k_{3})(k_{3}+k_{1})t}}
{k_{1}}v_{k_{1}}\Pi_{-n}v_{k_{2}}\Pi _{-n}v_{k_{3}}.
\end{equation*}
We make the transformation of the form
(\ref{second})--(\ref{B42}) (second differentiation
 by parts in time) applied to
$R_{3\text{nres}0}^{(n)}$
instead of $R_{3\text{nres}}$. Namely,
transformation  (\ref{second}) is replaced by
\begin{gather*}
R_{3\text{nres}0}^{(n)}(v^{3})_{k}=
\frac{1}{3i}\partial _{t}
B_{30}^{(n)}(v,v,v)_{k}-
\frac{1}{3i}
\sum_{k_{1}+k_{2}+k_{3}=k}^{\text{nonres}}
\frac{e^{i3(k_{1}+k_{2})(k_{2}+k_{3})(k_{3}+k_{1})t}}
{k_{1}(k_{1}+k_{2})(k_{2}+k_{3})(k_{3}+k_{1})}\times\\
\bigl(\partial
_{t}v_{k_{1}}\Pi _{-n}v_{k_{2}}\Pi
_{-n}v_{k_{3}}+v_{k_{1}}\Pi _{-n}\partial _{t}v_{k_{2}}\Pi
_{-n}v_{k_{3}}+v_{k_{1}}\Pi _{-n}v_{k_{2}}\Pi _{-n}\partial
_{t}v_{k_{3}}\bigr).
\end{gather*}%
Similarly to~(\ref{B3}), the operator $B_{30}^{(n)}$ is
given  by
\begin{equation}\label{B30}
B_{30}^{(n)}(v,v,v)_{k}=
\sum_{k_{1}+k_{2}+k_{3}=k}^{\text{nonres}}
\frac{e^{i3(k_{1}+k_{2})(k_{2}+k_{3})(k_{3}+k_{1})t}}
{k_{1}(k_{1}+k_{2})(k_{2}+k_{3})(k_{3}+k_{1})}v_{k_{1}}
\Pi _{-n}v_{k_{2}}\Pi_{-n}v_{k_{3}}.
\end{equation}%
We obtain similarly to (\ref{R3nres})
\begin{equation}\label{R3ns0}
R_{3\text{nres}0}^{(n)}(v^3)_k=
\frac{1}{3i}\partial
_{t}B_{30}^{(n)}(v^3)_{k}-\frac{1}{3}
B_{40}^{(n)}(v^4)_k,
\end{equation}
with
$$
B_{40}^{(n)}(v^4)_k= \frac{1}{2}B_{40}^{1}(v^4)_{k}+
B_{40}^{2}(v^4)_{k},
$$
where  $B_{40}^{1}=B_{40}^{1(n)}$,
$B_{40}^{2}=B_{40}^{2(n)}$ are defined similarly to
(\ref{B41}), (\ref{B42}):
\begin{equation} \label{B41n}
B_{40}^{1}(v^{4})_k=\sum_{k_{1}+k_{2}+k_{3}+k_{4}=k}^{\text{nonres}}\frac{%
e^{i\Phi (k_{1},k_{2},k_{3},k_{4})t}}{%
(k_{1}+k_{2})(k_{1}+k_{3}+k_{4})(k_{2}+k_{3}+k_{4})}\Pi _{-n}v_{k_{1}}\Pi
_{-n}v_{k_{2}}v_{k_{3}}v_{k_{4}},
\end{equation}%
\begin{equation} \label{B42n}
B_{40}^{2}(v^{4})_k=\sum_{k_{1}+k_{2}+k_{3}+k_{4}=k}^{\text{nonres}}\frac{%
e^{i\Phi (k_{1},k_{2},k_{3},k_{4})t}\ (k_{3}+k_{4})}{%
k_{1}(k_{1}+k_{2})(k_{1}+k_{3}+k_{4})(k_{2}+k_{3}+k_{4})}v_{k_{1}}\Pi
_{-n}v_{k_{2}}\Pi _{-n}\left( v_{k_{3}}v_{k_{4}}\right).
\end{equation}
In the last formula the operator $\Pi _{-n}( v_{k_{3}}v_{k_{4}})$ is
defined as follows:
\begin{equation*}\label{Pi(vv)}
 \Pi_{-n}( v_{k_{3}}v_{k_{4}})=
\begin{cases}
v_{k_3}v_{k_4}&\text{ if \ }|k_3+k_4|>n\\
0&\text{ if \ }|k_3+k_4|\le n
\end{cases}\, .
\end{equation*}

Now, similarly to (\ref{fin2}), based on (\ref{R3sum1}) and
(\ref{R3ns0}), we obtain  from (\ref{one}) for every fixed
$n\in\mathbb{N}$ the following family of equations, which we call
{\it the third  form of the~KdV}:
\begin{equation}\label{fin4}
\partial _{t}\left( v_{k}-\frac{1}{6}B_{2}(v^2)_{k}-
\frac{1}{18}B_{30}^{(n)}(v^3)_{k}\right) =
\frac{i}{6}R_{3\text{res}}(v^{3})_{k}+
\frac{i}{6}R_{3\text{nres}1}^{(n)}(v^{3})_{k}+
\frac{i}{18}B_{40}^{(n)}(v^4)_k,
\end{equation}
where $k\in\mathbb{Z}_{0}$.
Integrating (\ref{fin4}) we obtain
\begin{equation}\label{fin4int}
\aligned
\left( v_{k}-\frac{1}{6}B_{2}(v^{2})_{k}-
\frac{1}{18}B_{30}^{(n)}(v^{3})_{k}\right)(t) -
\left( v_{k}-\frac{1}{6}B_{2}(v^{2})_{k}-
\frac{1}{18}B_{30}^{(n)}(v^{3})_{k}\right)(0) =
\\
\frac{i}{6}\int_{0}^{t}\biggl( R_{3\text{res}}(v^{3})_k+
R_{3\text{nres}1}^{(n)}(v^{3})_k+\frac{1}{3}
B_{40}^{(n)}(v^4)_k
\biggr)(t')dt'.
\endaligned
\end{equation}
For $v(t)=y(t)+v^{0}$, $y(0)=0$,
 we obtain as in (\ref{soly}) the equivalent equation
\begin{equation}\label{yvn}
\aligned
 y(t)-\frac{1}{3}B_{2}(v^{0},y(t))=
 \frac{1}{6}B_{2}(y(t),y(t))
+\frac{1}{18}\biggl(
B_{30}^{(n)}\left((y(t)+v^{0})^{3}\right)-
B_{30}^{(n)}\left((v^{0})^{3}\right)
\biggr)
+
\\
+
\frac{i}{6}\int_{0}^{t}\biggl( R_{3\text{res}}
(( y(t')+v^{0})^{3})
+R_{3\text{nres}1}^{(n)}(( y(t')+v^{0})^{3})+
\frac{1}{3}B_{40}^{(n)}(( y(t')+v^{0})^{4})
 \biggr)dt'.
\endaligned
\end{equation}


\begin{theorem}
\label{T:Fn} Equation $(\ref{yvn})$ can be written in the form
$(\ref{Ftaun})$ with $\mathcal{F}_{\tau ,n}(y,v^{0})$ satisfying
$(\ref{Lipf})$ and $(\ref{F1n})$ with $\theta\in [0,1/2]$. Here
$\theta_0=0$ for $\theta<1/2$ and $\theta_0>0$ for $\theta=1/2$.
\end{theorem}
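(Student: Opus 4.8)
The plan is to obtain $\mathcal{F}_{\tau,n}$ by inverting the linear operator standing on the left of~(\ref{yvn}) and then to split the three sources of smallness in~(\ref{Lipf}) among the right-hand terms. Writing $L_{v^{0}}y=y-\tfrac13 B_{2}(v^{0},y)$, I would invoke Lemma~\ref{L:linB2}, which makes $L_{v^{0}}$ boundedly invertible on $\dot{H}^{\theta}$ for every $\theta>-1/2$, together with Remark~\ref{R:Z0}, by which $\|L_{v^{0}}^{-1}\|_{\mathcal{L}(\dot{H}^{\theta})}\le F(\|v^{0}\|_{\dot{H}^{0}})$; since $\|v^{0}\|_{\dot{H}^{0}}\le C_{0}$ by~(\ref{C(C0)}), this produces the prefactor $C(C_{0})$ of~(\ref{Lipf}). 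Applying $L_{v^{0}}^{-1}$ to~(\ref{yvn}) and setting
\[
\mathcal{F}_{\tau,n}(y,v^{0}):=L_{v^{0}}^{-1}\Bigl(\text{the right-hand side of }(\ref{yvn})\Bigr)
\]
turns~(\ref{yvn}) into the fixed-point form~(\ref{Ftaun}). I note that this inversion is already available on all of $\dot{H}^{\theta}$, $\theta\in[0,1/2]$, with $\theta_{0}=0$ (take $\varphi=v^{0}\in\dot{H}^{s-1}$ with $s\in(1/2,1]$ in Lemma~\ref{L:linB2}), so the restriction $\theta_{0}>0$ at the endpoint $\theta=1/2$ will not come from invertibility but from the squeezing estimate discussed last.

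To fix the ball on which~(\ref{Lipf}) is to hold, I would first evaluate the right-hand side of~(\ref{yvn}) at $y=0$: both $B_{2}(y,y)$ and $B_{30}^{(n)}((y+v^{0})^{3})-B_{30}^{(n)}((v^{0})^{3})$ vanish there, so only the time integral remains and $\|\mathcal{F}_{\tau,n}(0,v^{0})\|_{\dot{H}^{\theta}}\le\tau\,C(C_{0})F_{2}(n)$; hence the relevant solutions satisfy $\|y\|_{L_{\infty}([T_{1},T_{1}+\tau],\dot{H}^{\theta})}\le A\lesssim\tau$. On this ball the two short-time contributions to~(\ref{Lipf}) are routine. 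For the quadratic term, $B_{2}(y,y)-B_{2}(z,z)=B_{2}(y+z,y-z)$, which Lemma~\ref{L:B2} (with $\dot{H}^{\theta+1}\hookrightarrow\dot{H}^{\theta}$, valid since $\theta\ge0$) bounds by $c\,\|y+z\|_{\dot{H}^{\theta}}\|y-z\|_{\dot{H}^{\theta}}\lesssim A\,\|y-z\|_{\dot{H}^{\theta}}$, a quantity of order $\tau F_{2}(n)$. For the integral, the resonant part equals the explicit cubic operator $A_{\mathrm{res}}$ of~(\ref{Ares}) and is estimated through Lemma~\ref{L:Ares}, its conserved $L_{2}$ factor being bounded by $C_{0}$, while $R_{3\mathrm{nres}1}^{(n)}$ and $B_{40}^{(n)}$ are handled by Lemma~\ref{L:R31} and by the appendix bound for $B_{40}^{(n)}$ (the analogue of Lemma~\ref{L:B4}). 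Each of these carries a factor $\int_{T_{1}}^{t}dt'\le\tau$, and their $n$-dependent constants — e.g.\ the factor $n^{\theta+1+\alpha}$ in~(\ref{nr3s}) — together define $F_{2}(n)$.

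The heart of the matter, and the step I expect to be hardest, is to show that the $B_{30}^{(n)}$ term supplies the time-independent factor $F_{1}(n)$ with $F_{1}(n)\to0$. The difference $B_{30}^{(n)}((y+v^{0})^{3})-B_{30}^{(n)}((v^{0})^{3})$ is trilinear, and in its leading, difference-linear pieces two slots are filled by the backgrounds $v^{0}$ or $w^{0}$; by the very definition~(\ref{B30}) the second and third slots are acted on by $\Pi_{-n}$, so each such term contains at least one high-frequency factor. For $\theta<1/2$ I would prove in the appendix that these two projections, together with the benign denominator $k_{1}(k_{1}+k_{2})(k_{2}+k_{3})(k_{3}+k_{1})$, already force the \emph{operator norm} of $B_{30}^{(n)}$ from $(\dot{H}^{0})^{3}$ into $\dot{H}^{\theta}$ to decay in $n$, schematically $\|B_{30}^{(n)}(a,b,c)\|_{\dot{H}^{\theta}}\le C\,n^{\theta-1/2}\|a\|_{\dot{H}^{0}}\|b\|_{\dot{H}^{0}}\|c\|_{\dot{H}^{0}}$, so that one may take $F_{1}(n)=C\,n^{\theta-1/2}$ and $\theta_{0}=0$. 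At the endpoint $\theta=1/2$ this operator-norm decay is exactly lost, and the smallness must instead be extracted from the data: the high-frequency background factor is $\Pi_{-n}v^{0}$, and $\|\Pi_{-n}v^{0}\|_{\dot{H}^{0}}\le n^{-\theta_{0}}\|v^{0}\|_{\dot{H}^{\theta_{0}}}\le C_{0}\,n^{-\theta_{0}}\to0$ precisely because $\theta_{0}>0$. This change in the mechanism producing $F_{1}(n)\to0$ is what forces the dichotomy $\theta_{0}=0$ for $\theta<1/2$ versus $\theta_{0}>0$ for $\theta=1/2$. Collecting the three groups of terms then yields~(\ref{Lipf}), and the verified decay of $F_{1}$ is~(\ref{F1n}).
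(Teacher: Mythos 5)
Your overall architecture coincides with the paper's: apply $L_{v^{0}}^{-1}$ (Lemma~\ref{L:linB2}, Remark~\ref{R:Z0}) to pass from~(\ref{yvn}) to~(\ref{Ftaun}), let the terms under the time integral carry the factor $\tau F_{2}(n)$, and extract the time-independent smallness $F_{1}(n)$ from the $B_{30}^{(n)}$ difference. The genuine gap is in your explanation of the dichotomy $\theta_{0}=0$ for $\theta<1/2$ versus $\theta_{0}>0$ at $\theta=1/2$, which you attribute to a loss of decay of $B_{30}^{(n)}$ at the endpoint. That attribution is wrong, and the estimate you build it on is both unproved and incorrectly calibrated. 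The paper's Lemma~\ref{L:R310} gives $\Vert B_{30}^{(n)}(u,u,v)\Vert_{\dot{H}^{s}}\le C n^{-s}\Vert u\Vert_{\dot{H}^{0}}^{2}\Vert v\Vert_{\dot{H}^{s}}$ for \emph{all} $0<s\le 1$, so at $\theta=1/2$ the squeezing factor is $n^{-1/2}$ with the backgrounds measured only in $\dot{H}^{0}$: nothing degenerates there. Moreover, your schematic bound $\Vert B_{30}^{(n)}(a,b,c)\Vert_{\dot{H}^{\theta}}\lesssim n^{\theta-1/2}\Vert a\Vert_{\dot{H}^{0}}\Vert b\Vert_{\dot{H}^{0}}\Vert c\Vert_{\dot{H}^{0}}$, with decay "exactly lost" at $\theta=1/2$, cannot be the true picture: combining Lemma~\ref{L:R30neg} at $s=0$ (decay $n^{-2\alpha}$, $\alpha<5/6$) with the $n$-uniform boundedness of $B_{30}^{(n)}$ from $(\dot{H}^{0})^{3}$ into $\dot{H}^{2}$ (Lemma~\ref{L:B3} is insensitive to inserting $\Pi_{-n}$) and the interpolation inequality $\Vert f\Vert_{\dot{H}^{\theta}}\le\Vert f\Vert_{\dot{H}^{0}}^{1-\theta/2}\Vert f\Vert_{\dot{H}^{2}}^{\theta/2}$ shows the all-$\dot{H}^{0}$ operator norm keeps decaying well past $\theta=1/2$. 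So the mechanism you designate as "the heart of the matter" does not force any restriction at $\theta=1/2$.

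The actual source of the endpoint restriction is the quartic terms $B_{40}^{1}$, $B_{40}^{2}$ under the time integral, which you propose to handle by "the analogue of Lemma~\ref{L:B4}". That lemma is symmetric (all four arguments in $\dot{H}^{s}$), so used with $s=\theta$ it would force the constant $C(C_{0})$ in~(\ref{Lipf}) to depend on the $\dot{H}^{\theta}$ norms of the solutions, i.e.\ $\theta_{0}=\theta$, contradicting the assertion $\theta_{0}=0$ for $\theta\in(0,1/2)$. What is needed, and what the paper's proof cites, are the asymmetric estimates of Lemmas~\ref{L:B41n} and~\ref{L:B42n}: for $-3/2<s<1/2$ one has $\Vert B_{40}^{1,2}(\cdot)\Vert_{\dot{H}^{s}}\le c_{8}(s)\Vert u\Vert_{\dot{H}^{0}}^{3}\Vert v\Vert_{\dot{H}^{s}}$, while for $s\ge 1/2$ the background arguments must be taken in $\dot{H}^{\theta_{0}}$ with $\theta_{0}>s-1/2$. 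It is exactly this threshold in the quartic estimates --- not anything about $B_{30}^{(n)}$ --- that produces $\theta_{0}=0$ for $\theta<1/2$ and $\theta_{0}>0$ at $\theta=1/2$. To repair your argument you should replace the appeal to Lemma~\ref{L:B4} by Lemmas~\ref{L:B41n} and~\ref{L:B42n} (and Lemma~\ref{L:R31} with $\alpha=0$ for $R_{3\mathrm{nres}1}^{(n)}$), and take $F_{1}(n)$ directly from Lemma~\ref{L:R310}, which already covers the whole range $\theta\in[0,1/2]$ with $\dot{H}^{0}$ backgrounds.
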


\begin{proof}
We apply the linear operator $L_{v^{0}}^{-1}$, which is
inverse to $I-\frac{1}{3}B_{2}(v^{0},y)$,
to (\ref{yvn}) and obtain similarly to (\ref{invinteq}) the
equation (\ref{Ftaun}). The Lipschitz estimate (\ref{Lipf})
and the fact that $C(C_0)$ is bounded
if (\ref{C(C0)}) holds   follow from
the estimate
$\|L_{v^0}^{-1}\|_{\dot{H}^\theta\to\dot{H}^\theta}
\le F(\|v^0\|_{\dot{H}^{0}})$, $\theta\ge0$
(see Lemma~\ref{L:linB2}, where, in fact, $\theta\ge-1$) and
the boundedness of the multilinear operators
$B_{40}^{1},B_{40}^{2}$,
$R_{3\text{res}}=A_{\mathrm{res}}$,
$R_{3\text{nres}1}^{(n)}$ in
$\dot{H}^{s}$,
see Lemma~\ref{L:B41n}, Lemma~\ref{L:B42n},
Lemma~\ref{L:Ares} and Lemma~\ref{L:R31}
(with $\alpha=0$).
The estimate (\ref{F1n})  follows from Lemma~\ref{L:R310}.
\end{proof}

\begin{theorem}\label{T:Fnex}
Let $0\le s\le 1/2$. Let
$v(0)=v^{0}\in \dot{H}^{s}$
be given. Then there exists $n_0$ large enough
and $T^*$ small enough, both depending on
$\|v^0\|_{\dot{H}^0}$, such that
for each $n\ge n_0$ the equation~$(\ref{yvn})$
has a
unique local solution $y\in
C_0([0,T^*];\dot{H}^{s})$ and
$v=y+v^{0}\in C([0,T^*];\dot{H}^{s})$
is a unique local solution of $(\ref{fin4int})$ on the time
interval $[0,T^*]$  with sufficiently small $T^*$. This
solution $v(t)$ can be extended to the maximal interval
$[0,T_{\max }^*)$
such that either $T_{\max }^*=+\infty $  or
$\limsup_{t\to T_{\max }^*-0}\|v(t)\|_{\dot{H}^{s}}=+\infty$.
\end{theorem}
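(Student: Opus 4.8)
The plan is to realize~(\ref{yvn}) as a fixed-point equation and solve it by the Banach contraction principle, and then to extend the local solution by the standard continuation argument. By Theorem~\ref{T:Fn}, taking $T_1=0$ and $\tau=T^*$, equation~(\ref{yvn}) is equivalent to $y=\mathcal{F}_{T^*,n}(y,v^0)$, where $\mathcal{F}_{T^*,n}(\cdot,v^0)$ is obtained by applying $L_{v^0}^{-1}$ (the inverse of $I-\tfrac13 B_2(v^0,\cdot)$ furnished by Lemma~\ref{L:linB2}) to the right-hand side, and where~(\ref{Lipf}) and~(\ref{F1n}) hold in $\dot H^\theta$ for $\theta\in[0,1/2]$. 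In particular $\mathcal{F}_{T^*,n}(0,v^0)=L_{v^0}^{-1}\bigl(\tfrac{i}{6}\int_0^t(R_{3\mathrm{res}}((v^0)^3)+R^{(n)}_{3\mathrm{nres}1}((v^0)^3)+\tfrac13 B^{(n)}_{40}((v^0)^4))\,dt'\bigr)$, since the quadratic term and the $B^{(n)}_{30}$ difference in~(\ref{yvn}) vanish at $y=0$.

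I would run the contraction in a ball controlling two norms at once, so as to keep all constants tied to $\|v^0\|_{\dot H^0}$ alone. Let $\theta_0=0$ when $s<1/2$ and $\theta_0>0$ small when $s=1/2$ (as in Theorem~\ref{T:Fn}), and set $B=\{y\in C_0([0,T^*];\dot H^s):\ \|y\|_{L_\infty\dot H^s}\le 2\|v^0\|_{\dot H^s},\ \|y\|_{L_\infty\dot H^{\theta_0}}\le\|v^0\|_{\dot H^{\theta_0}}\}$, a closed, hence complete, subset of $C([0,T^*];\dot H^s)$. For $y\in B$ the function $v=y+v^0$ obeys~(\ref{C(C0)}) with $C_0=4\|v^0\|_{\dot H^{\theta_0}}$, so the constant $C(C_0)$ in~(\ref{Lipf}) depends only on $\|v^0\|_{\dot H^{\theta_0}}$ (i.e.\ on $\|v^0\|_{\dot H^0}$ when $s<1/2$). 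Following the scheme described before the theorem, I would first fix $n_0$ so large that $C(C_0)F_1(n_0)\le 1/4$, which is possible by~(\ref{F1n}), and then pick $T^*$ so small that $C(C_0)T^*F_2(n_0)\le 1/4$; both choices depend only on $\|v^0\|_{\dot H^0}$, and they make $\mathcal{F}_{T^*,n}(\cdot,v^0)$ a contraction of constant $\le 1/2$ in $\dot H^s$ and, via~(\ref{Lipf}) with $\theta=\theta_0$, also in $\dot H^{\theta_0}$.

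To verify $\mathcal{F}_{T^*,n}(B,v^0)\subset B$ I would estimate $\mathcal{F}_{T^*,n}(0,v^0)$ using Lemmas~\ref{L:Ares},~\ref{L:R31} (with $\alpha=0$),~\ref{L:B41n} and~\ref{L:B42n}: each term carries at most one factor of the high norm $\|v^0\|_{\dot H^s}$, the remaining factors being $\|v^0\|_{\dot H^0}$, so together with $\|L_{v^0}^{-1}\|_{\mathcal L(\dot H^s)}\le F(\|v^0\|_{\dot H^0})$ one gets $\|\mathcal{F}_{T^*,n}(0,v^0)\|_{\dot H^s}\le T^*\,C(n,\|v^0\|_{\dot H^0})\,\|v^0\|_{\dot H^s}\le \|v^0\|_{\dot H^s}$ once $T^*$ is small, and the analogous bound in $\dot H^{\theta_0}$. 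Combined with the contraction estimates this keeps both $\|\mathcal{F}_{T^*,n}(y,v^0)\|_{\dot H^s}\le 2\|v^0\|_{\dot H^s}$ and $\|\mathcal{F}_{T^*,n}(y,v^0)\|_{\dot H^{\theta_0}}\le\|v^0\|_{\dot H^{\theta_0}}$, so $\mathcal{F}_{T^*,n}$ maps $B$ into itself. Banach's fixed-point theorem then gives a unique $y\in B$, and $v=y+v^0\in C([0,T^*];\dot H^s)$ solves~(\ref{fin4int}); uniqueness in the full space $C_0([0,T^*];\dot H^s)$ follows by applying~(\ref{Lipf}) on subintervals short enough that the corresponding contraction constant is $<1$ and patching.

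For the maximal interval I would let $[0,T^*_{\max})$ be the union of all existence intervals and argue by contradiction: if $T^*_{\max}<\infty$ while $\limsup_{t\to T^*_{\max}-0}\|v(t)\|_{\dot H^s}=M<\infty$, then $\|v(t)\|_{\dot H^0}\le\|v(t)\|_{\dot H^s}$ (valid since $s\ge0$) stays bounded by some $M'$ near $T^*_{\max}$, so the local step applied at a time $t_0$ close to $T^*_{\max}$ yields existence on $[t_0,t_0+\delta]$ with $\delta>0$ depending only on $M'$; since every form~(\ref{fin4int}) is equivalent to~(\ref{v}), enlarging $n$ to accommodate $M'$ causes no inconsistency and the continued solution agrees with $v$ by the local uniqueness, pushing the solution past $T^*_{\max}$ — a contradiction. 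The crux of the whole argument, and the only genuinely delicate point, is precisely this $\|v^0\|_{\dot H^0}$-dependence of $n_0$ and $T^*$: a naive single-norm ball of radius $\sim\|v^0\|_{\dot H^s}$ would let $C_0$, hence $C(C_0)$ and the threshold $n_0$, grow with the high Sobolev norm and the continuation would fail, since its blow-up criterion is in $\dot H^s$ while the life-span is governed by $\dot H^0$. The two-norm ball works only because of the averaging structure of the estimates — each nonlinearity carries a single high-norm factor — together with the squeezing~(\ref{F1n}) that lets $F_1(n)$ be beaten by the fixed, $\dot H^0$-controlled constant $C(C_0)$.
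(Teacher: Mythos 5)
Your local existence and uniqueness argument follows the same route as the paper's proof: the Contraction Principle applied to $y=\mathcal{F}_{\tau,n}(y,v^0)$, using the estimates (\ref{Lipf}) and (\ref{F1n}) supplied by Theorem~\ref{T:Fn}, with $n$ chosen first to beat $F_1$ and then $\tau$ to beat $\tau F_2$; your two-norm ball is a sensible way of making the dependence of $n_0$ and $T^*$ on $\|v^0\|_{\dot H^0}$ explicit, exactly as sketched in (\ref{C(C0)})--(\ref{C_0}), and your verification of the self-mapping property (one high-norm factor per nonlinear term) is correct. One imprecision: for uniqueness in all of $C_0([0,T^*];\dot H^s)$ you invoke ``subintervals short enough that the corresponding contraction constant is $<1$''. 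Shortening the interval only shrinks the $\tau F_2(n)$ part of (\ref{Lipf}); the part $C(C_0)F_1(n)$ does not shrink with $\tau$. What you actually need is that the competitor's $\dot H^{\theta_0}$ norm near each splitting time is still within the range the \emph{fixed} $n$ can handle; this does follow from its agreement with $y$ up to that time plus continuity, provided $n_0$ is chosen with some room to spare, but as written the claim is unjustified.

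The genuine gap is in the maximal-interval step. Your continuation rests on the assertion that ``every form (\ref{fin4int}) is equivalent to (\ref{v})'', which is not available: the paper proves only one direction, namely that solutions of (\ref{v}) (constructed via Galerkin) satisfy (\ref{fin4int}) for every $n$; nothing shows that a solution of (\ref{fin4int}) for one fixed $n$ solves (\ref{v}), conserves energy, or solves (\ref{fin4int}) for any other $n'$ --- indeed avoiding any such converse is the whole point of the uniqueness machinery. Consequently, ``enlarging $n$ to accommodate $M'$'' changes the equation: the function you produce on $[t_0,t_0+\delta]$ solves the $n'$-equation, not the $n$-equation whose maximal interval you are extending, and the identification ``agrees with $v$ by the local uniqueness'' collapses, because $v$ restricted to $[t_0,T^*_{\max})$ is not known to solve the $n'$-equation, so uniqueness for that equation says nothing about $v$. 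The repair consistent with the paper is to keep $n$ fixed: continuation from $t_0$ requires only $n\ge n_0(\|v(t_0)\|_{\dot H^{\theta_0}})$ and a time step depending on that norm, and in the paper's actual use of this theorem (Theorems~\ref{T:globs>0} and~\ref{T:globs=0}) this is guaranteed because the solution is first identified, on its maximal interval, with the energy-conserving limit $v^\infty$, for which $\|v^\infty(t)\|_{\dot H^0}=\|v^0\|_{\dot H^0}$ at every recentering time. Without some such control of the $\dot H^{\theta_0}$ norm along the solution, your contradiction argument does not go through for a fixed-$n$ solution.
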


\begin{proof}
The proof is similar to the proof of the first part of
Theorem~\ref{T:loca}. We use the Contraction Principle.  Based on
(\ref{Lipf}) and (\ref{F1n})  we derive (\ref{ftnl}) for $[0,T]$
with $T$ small enough and $n$ large enough. The existence and
uniqueness of a solution to (\ref{yvn}) follows in a standard way
from the Contraction Principle.  Extension to the maximal interval
is treated as in Theorem \ref{T:loca}, in particular,  the
uniqueness of a solution to (\ref{fin4int}) on a maximal interval
follows in a standard way from the proof of the local uniqueness.
\end{proof}

\begin{remark}
{\rm
Analogous local existence and uniqueness theorem for
solutions of (\ref{fin4int}) (similar results can be found in
\cite{Colliander03}, \cite{Colliander04}) can be proven for
$0\geq s>-1$ using the boundedness in negative spaces of
linear operator $L_{v^{0}}^{-1}$ and multilinear operators
which enter (\ref{fin4int}). We do not consider this case
here.
}
\end{remark}

\begin{theorem}\label{T:globs>0}
 Let $s\in (0,1/2]$ and $T>0$. For any initial data
$v(0)=v^{0}\in \dot{H}^{s}$ the solution $v=v^\infty$ of
equation~$(\ref{vint})$, constructed in Theorem~$\ref{T:gal}$, is
unique on the interval $t\in [0,T]$, is of the class
$C([0,T],\dot{H}^s)$ and depends Lipschitz continuously on the
initial data in the sense of~$(\ref{ylip})$.
\end{theorem}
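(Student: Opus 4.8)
The plan is to assemble the global statement from the local machinery of this section together with the existence and energy conservation already in hand. Existence is provided by Theorem~\ref{T:gal}: for $s\in(0,1/2]$ the solution $v^\infty\in L_\infty([0,T];\dot{H}^s)$ solves~(\ref{vint}) in the sense of Definition~\ref{D:defmain} and conserves energy, $\|v^\infty(t)\|_{\dot{H}^0}=\|v^0\|_{\dot{H}^0}$. By Corollary~\ref{C:one-two} it satisfies the second integrated form~(\ref{fin3}). Since the third form~(\ref{fin4int}) is obtained from~(\ref{one}) purely by the splitting~(\ref{R3sum1}) and the second differentiation by parts~(\ref{R3ns0})---algebraic rearrangements legitimate for any $L_\infty(\dot{H}^0)$ solution in view of the boundedness of the operators $B_{30}^{(n)}$, $R_{3\text{nres}1}^{(n)}$, $B_{40}^{(n)}$---the same $v^\infty$ satisfies~(\ref{fin4int}) for every $n\in\mathbb{N}$.

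For the local theory I would fix $v^0$, set $y=v-v^0$, and invoke Theorem~\ref{T:Fn} to recast~(\ref{yvn}) as the fixed point problem~(\ref{Ftaun}) with $\mathcal{F}_{\tau,n}$ satisfying~(\ref{Lipf}) and~(\ref{F1n}). Choosing $n$ large (so that $C(C_0)F_1(n)\le 1/4$, using~(\ref{F1n})) and then $\tau$ small makes $\mathcal{F}_{\tau,n}(\cdot,v^0)$ a contraction, and Theorem~\ref{T:Fnex} supplies the unique local solution $y\in C_0([0,T^*];\dot{H}^s)$, hence $v=y+v^0\in C([0,T^*];\dot{H}^s)$. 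Because $v^\infty$ satisfies~(\ref{fin4int}) and lies in $L_\infty([0,T];\dot{H}^s)$, the function $y^\infty=v^\infty-v^0$ is itself a fixed point of $\mathcal{F}_{\tau,n}$ in $L_\infty([0,T^*];\dot{H}^s)$; by uniqueness of the fixed point it coincides with the contraction solution on $[0,T^*]$, so $v^\infty\in C([0,T^*];\dot{H}^s)$.

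To globalize, let $v$ be any solution in $L_\infty([0,T];\dot{H}^s)$ with $v(0)=v^0$, which by the same rearrangement satisfies~(\ref{fin4int}), and put $T_1=\sup\{t\le T:\ v\equiv v^\infty \text{ on }[0,t]\}$. By Remark~\ref{R:regularity} each Fourier component is continuous in $t$, so the coincidence set is closed and $v(T_1)=v^\infty(T_1)$. If $T_1<T$ I would restart the local argument from $T_1$ with common initial datum $v(T_1)$: energy conservation gives $\|v^\infty(T_1)\|_{\dot{H}^0}=\|v^0\|_{\dot{H}^0}$, so the thresholds $n_0$ and $\tau$ from Theorems~\ref{T:Fn} and~\ref{T:Fnex} depend only on $\|v^0\|_{\dot{H}^0}$, while the hypothesis~(\ref{C(C0)}) holds with a $t$-uniform $C_0$ because both solutions belong to $L_\infty([0,T];\dot{H}^{\theta_0})$ (for $s<1/2$ this is the conserved/bounded energy with $\theta_0=0$; at the endpoint $s=1/2$ one uses the $L_\infty(\dot{H}^{\theta_0})$ bound with $0<\theta_0<1/2$ coming from $v^\infty\in L_\infty([0,T];\dot{H}^{1/2})$). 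Local uniqueness on $[T_1,T_1+\tau]$ then forces $v\equiv v^\infty$ there, contradicting the maximality of $T_1$; hence $T_1=T$, which proves both uniqueness and $v^\infty\in C([0,T];\dot{H}^s)$.

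Lipschitz dependence follows the template already displayed in this section. For two solutions with data $v^0,w^0$ I set $y=v-v^0$, $z=w-w^0$, fixed points of $\mathcal{F}_{\tau,n}(\cdot,v^0)$ and $\mathcal{F}_{\tau,n}(\cdot,w^0)$, and apply~(\ref{Lipf}) with the above choice of $n,\tau$ to obtain~(\ref{ftnl}); absorbing the $\frac12\|y-z\|$ term yields the one-step estimate $\|v-w\|_{L_\infty([T_1,T_1+\tau];\dot{H}^s)}\le(2C+1)\|v^0-w^0\|_{\dot{H}^s}$. Iterating over $N=[T/\tau]+1$ steps, with $\tau$ kept uniform, gives~(\ref{ylip}). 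The main obstacle I expect is precisely the bookkeeping that keeps the step $\tau$ uniform across $[0,T]$: it rests on the $t$-uniform control of $C(C_0)$, delivered by the conserved $\dot{H}^0$-norm for $s<1/2$ and by the $L_\infty(\dot{H}^{\theta_0})$ bound at the endpoint $s=1/2$, together with verifying that every competing $L_\infty(\dot{H}^s)$ solution genuinely satisfies the third integrated form~(\ref{fin4int}) so that it falls under the contraction's uniqueness.
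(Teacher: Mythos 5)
Your skeleton coincides with the paper's strategy for this theorem: recast~(\ref{yvn}) as the fixed-point problem~(\ref{Ftaun}) via Theorem~\ref{T:Fn}, pick $n$ large using~(\ref{F1n}) and then $\tau$ small (uniformly, thanks to energy conservation for $s<1/2$, resp.\ the $\dot H^{\theta_0}$ bound at $s=1/2$), invoke Theorem~\ref{T:Fnex}, restart at the maximal coincidence time, and iterate to get~(\ref{ylip}). The genuine gap is in the step you dispose of in one sentence: that $v^\infty$ --- and, for your broader uniqueness claim, an \emph{arbitrary} competing solution in $L_\infty([0,T];\dot H^s)$ --- satisfies the third integrated form~(\ref{fin4int}). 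You call the passage from~(\ref{one}) to~(\ref{fin4int}) ``algebraic rearrangements legitimate for any $L_\infty(\dot H^0)$ solution in view of the boundedness of the operators.'' It is not algebraic: the second differentiation by parts~(\ref{R3ns0}) integrates by parts in time \emph{inside} an infinite sum and then substitutes equation~(\ref{v}) for $\partial_t v_{k_j}$ term by term. For a solution in the sense of Definition~\ref{D:defmain} one only knows that each $v_k$ is absolutely continuous and that~(\ref{v}) holds a.e.; to reassemble the resulting series into $B_{30}^{(n)}$ and $B_{40}^{(n)}$ one must justify interchanging $\int_0^t$ with the infinite sums and rearranging the triple and quadruple sums (dominated convergence/Fubini, resting on \emph{absolute} convergence bounds uniform in $t$), which is strictly more than boundedness of the limiting operators between $\dot H^s$ spaces. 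The paper explicitly treats these derivations as formal (see the discussion following Definition~\ref{D:defmain}) and supplies the rigorous justification by a different route: it builds the Galerkin truncated version of~(\ref{fin4}) starting from~(\ref{onem}), where all sums are finite and every manipulation is exact, and then passes to the limit $m\to\infty$ along the subsequence of Theorem~\ref{T:gal}, exactly as in Corollary~\ref{C:one-two}. That limit passage is the step your proposal is missing.

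A consequence of the two routes differing here: the Galerkin argument only shows that the particular solution $v^\infty$ constructed in Theorem~\ref{T:gal} satisfies~(\ref{fin4int}), which is all the paper needs, since uniqueness is then settled inside Theorem~\ref{T:Fnex} and the $\dot H^s$ bound from Theorem~\ref{T:gal} forces $T^*_{\max}=\infty$. Your stronger assertion --- uniqueness among \emph{all} weak solutions of~(\ref{vint}) --- would indeed follow if your direct-rearrangement claim were proved (and it plausibly can be: the appendix proofs of the relevant lemmas bound the sums with absolute values inserted, so the required absolute convergence is available), but as written it is asserted, not proved. To repair the argument, either carry out the Fubini/dominated-convergence justification for a general weak solution, or follow the paper and route the verification of~(\ref{fin4int}) through the Galerkin approximation.
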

\begin{proof}
First we observe that for each $n\in\mathbb{N}$ one
can construct a Galerkin truncated version of
(\ref{fin4}). In fact, this can be done step by
step as the previous derivation of~(\ref{fin4}) but
starting from~(\ref{onem}). Therefore it is clear
that the Galerkin solution of equation~(\ref{vm0})
(and~(\ref{onem})) established in
Proposition~\ref{L:L2m} is also the unique solution
of the Galerkin version of~(\ref{fin4int}) for
every $n\in\mathbb{N}$. Notice that $v^{(m)}$ is
independent of $n$. Similarly to
Theorem~\ref{T:gal} we can pass to the limit along
a subsequence $m_j\to\infty$ in the Galerkin
version of the equation~(\ref{fin4int}) (as in
Corollary~\ref{C:one-two}) to see that the solution
$v=v^\infty$ of (\ref{vint}) also
satisfies~(\ref{fin4int}) for each $n$. Therefore
$v=v^\infty$ belongs to $C([0,T^*_{\max}),
\dot{H}^s)$ on the maximal interval of existence in
Theorem~\ref{T:Fnex} and is unique.  In addition,
$\|v(t)\|_{\dot{H}^0}=\|v(0)\|_{\dot{H}^0}$ for all
$t\in [0,T^*_{\max})$. From Theorem~\ref{T:gal} we
have that $v$ is bounded in $\dot{H}^s$ on any
finite time interval. Hence $T^*_{\max}=\infty$,
the energy is conserved for all $t$ and
$v=v^\infty\in C([0,T], \dot{H}^s)$ for any finite
$T$. Finally, the Lipschitz continuity follows
from~(\ref{ylip}) with $\theta=s$.
\end{proof}

Finally, we consider the case $s=0$, which still remains
unsettled. For this purpose we use the regularization of the
initial data.
\begin{theorem}\label{T:globs=0}
Let $s=0$ and $T>0$. For any initial data
$v(0)=v^{0}\in \dot{H}^{0}$ the
equation~$(\ref{vint})$ has on $t\in [0,T]$ a unique solution
 of class $C([0,T],\dot{H}^0)$,
which  depends Lipschitz
continuously on the initial data.
\end{theorem}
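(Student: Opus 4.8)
The plan is to realize the $\dot{H}^0$ solution as a limit of the regular solutions furnished by Theorem~\ref{T:globs>0}, exploiting that the squeezing-based Lipschitz estimate of Section~\ref{Sec:nregin} is available at $\theta=0$ with controlling exponent $\theta_0=0$. Given $v^0\in\dot{H}^0$, I first regularize by setting $v^0_j=\Pi_j v^0$ with $\Pi_j$ as in (\ref{Pm}); then $v^0_j$ is a trigonometric polynomial, so $v^0_j\in\dot{H}^s$ for every $s$, while $\|v^0_j\|_{\dot{H}^0}\le\|v^0\|_{\dot{H}^0}$ and $v^0_j\to v^0$ in $\dot{H}^0$. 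Applying Theorem~\ref{T:globs>0} with a fixed $s\in(0,1/2]$, each $v^0_j$ generates a unique global solution $v_j\in C([0,T],\dot{H}^s)$ that conserves energy, so that $\|v_j(t)\|_{\dot{H}^0}=\|v^0_j\|_{\dot{H}^0}\le\|v^0\|_{\dot{H}^0}$ for all $t\in[0,T]$.

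The decisive observation is that the squeezing-based Lipschitz estimate (\ref{ylip0}) is available at $\theta=\theta_0=0$: by the case $\theta=0<1/2$ of Theorem~\ref{T:Fn} the controlling exponent may be taken to be $\theta_0=0$, and each $v_j$ satisfies (\ref{fin4int}) while lying in $L_\infty([0,T],\dot{H}^0)$. Applying (\ref{ylip0}) to the pair $(v_j,v_l)$ gives
$$\|v_j-v_l\|_{L_\infty([0,T],\dot{H}^0)}\le C''(2C+1)^{T}\,\|v^0_j-v^0_l\|_{\dot{H}^0},$$
where the constants depend only on $T$ and on $C_0=\sup_j\|v_j\|_{L_\infty([0,T],\dot{H}^0)}\le\|v^0\|_{\dot{H}^0}$, hence are uniform in $j,l$. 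Since $\{v^0_j\}$ is Cauchy in $\dot{H}^0$, the family $\{v_j\}$ is Cauchy in $C([0,T],\dot{H}^0)$ and converges to some $v\in C([0,T],\dot{H}^0)$ with $\|v(t)\|_{\dot{H}^0}=\|v^0\|_{\dot{H}^0}$. To check that $v$ solves (\ref{vint}) in the sense of Definition~\ref{D:defmain}, I would pass to the limit mode-by-mode: for each fixed $k$ the quadratic integrand in (\ref{vint}) is controlled uniformly through (\ref{d}) and depends continuously on its arguments in $\dot{H}^0$, so dominated convergence delivers (\ref{vint}) for $v$. Running the same limiting procedure on the Galerkin version of (\ref{fin4int}), as in the proof of Theorem~\ref{T:globs>0}, shows in addition that $v$ satisfies the third form (\ref{fin4int}) for every $n$.

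For uniqueness and the full Lipschitz statement I would invoke the time-averaging induced squeezing argument of Section~\ref{Sec:nregin} directly at the level $\theta=\theta_0=0$. Any solution $w\in C([0,T],\dot{H}^0)$ of (\ref{vint}) satisfies the integrated second form (\ref{fin3}) by Remark~\ref{R:two-oper}; and (\ref{fin4int}) is an exact algebraic rearrangement of (\ref{fin3}) obtained from the splitting (\ref{R3sum1}) together with the identity (\ref{R3ns0}), so, since all operators entering (\ref{fin4int}) are bounded on $\dot{H}^0$, $w$ satisfies (\ref{fin4int}) as well. Given two solutions with $v(0)=w(0)$, let $[0,T_1]$ be their maximal interval of coincidence, which is closed by Remark~\ref{R:regularity}. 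If $T_1<T$, then on $[T_1,T_1+\tau]$ both solve (\ref{yvn}), i.e. the fixed-point equation (\ref{Ftaun}); choosing $n$ large and $\tau$ small as in (\ref{ftnl}) renders $\mathcal{F}_{\tau,n}$ a contraction in $L_\infty([T_1,T_1+\tau],\dot{H}^0)$, which forces $v\equiv w$ there and contradicts maximality. Hence $T_1=T$, and the Lipschitz dependence is exactly (\ref{ylip0}) with $\theta=0$.

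The principal obstacle I anticipate is not the contraction mechanics, which are already packaged in Theorem~\ref{T:Fn}, but two regularity bookkeeping points. First, one must confirm that the controlling exponent may indeed be chosen $\theta_0=0$ when $\theta=0$, so that the Lipschitz constant in (\ref{ylip0}) is governed solely by the conserved $L_2$ norm and is therefore uniform along the regularizing sequence; this is precisely what allows $\{v_j\}$ to be Cauchy in $C([0,T],\dot{H}^0)$ rather than in some weaker norm. Second, one must justify the passage to the limit in the nonlinear terms of (\ref{vint}) and (\ref{fin4int}) under mere $\dot{H}^0$-convergence, which rests on the uniform bound (\ref{d}) and on the $\dot{H}^0$-continuity of the multilinear operators $B_2$, $B_{30}^{(n)}$, $R_{3\text{res}}$, $R_{3\text{nres}1}^{(n)}$ and $B_{40}^{(n)}$ supplied by the appendix lemmas.
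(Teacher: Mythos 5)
Your proposal is correct and follows essentially the same route as the paper's own proof: regularize the initial data, invoke Theorem~\ref{T:globs>0} for global solutions with the regularized data, use the squeezing Lipschitz estimate~(\ref{ylip0}) at $\theta=\theta_0=0$ (so that the constants depend only on the conserved $\dot{H}^0$ norm) to conclude the approximating solutions are Cauchy in $C([0,T],\dot{H}^0)$, pass to the limit in~(\ref{vint}) and in~(\ref{fin4int}), and obtain uniqueness and Lipschitz dependence from the contraction theory for~(\ref{fin4int}) (Theorem~\ref{T:Fnex}). The only place you go beyond the paper is in asserting that \emph{every} $C([0,T],\dot{H}^0)$ solution of~(\ref{vint}) satisfies~(\ref{fin4int}) as an ``exact algebraic rearrangement'' of~(\ref{fin3}); be aware that this rearrangement is not purely algebraic, since the identity~(\ref{R3ns0}) substitutes equation~(\ref{v}) for $\partial_t v$ inside infinite sums and therefore needs the same convergence justification as Remark~\ref{R:two-oper} (the paper avoids this by verifying~(\ref{fin4int}) only for the constructed limit, via the approximating solutions, and then applying the uniqueness assertion of Theorem~\ref{T:Fnex} to that equation).
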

\begin{proof}
Let $v(0)=v^0\in\dot{H}^0$.
We approximate the initial data $v^0$ by a sequence of
smooth functions $v_{(j)}^0\in\dot{H}^{s_0}$, where
$s_0>0$, and $\|v^0-v_{(j)}^0\|_{\dot{H}^{0}}\to0$
as $j\to\infty$. By Theorem~$\ref{T:globs>0}$,
for each $j$ there exits a unique global solution
$v_{(j)}\in C([0,T], \dot{H}^{s_0})$, which conserves energy,
that is,
$\|v_{(j)}(t)\|_{\dot{H}^0}=\|v_{(j)}(0)\|_{\dot{H}^0}$.
Moreover, from  the Lipschitz estimate~(\ref{ylip})
we have
$$
\|v_{(j)}-v_{(i)}\|_{L_{\infty }([0,T],\dot{H}^0)}\le
C(T,\|v^0\|_{\dot{H}^0})
\|v^0_{(j)}-v^0_{(i)}\|_{\dot{H}^0}.
$$
Since $v_{(j)}^0$ is a Cauchy sequence in $\dot{H}^{0}$, it follows
that $v_{(j)}(t)$ is a Cauchy sequence in $C([0,T],\dot{H}^0)$. We
again denote the corresponding limit by $v^\infty(t)$, although the
origin of this limit is different from that in Theorem~\ref{T:gal}.
We can pass to the limit in equation~(\ref{vint}) for $v=v_{(j)}$,
as $j\to\infty$, in a similar (simpler) way as we did in
Theorem~\ref{T:gal}. We obtain that $v^\infty$ is a solution
of~(\ref{vint}), which conserves energy since
$\|v^\infty(t)\|_{\dot{H}^0}=
\lim_{j\to\infty}\|v_{(j)}(t)\|_{\dot{H}^0}=
\lim_{j\to\infty}\|v_{(j)}^0\|_{\dot{H}^0}= \|v^0\|_{\dot{H}^0}$,
for all $t\in[0,T]$. A totally similar passage to the limit shows
that $v^\infty$ is a solution of (\ref{defofsol}), (\ref{fin3}),
and, most importantly, of  equation~(\ref{fin4int}) for any
$n\in\mathbb{N}$. Equation~(\ref{fin4int}) for a sufficiently large
$n$ depending on $\|v^0\|_{\dot{H}^0}$
  has a unique
local solution in view of Theorem~\ref{T:Fnex}.
However, there we have $T^*_{\max}=\infty$, since
$\|v^\infty(t)\|_{\dot{H}^0}=\|v^\infty(0)\|_{\dot{H}^0}$.
Since $v^\infty$ is the unique solution of~(\ref{fin4int})
for $n$ large enough, we apply Theorem~\ref{T:Fnex}
to show the Lipschitz continuous dependence on the initial
data in the $\dot{H}^0$ norm.
\end{proof}

We now study the Lipschitz dependence of $v\left( t\right)$ on
the initial data $v(0)$ in $\dot{H}^{s}$, $s<0$ if
$v\left( 0\right) $  is bounded in $\dot{H}^{0}$.

\begin{theorem} \label{T:negllip}
Let $T>0$ be given, and $\theta \in (-1, 0]$. If
$\|v^{0}\|_{\dot{H}^{0}}+\|w^{0}\|_{\dot{H}^{0}}\leq M$, then the
two solutions $v(t)$ and $w(t)$ of equation~$(\ref{vint})$, with
$v(0)=v^0$ and $w(0)=w^0$,  satisfy the estimate
\begin{equation}\label{vwlip}
\|v-w\|_{L_{\infty }([0,T],\dot{H}^{\theta }) }\leq C^{\prime }C_{M}^{T}
\|v^{0}-w^{0}\|_{\dot{H}^{\theta }},
\end{equation}
where $C^{\prime },C_{M}$ depend only on $M$ and $\theta$.
\end{theorem}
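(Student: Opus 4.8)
The plan is to rerun, in the weaker norm $\dot H^\theta$ with $\theta\in(-1,0]$, the same fixed-point and iteration scheme that was used for $\theta\ge0$ in Theorems~\ref{T:Fn}--\ref{T:globs=0}, based on the third form of the KdV~(\ref{fin4int}) and its $y$-version~(\ref{yvn}). First I would collect the inputs that are already available. By Theorem~\ref{T:globs=0} the data $v^0,w^0\in\dot H^0$ produce unique global solutions $v,w\in C([0,T];\dot H^0)$ that conserve energy, so $\|v(t)\|_{\dot H^0}=\|v^0\|_{\dot H^0}\le M$ and likewise for $w$ on all of $[0,T]$; moreover both satisfy~(\ref{fin4int}) for every $n\in\mathbb N$. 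This furnishes the uniform bound~(\ref{C(C0)}) with $\theta_0=0$ and $C_0=C_0(M)$, which is exactly what makes the constant $C(C_0)$ in~(\ref{Lipf}) depend on $M$ alone. Since existence and uniqueness are thus settled, only the Lipschitz estimate~(\ref{vwlip}) remains to be proved.

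The heart of the argument is to establish~(\ref{Lipf})--(\ref{F1n}) in $\dot H^\theta$ for $\theta\in(-1,0]$ rather than for $\theta\ge0$, and two ingredients must be upgraded to negative exponents. First, the inverse $L_{v^0}^{-1}=(I-\tfrac13 B_2(v^0,\cdot))^{-1}$ must be bounded on $\dot H^\theta$: this is precisely the negative-$\theta$ part of Lemma~\ref{L:linB2}, namely estimate~(\ref{B2estofnorm}) for $\theta\in[-1,s]$; because $\|v^0\|_{\dot H^{-1}}\le\|v^0\|_{\dot H^0}\le M$, Remark~\ref{R:s-and-theta} yields $\|L_{v^0}^{-1}\|_{\mathcal L(\dot H^\theta)}\le F(M)$, which is the use of~(\ref{B2estofnorm}) anticipated in Remark~\ref{R:Z0}. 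Second, each multilinear operator on the right of~(\ref{yvn}) must be estimated with the \emph{difference} factor ($v-w$ or $v^0-w^0$) measured in $\dot H^\theta=\dot H^{-\alpha}$, $\alpha=-\theta\in[0,1)$, and the remaining solution factors measured in $\dot H^0$ and bounded by $M$. For $R_{3\mathrm{nres}1}^{(n)}$ this is exactly the role of the parameter $\alpha$ in Lemma~\ref{L:R31}; negative-norm versions of Lemma~\ref{L:B41n} and Lemma~\ref{L:B42n} control $B_{40}^{(n)}$, and the squeezing factor $F_1(n)\to0$ of~(\ref{F1n}), coming from $B_{30}^{(n)}$, is supplied by the negative-norm form of Lemma~\ref{L:R310}. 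Telescoping $v=y+v^0$, $w=z+w^0$ and collecting the mismatched linear terms into $L_{w^0}$ as in the proof of Theorem~\ref{T:loca}, this gives~(\ref{Lipf}) with $C(C_0)=C(M)$, a squeezing term $F_1(n)\to0$, and a short-time factor $\tau$ on the integral terms.

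Once~(\ref{Lipf}) and~(\ref{F1n}) hold in $\dot H^\theta$, the endgame is identical to the $\theta\ge0$ case: I would first choose $n$ so large that $C(M)F_1(n)\le\tfrac14$, then $\tau$ so small that $C(M)\tau F_2(n)\le\tfrac14$, producing a contraction with Lipschitz constant $\le\tfrac12$ on each slab $[T_1,T_1+\tau]$, hence the one-step estimate~(\ref{ftnl}). Iterating $[T/\tau]+1$ times gives~(\ref{ylip}); since the $\dot H^0$ (energy) bounds are uniform in $T$, one can take $\theta_0=0$ throughout and convert the iterated estimate into the clean exponential form~(\ref{ylip0}), which is exactly~(\ref{vwlip}) with $C',C_M$ depending only on $M$ and $\theta$.

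The step I expect to be the main obstacle is the negative-norm estimate of the resonant contribution $R_{3\mathrm{res}}=A_{\mathrm{res}}$. Using energy conservation, Remark~\ref{R:conservation} and~(\ref{R3resconserv}) let me write $A_{\mathrm{res}}(v)_k=\tfrac{v_k}{k}(\|v^0\|_{L_2}^2-|v_k|^2)$, whose principal part $\tfrac{\|v^0\|_{L_2}^2}{k}v_k$ is a \emph{linear} diagonal multiplier; the plan is to absorb this part into the invertible left-hand operator through an integrating factor, leaving only the genuinely cubic remainder $-\tfrac{v_k|v_k|^2}{k}$ to be estimated trilinearly with one factor in $\dot H^{-\alpha}$. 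The delicate point, and the reason the naive scheme breaks down for $\theta<0$, is that the difference $A_{\mathrm{res}}(v)-A_{\mathrm{res}}(w)$ cannot be controlled by simply pairing the low-regularity factor $v-w\in\dot H^\theta$ against a solution lying only in $L_2$ (the pairing $\langle v-w,\,v+w\rangle$ is not bounded in the weak norm). One must instead exploit the $\tfrac1k$ gain in $A_{\mathrm{res}}$ together with the conserved $L_2$ bound $M$ and the high/low splitting of $\Pi_n,\Pi_{-n}$, so that every surviving term either inherits the squeezing factor $F_1(n)$ or carries the short-time factor $\tau$; it is here that the precise structure of $A_{\mathrm{res}}$ from~(\ref{Ares}) and the energy conservation enter in an essential way, and I expect the bulk of the technical work to reside in this estimate.
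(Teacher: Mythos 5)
Your proposal is, in outline, exactly the paper's proof of Theorem~\ref{T:negllip}: the paper reruns the contraction--iteration scheme of section~\ref{Sec:nregin} in $\dot H^{\theta}$, citing Lemma~\ref{L:linB2} (via Remark~\ref{R:Z0}) for $L_{v^0}^{-1}$, Lemma~\ref{L:negB21} for $B_2$, Lemma~\ref{L:R310} for $B_{30}^{(n)}$, and Lemmas~\ref{L:B41n}, \ref{L:B42n} for $B_{40}^{1}$, $B_{40}^{2}$, the intersection of whose validity ranges produces the restriction $\theta\in(-1,0]$; it then passes from~(\ref{ftnl}) to~(\ref{ylip}) to~(\ref{vwlip}) precisely as you describe. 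Up to that point your plan and the paper coincide.

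Where you differ from the paper is that you flag the resonant term, and here you have put your finger on a genuine gap --- one that is present in the paper's own proof, not only in your proposal. The lemmas the paper invokes cover every operator entering~(\ref{fin4int}) \emph{except} $R_{3\mathrm{res}}=A_{\mathrm{res}}$: Lemma~\ref{L:Ares} is stated only for $s\ge 0$ and, more importantly, gives only a bound, not a difference (Lipschitz) estimate, and nothing in section~\ref{S:App} supplies the latter for $\theta<0$. Your diagnosis of why it cannot be supplied is correct: by~(\ref{Ares}) and energy conservation, $A_{\mathrm{res}}(v)_k-A_{\mathrm{res}}(w)_k$ contains the term $\frac{w_k}{k}\bigl(\|v^{0}\|_{L_2}^{2}-\|w^{0}\|_{L_2}^{2}\bigr)$, and on an $L_2$-ball the energy difference $\|v^{0}\|_{L_2}^{2}-\|w^{0}\|_{L_2}^{2}=\langle v^{0}-w^{0},\,v^{0}+w^{0}\rangle$ is \emph{not} controlled by $\|v^{0}-w^{0}\|_{\dot H^{\theta}}$ when $\theta<0$: for $w^{0}=v^{0}+2\epsilon\cos(Nx)$ (with $v^{0}_{\pm N}=0$) the energy difference equals $2\epsilon^{2}$ for every $N$, while $\|v^{0}-w^{0}\|_{\dot H^{\theta}}=\sqrt2\,\epsilon N^{\theta}\to0$. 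Your proposed repair --- absorbing the linear part $\frac{\|v^{0}\|_{L_2}^{2}}{6k}v_k$ into the invertible left-hand operator by an integrating factor --- does not escape this: the integrating factors of the two solutions are $e^{i\|v^{0}\|_{L_2}^{2}t/(6k)}$ and $e^{i\|w^{0}\|_{L_2}^{2}t/(6k)}$, and comparing the gauged solutions reintroduces phases differing by $t\bigl(\|v^{0}\|_{L_2}^{2}-\|w^{0}\|_{L_2}^{2}\bigr)/(6k)$, i.e.\ the same uncontrolled quantity; neither the $1/k$ gain, nor the factor $\tau$ (which is iterated $T/\tau$ times), nor the squeezing in $n$ (which only acts on $B_{30}^{(n)}$) touches this term.

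Moreover, the gap is not merely technical: estimate~(\ref{vwlip}) itself fails in the model obtained by keeping only the resonant term. Solving $\partial_t v_k=\frac{i}{6k}v_k(\|v\|_{L_2}^{2}-|v_k|^{2})$ gives $|v_k(t)|=|v^{0}_k|$, hence $v_k(t)=\exp\bigl(\tfrac{it}{6k}(\|v^{0}\|_{L_2}^{2}-|v^{0}_k|^{2})\bigr)v^{0}_k$; with the data above and $v^{0}_{1}\neq0$ one gets $\|v(t)-w(t)\|_{\dot H^{\theta}}\ge 2|v^{0}_{1}|\,|\sin(\epsilon^{2}t/6)|$, bounded away from zero uniformly in $N$, while the right-hand side of~(\ref{vwlip}) tends to zero as $N\to\infty$. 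The terms dropped in this model are exactly the ones your lemmas (and the paper's) estimate in absolute value, so they cannot be used to cancel this contribution. This phase-decoherence mechanism is the known obstruction to uniform continuity of the periodic KdV flow in negative Sobolev norms; Molinet's ill-posedness results for KdV on the torus show in particular that the flow map is not weakly continuous on $L_2$, a property that~(\ref{vwlip}) with $\theta<0$ would imply. What the scheme does prove is~(\ref{vwlip}) restricted to pairs of data with $\|v^{0}\|_{L_2}=\|w^{0}\|_{L_2}$ (then the offending term vanishes identically), or a Lipschitz estimate for the solutions compared after gauging each by its own resonant phase --- but both are genuinely weaker statements than the theorem. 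So your instinct to isolate this step as the crux was exactly right; the honest conclusion is that it cannot be closed by estimates of the type used here.
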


\begin{proof}
We derive (\ref{ftnl}) and (\ref{ylip}) based on the
boundedness in negative spaces of linear operator
$L_{v^{0}}^{-1}$ and multilinear operators which enter
(\ref{fin4int}). Namely, we use Lemma~\ref{L:negB21},
Lemma~\ref{L:R310},  Lemma~\ref{L:B41n} and
  Lemma~\ref{L:B42n}.
The intersection of the range of the values of $s$ in the
above lemmas  and Lemma~\ref{L:linB2} gives the restriction
on the range of $\theta$, namely $\theta\in (-1,0]$. From
 (\ref{ftnl})  we infer  (\ref{ylip})  which yields  (\ref{vwlip}).
\end{proof}

\setcounter{equation}{0}

\section{Appendix. Estimates for convolution operators}

\label{S:App}

In this section we study the continuity and
 smoothing properties of the
convolution operators $B_1$, $B_2$, $B_3$,
$B_{30}^{(n)}$, $B_4$, $B_{40}^{(n)}$,
$B_{40}^1$, $B_{40}^2$, $R_3$ and the resonant operator
$A_{\mathrm{res}}$ in the Sobolev spaces $\dot H^s$.
Although the corresponding constants $c_k$ do not play a
role in our analysis, the explicit expressions
for them can easily be given and, in fact,
are given in most cases at the end of the proofs.

\begin{lemma}\label{L:B1}
Let $\theta>3/2$. Then the bilinear operator
$B_{1}$ defined in $(\ref{B1})$ maps
$\dot{H}^{0}\times\dot{H}^{0}$ into
$\dot{H}^{-\theta}$
and
satisfies the estimate
\begin{equation}  \label{estB1}
\|B_{1}(u,v)\|_{\dot{H}^{-\theta}}\leq c_{1}(\theta)
\|u\|_{\dot{H}^{0}}\|v\|_{\dot{H}^{0}}.
\end{equation}
\end{lemma}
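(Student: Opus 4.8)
The plan is to estimate the homogeneous negative norm of $B_1(u,v)$ directly from its Fourier coefficients, exploiting two elementary observations: the oscillating exponentials in~(\ref{B1}) have modulus one and so play no role in the bound, and the convolution of two $\dot H^{0}=\ell_{2}$ sequences lies in $\ell_{\infty}$. First I would bound each coefficient. Since $|e^{i3kk_{1}k_{2}t}|=1$,
\[
|B_{1}(u,v)_{k}|\le \tfrac12|k|\sum_{k_{1}+k_{2}=k}|u_{k_{1}}||v_{k_{2}}|,
\]
and by the Cauchy--Schwarz inequality the inner sum is the $k$-th coefficient of the convolution of $\{|u_{k}|\}$ with $\{|v_{k}|\}$, hence bounded uniformly in $k$:
\[
\sum_{k_{1}+k_{2}=k}|u_{k_{1}}||v_{k_{2}}|
=\sum_{k_{1}}|u_{k_{1}}||v_{k-k_{1}}|
\le\Bigl(\sum_{k_{1}}|u_{k_{1}}|^{2}\Bigr)^{1/2}
\Bigl(\sum_{k_{1}}|v_{k-k_{1}}|^{2}\Bigr)^{1/2}
=\|u\|_{\dot H^{0}}\|v\|_{\dot H^{0}}.
\]

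Next I would insert this pointwise bound into the definition of the norm. Using $\|B_{1}(u,v)\|_{\dot H^{-\theta}}^{2}=\sum_{k\in\mathbb Z_{0}}|k|^{-2\theta}|B_{1}(u,v)_{k}|^{2}$ together with the estimate above gives
\[
\|B_{1}(u,v)\|_{\dot H^{-\theta}}^{2}
\le\tfrac14\,\|u\|_{\dot H^{0}}^{2}\|v\|_{\dot H^{0}}^{2}
\sum_{k\in\mathbb Z_{0}}|k|^{2-2\theta}.
\]
The series $\sum_{k\in\mathbb Z_{0}}|k|^{2-2\theta}=2\sum_{k=1}^{\infty}k^{2-2\theta}$ converges precisely when $2\theta-2>1$, i.e.\ when $\theta>3/2$, which is exactly the hypothesis; this is the only point at which the restriction on $\theta$ enters. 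Setting $c_{1}(\theta)=\tfrac12\bigl(\sum_{k\in\mathbb Z_{0}}|k|^{2-2\theta}\bigr)^{1/2}$ then yields~(\ref{estB1}) with an explicit constant, which proves the lemma.

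As for difficulty, there is essentially no obstacle here. The only thing to keep in mind is the bookkeeping of powers of $|k|$: the prefactor $\tfrac12 ik$ in~(\ref{B1}) contributes a factor $|k|^{2}$ to the squared norm, which is compensated by the weight $|k|^{-2\theta}$, leaving the summable tail $|k|^{2-2\theta}$. Thus a gain of more than $3/2$ derivatives is exactly what is needed to absorb both the single power of $|k|$ coming from the nonlinearity and the loss inherent in the embedding $\ell_{2}\!*\!\ell_{2}\hookrightarrow\ell_{\infty}$. Because the bound ignores the phases entirely, the same argument applies verbatim to the truncated operators obtained by inserting the projections $\Pi_{m}$, which is precisely what is needed for the \textit{a priori} bound on $\partial_{t}v^{(m)}$ in Proposition~\ref{L:ests1m}.
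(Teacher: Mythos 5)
Your proof is correct and is essentially the same argument as the paper's: bound the convolution coefficient uniformly by Cauchy--Schwarz (the phases being unimodular), then absorb the factor $\tfrac12|k|$ into the weight $|k|^{-2\theta}$ and use summability of $\sum_{k\in\mathbb{Z}_0}|k|^{2-2\theta}$ for $\theta>3/2$. Even your constant $c_1(\theta)=\tfrac12\bigl(\sum_{k\in\mathbb{Z}_0}|k|^{2-2\theta}\bigr)^{1/2}$ coincides with the paper's $\tfrac12 c(\theta-1)$.
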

\begin{proof}
We observe that setting
$d_k=\sum_{k_{1}+k_{2}=k}
e^{i3kk_{1}k_{2}t}u_{k_{1}}v_{k_{2}}$
we have
$
|d_k|
\le
\|u\|_{\dot{H}^0}\|v\|_{\dot{H}^0}.
$
Furthermore, for $\sigma>1/2$ we see that
 $d\in\dot{H}^{-\sigma}$ with
$$
\|d\|_{\dot{H}^{-\sigma}}=
\biggl(\sum_{k\in\mathbb{Z}_0}
|d_k|^2 k^{-2\sigma}\biggr)^{1/2}
\le c(\sigma)\|u\|_{\dot{H}^0}\|v\|_{\dot{H}^0}
\quad\text{and}\quad c(\sigma)=\biggl(\sum_{k\in\mathbb{Z}_0}
|k|^{-2\sigma}\biggr)^{1/2}.
$$
Hence for $\theta=\sigma+1>3/2$,
$B_1(u,v)\in \dot{H}^{-\theta}$
with
$\|B_1(u,v)\|_{\dot{H}^{-\theta}}\le
\frac{1}{2}c(\theta-1)
\|u\|_{\dot{H}^0}\|v\|_{\dot{H}^0}.
$
\end{proof}

\begin{lemma}
\label{L:B2} Let $s>-1/2$. Then the bilinear operator
$B_{2}$ defined in $(\ref{B2})$ maps $\dot{H}^{s}\times
\dot{H}^{s}$ into $\dot{H}^{s+1}$ and satisfies the
estimate
\begin{equation}
\|B_{2}(u,v)\|_{\dot{H}^{s+1}}\leq c_{2}(s+1)
\|u\|_{\dot{H}^{s}}\|v\|_{\dot{H}^{s}}.  \label{estB2}
\end{equation}
\end{lemma}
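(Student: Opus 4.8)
The plan is to prove the estimate $\|B_{2}(u,v)\|_{\dot{H}^{s+1}}\leq c_{2}(s+1)\|u\|_{\dot{H}^{s}}\|v\|_{\dot{H}^{s}}$ for $s>-1/2$ by a direct estimate on the Fourier coefficients, exploiting the smoothing factor $1/(k_1k_2)$ in the definition~\eqref{B2}. First I would write out the $\dot{H}^{s+1}$ norm of $B_2(u,v)$ as
\begin{equation*}
\|B_{2}(u,v)\|_{\dot{H}^{s+1}}^2=\sum_{k\in\mathbb{Z}_0}|k|^{2(s+1)}
\Biggl|\sum_{k_1+k_2=k}\frac{e^{i3kk_1k_2t}u_{k_1}v_{k_2}}{k_1k_2}\Biggr|^2,
\end{equation*}
and observe that the unimodular oscillating factor can be discarded by the triangle inequality, so it suffices to bound $\sum_k|k|^{2(s+1)}\bigl(\sum_{k_1+k_2=k}|u_{k_1}||v_{k_2}|/|k_1k_2|\bigr)^2$. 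The natural approach is duality: pair $B_2(u,v)$ against a test sequence $\{z_k\}\in\dot{H}^{-(s+1)}$ and estimate the resulting trilinear sum $\sum_{k_1+k_2=k}|u_{k_1}||v_{k_2}||z_k|\,|k|^{s+1}/|k_1k_2|$ by Cauchy--Schwarz, exactly in the style of the proof of Lemma~\ref{L:R31} above.

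The main step is to absorb the weight $|k|^{s+1}=|k_1+k_2|^{s+1}$ against the two denominators $|k_1|$ and $|k_2|$ and the available Sobolev weights $|k_1|^s$, $|k_2|^s$ coming from $\|u\|_{\dot{H}^s}$ and $\|v\|_{\dot{H}^s}$. Using $|k_1+k_2|^{s+1}\le C_s(|k_1|^{s+1}+|k_2|^{s+1})$ (valid for $s+1\ge 0$, i.e.\ $s\ge -1$, which covers our range), the generic term splits into two symmetric pieces; in the piece carrying $|k_1|^{s+1}$ one is left with $|k_1|^{s+1}/(|k_1|\,|k_1|^s)=1$ on the $u$-factor and a surplus weight $1/(|k_2|\,|k_2|^{s})=|k_2|^{-(s+1)}$ on the $v$-factor. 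The summability of the residual weight is governed by $\sum_{k_2}|k_2|^{-2(s+1)}<\infty$, which holds precisely when $2(s+1)>1$, that is $s>-1/2$; this is exactly the hypothesis of the lemma, and here is where the threshold $-1/2$ enters. After applying Cauchy--Schwarz in $(k_1,k_2)$ and summing the convergent series, one obtains
\begin{equation*}
|(B_2(u,v),z)|\le c_2(s+1)\|u\|_{\dot{H}^s}\|v\|_{\dot{H}^s}\|z\|_{\dot{H}^{-(s+1)}},
\end{equation*}
and the stated estimate follows by taking the supremum over $\|z\|_{\dot{H}^{-(s+1)}}\le 1$.

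I expect the bookkeeping of the weight splitting to be the only delicate point: one must keep track of which factor carries the extra $|k|^{s+1}$ after the inequality $|k_1+k_2|^{s+1}\le C_s(|k_1|^{s+1}+|k_2|^{s+1})$, and verify that in each of the two resulting sums exactly one residual negative power $|k_j|^{-(s+1)}$ survives to be summed. The convergence of $\sum|k_j|^{-2(s+1)}$ for $s>-1/2$ is the crux, and it both produces the finite constant $c_2(s+1)$ and pins down the sharp range of $s$. The oscillatory phase $e^{i3kk_1k_2t}$ plays no role in this particular estimate and is simply bounded by one in modulus, so no averaging is used here; the smoothing is entirely due to the algebraic factor $1/(k_1k_2)$, which is why $B_2$ gains a full derivative and has better continuity properties than $R_3$.
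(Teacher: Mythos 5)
Your proposal is correct and follows essentially the same route as the paper's proof: duality against $z\in\dot{H}^{-(s+1)}$, discarding the unimodular phase, splitting the weight via $|k_1+k_2|^{s+1}\le C_s(|k_1|^{s+1}+|k_2|^{s+1})$, and Cauchy--Schwarz, with the threshold $s>-1/2$ entering exactly through the convergence of $\sum_{j\in\mathbb{Z}_0}|j|^{-2(s+1)}$. The only cosmetic difference is that the paper writes the constant in the weight-splitting inequality explicitly as $2^{s}$ and carries out both symmetric pieces in full, yielding $c_2(s+1)=2^{s+1}c(s+1)$ with $c(p)=(\sum_{j\in\mathbb{Z}_0}j^{-2p})^{1/2}$.
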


\begin{proof}
By duality, (\ref{estB2}) is equivalent to the estimate
\begin{equation} \label{B2dual}
|(B_{2}(u,v),z)|\leq c_{2}(s+1)\| u\| _{\dot{H}^{s}}
\|v\|_{\dot{H}^{s}}\|z\|_{\dot{H}^{-(s+1 )}},
\end{equation}%
where $z$ is an arbitrary element in $\dot{H}^{-(s+1)}$.
Replacing the exponentials in~(\ref{B2}) by unity, setting
below $\tilde{u}_{k}=|u_{k}||k|^{s}$,
$\tilde{v}_{k}=|v_{k}||k|^{s}$,
$\tilde{z}_{k}=|z_{k}||k|^{-s-1 }$, and using the
inequality $|k_{1}+k_{2}|^{s+1}\leq
2^{s}(|k_{1}|^{s+1}+|k_{2}|^{s+1})$,
 we have
\begin{gather*}
\aligned|(B_{2}(u,v),z)|\leq \sum_{k}\sum_{k_{1}+k_{2}=k}
\frac{|u_{k_{1}}||v_{k_{2}}||z_{k}|}{|k_{1}||k_{2}|}=
\\
\sum_{k_{1}}\sum_{k_{2}}
\frac{|u_{k_{1}}||v_{k_{2}}||z_{k_{1}+k_{2}}|}
{|k_{1}||k_{2}|}=\sum_{k_{1}}\sum_{k_{2}}
\frac{\tilde{u}_{k_{1}}\tilde{v}_{k_{2}}
\tilde{z}_{k_{1}+k_{2}}|k_{1}+k_{2}|^{1 +s}}
{|k_{1}|^{1+s}|k_{2}|^{1+s}}\leq
\\
2^{s}\sum_{k_{1}}\sum_{k_{2}}
\frac{\tilde{u}_{k_{1}}\tilde{v}_{k_{2}}
\tilde{z}_{k_{1}+k_{2}}}{|k_{2}|^{1+s}}+
2^{s}\sum_{k_{1}}\sum_{k_{2}}
\frac{\tilde{u}_{k_{1}}\tilde{v}_{k_{2}}
\tilde{z}_{k_{1}+k_{2}}} {|k_{1}|^{1+s}}\leq
\\
2^{s}\left( \sum_{k_{1}}\sum_{k_{2}}
\tilde{u}_{k_{1}}^{2}\tilde{v}_{k_{2}}^{2} \right)^{1/2}
\left(\sum_{k_{2}}\frac{1}{|k_{2}|^{2s+2}}
\sum_{k_{1}}\tilde{z}_{k_{1}+k_{2}}^{2} \right) ^{1/2}+
\\
2^{s}\left( \sum_{k_{1}}\sum_{k_{2}}
\tilde{u}_{k_{1}}^{2}\tilde{v}_{k_{2}}^{2} \right)^{1/2}
\left(\sum_{k_{1}}\frac{1}{|k_{1}|^{2s+2}}
\sum_{k_{2}}\tilde{z}_{k_{1}+k_{2}}^{2} \right)^{1/2}=
\\
2^{s+1}c(s+1)\|u\|_{\dot{H}^{s}}\|v\|_{\dot{H}^{s}} \|z\|
_{\dot{H}^{-(s+1)}},
\endaligned
\end{gather*}
where
\begin{equation} \label{c2s}
c(p)=(\sum_{j\in \mathbb{Z}_{0}}j^{-2p})^{1/2}, \quad
p>1/2.
\end{equation}
This proves~(\ref{B2dual}), (\ref{estB2}) and provides an
expression for the constant~$c_{2}(s+1)$.
\end{proof}

Lemma~\ref{L:B2} essentially proves the following
well-known Banach algebra property of the Sobolev spaces
$H^s$, $s>1/2$.

\begin{corollary}
\label{C:B2} If $u,v\in H^s$, $s>1/2$, then the product
$uv\in H^s$ and
\begin{equation}\label{product}
\|uv\|_{H^s}\le K_1(s)\|u\|_{H^s}\|v\|_{H^s}.
\end{equation}

\begin{proof}
The estimate~(\ref{product}) is equivalent to
\begin{equation} \label{dualproduct}
|(uv,z)|\leq K_{1}(s)\|u\|_{H^{s}}\|v\|_{H^{s}}
\|z\|_{H^{-s}}.
\end{equation}
Setting $\tilde{u}_{k}=|u_{k}||k|^{s}$,
$\tilde{v}_{k}=|v_{k}||k|^{s}$,
$\tilde{z}_{k}=|z_{k}||k|^{-s}$ for $k\in \mathbb{Z}_{0}$
and
 arguing as in Lemma~\ref{L:B2} we have
\begin{gather*}
\aligned|(uv,z)|\leq \sum_{k_{1}\in \mathbb{Z}}
\sum_{k_{2}\in \mathbb{Z}}
|u_{k_{1}}||v_{k_{2}}||z_{k_{1}+k_{2}}|=|z_{0}|
\sum_{k_{1}+k_{2}=0}|u_{k_{1}}||v_{k_{2}}|+
|u_{0}|\sum_{k_{2}\in \mathbb{Z}_{0}}
\tilde{v}_{k_{2}}\tilde{z}_{k_{2}}+ \\
|v_{0}|\sum_{k_{1}\in \mathbb{Z}_{0}}
\tilde{u}_{k_{1}}\tilde{z}_{k_{1}}+ \sum_{k_{1}\in
\mathbb{Z}_{0}}\sum_{k_{2}\in \mathbb{Z}_{0}}
\frac{\tilde{u}_{k_{1}}\tilde{v}_{k_{2}}|k_{1}+k_{2}|^{s}
\tilde{z}_{k_{1}+k_{2}}} {|k_{1}|^{s}|k_{2}|^{s}} \leq
(3+2^{s}c(s-1))\|u\|_{H^{s}}\|v\|_{H^{s}}\|z\|_{H^{-s}},
\endaligned
\end{gather*}%
which proves~(\ref{dualproduct}) providing an expression
for~$K_{1}(s)$.
\end{proof}
\end{corollary}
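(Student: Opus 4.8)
The plan is to establish the equivalent dual estimate \eqref{dualproduct}, namely $|(uv,z)|\le K_1(s)\|u\|_{H^s}\|v\|_{H^s}\|z\|_{H^{-s}}$ for every $z\in H^{-s}$; by the duality between $H^s$ and $H^{-s}$ with respect to the $H^0$ pairing, this is equivalent to \eqref{product}. Expanding the product in Fourier coefficients gives $(uv)_k=\sum_{k_1+k_2=k}u_{k_1}v_{k_2}$, so that $(uv,z)=\sum_{k_1,k_2\in\mathbb{Z}}u_{k_1}v_{k_2}\overline{z_{k_1+k_2}}$, and after passing to absolute values the whole problem reduces to bounding $\sum_{k_1,k_2}|u_{k_1}||v_{k_2}||z_{k_1+k_2}|$ by the right-hand side. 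This is precisely the kind of sum treated in Lemma~\ref{L:B2}, except that the smoothing factor $1/(k_1k_2)$ is absent and, crucially, the indices now range over all of $\mathbb{Z}$ rather than $\mathbb{Z}_0$, since $u$, $v$, and $z$ need not have zero mean.

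The genuinely new feature, compared with Lemma~\ref{L:B2}, is the presence of the zero Fourier modes, so I would first split off the three boundary contributions corresponding to $k_1+k_2=0$, to $k_1=0$ (with $k_2\ne0$), and to $k_2=0$ (with $k_1\ne0$). Each is elementary: the first equals $|z_0|\sum_{k_1}|u_{k_1}||v_{-k_1}|$, which by Cauchy--Schwarz together with $|z_0|\le\|z\|_{H^{-s}}$ is at most $\|u\|_{H^s}\|v\|_{H^s}\|z\|_{H^{-s}}$; the second equals $|u_0|\sum_{k_2\in\mathbb{Z}_0}\tilde v_{k_2}\tilde z_{k_2}$, where $\tilde v_k=|v_k||k|^s$ and $\tilde z_k=|z_k||k|^{-s}$, and is bounded by $\|u\|_{H^s}\|v\|_{\dot{H}^s}\|z\|_{\dot{H}^{-s}}$; the third is symmetric. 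Together these three pieces contribute the additive constant $3$ to $K_1(s)$.

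For the main sum, where $k_1$, $k_2$, and $k_1+k_2$ are all nonzero, I would argue exactly as in Lemma~\ref{L:B2}. Writing the summand with the weights $\tilde u,\tilde v,\tilde z$ produces the factor $|k_1+k_2|^s/(|k_1|^s|k_2|^s)$; the elementary inequality $|k_1+k_2|^s\le 2^s(|k_1|^s+|k_2|^s)$ then breaks the sum into two symmetric pieces in which exactly one of the decaying weights $|k_1|^{-s}$ or $|k_2|^{-s}$ survives. A single application of the Cauchy--Schwarz inequality over the pair $(k_1,k_2)$, pairing $\tilde u_{k_1}\tilde v_{k_2}$ against $\tilde z_{k_1+k_2}/|k_2|^s$ (respectively $/|k_1|^s$) and using that the shift $k_1\mapsto k_1+k_2$ preserves the $\ell^2$ norm of $\tilde z$, bounds the main sum by $2^{s+1}c(s)\|u\|_{\dot{H}^s}\|v\|_{\dot{H}^s}\|z\|_{\dot{H}^{-s}}$, with $c(s)=(\sum_{j\in\mathbb{Z}_0}|j|^{-2s})^{1/2}$ as in \eqref{c2s}.

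The single point at which the hypothesis $s>1/2$ enters---and hence the only, rather mild, obstacle---is the finiteness of $c(s)$, which holds precisely when $2s>1$; this is exactly the threshold at which $H^s$ is a Banach algebra, and the estimate manifestly degenerates as $s\downarrow 1/2$. Collecting the boundary and main contributions gives \eqref{dualproduct} with the explicit value $K_1(s)=3+2^{s+1}c(s)$, and \eqref{product} then follows by duality.
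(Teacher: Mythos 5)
Your proof is correct and follows essentially the same route as the paper's: the duality reduction to \eqref{dualproduct}, the same splitting into the three zero-mode contributions (each bounded by Cauchy--Schwarz, contributing the additive $3$) plus the main sum over $\mathbb{Z}_0$, and the same use of $|k_1+k_2|^s\le 2^s(|k_1|^s+|k_2|^s)$ followed by Cauchy--Schwarz with the shift-invariance of the $\ell^2$ norm. Your constant $K_1(s)=3+2^{s+1}c(s)$ with $c(s)=(\sum_{j\in\mathbb{Z}_0}|j|^{-2s})^{1/2}$ agrees with the paper's up to its slightly inconsistent notation for $c(\cdot)$, and you correctly identify $s>1/2$ as entering only through the convergence of this series.
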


\begin{lemma}
\label{L:B21}Let $s+\alpha \geq 0$, $\alpha <3/4$,
$s>-3/4$. Then the bilinear operator $B_{2}$ defined
in~$(\ref{B2})$ maps $\dot{H}^{s}\times \dot{H}^{s}$ into
$\dot{H}^{s+\alpha }$ and satisfies the estimate
\begin{equation} \label{estB2a}
\|B_{2}(u,v)\|_{\dot{H}^{s+\alpha }}\leq c_{2}^{\prime
}(s,\alpha) \|u\| _{\dot{H}^{s}}\|v\|_{\dot{H}^{s}}.
\end{equation}
\end{lemma}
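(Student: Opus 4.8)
The plan is to follow the duality scheme used in the proof of Lemma~\ref{L:B2}, but with the exponent $s+\alpha$ in place of $s+1$; the point is that since $\alpha<1$ a nontrivial weight now survives on \emph{both} summation indices after splitting (in Lemma~\ref{L:B2} the case $\alpha=1$ makes the weight on one index disappear), so a genuinely two-dimensional convolution estimate is needed rather than the one-variable Cauchy--Schwarz of Lemma~\ref{L:B2}. By duality, (\ref{estB2a}) is equivalent to
\begin{equation*}
|(B_{2}(u,v),z)|\le c_{2}'(s,\alpha)\,\|u\|_{\dot{H}^{s}}\|v\|_{\dot{H}^{s}}\|z\|_{\dot{H}^{-(s+\alpha)}}
\end{equation*}
for every $z\in\dot{H}^{-(s+\alpha)}$. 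First I would replace the exponentials in~(\ref{B2}) by $1$ and pass to the nonnegative sequences $\tilde{u}_{k}=|u_{k}||k|^{s}$, $\tilde{v}_{k}=|v_{k}||k|^{s}$, $\tilde{z}_{k}=|z_{k}||k|^{-(s+\alpha)}$, so that $\|\tilde{u}\|_{\ell_{2}}=\|u\|_{\dot{H}^{s}}$, $\|\tilde{v}\|_{\ell_{2}}=\|v\|_{\dot{H}^{s}}$, $\|\tilde{z}\|_{\ell_{2}}=\|z\|_{\dot{H}^{-(s+\alpha)}}$, and the dual pairing is bounded by
\begin{equation*}
\sum_{k_{1}}\sum_{k_{2}}\frac{\tilde{u}_{k_{1}}\tilde{v}_{k_{2}}\tilde{z}_{k_{1}+k_{2}}\,|k_{1}+k_{2}|^{s+\alpha}}{|k_{1}|^{1+s}|k_{2}|^{1+s}}.
\end{equation*}

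Next, using the hypothesis $s+\alpha\ge0$ together with the elementary inequality $|k_{1}+k_{2}|^{s+\alpha}\le C_{s+\alpha}(|k_{1}|^{s+\alpha}+|k_{2}|^{s+\alpha})$, I would split this sum into two pieces with kernels $|k_{1}|^{-(1-\alpha)}|k_{2}|^{-(1+s)}$ and $|k_{1}|^{-(1+s)}|k_{2}|^{-(1-\alpha)}$, which are interchanged by $k_{1}\leftrightarrow k_{2}$, so that it suffices to estimate one of them, say
\begin{equation*}
S=\sum_{k_{1}}\sum_{k_{2}}\frac{\tilde{u}_{k_{1}}\tilde{v}_{k_{2}}\tilde{z}_{k_{1}+k_{2}}}{|k_{1}|^{1+s}|k_{2}|^{1-\alpha}}.
\end{equation*}
I would then apply a single Cauchy--Schwarz inequality with the grouping $A_{k_{1},k_{2}}=\tilde{u}_{k_{1}}\tilde{v}_{k_{2}}$ and $B_{k_{1},k_{2}}=\tilde{z}_{k_{1}+k_{2}}/(|k_{1}|^{1+s}|k_{2}|^{1-\alpha})$. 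The factor $\sum_{k_{1},k_{2}}A_{k_{1},k_{2}}^{2}=\|\tilde{u}\|_{\ell_{2}}^{2}\|\tilde{v}\|_{\ell_{2}}^{2}$ is immediate, and after fixing $k=k_{1}+k_{2}$ one has $\sum_{k_{1},k_{2}}B_{k_{1},k_{2}}^{2}=\sum_{k}\tilde{z}_{k}^{2}\,W_{k}$ with $W_{k}=\sum_{k_{1}+k_{2}=k}|k_{1}|^{-2(1+s)}|k_{2}|^{-2(1-\alpha)}$, so everything reduces to the uniform bound
\begin{equation*}
\sup_{k\in\mathbb{Z}_{0}}\ W_{k}=\sup_{k\in\mathbb{Z}_{0}}\ \sum_{k_{1}+k_{2}=k}\frac{1}{|k_{1}|^{2(1+s)}|k_{2}|^{2(1-\alpha)}}<\infty .
\end{equation*}

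The crux is this last supremum, which is exactly where the hypotheses $s>-3/4$ and $\alpha<3/4$ enter. I would recognize $W_{k}$ as the convolution $(f*g)_{k}$ of the positive sequences $f_{j}=|j|^{-2(1+s)}$ and $g_{j}=|j|^{-2(1-\alpha)}$ (positive since $s>-1$ and $\alpha<1$), and bound it in $\ell_{\infty}$ by Young's convolution inequality $\|f*g\|_{\ell_{\infty}}\le\|f\|_{\ell_{p}}\|g\|_{\ell_{p'}}$ with $1/p+1/p'=1$. Here $f\in\ell_{p}$ iff $2(1+s)p>1$ and $g\in\ell_{p'}$ iff $2(1-\alpha)p'>1$; eliminating $p'$, the admissible range for $1/p$ is the open interval $(\max(0,2\alpha-1),\min(1,2(1+s)))$, which is nonempty precisely when $2(1+s)+2(1-\alpha)>1$, i.e.\ $\alpha-s<3/2$, a condition guaranteed by $s>-3/4$ and $\alpha<3/4$. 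The degenerate subcases where one of $f,g$ already lies in $\ell_{1}$ (namely $s>-1/2$ or $\alpha<1/2$) are handled by the trivial bound $\|f*g\|_{\ell_{\infty}}\le\|f\|_{\ell_{1}}\|g\|_{\ell_{\infty}}$, so the only delicate region, worth checking case by case, is $-3/4<s\le-1/2$ and $1/2\le\alpha<3/4$, where both $f,g\notin\ell_{1}$ and the Young interval must be verified to be nonempty up to the boundary $\alpha-s\to3/2$. Combining the resulting bound $\sum_{k_{1},k_{2}}B_{k_{1},k_{2}}^{2}\le\|f\|_{\ell_{p}}\|g\|_{\ell_{p'}}\|\tilde{z}\|_{\ell_{2}}^{2}$ with the two symmetric pieces then yields~(\ref{estB2a}), with an explicit constant $c_{2}'(s,\alpha)$ assembled from $C_{s+\alpha}$ and the norms $\|f\|_{\ell_{p}}$, $\|g\|_{\ell_{p'}}$.
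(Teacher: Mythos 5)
Your proof is correct, and it diverges from the paper's at the decisive step. Both arguments begin identically: duality, discarding the oscillating exponentials, the weight substitutions $\tilde{u},\tilde{v},\tilde{z}$, and the splitting $|k_{1}+k_{2}|^{s+\alpha}\le C(|k_{1}|^{s+\alpha}+|k_{2}|^{s+\alpha})$ (where the hypothesis $s+\alpha\ge 0$ is used). The difference is in how the resulting weighted bilinear sum is estimated. The paper stays with the device already used in Lemma~\ref{L:B2}: it splits $\tilde{z}_{k_{1}+k_{2}}=\tilde{z}_{k_{1}+k_{2}}^{1/2}\cdot\tilde{z}_{k_{1}+k_{2}}^{1/2}$ between the two Cauchy--Schwarz factors, so that each factor has the form $\sum_{k_{1}}\tilde{u}_{k_{1}}^{2}\sum_{k_{2}}\tilde{z}_{k_{1}+k_{2}}|k_{2}|^{-(2-2\alpha)}$ (or its mirror with $|k_{1}|^{-(2+2s)}$), and then bounds each inner one-variable sum by Cauchy--Schwarz; this is exactly where the two hypotheses enter separately, as $2(2-2\alpha)>1$ $(\alpha<3/4)$ and $2(2+2s)>1$ $(s>-3/4)$. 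You instead keep $\tilde{z}$ intact in one Cauchy--Schwarz factor, pair it against the full kernel, and reduce everything to the Schur-type bound $\sup_{k}W_{k}<\infty$ for the convolution $W_{k}=\sum_{k_{1}+k_{2}=k}|k_{1}|^{-2(1+s)}|k_{2}|^{-2(1-\alpha)}$, which you control by H\"older/Young with an adjustable exponent $p$. The two proofs meet at $p=p'=2$: there $f\in\ell_{2}$ iff $s>-3/4$ and $g\in\ell_{2}$ iff $\alpha<3/4$, so that single choice already closes the lemma under its stated hypotheses, and your case analysis around the endpoint exponents is not even needed for the statement as given. What your route buys is flexibility and transparency: it isolates the kernel from the data and shows the conclusion actually holds under the weaker joint condition $\alpha-s<3/2$ (together with $s+\alpha\ge0$, $s>-1$, $\alpha<1$), whereas the paper's grouping hard-wires the exponent-$2$ choice and hence the two separate thresholds. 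What the paper's route buys is brevity and a fully explicit constant, $c_{2}'(s,\alpha)=2b\,c(2-2\alpha)c(2+2s)$ with $c(\cdot)$ as in~(\ref{c2s}), consistent with the paper's stated aim of giving explicit constants. One small framing correction: the paper's proof of this lemma is not structurally different from that of Lemma~\ref{L:B2} --- it is the same split-$\tilde{z}$ trick with different exponents --- so your opening claim that a ``genuinely two-dimensional'' estimate is forced here describes your own approach rather than a necessity.
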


\begin{proof}
By duality, (\ref{estB2a}) is equivalent to the estimate
\begin{equation*}
|(B_{2}(u,v),z)|\leq c_{2}^{\prime }(s,\alpha)
\|u\|_{\dot{H}^{s}}\|v\|_{\dot{H}^{s}}
\|z\|_{\dot{H}^{-(s+\alpha )}},
\end{equation*}%
where $z$ is an arbitrary element in
$\dot{H}^{-(s+\alpha)}$. Replacing the exponentials
in~(\ref{B2}) by unity, setting below
$\tilde{u}_{k}=|u_{k}||k|^{s}$,
$\tilde{v}_{k}=|v_{k}||k|^{s}$,
$\tilde{z}_{k}=|z_{k}||k|^{-s-\alpha }$, and using the
inequality $|k_{1}+k_{2}|^{s+\alpha }\leq
b(|k_{1}|^{s+\alpha }+|k_{2}|^{s+\alpha })$, with $b=\max (
2^{s+\alpha -1},1)$ we have
\begin{gather*}
|(B_{2}(u,v),z)|\leq \sum_{k}\sum_{k_{1}+k_{2}=k}
\frac{|u_{k_{1}}||v_{k_{2}}||z_{k}|}{|k_{1}||k_{2}|}=
\\
\sum_{k_{1}}\sum_{k_{2}}\frac{|u_{k_{1}}||v_{k_{2}}||
\tilde{z}_{k_{1}+k_{2}}||k_{1}+k_{2}|^{\alpha +s}}
{|k_{1}||k_{2}|}\leq
b\sum_{k_{1}}\sum_{k_{2}}\frac{\tilde{u}_{k_{1}}
\tilde{v}_{k_{2}}(|k_{1}|^{s+\alpha }+|k_{2}|^{s+\alpha })
\tilde{z}_{k_{1}+k_{2}}}{|k_{1}|^{1+s}|k_{2}|^{1+s}}\leq
\\
b\left( \sum_{k_{1}}\sum_{k_{2}}\tilde{u}_{k_{1}}^{2}
\frac{\tilde{z}_{k_{1}+k_{2}}}{|k_{2}|^{2-2\alpha }}
\right)^{1/2} \left(
\sum_{k_{2}}\sum_{k_{1}}\tilde{v}_{k_{2}}^{2}\frac{\tilde{z}_{k_{1}+k_{2}}}{%
|k_{1}|^{2+2s}} \right)^{1/2}+
\\
b\left(\sum_{k_{1}}\sum_{k_{2}}\tilde{u}_{k_{1}}^{2}
\frac{\tilde{z}_{k_{1}+k_{2}}}{|k_{2}|^{2s+2}}
\right)^{1/2}
\left(\sum_{k_{2}}\sum_{k_{1}}\tilde{v}_{k_{2}}^{2}
\frac{\tilde{z}_{k_{1}+k_{2}}}{|k_{1}|^{2-2\alpha }}
\right)^{1/2},
\end{gather*}
where $4-4\alpha >1$. Note that
\begin{equation*}
\sum_{k_{2}}\frac{|\tilde{z}_{k_{1}+k_{2}}|}{|k_{2}|^{p}}
\leq \left( \sum_{k_{2}}|\tilde{z}_{k_{1}+k_{2}}|^{2}
\right)^{1/2} \left(
\sum_{k_{2}}\frac{1}{|k_{2}|^{2p}}\right)^{1/2}\leq
c(p)\|\tilde{z}\|_{\dot{H}^{0}},\quad 2p>1
\end{equation*}%
with the same $c(p)$ as in (\ref{c2s}). Since $2-2\alpha
>1/2$ and $2+2s>1/2$, we have
\begin{gather*}
|(B_{2}(u,v),z)| \leq 2b\,c( 2-2\alpha) c( 2+2s)
\|\tilde{u}\|_{\dot{H}^{0}}\|\tilde{v}\|_{\dot{H}^{0}}
\|\tilde{z}\|_{\dot{H}^{0}}= c_{2}^{\prime }(s,\alpha)
\|u\|_{\dot{H}^{s}}\|v\|_{\dot{H}^{s}}
\|z\|_{\dot{H}^{-(s+\alpha )}}.
\end{gather*}
\end{proof}

\begin{lemma}
\label{L:negB2}Let $u,v\in H^{-1}$ and  $S>1/2$. Then
\begin{equation}
\Vert B_{2}(u,v)\Vert _{\dot{H}^{-S}}\leq c(S) \Vert u\Vert
_{\dot{H}^{-1}}\Vert v\Vert _{\dot{H}^{-1}}. \label{min2}
\end{equation}
\end{lemma}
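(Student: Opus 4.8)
The plan is to argue by duality, exactly as in Lemma~\ref{L:B2} and Lemma~\ref{L:B21}: it suffices to bound $|(B_{2}(u,v),z)|$ by $c(S)\|u\|_{\dot{H}^{-1}}\|v\|_{\dot{H}^{-1}}\|z\|_{\dot{H}^{S}}$ for an arbitrary $z\in\dot{H}^{S}$. As in those lemmas, I would first discard the oscillatory factors $e^{i3kk_{1}k_{2}t}$ by the triangle inequality (so that the bound is uniform in $t$), reducing matters to the nonnegative sum
\[
I:=\sum_{k\in\mathbb{Z}_{0}}\sum_{k_{1}+k_{2}=k}
\frac{|u_{k_{1}}|\,|v_{k_{2}}|\,|z_{k}|}{|k_{1}|\,|k_{2}|}.
\]

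The essential point, and the place where the argument differs from Lemma~\ref{L:B2}, is that the target exponent $-S$ is now negative, so one cannot split a positive power $|k_{1}+k_{2}|^{s+1}$ across the two factors by subadditivity. Instead I would keep both decaying weights $1/|k_{1}|$ and $1/|k_{2}|$ attached to $u$ and $v$: setting $\tilde{u}_{k}=|u_{k}|/|k|$ and $\tilde{v}_{k}=|v_{k}|/|k|$ one has $\|\tilde{u}\|_{l_{2}}=\|u\|_{\dot{H}^{-1}}$ and $\|\tilde{v}\|_{l_{2}}=\|v\|_{\dot{H}^{-1}}$, and the inner sum over $k_{1}+k_{2}=k$ is exactly the convolution $(\tilde{u}\ast\tilde{v})_{k}$. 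The crux is then the elementary observation that this convolution is \emph{uniformly} bounded in $k$ by Cauchy--Schwarz, $|(\tilde{u}\ast\tilde{v})_{k}|\le\|\tilde{u}\|_{l_{2}}\|\tilde{v}\|_{l_{2}}$; equivalently, the product of two $L_{2}$ functions has uniformly bounded Fourier coefficients.

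To finish I would apply Cauchy--Schwarz a second time, pairing $z$ (carrying the weight $|k|^{S}$) against the convolution (carrying the weight $|k|^{-S}$):
\[
I=\sum_{k\in\mathbb{Z}_{0}}|z_{k}|\,|(\tilde{u}\ast\tilde{v})_{k}|
\le\|z\|_{\dot{H}^{S}}
\Bigl(\sum_{k\in\mathbb{Z}_{0}}\frac{|(\tilde{u}\ast\tilde{v})_{k}|^{2}}{|k|^{2S}}\Bigr)^{1/2}
\le c(S)\,\|z\|_{\dot{H}^{S}}\,\|u\|_{\dot{H}^{-1}}\|v\|_{\dot{H}^{-1}},
\]
where the last step inserts the uniform bound on the convolution and uses $\sum_{k\in\mathbb{Z}_{0}}|k|^{-2S}=c(S)^{2}<\infty$, which converges precisely because $S>1/2$. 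This establishes~(\ref{min2}) with the constant $c(S)$ from~(\ref{c2s}).

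I do not expect a genuine obstacle once the uniform bound on the convolution is isolated. The only delicate point to keep in view is that the hypothesis $S>1/2$ is exactly what makes $|k|^{-2S}$ summable, so the method is sharp in this direction and does not reach $\dot{H}^{-1/2}$; all other steps are the same duality-and-Cauchy--Schwarz bookkeeping already used throughout section~\ref{S:App}.
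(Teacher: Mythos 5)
Your proof is correct and is essentially the paper's own argument: duality against $z\in\dot{H}^{S}$, discarding the oscillatory exponentials, two applications of Cauchy--Schwarz, and the summability of $|k|^{-2S}$ for $S>1/2$, yielding the same constant $c(S)$ as in~(\ref{c2s}). The only difference is organizational: the paper splits $|z_{k_{1}+k_{2}}|=|z_{k_{1}+k_{2}}|^{1/2}\,|z_{k_{1}+k_{2}}|^{1/2}$ inside a single joint Cauchy--Schwarz over $(k_{1},k_{2})$ and then invokes $\sum_{k}|z_{k}|\le c(S)\|z\|_{\dot{H}^{S}}$, whereas you first isolate the uniform bound $\sup_{k}|(\tilde{u}\ast\tilde{v})_{k}|\le\|u\|_{\dot{H}^{-1}}\|v\|_{\dot{H}^{-1}}$ on the convolution and then pair against $z$ with the weights $|k|^{\pm S}$ --- a rearrangement of the same two estimates.
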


\begin{proof}
Arguing by duality, we take an arbitrary element $z$ in
$\dot{H}^{S}$ and estimate the  inner product:
\begin{gather*}
|(B_{2}(u,v),z)|\leq \sum_{k}\sum_{k_{1}+k_{2}=k}
\frac{|u_{k_{1}}||v_{k_{2}}||z_{k}|}{|k_{1}||k_{2}|}=
\sum_{k_{1}}\sum_{k_{2}}
\frac{|u_{k_{1}}||v_{k_{2}}||z_{k_{1}+k_{2}}|}
{|k_{1}||k_{2}|}\leq \newline
\\
\left( \sum_{k_{1}}\sum_{k_{2}}
\frac{|u_{k_{1}}|^{2}}{|k_{1}|^{2}}%
|z_{k_{1}+k_{2}}|\right) ^{1/2} \left(
\sum_{k_{1}}\sum_{k_{2}}
\frac{|v_{k_{2}}|^{2}|z_{k_{1}+k_{2}}|}
{|k_{2}|^{2}}\right) ^{1/2}= \Vert u\Vert
_{\dot{H}^{-1}}\Vert v\Vert _{\dot{H}^{-1}} \sum_{k\in
\mathbb{Z}_{0}}|z_{k}|.
\end{gather*}%
Since
\begin{equation}\label{zk}
\sum_{k\in \mathbb{Z}_{0}}|z_{k}| \leq \left( \sum_{k\in
\mathbb{Z}_{0}}|z_{k}|^{2}|k|^{2S} \right)^{1/2} \left(
\sum_{k\in \mathbb{Z}_{0}}|k|^{-2S}\right)^{1/2}=
c(S)\|z\|_{\dot{H}^{S}},
\end{equation}
this proves~(\ref{min2}).
\end{proof}

\begin{lemma} \label{L:negB21}
 Let $-7/4<s\le0$. Then the bilinear operator $B_{2}$
defined in $(\ref{B2})$ maps $\dot{H}^0\times \dot{H}^s$
into $\dot{H}^{s}$, and satisfies the estimate
\begin{equation}
\|B_{2}(u,v)\|_{\dot{H}^{s}}\leq
c_{2}^{\prime\prime}(s)\|u\|_{\dot{H}^{0}}\|v\|_{\dot{H}^{s}}.
\end{equation}
\end{lemma}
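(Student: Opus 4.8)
The plan is to follow the same duality scheme used in Lemmas~\ref{L:B2}, \ref{L:B21} and~\ref{L:negB2}. By duality the asserted estimate is equivalent to
$$
|(B_{2}(u,v),z)|\le c_{2}''(s)\,\|u\|_{\dot{H}^{0}}\|v\|_{\dot{H}^{s}}\|z\|_{\dot{H}^{-s}}
$$
for every $z\in\dot{H}^{-s}$. As in those lemmas I would first replace the oscillatory exponentials in~(\ref{B2}) by unity and pass to absolute values, and then introduce the nonnegative sequences $\tilde{u}_{k}=|u_{k}|$, $\tilde{v}_{k}=|v_{k}||k|^{s}$ and $\tilde{z}_{k}=|z_{k}||k|^{-s}$, whose $\ell_{2}$-norms are exactly $\|u\|_{\dot{H}^{0}}$, $\|v\|_{\dot{H}^{s}}$ and $\|z\|_{\dot{H}^{-s}}$. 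After this substitution the inner product is dominated by the trilinear sum $\sum_{k_{1},k_{2}}\tilde{u}_{k_{1}}\tilde{v}_{k_{2}}\tilde{z}_{k_{1}+k_{2}}\,m(k_{1},k_{2})$ with multiplier $m(k_{1},k_{2})=|k_{1}+k_{2}|^{s}/(|k_{1}|\,|k_{2}|^{1+s})$.

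The difference from Lemma~\ref{L:B2} is that the output weight $|k_{1}+k_{2}|^{s}$ now carries the \emph{negative} power $s\le0$, so the triangle-inequality splitting $|k_{1}+k_{2}|^{s+1}\le 2^{s}(|k_{1}|^{s+1}+|k_{2}|^{s+1})$ is no longer available. Instead I would apply Cauchy--Schwarz by pulling out the \emph{unweighted} factor $\tilde{u}$ (the one coming from $\dot{H}^{0}$) and pairing $\tilde{v}$ with $\tilde{z}$; since $\sum_{k_{1},k_{2}}\tilde{v}_{k_{2}}^{2}\tilde{z}_{k_{1}+k_{2}}^{2}=\|v\|_{\dot{H}^{s}}^{2}\|z\|_{\dot{H}^{-s}}^{2}$, this reduces the whole estimate to the single uniform kernel bound
$$
\sup_{k_{1}\in\mathbb{Z}_{0}}\sum_{k_{2}}m(k_{1},k_{2})^{2}
=\sup_{k_{1}}\frac{1}{|k_{1}|^{2}}\sum_{k_{2}}\frac{|k_{1}+k_{2}|^{2s}}{|k_{2}|^{2+2s}}<\infty ,
$$
after which one reads off the constant $c_{2}''(s)$ from the square root of this supremum.

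The remaining work, and the source of the restriction on $s$, is the verification of this kernel bound, which I expect to be the main obstacle. I would split the sum over $k_{2}$ into the regimes $|k_{2}|\lesssim|k_{1}|$, $|k_{2}|\gtrsim|k_{1}|$, and the near-resonant regime $k_{2}\approx-k_{1}$ where the output frequency $|k_{1}+k_{2}|$ is small. In the first two regimes $|k_{1}+k_{2}|$ is comparable to $\max(|k_{1}|,|k_{2}|)$ and the geometric decay makes the sums converge (the large-$|k_{2}|$ tail already behaves like $|k_{2}|^{-2}$), so that, after division by $|k_{1}|^{2}$, each contributes a bounded multiple of $|k_{1}|^{-1}$. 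The delicate piece is the high--high interaction $k_{2}\approx-k_{1}$: there $|k_{2}|\approx|k_{1}|$ while $|k_{1}+k_{2}|$ runs over small values, so the gain $|k_{1}+k_{2}|^{2s}$ is only of order one and offers no help; summing $|k_{1}+k_{2}|^{2s}$ over these small output frequencies converges (because $2s<-1$) and leaves a factor $|k_{1}|^{-2-2s}$, whence this regime contributes $|k_{1}|^{-4-2s}$. This is bounded uniformly in $k_{1}$ precisely when $-4-2s\le0$, i.e. $s\ge-2$, which comfortably contains the stated range $s>-7/4$. Collecting the three contributions gives the kernel bound and hence the estimate.
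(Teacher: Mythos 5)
Your proposal is correct, but the decisive step is genuinely different from the paper's. Both arguments begin identically: duality, discarding the oscillatory exponentials, and reduction to the weighted convolution sum with kernel $K(k_{1},k_{2})=|k_{1}+k_{2}|^{s}/(|k_{1}|\,|k_{2}|^{1+s})$ (the paper's $K_{2}$ with $p=-s$). The paper then splits the product pointwise as $\tilde{u}_{k_{1}}\tilde{v}_{k_{2}}\tilde{z}_{k_{1}+k_{2}}=(\tilde{u}_{k_{1}}\tilde{v}_{k_{2}})^{1/2}(\tilde{v}_{k_{2}}\tilde{z}_{k_{1}+k_{2}})^{1/2}(\tilde{u}_{k_{1}}\tilde{z}_{k_{1}+k_{2}})^{1/2}$ and applies a four-fold H\"older inequality with exponents $(4,4,4,4)$, reducing everything to $\sum_{k_{1},k_{2}}K^{4}<\infty$; it is precisely this $\ell^{4}$-summability of the kernel over \emph{both} indices that produces the restriction $p<7/4$. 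You instead apply Cauchy--Schwarz asymmetrically, pairing $\tilde{u}_{k_{1}}K(k_{1},k_{2})$ against $\tilde{v}_{k_{2}}\tilde{z}_{k_{1}+k_{2}}$, thereby exploiting which factor lives in $\dot{H}^{0}$ and the identity $\sum_{k_{1},k_{2}}\tilde{v}_{k_{2}}^{2}\tilde{z}_{k_{1}+k_{2}}^{2}=\|v\|_{\dot{H}^{s}}^{2}\|z\|_{\dot{H}^{-s}}^{2}$; this reduces the lemma to the Schur-type bound $\sup_{k_{1}}\sum_{k_{2}}K(k_{1},k_{2})^{2}<\infty$, a strictly weaker requirement than the paper's. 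Your regime analysis of that bound is sound and in fact yields the estimate for all $-2\le s\le 0$, so your route not only covers the stated range $s>-7/4$ but reaches the sharp threshold: testing $u$ concentrated at frequency $\pm N$ against $v$ concentrated at frequency $\mp(N-1)$ gives $\|B_{2}(u,v)\|_{\dot{H}^{s}}\gtrsim N^{-2}$ while $\|u\|_{\dot{H}^{0}}\|v\|_{\dot{H}^{s}}\approx N^{s}$, so the inequality must fail for $s<-2$. In short, the paper's $7/4$ is an artifact of its $\ell^{4}$ kernel requirement, which your more elementary Schur/Cauchy--Schwarz argument avoids.

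One small imprecision to fix when writing this up: in the near-resonant regime $k_{2}\approx-k_{1}$ you justify the convergence of $\sum_{j}|j|^{2s}$ over small output frequencies by ``$2s<-1$'', which holds only for $s<-1/2$ and thus not on all of the lemma's range. For $-1/2\le s\le 0$ that partial sum is instead $O(|k_{1}|^{1+2s})$, and combined with the factor $|k_{1}|^{-2-2s}$ from $|k_{2}|^{-(2+2s)}$ the regime contributes $O(|k_{1}|^{-1})$ to the $k_{2}$-sum, hence $O(|k_{1}|^{-3})$ after division by $|k_{1}|^{2}$ --- even better than the bound you state. So the conclusion is unaffected, but the two sub-ranges should be treated separately.
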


\begin{proof}
Using duality, it is sufficient to estimate
\begin{equation*}
|B_{2}(u,v),z)|\leq
c_{2}^{\prime\prime}(s)\|u\|_{\dot{H}^{0}}\|v\|_{\dot{H}^{s}}
\|z\|_{\dot{H}^{-s}},
\end{equation*}
where $z$ is an arbitrary element in $\dot{H}^{-s}$. We set
$p=-s$, $0\le p<7/4$. Setting below
$\tilde{u}_{k}=|u_{k}|$, $\tilde{v}_{k}=|v_{k}||k|^{s}$,
$\tilde{z}_{k}=|z_{k}||k|^{-s}=|z_{k}||k|^{p}$, we have
\begin{gather*}
|B_{2}(u,v),z)|\leq \sum_{k\in\mathbb{Z}_0}
\sum_{k_{1}+k_{2}=k}
\frac{|u_{k_{1}}||v_{k_{2}}||z_{k}|}{|k_{1}||k_{2}|}
=\sum_{k_{1}}\sum_{k_{2}}
\frac{|u_{k_{1}}||v_{k_{2}}||z_{k_{1}+k_{2}}|}
{|k_{1}||k_{2}|}=\\
\sum_{k_{1},k_{2}}\frac{\tilde{u}_{k_{1}}\tilde{v}_{k_{2}}
\tilde{z}_{k_{1}+k_{2}}}
{|k_{1}||k_{1}+k_{2}|^{p}|k_{2}|^{1-p}} =\sum_{k_{1},k_{2}}
\frac{( \tilde{u}_{k_{1}}\tilde{v}_{k_{2}})^{1/2} (
\tilde{v}_{k_{2}}\tilde{z}_{k_{1}+k_{2}})^{1/2}
(\tilde{u}_{k_{1}}\tilde{z}_{k_{1}+k_{2}})^{1/2}}
{|k_{1}||k_{1}+k_{2}|^{p}|k_{2}|^{1-p}} \leq
\\
\left( \sum_{k_{1},k_{2}}\left( \tilde{u}_{k_{1}}
\tilde{v}_{k_{2}}\right) ^{2}\right)^{1/4} \left(
\sum_{k_{1},k_{2}} \left(
\tilde{v}_{k_{2}}\tilde{z}_{k_{1}+k_{2}}\right)^{2}
\right)^{1/4} \left(\sum_{k_{1},k_{2}}\left(
\tilde{u}_{k_{1}} \tilde{z}_{k_{1}+k_{2}}\right)^{2}
\right)^{1/4}
\left(\sum_{k_{1},k_{2}}K_{2}^{4}\right)^{1/4}=
\\
\|u\|_{\dot{H}^{0}}\| v\|_{\dot{H}^{s}}\|z\|_{\dot{H}^{-s}}
\left( \sum_{k_{1},k_{2}}K_{2}^{4}\right)^{1/4},
\end{gather*}%
where
\begin{equation*}
\sum_{k_1,k_2}K_{2}^{4}=
\sum_{k_{1},k_{2}}\frac{1}{|k_{1}|^{4}|k_{1}+k_{2}|^{4p}
|k_{2}|^{4-4p}}=\sum_{k_{1}}\frac{1}{|k_{1}|^{4}}%
\sum_{k_{2}}\frac{1}{| k_{1}+k_{2}|^{4p}|k_{2}|^{4-4p}}
=:(c_2^{\prime\prime}(s))^4.
\end{equation*}%
If $p\leq 1$, then $c_2^{\prime\prime}(s)\le
 (\sum_{k\in \mathbb{Z}_{0}}|k|^{-4})^{1/2}=c(2)$, since
\begin{gather*}
\sum_{k_{2}}%
\frac{1}{|k_{1}+k_{2}|^{4p}|k_{2}|^{4-4p}}\leq \left(
\sum_{k_{2}}\frac{1}{\left\vert k_{1}+k_{2}\right\vert
^{4}}\right) ^{p} \left(
\sum_{k_{2}}\frac{1}{|k_{2}|^{4}}\right)^{1-p}= \sum_{k\in
\mathbb{Z}_{0}}|k|^{-4}.
\end{gather*}%
If $7/4>p>1$ (so that $4p-4<3$ and $4p-8<-1$) we have
(see~(\ref{c2s}))
\begin{gather*}
\sum_{k_1,k_2}K_2^4\leq \sum_{k_1,k_2}\frac{|k_2|^{4p-4}}
{|k_1|^{4}|k_1+k_2|^{4p}}\leq 8\sum_{k_1,k_2}
\frac{|k_1|^{4p-4}+|k_1+k_2|^{4p-4}}
{|k_1|^4|k_1+k_2|^{4p}}=\\
8\sum_{k_1,k_2}\frac{|k_1|^{4p-8}}{|k_1+k_2|^{4p}}+
8\sum_{k_1,k_2}\frac1{|k_1|^{4}|k_1+k_2|^4}
=8c(4-2p)^2c(2p)^2+8c(2)^4,
\end{gather*}
which gives in this case $c_2^{\prime\prime}(s)\le
\bigl(8c(4-2p)^2c(2p)^2+8c(2)^4\bigr)^{1/4}$.
\end{proof}

\begin{lemma}
\label{L:B3} Let $s\geq 0$. Then the trilinear operator
$B_{3}$ in~$(\ref{B3})$ maps $(\dot{H}^{s})^3$ into
$\dot{H}^{s+2}$ and satisfies the estimate
\begin{equation}
\Vert B_{3}(u,v,w)\Vert _{\dot{H}^{s+2}}\leq c_{3}(s)\Vert u\Vert _{\dot{H}%
^{s}}\Vert v\Vert _{\dot{H}^{s}}\Vert w\Vert
_{\dot{H}^{s}}.  \label{estB3}
\end{equation}
\end{lemma}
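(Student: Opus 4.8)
The plan is to reduce the whole estimate to the endpoint case $s=0$ and then to prove the resulting bound by duality, after an elementary analysis of the denominator. First I would note that, since $k=k_1+k_2+k_3$, one has $k_1+k_2=k-k_3$, $k_2+k_3=k-k_1$, $k_3+k_1=k-k_2$, so the denominator in~(\ref{B3}) equals $k_1(k-k_1)(k-k_2)(k-k_3)$. As $|e^{i\Phi t}|=1$, I may replace the exponential by $1$ and pass to absolute values. Setting $\tilde u_k=|k|^s|u_k|$, $\tilde v_k=|k|^s|v_k|$, $\tilde w_k=|k|^s|w_k|$ (so $\|\tilde u\|_{\dot H^0}=\|u\|_{\dot H^s}$, etc.) and $\tilde z_k=|k|^{-(s+2)}|z_k|$ (so $\|\tilde z\|_{\dot H^0}=\|z\|_{\dot H^{-(s+2)}}$), the estimate~(\ref{estB3}) becomes, by duality exactly as in Lemma~\ref{L:B2}, a uniform bound for the multilinear form with symbol $|k|^{s+2}\big/\big(|k_1|^{s+1}|k_2|^s|k_3|^s|k-k_1||k-k_2||k-k_3|\big)$.

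Next I would dispose of the factor $|k|^s$. Since $s\ge0$ and $|k|\le|k_1|+|k_2|+|k_3|$, we have $|k|^s\le 3^s(|k_1|^s+|k_2|^s+|k_3|^s)$, and in each of the three resulting terms the matching power $|k_j|^s$ cancels a numerator weight while the remaining factors $|k_\ell|^{-s}\le 1$ are harmless (here $|k_\ell|\ge1$). This reduces everything to the single $s=0$ estimate
\begin{equation*}
S_0:=\sum_{k_1+k_2+k_3=k}^{\text{nonres}}
\frac{|k|^2\,\tilde u_{k_1}\tilde v_{k_2}\tilde w_{k_3}\tilde z_k}
{|k_1|\,|k-k_1|\,|k-k_2|\,|k-k_3|}
\le C\,\|\tilde u\|_{\dot H^0}\|\tilde v\|_{\dot H^0}
\|\tilde w\|_{\dot H^0}\|\tilde z\|_{\dot H^0}.
\end{equation*}

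The heart of the matter is extracting the two powers $|k|^2$ from the four denominator factors $|k_1|,|k-k_1|,|k-k_2|,|k-k_3|$. Here I would use that $(k-k_1)+(k-k_2)+(k-k_3)=2k$, whence $\max_j|k-k_j|\ge\tfrac23|k|$, together with $(k-k_2)+(k-k_3)=k+k_1$. I then split according to the size of $|k_1|$: if $|k_1|\ge\tfrac13|k|$, then $|k_1|$ and the largest $|k-k_j|$ have product at least $\tfrac29|k|^2$; if $|k_1|<\tfrac13|k|$, then $|k+k_1|\ge\tfrac23|k|$ forces a \emph{second} factor among $|k-k_2|,|k-k_3|$ to be at least $\tfrac13|k|$, so two of the $|k-k_j|$ together dominate $\tfrac29|k|^2$. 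In every case $|k|^2\le C\,G_1G_2$ for two of the four denominator factors $G_1,G_2$, and after cancellation the symbol of $S_0$ is controlled by $C/(H_1H_2)$, where $H_1,H_2$ are the two remaining factors — each of convolution type $1/|k_i+k_j|$ or diagonal type $1/|k_1|$.

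Finally I would bound each of these reduced four-linear forms by iterated Cauchy--Schwarz, choosing the order of summation so that every summation variable is paired against a square-summable weight. For instance, for the kernel $1/(|k_1+k_2||k_1+k_3|)$ I would sum first in $k_2$, pairing $\tilde v$ with $\tilde z$ through $1/|k_1+k_2|$; then in $k_1$, pairing $\tilde u$ through $1/|k_1+k_3|$; then in $k_3$; each step contributes a factor $c(1)=\big(\sum_{j\neq0}|j|^{-2}\big)^{1/2}$ as in~(\ref{c2s}). The nonresonance condition enters exactly here: it guarantees $k_i+k_j\neq0$ in each such denominator, so these series converge. I expect the main obstacle to be precisely this bookkeeping — checking that in each of the finitely many cases there is a pairing in which no summation variable is left without a decaying weight, since a careless pairing leaves a divergent $\sum_j|j|^{-1}$; the admissible pairing is dictated by which two factors were cancelled. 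Collecting the constants over all cases yields~(\ref{estB3}) with an explicit $c_3(s)$ of the form $3^{s}\,c(1)^2$ times an absolute constant.
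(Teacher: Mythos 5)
Your proposal is correct, and it shares the paper's overall skeleton: pass to duality, drop the unimodular exponential, cancel $|k|^{s+2}$ against two of the four denominator factors, and close with Cauchy--Schwarz against square-summable weights as in~(\ref{c2s}). The two reduction steps, however, are carried out differently. For the passage from $s\ge 0$ to $s=0$ the paper uses the multiplicative inequality $|k_1+k_2+k_3|\le 3|k_1||k_2||k_3|$ (valid for nonzero integers), whereas you use the additive $|k|^s\le 3^s(|k_1|^s+|k_2|^s+|k_3|^s)$ and then discard the harmless leftover factors $|k_\ell|^{-s}\le 1$; both cost a factor of order $3^s$. For the extraction of $|k|^2$, the paper performs an explicit algebraic splitting --- $|k|\le |k_1|+|k_2+k_3|$ followed by the two inequalities in~(\ref{k123}) --- which produces six terms collapsing by symmetry to exactly two kernel types, $1/(|k_2+k_3||k_3+k_1|)$ and $1/(|k_1||k_1+k_2|)$; you instead run a size case analysis ($|k_1|\gtrless |k|/3$, exploiting $(k-k_1)+(k-k_2)+(k-k_3)=2k$ and $(k-k_2)+(k-k_3)=k+k_1$), which is more conceptual ("two of the four factors must be large") but leaves five possible residual pairs: any two of the convolution factors $|k_1+k_2|,|k_2+k_3|,|k_3+k_1|$, or $|k_1|$ together with $|k_1+k_2|$ or $|k_1+k_3|$. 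The bookkeeping you flag at the end as the main obstacle is in fact a non-issue, and can be settled more cleanly than by iterated pairings: for every one of these residual pairs $H_1,H_2$, the three linear functionals given by the argument of $H_1$, the argument of $H_2$, and $k=k_1+k_2+k_3$ have determinant $\pm1$ as functions of $(k_1,k_2,k_3)$, hence define a bijection of $\mathbb{Z}^3$; so a single Cauchy--Schwarz pulling out $\|u\|_{\dot{H}^0}\|v\|_{\dot{H}^0}\|w\|_{\dot{H}^0}$, followed by this change of variables, bounds the remaining quadratic sum in the dual sequence by $c(1)^4$ times the square of its $\ell_2$ norm, uniformly over all cases --- which is precisely the paper's closing computation, with nonresonance guaranteeing that every denominator argument is nonzero. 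The net difference is one of economy: the paper's splitting reaches fewer cases and the clean constant $c_3(s)=3^{s+1}\pi^2/2$, while your decomposition is more robust but generates more cases and messier constants.
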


\begin{proof}
Arguing by duality we start with the case $s=0$. Setting $\tilde{z}%
_{k}=|z_{k}|/k^{2}$ and using the inequality
$|k_{1}+k_{2}+k_{3}|\leq |k_{1}|+|k_{2}+k_{3}|$ we have
\begin{gather*}
\aligned|(B_{3}(u,v,w),z)|\leq \sum_{k_{1},k_{2},k_{3}}^{\text{nonres}}\frac{%
|u_{k_{1}}||v_{k_{2}}||w_{k_{3}}|\,\tilde{z}_{k_{1}+k_{2}+k_{3}}%
\,|k_{1}+k_{2}+k_{3}|^{2}}{|k_{1}||k_{1}+k_{2}||k_{2}+k_{3}||k_{3}+k_{1}|}%
\leq
\\
 \sum_{k_{1},k_{2},k_{3}}^{\text{nonres}}\frac{%
|u_{k_{1}}||v_{k_{2}}||w_{k_{3}}|\,\tilde{z}_{k_{1}+k_{2}+k_{3}}%
\,|k_{1}+k_{2}+k_{3}|}{|k_{1}+k_{2}||k_{2}+k_{3}||k_{3}+k_{1}|}%
+\!\sum_{k_{1},k_{2},k_{3}}^{\text{nonres}}\frac{%
|u_{k_{1}}||v_{k_{2}}||w_{k_{3}}|\,\tilde{z}_{k_{1}+k_{2}+k_{3}}%
\,|k_{1}+k_{2}+k_{3}|}{|k_{1}||k_{1}+k_{2}||k_{3}+k_{1}|}.
\endaligned
\end{gather*}%
Next, using the inequalities
\begin{equation}\label{k123}
|k_{1}+k_{2}+k_{3}|\le\left\{%
\begin{array}{lll}
    \frac{1}{2}|k_{1}+k_{2}|+\frac{1}{2}|k_{2}+k_{3}|+%
\frac{1}{2}|k_{3}+k_{1}|, & \\ \\
    |k_{1}|+|k_{1}+k_{2}|+|k_{3}+k_{1}|, & \\
\end{array}%
\right.
\end{equation}

in the first and second sum, respectively, we continue
\begin{gather*}
|(B_{3}(u,v,w),z)|\leq \sum_{k_{1},k_{2},k_{3}}^{\text{nonres}%
}|u_{k_{1}}||v_{k_{2}}||w_{k_{3}}|\,\tilde{z}_{k_{1}+k_{2}+k_{3}}\times
\newline
\\
\times \left( \frac{1}{2|k_{2}+k_{3}||k_{3}+k_{1}|}+\frac{1}{%
2|k_{1}+k_{2}||k_{3}+k_{1}|}+\frac{1}{2|k_{1}+k_{2}||k_{2}+k_{3}|}\right.
\newline
\\
\left. +\frac{1}{|k_{1}+k_{2}||k_{3}+k_{1}|}+\frac{1}{|k_{1}||k_{3}+k_{1}|}+%
\frac{1}{|k_{1}||k_{1}+k_{2}|}\right).
\end{gather*}
By symmetry this is equal to
\begin{gather*}
\frac{5}{2}\sum_{k_{1},k_{2},k_{3}}^{\text{nonres}%
}|u_{k_{1}}||v_{k_{2}}||w_{k_{3}}|\,\frac{\tilde{z}_{k_{1}+k_{2}+k_{3}}}{%
|k_{2}+k_{3}||k_{3}+k_{1}|}+2\sum_{k_{1},k_{2},k_{3}}^{\text{nonres}%
}|u_{k_{1}}||v_{k_{2}}||w_{k_{3}}|\,\frac{\tilde{z}_{k_{1}+k_{2}+k_{3}}}{%
|k_{1}||k_{1}+k_{2}|}\leq \newline
\\
\frac{5}{2}\Vert u\Vert \Vert v\Vert \Vert w\Vert \left(
\sum_{k_{1},k_{2},k_{3}}^{\text{nonres}}\right. \left. \frac{\tilde{z}%
_{k_{1}+k_{2}+k_{3}}^{2}}{|k_{2}+k_{3}|^{2}|k_{3}+k_{1}|^{2}}\right)
^{1/2}\!\!\!\!+\!\!2\Vert u\Vert \Vert v\Vert \Vert w\Vert
\left(
\sum_{k_{1},k_{2},k_{3}}^{\text{nonres}}\right. \left. \frac{\tilde{z}%
_{k_{1}+k_{2}+k_{3}}^{2}}{|k_{1}|^{2}|k_{1}+k_{2}|^{2}}\right)
^{1/2}\!\!\!\leq \newline
\\
\text{setting $k_{2}+k_{3}=l$, $k_{3}+k_{1}=j$,
$k_{1}+k_{2}+k_{3}=k$ in the first sum, where $l,j,k\neq
0$,}\newline
\\
\Vert u\Vert \Vert v\Vert \Vert w\Vert \biggl(\frac{5}{2}\biggl(%
\sum_{l}l^{-2}\sum_{j}j^{-2}\sum_{k}\tilde{z}_{k}^{2}\biggr)^{1/2}+2\biggl(%
\sum_{k_{1}}|k_{1}|^{-2}\sum_{k_{2}}|k_{1}+k_{2}|^{-2}\sum_{k_{3}}\tilde{z}%
_{k_{1}+k_{2}+k_{3}}^{2}\biggr)^{1/2}\biggr)
\\
=\frac{9}{2}\biggl(\sum_{i}i^{-2}\biggr)\Vert u\Vert \Vert
v\Vert \Vert w\Vert \Vert z\Vert _{\dot{H}^{-2}}=\frac{3\pi
^{2}}{2}\Vert u\Vert \Vert v\Vert \Vert w\Vert \Vert z\Vert
_{\dot{H}^{-2}}.
\end{gather*}%
The proof in the case $s\geq 0$ is similar. We use the
inequality that for $k_{1}\cdot k_{2}\cdot k_{3}\neq 0$
$|k_1+k_2+k_3|\le 3|k_1||k_2||k_3|$. Then, setting
$\tilde{u}_{k}=|k|^{s}|u_{k}|$,
$\tilde{v}_{k}=|k|^{s}|v_{k}|$,
$\tilde{w}_{k}=|k|^{s}|w_{k}|$,
$\tilde{z}_{k}=|z_{k}|/|k|^{s+2}$, and following the above
argument we have
\begin{gather*}
\aligned|(B_{3}(u,v,w),z)|\leq \sum_{k_{1},k_{2},k_{3}}^{\text{nonres}}\frac{%
\tilde{u}_{k_{1}}\tilde{v}_{k_{2}}\tilde{w}_{k_{3}}\,\tilde{z}%
_{k_{1}+k_{2}+k_{3}}{|k_{1}+k_{2}+k_{3}|^{2}}}{%
|k_{1}||k_{1}+k_{2}||k_{2}+k_{3}||k_{3}+k_{1}|}\frac{|k_{1}+k_{2}+k_{3}|^{s}%
}{|k_{1}|^{s}|k_{2}|^{s}|k_{3}|^{s}}\leq \newline
\\
3^{s}\sum_{k_{1},k_{2},k_{3}}^{\text{nonres}}\frac{\tilde{u}_{k_{1}}\tilde{v}%
_{k_{2}}\tilde{w}_{k_{3}}\,\tilde{z}_{k_{1}+k_{2}+k_{3}}{%
|k_{1}+k_{2}+k_{3}|^{2}}}{|k_{1}||k_{1}+k_{2}||k_{2}+k_{3}||k_{3}+k_{1}|}%
\leq 3^{s}\frac{3\pi ^{2}}{2}\Vert u\Vert _{\dot{H}^{s}}\Vert v\Vert _{\dot{H%
}^{s}}\Vert w\Vert _{\dot{H}^{s}}\Vert z\Vert
_{\dot{H}^{-s-2}},\endaligned
\end{gather*}%
which proves the lemma with~$c_{3}(s)=3^{(s+1)}\pi ^{2}/2$.
\end{proof}

\begin{remark} \label{R:B3neg}
{\rm In fact, one can show that
\begin{equation}
\Vert B_{3}(u,v,w)\Vert _{\dot{H}^{2-4\eta }} \leq
c_{3}^{\prime }(\eta)\Vert u\Vert _{\dot{H}^{-\eta }} \Vert
v\Vert _{\dot{H}^{-\eta }}\Vert w\Vert _{\dot{H}^{-\eta
}},\qquad \eta <\frac{1}{4}. \label{estB3neg}
\end{equation}%
}
\end{remark}

\begin{lemma}\label{L:R30}
 Let $1\ge s> 0$. Then the trilinear operator
 $B_{30}^{(n)}(v,v,v)$ defined in~$(\ref{B30})$ for every $t$
maps $\dot{H}^{s}\times \dot{H}^{s}\times \dot{H}^{s}$ into
$\dot{H}^{s}$ and satisfies the estimate
\begin{equation}\label{B30sgr}
\| B_{30}^{(n)}(v,v,v)\|_{\dot{H}^{s}}\leq \frac{\pi^2}{n^{s}}
\|v\|_{\dot{H}^{0}}^{2}\|v\|_{\dot{H}^{s}}.
\end{equation}
\end{lemma}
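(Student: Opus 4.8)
The plan is to exploit the identity $B_{30}^{(n)}(v,v,v)=B_3(v,\Pi_{-n}v,\Pi_{-n}v)$, which is immediate on comparing the definition~(\ref{B30}) with~(\ref{B3}): the two projections $\Pi_{-n}$ simply force the summation in~(\ref{B30}) to run over $|k_2|>n$ and $|k_3|>n$. This restriction to high frequencies is exactly what produces the gain $n^{-s}$, while the strongly smoothing denominator $k_1(k_1+k_2)(k_2+k_3)(k_3+k_1)$ will carry the output weight $|k|^s$. I would argue by duality: for an arbitrary $z\in\dot{H}^{-s}$, discarding the unimodular phase,
\[
|(B_{30}^{(n)}(v,v,v),z)|\le \sum_{\substack{k_1+k_2+k_3=k\\ \mathrm{nonres},\,|k_2|,|k_3|>n}} \frac{|v_{k_1}||v_{k_2}||v_{k_3}||z_k|}{|k_1|\,|k_1+k_2|\,|k_2+k_3|\,|k_3+k_1|}.
\]

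First I would extract the factor $n^{-s}$ from the third frequency: since $|k_3|>n$ we may write $|v_{k_3}|=|k_3|^{-s}(|k_3|^s|v_{k_3}|)\le n^{-s}V_{k_3}$, where $V_{k_3}:=|k_3|^{s}|v_{k_3}|$ satisfies $\|V\|_{\dot{H}^0}\le\|v\|_{\dot{H}^s}$. This single step simultaneously pulls out $n^{-s}$ and promotes the third argument to the \emph{one} $\dot{H}^s$ factor, leaving the two remaining factors $|v_{k_1}|,|v_{k_2}|$ at the $\dot{H}^0$ level, which matches the right-hand side of~(\ref{B30sgr}) precisely. What remains is to bound $\sum_{\mathrm{nonres}}\frac{|v_{k_1}||v_{k_2}|V_{k_3}|z_k|}{|k_1||k_1+k_2||k_2+k_3||k_3+k_1|}$.

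The key observation for this remaining sum is that I only need output regularity $s\le 1$, whereas Lemma~\ref{L:B3} already yields two full derivatives of smoothing. Since $|k|\ge1$ one has $\|z\|_{\dot{H}^{-2}}\le\|z\|_{\dot{H}^{-s}}$ for $s\le2$, so the absolute-value estimate established in the proof of Lemma~\ref{L:B3} (the case $s=0$, paired against $\dot{H}^{-2}$) applies verbatim with $u=|v|$, $v=|v|$, $w=V$, giving the bound $\tfrac{3\pi^2}{2}\|v\|_{\dot{H}^0}^2\|V\|_{\dot{H}^0}\|z\|_{\dot{H}^{-s}}$. Together with the extracted $n^{-s}$ and $\|V\|_{\dot{H}^0}\le\|v\|_{\dot{H}^s}$ this already proves~(\ref{B30sgr}) with constant $3\pi^2/2$. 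To reach the sharper constant $\pi^2$ I would instead do the weight bookkeeping directly: set $|z_k|=|k|^s\tilde z_k$, estimate $|k_1+k_2+k_3|^s\le 2^{-s}(|k_1+k_2|^s+|k_2+k_3|^s+|k_3+k_1|^s)$ using the first inequality in~(\ref{k123}) together with the subadditivity of $t\mapsto t^{s}$, employ $|k_i+k_j|^{s-1}\le1$ (legitimate since $k_i+k_j\in\mathbb{Z}_0$ and $s\le1$) to reduce each of the three resulting terms to three denominator factors, and close by Cauchy--Schwarz as in Lemma~\ref{L:B3}, the constant $\pi^2$ emerging from $\sum_{k\in\mathbb{Z}_0}|k|^{-2}=\pi^2/3$.

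I do not expect a genuine obstacle here, since the estimate is strictly weaker than Lemma~\ref{L:B3}; the only point demanding care is the weight accounting. One must spend the single high-frequency factor $|k_3|^{-s}$ in two roles at once — the $n^{-s}$ gain and the lone $\dot{H}^s$ slot — without double-counting derivatives, and then check that the four (respectively three) surviving denominator factors still absorb the output weight $|k|^s$. Tracking this carefully is exactly what produces $n^{-s}\|v\|_{\dot{H}^0}^2\|v\|_{\dot{H}^s}$ rather than an unbalanced expression such as $\|v\|_{\dot{H}^s}^3$.
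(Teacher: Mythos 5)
Your proof is correct, and your primary route is genuinely different from the paper's. The paper proves Lemma~\ref{L:R30} by a direct duality computation: it splits the output weight as $|k_1+k_2+k_3|^s\le |k_1|^s+|k_1+k_2|^s+|k_3+k_1|^s$ (the second inequality in~(\ref{k123}) plus subadditivity of $t\mapsto t^s$), and in each of the three resulting terms it multiplies and divides by $|k_2|^s$ or $|k_3|^s$ on one of the $\Pi_{-n}$-projected factors, so that $|k_j|^{-s}\le n^{-s}$ supplies the gain while $|k_j|^s|v_{k_j}|$ becomes the single $\dot{H}^s$ factor; it then closes with Cauchy--Schwarz and the series $\sum_{l\in\mathbb{Z}_0}|l|^{-2}=\pi^2/3$, arriving at the constant $\pi^2$. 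Your primary argument instead observes $B_{30}^{(n)}(v,v,v)=B_3(v,\Pi_{-n}v,\Pi_{-n}v)$, spends the factor $|k_3|^{-s}$ exactly once via $|v_{k_3}|=|k_3|^{-s}V_{k_3}\le n^{-s}V_{k_3}$ (no double counting, and $\|V\|_{\dot{H}^0}=\|v\|_{\dot{H}^s}$), and then recycles the absolute-value estimate from the proof of Lemma~\ref{L:B3} (case $s=0$) together with $\|z\|_{\dot{H}^{-2}}\le\|z\|_{\dot{H}^{-s}}$. This reduction is shorter and reuses existing machinery, at the price of the constant $3\pi^2/2$ instead of $\pi^2$ --- immaterial, since downstream only the $n^{-s}$ decay (the squeezing) is ever used, never the numerical constant. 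Your refinement to recover $\pi^2$ is, up to the cosmetic differences of which inequality in~(\ref{k123}) is invoked and of which projected factor donates the $n^{-s}$ (you always take $k_3$, the paper alternates between $k_2$ and $k_3$), precisely the paper's own argument; carried out, it in fact yields $2^{-s}\pi^2\le\pi^2$, and the steps you flag as delicate do check out, in particular $|k_i+k_j|^{s-1}\le1$ is legitimate on the nonresonant set and each of the three kernel sums factors into two convergent $|l|^{-2}$ series after an injective linear change of summation variables.
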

\begin{proof}
To estimate the norm of $B_{30}^{(n)}(v,v,v)$ we take
$z\in\dot{H}^{-s}$ and consider
\begin{equation*}
(B_{30}^{(n)}(v,v,v),z)=\sum_{ k\in\mathbb{Z}_0}
\sum_{k_{1}+k_{2}+k_{3}=k}^{\text{nonres}}
\frac{e^{i3(k_{1}+k_{2})(k_{2}+k_{3})(k_{3}+k_{1})t}}
{k_{1}(k_{1}+k_{2})(k_{2}+k_{3})(k_{3}+k_{1})} v_{k_{1}}\Pi
_{-n}v_{k_{2}}\Pi_{-n}v_{k_{3}}z_{k}.
\end{equation*}%
We set $\tilde{z}_{k}=|z_{k}|/|k|^{s}$ and  using
(\ref{k123}) obtain for $1\geq s>0$
\begin{gather*}
|( B_{30}^{(n)}(v,v,v),z)|\leq \sum_{k\in\mathbb{Z}_0}
\sum_{k_{1}+k_{2}+k_{3}=k}^{\text{nonres}}
\frac{|k_{1}+k_{2}+k_{3}|^{s}} {|k_{1}||k_{1}+k_{2}||
k_{2}+k_{3}||k_{3}+k_{1}|}
|v_{k_{1}}||\Pi_{-n}v_{k_{2}}||\Pi _{-n}v_{k_{3}}|
\tilde{z}_{k} \le
\\
\sum_{k\in\mathbb{Z}_0}
\sum_{k_{1}+k_{2}+k_{3}=k}^{\text{nonres}} \left(
\frac{|v_{k_{1}}||k_{2}|^{s}|\Pi _{-n}v_{k_{2}}||
\Pi_{-n}v_{k_{3}}|\tilde{z}_{k}}
{|k_{1}|^{1-s}|k_{1}+k_{2}||k_{2}|^{s}|k_{2}+k_{3}|
|k_{3}+k_{1}|}+%
\right.\\
\left. \frac{|v_{k_{1}}||\Pi_{-n}v_{k_{2}}|| k_{3}|^{s}
|\Pi_{-n}v_{k_{3}}|\tilde{z}_{k}} {|k_{1}||
k_{1}+k_{2}|^{1-s}|k_{2}+k_{3}||k_{3}+k_{1}| |k_{3}|^{s}}+
\frac{ |v_{k_{1}}||k_{2}|^{s}|\Pi _{-n}v_{k_{2}}| |\Pi
_{-n}v_{k_{3}}|\tilde{z}_{k}}
{|k_{2}|^{s}|k_{1}||k_{1}+k_{2}|| k_{2}+k_{3}|
|k_{3}+k_{1}|^{1-s}}\right)\le
\\
\frac{1}{n^{s}} \sum_{k_{1},k_{2},k_{3}}^{\text{nonres}}
\biggl(
 \frac{%
\left\vert v_{k_{1}}\right\vert \left\vert k_{2}\right\vert
^{s}\left\vert
\Pi _{-n}v_{k_{2}}\right\vert \left\vert \Pi _{-n}v_{k_{3}}\right\vert }{%
\left\vert k_{1}\right\vert ^{1-s}\left\vert
k_{1}+k_{2}\right\vert \left\vert k_{2}+k_{3}\right\vert
\left\vert k_{3}+k_{1}\right
\vert }+\\
\frac{%
\left\vert v_{k_{1}}\right\vert \left\vert \Pi
_{-n}v_{k_{2}}\right\vert
\left\vert k_{3}\right\vert ^{s}\left\vert \Pi _{-n}v_{k_{3}}\right\vert }{%
\left\vert k_{1}\right\vert \left\vert
k_{1}+k_{2}\right\vert
^{1-s}\left\vert k_{2}+k_{3}\right\vert \left\vert k_{3}+k_{1}\right\vert }+%
\frac{\left\vert v_{k_{1}}\right\vert \left\vert
k_{2}\right\vert ^{s}\left\vert \Pi
_{-n}v_{k_{2}}\right\vert \left\vert \Pi
_{-n}v_{k_{3}}\right\vert }{\left\vert k_{1}\right\vert
\left\vert k_{1}+k_{2}\right\vert \left\vert
k_{2}+k_{3}\right\vert \left\vert k_{3}+k_{1}\right\vert
^{1-s}} \biggr) \tilde{z}_{k_{1}+k_{2}+k_{3}}\le
\\
\frac{1}{n^{s}}\|v\|_{\dot{H}^{0}}^{2}
\| v\|_{\dot{H}^{s}}
\biggr[ \sum_{k_{1},k_{2},k_{3}}^{\text{nonres}}
 \frac{\tilde{z}_{k_{1}+k_{2}+k_{3}}^2}
{\left\vert k_{1}\right\vert ^{2-2s}\left\vert
k_{1}+k_{2}\right\vert ^{2}\left\vert
k_{2}+k_{3}\right\vert ^{2}\left\vert
k_{3}+k_{1}\right\vert ^{2}}+
\\
\frac{\tilde{z}_{k_{1}+k_{2}+k_{3}}^2} {\left\vert
k_{1}\right\vert ^{2}\left\vert k_{1}+k_{2}\right\vert
^{2-2s}\left\vert k_{2}+k_{3}\right\vert ^{2}\left\vert
k_{3}+k_{1} \right\vert ^{2}}+
\frac{\tilde{z}_{k_{1}+k_{2}+k_{3}}^2}{%
\left\vert k_{1}\right\vert ^{2}\left\vert
k_{1}+k_{2}\right\vert ^{2}\left\vert
k_{2}+k_{3}\right\vert ^{2}\left\vert
k_{3}+k_{1}\right\vert ^{2-2s}} \biggr] ^{1/2} \leq
\\
\frac{3}{n^{s}}\|v\|_{\dot{H}^{0}}^{2} \| v\|_{\dot{H}^{s}}
\biggr[ \sum_{k_{1},k_{2},k_{3}}^{\text{nonres}}
 \frac{\tilde{z}_{k_{1}+k_{2}+k_{3}}^2}
{
|k_{1}+k_{2}|^{2}|k_{2}+k_{3}|^{2}} \biggr] ^{1/2} =
\frac{\pi^2}{n^{s}}
\Vert v\Vert _{\dot{H}^{0}}^{2}\Vert v\Vert _{\dot{H}%
^{s}}\Vert z\Vert _{\dot{H}^{-s}}.
\end{gather*}%
\end{proof}

For $s\le0$ we have the following estimate.
\begin{lemma}\label{L:R30neg}
 Let $s\le0$. Then the trilinear operator
 $B_{30}^{(n)}(v,v,v)$
satisfies the estimate
\begin{equation}\label{B30sgrneg1}
\| B_{30}^{(n)}(v,v,v)\|_{\dot{H}^{s}}\leq
\frac{C(p,\alpha)}{n^{2\alpha}} \|v\|_{\dot{H}^{s}}^{3},
\end{equation}
where $p=-s\ge0$, $\alpha>0$ and $p+\alpha<5/6$.
\end{lemma}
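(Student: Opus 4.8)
The plan is to argue by duality, to use the two high--frequency projections $\Pi_{-n}$ in $(\ref{B30})$ to pull out the factor $n^{-2\alpha}$, and then to estimate the remaining trilinear form by the same weight--balancing device that was used in Lemma~\ref{L:negB21} (in the spirit of Lemma~\ref{L:B3}, Lemma~\ref{L:R30} and Remark~\ref{R:B3neg}). Write $p=-s\ge 0$. Since $(\ref{B30sgrneg1})$ is an $\dot{H}^{-p}$ estimate, it is equivalent, by duality, to
\begin{equation*}
|(B_{30}^{(n)}(v,v,v),z)|\le \frac{C(p,\alpha)}{n^{2\alpha}}\,\|v\|_{\dot{H}^{-p}}^{3}\,\|z\|_{\dot{H}^{p}},\qquad z\in\dot{H}^{p}.
\end{equation*}
Passing to absolute values in the definition $(\ref{B30})$ and recalling that $\Pi_{-n}$ forces $|k_2|>n$ and $|k_3|>n$, I would introduce the weighted sequences $\tilde v_k=|v_k|\,|k|^{-p}$ and $\tilde z_k=|z_k|\,|k|^{p}$, so that $\|\tilde v\|_{\ell_2}=\|v\|_{\dot{H}^{-p}}$ and $\|\tilde z\|_{\ell_2}=\|z\|_{\dot{H}^{p}}$. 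Then $|v_{k_j}|=\tilde v_{k_j}|k_j|^{p}$, $|z_k|=\tilde z_k|k|^{-p}$, and the gain is produced by writing $|k_2|^{p}|k_3|^{p}=|k_2|^{-\alpha}|k_3|^{-\alpha}\,|k_2|^{p+\alpha}|k_3|^{p+\alpha}$ and using $|k_2|^{-\alpha}|k_3|^{-\alpha}\le n^{-2\alpha}$ on the high modes.

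After this extraction (and combining $|k_1|^{p}/|k_1|=|k_1|^{p-1}$) the whole problem reduces to bounding, uniformly in $n$,
\begin{equation*}
S=\sum_{k_1,k_2,k_3}^{\mathrm{nonres}}\frac{|k_1|^{p-1}\,|k_2|^{p+\alpha}\,|k_3|^{p+\alpha}\,|k|^{-p}}{|k_1+k_2|\,|k_2+k_3|\,|k_3+k_1|}\;\tilde v_{k_1}\tilde v_{k_2}\tilde v_{k_3}\tilde z_k,\qquad k=k_1+k_2+k_3,
\end{equation*}
by $C(p,\alpha)\|\tilde v\|_{\ell_2}^{3}\|\tilde z\|_{\ell_2}$. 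Here the two negative powers are harmless, $|k_1|^{p-1}\le1$ and $|k|^{-p}\le1$ (since $0\le p<5/6<1$ and all indices lie in $\mathbb{Z}_0$), while the two positive weights $|k_2|^{p+\alpha},|k_3|^{p+\alpha}$ are meant to be absorbed into the ``gap'' factors $|k_1+k_2|,|k_2+k_3|,|k_3+k_1|$ by means of the elementary identities $k_2=\tfrac12[(k_1+k_2)+(k_2+k_3)-(k_3+k_1)]$ and $k_3=\tfrac12[(k_2+k_3)+(k_3+k_1)-(k_1+k_2)]$ together with the subadditivity of $x\mapsto x^{p+\alpha}$ for $0\le p+\alpha<1$ (the same mechanism as the use of $(\ref{k123})$ in Lemma~\ref{L:B3} and Lemma~\ref{L:R30}). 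To carry out the summation I would then use the symmetrizing H\"older inequality of Lemma~\ref{L:negB21}: write $\tilde v_{k_1}\tilde v_{k_2}\tilde v_{k_3}\tilde z_k=(\tilde v_{k_1}\tilde v_{k_2})^{1/2}(\tilde v_{k_2}\tilde v_{k_3})^{1/2}(\tilde v_{k_3}\tilde z_k)^{1/2}(\tilde v_{k_1}\tilde z_k)^{1/2}$, split the kernel accordingly into four factors $K=K_1K_2K_3K_4$ tuned to balance the weights, and apply H\"older with four exponents equal to $4$. This bounds $S$ by a product of four factors of the form $\bigl(\sum K_i^{4}\,\tilde a^2\tilde b^2\bigr)^{1/4}$, with $\{\tilde a,\tilde b\}$ the corresponding pair among $\tilde v,\tilde z$. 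Because the indices $k_1,k_2,k_3$ range independently, each pair--sum factors into a product of $\ell_2$ norms once the one--dimensional sum of $K_i^{4}$ over the remaining index has been shown to be finite; multiplying the four factors then reproduces exactly $\|\tilde v\|_{\ell_2}^{3}\|\tilde z\|_{\ell_2}$.

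The main obstacle is precisely the bookkeeping of exponents in those one--dimensional sub--sums, and it is this that fixes the threshold $p+\alpha<5/6$. The worst configuration is the one in which the two high modes $k_2,k_3$ tend to infinity, where the numerator weights $|k_2|^{p+\alpha}|k_3|^{p+\alpha}$ are largest relative to the decay supplied by the gap factors; there one must verify that the reductions above still leave every relevant tail $\sum_k|k|^{-q}$ with $q>1$. As a check that care is needed, a naive Cauchy--Schwarz that keeps the three factors $\tilde v_{k_1},\tilde v_{k_2},\tilde v_{k_3}$ together leads, on the slice $k_1+k_2+k_3=k$ with $k_2\to\infty$, to a kernel square behaving like $|k_2|^{4(p+\alpha)-4}$, hence to convergence only for $p+\alpha<3/4$; the balanced H\"older splitting is what is needed to reach the sharp value $5/6$ asserted in the lemma. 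A secondary point to verify is that the nonresonance restriction keeps all three gap variables in $\mathbb{Z}_0$, so that the changes of summation variable used to evaluate the sub--sums are legitimate and the constant $C(p,\alpha)$ is finite for $\alpha>0$ and $p+\alpha<5/6$.
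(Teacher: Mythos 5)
Your opening moves (duality, the weights $\tilde v_k=|v_k||k|^{-p}$, $\tilde z_k=|z_k||k|^{p}$, and pulling out $n^{-2\alpha}$ from the two high-frequency projections) coincide exactly with the paper's proof, and at that point you have reproduced the paper's kernel~(\ref{KK}). The fatal step is the next one: you declare the factors $|k_1|^{p-1}$ and $|k|^{-p}$ ``harmless'' and discard them, reducing the problem to absorbing $|k_2|^{p+\alpha}|k_3|^{p+\alpha}$ into the three gap factors $|k_1+k_2|\,|k_2+k_3|\,|k_3+k_1|$ alone. That reduced inequality is false well inside the claimed range. Take $k_1=k_3=-N+1$, $k_2=N$ (nonresonant: the gaps are $1$, $1$, $2-2N$), with $\tilde v=\tfrac1{\sqrt2}\bigl(\delta_{-N+1}+\delta_{N}\bigr)$ and $\tilde z=\delta_{-N+2}$. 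A single term of your sum $S$ is then of size $N^{2(p+\alpha)}/(1\cdot1\cdot 2N)\sim N^{2(p+\alpha)-1}$, while $\|\tilde v\|_{\ell_2}=\|\tilde z\|_{\ell_2}=1$; this is unbounded as $N\to\infty$ whenever $p+\alpha>1/2$. Since your scheme has already passed to absolute values, no H\"older splitting --- balanced or not --- can recover the range $p+\alpha<5/6$ after this reduction; your ``$3/4$ versus $5/6$'' discussion is moot because the reduced multilinear form itself is unbounded beyond $1/2$.

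The factors you threw away are precisely what the paper's argument lives on. In the configuration above they contribute $|k_1|^{-(1-p)}|k|^{-p}\approx N^{-(1-p)}N^{-p}=N^{-1}$ --- a full power of decay, independently of $p$ --- bringing the critical kernel value down to $N^{2(p+\alpha)-2}$; after cubing, this is $N^{6(p+\alpha)-6}$, which is summable over $N$ exactly when $p+\alpha<5/6$. That is where the threshold in the lemma comes from. Concretely, the paper isolates the kernel by a five-factor H\"older inequality with exponents $(6,6,6,6,3)$ (splitting $\tilde v_{k_1}\tilde v_{k_2}\tilde v_{k_3}\tilde z_k$ into four cube roots, as in~(\ref{BB3sum})), which reduces everything to $\sum^{\mathrm{nonres}}K^3<\infty$; Lemma~\ref{L:K3} then proves this convergence by keeping all the denominator functionals $k_1$, $k_1+k_2$, $k_2+k_3$, $k_3+k_1$, $k_1+k_2+k_3$, interpolating with exponents $1-p$ and $p$ between the $|k_1|$-decay and the $|k|$-decay, and exploiting the fact that any three of the four functionals in each resulting sum are linearly independent. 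It is this four-functional structure that allows numerator weights of total degree close to $5/3$ (per cube root) to be absorbed, whereas with only the three gap factors you can tolerate total weight $2(p+\alpha)$ at most $1$ even for boundedness of individual terms. To repair your proof you must keep $|k_1|^{p-1}$ and $|k|^{-p}$ in the kernel and either rerun your four-factor H\"older with all five denominator factors distributed among the pieces, or simply reduce to $\sum^{\mathrm{nonres}}K^3<\infty$ and invoke Lemma~\ref{L:K3} with $\gamma=\delta=p+\alpha$, as the paper does.
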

\begin{proof}
If $\ s=-p\leq 0$ we set
 $\tilde{v}_{k}=|v_{k}||k|^{s}$,
 $\tilde{z}_{k}=|z_{k}|/|k|^{s}$
 so that
$\|\tilde{v}\|_{\dot{H}^0}=\|{v}\|_{\dot{H}^s}$,
$\|\tilde{z}\|_{\dot{H}^0}=\|{z}\|_{\dot{H}^{-s}}$
 and obtain
\begin{gather*}
|( B_{30}^{(n)}(v,v,v),z)|\leq \sum_{k\in\mathbb{Z}_0}
\sum_{k_{1}+k_{2}+k_{3}=k}^{\text{nonres}}
\frac{|k_{1}|^{p}\tilde{v}_{k_{1}}|k_{2}|^{p} \Pi
_{-n}\tilde{v}_{k_{2}}| k_{3}|^{p} \Pi
_{-n}\tilde{v}_{k_{3}} \tilde{z}_{k}} {|
k_{1}||k_{1}+k_{2}||k_{2}+k_{3}||k_{3}+k_{1}|
|k_{1}+k_{2}+k_{3}|^{p}} \le
\\
\frac{1}{n^{2\alpha }}\sum_{k\in\mathbb{Z}_0}
\sum_{k_{1}+k_{2}+k_{3}=k}^{\text{nonres}}
K(k_1,k_2,k_3)\,\tilde{v}_{k_{1}}\Pi _{-n}\tilde{v}_{k_{2}}
\Pi _{-n}\tilde{v}_{k_{3}} \tilde{z}_{k},
\end{gather*}%
where
\begin{equation} \label{KK}
K\left( k_{1},k_{2},k_{3}\right) =\frac{\left\vert
k_{2}\right\vert ^{p+\alpha }\left\vert k_{3}\right\vert
^{p+\alpha }}{\left\vert k_{1}\right\vert ^{1-p}\left\vert
k_{1}+k_{2}\right\vert \left\vert k_{2}+k_{3}\right\vert
\left\vert k_{3}+k_{1}\right\vert \left\vert
k_{1}+k_{2}+k_{3}\right\vert ^{p}}.
\end{equation}%
Now we include condition $|k|>0$ into definition of
nonresonant summation and write
\begin{gather}
\left\vert \left( B_{30}^{(n)}(v,v,v),z\right) \right\vert \leq \frac{1}{%
n^{2\alpha }}\sum_{k_{1},k_{2},k_{3}}^{\text{nonres}}K\left\vert \tilde{v}%
_{k_{1}}\right\vert \left\vert \tilde{v}_{k_{2}}\right\vert
\left\vert \tilde{v}_{k_{3}}\right\vert \tilde{z}_{k}
=\label{BB3sum}
\\
\frac{1}{n^{2\alpha
}}\sum_{k_{1},k_{2},k_{3}}^{\text{nonres}} \left(
\left\vert \tilde{v}_{k_{1}}\right\vert \left\vert
\tilde{v}%
_{k_{2}}\right\vert \left\vert \tilde{v}_{k_{3}}
\right\vert \right) ^{1/3}\left( \left\vert
\tilde{v}_{k_{2}}\right\vert \left\vert
\tilde{v}_{k_{3}}\right\vert \tilde{z}_{k}\right) ^{1/3}
\left( \left\vert \tilde{v}_{k_{1}}\right\vert \left\vert
\tilde{v}_{k_{3}}
\right\vert \tilde{z}%
_{k}\right) ^{1/3}\left( \left\vert \tilde{v}_{k_{1}}
\right\vert \left\vert \tilde{v}_{k_{2}}\right\vert
\tilde{z}_{k}\right) ^{1/3}K \le\notag
\\
\leq \frac{1}{n^{2\alpha }}\left( \sum_{k_{1},k_{2},k_{3}}^{\text{nonres}%
}\left( \left\vert \tilde{v}_{k_{1}}\right\vert \left\vert \tilde{v}%
_{k_{2}}\right\vert \left\vert \tilde{v}_{k_{3}}\right\vert
\right) ^{2}\right) ^{1/6}\left(
\sum_{k_{1},k_{2},k_{3}}^{\text{nonres}}\left(
\left\vert \tilde{v}_{k_{2}}\right\vert \left\vert \tilde{v}%
_{k_{3}}\right\vert \tilde{z}_{k}\right) ^{2}\right) ^{1/6}
\times\notag
\\
\left(
\sum_{k_{1},k_{2},k_{3}}^{\text{nonres}}\left( \left\vert \tilde{v}%
_{k_{1}}\right\vert \left\vert \tilde{v}_{k_{3}}\right\vert \tilde{z}%
_{k}\right) ^{2}\right) ^{1/6}\left( \sum_{k_{1},k_{2},k_{3}}^{\text{nonres}%
}\left( \left\vert \tilde{v}_{k_{1}}\right\vert \left\vert \tilde{v}%
_{k_{2}}\right\vert \tilde{z}_{k}\right) ^{2}\right)
^{1/6}\left(
\sum_{k_{1},k_{2},k_{3}}^{\text{nonres}}K^{3}\right)
^{1/3}=
\notag\\
\frac{1}{n^{2\alpha }}
\|\tilde{v}\|_{\dot{H}^{0}}^{3}\|\tilde{z}\|_{\dot{H}^{0}}
\left(\sum_{k_1,k_2,k_3}^{\text{nonres}}K^{3}\right)^{1/3}=
\frac{1}{n^{2\alpha }}\|v\|_{\dot{H}^{s}}^{3}
\|z\|_{\dot{H}^{-s}} \left(
\sum_{k_1,k_2,k_3}^{\text{nonres}}K^{3}\right)^{1/3},
\notag
\end{gather}
where
$\left( \sum_{k_{1},k_{2},k_{3}}^{\text{%
nonres}}K^{3}\right)^{1/3}=:C(p,\alpha)<\infty$ by
Lemma~\ref{L:K3} (where $\gamma=\delta:=p+\alpha$).
\end{proof}

\begin{lemma} \label{L:K3}
Let $0\le p\le1$ and let $K(k_1,k_2,k_3)$ be defined as
follows
\begin{equation} \label{KKK}
K\left( k_{1},k_{2},k_{3}\right) = \frac{|k_{2}|^\gamma
|k_{3}|^\delta } {|k_{1}|^{1-p}|k_{1}+k_{2}||k_{2}+k_{3}||
k_{3}+k_{1}| |k_{1}+k_{2}+k_{3}|^{p}}\,.
\end{equation}%
If
\begin{equation}\label{gamdel}
   \gamma+\delta<\frac53\,,
\end{equation}
then
$$
\sum_{k_{1},k_{2},k_{3}}^{\mathrm{nonres}} K(
k_{1},k_{2},k_{3})^{3}\le C(\gamma,\delta,p)<\infty.
$$
\end{lemma}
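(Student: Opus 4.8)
The plan is to prove the bound by reducing the triple sum to a finite collection of one–dimensional ``model'' sums, each controlled by the elementary fact that $\sum_{j\in\mathbb{Z}_0}|j|^{-q}<\infty$ for $q>1$. First I would isolate the two structural facts that make the sum tractable: because the summation runs over the nonresonant set, the factors $|k_1+k_2|,\,|k_2+k_3|,\,|k_3+k_1|$ are nonzero integers, and since $k_1,k_2,k_3\in\mathbb{Z}_0$ and $k:=k_1+k_2+k_3\ne0$, every denominator factor of $K$ is $\ge1$. Hence each summand is finite and convergence is purely a question of decay at infinity. It is then natural to pass to the variables $m_1=k_1+k_2$, $m_2=k_2+k_3$, $m_3=k_3+k_1$, which map the nonresonant index set bijectively onto a sublattice (those $(m_1,m_2,m_3)$ with even sum) and diagonalize three denominator factors. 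In these variables
\[
k_1=k-m_2,\qquad k_2=k-m_3,\qquad k_3=k-m_1,\qquad k=\tfrac12(m_1+m_2+m_3),
\]
so that, after using $|k_2|\le|m_1|+|k_1|$, $|k_3|\le|m_3|+|k_1|$ and $|x+y|^{q}\le C_q(|x|^{q}+|y|^{q})$ to split the numerator, $K^3$ is dominated by a finite sum of terms of the form
\[
\frac{1}{|m_2|^{3}}\cdot\frac{|m_1|^{a_1}|m_3|^{a_3}|k_1|^{a_0}}{|m_1|^{3}|m_3|^{3}\,|k_1|^{3-3p}\,|k|^{3p}},\qquad a_1+a_3+a_0=3\gamma+3\delta,\ \ a_1,a_3,a_0\ge0 .
\]

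Second, I would estimate the representative model sum by iterated summation. The factor $|m_2|^{-3}$ is summable at once. For the remaining double sum over $m_1,m_3$ I would sum in $m_1$ first: for fixed $m_2,m_3$ the relation $m_1=2k-m_2-m_3$ turns the $m_1$–sum into a one–dimensional sum over $k$, in which the $k$–dependent factors $|k_1|^{3-3p}=|k-m_2|^{3-3p}$, $|k|^{3p}$ and the surviving power of $|m_1|=|2k-m_2-m_3|$ appear. Distributing the exponent $3p$ of $|k|^{3p}$ onto the two neighbouring factors by H\"older, this inner sum reduces to products of the two elementary types
\[
\sum_{k}\frac{1}{|k-c|^{q}}\le C(q)\ \ (q>1),\qquad \sum_{k}\frac{1}{|k-c_1|^{q_1}|k-c_2|^{q_2}}\le C\,|c_1-c_2|^{1-q_1-q_2}\ \ (0<q_1,q_2<1,\ q_1+q_2>1),
\]
the second of which supplies the extra decay in $m_1$ (hence in the scale of the problem) that the bare power $|m_1|^{-(3-3\gamma)}$ may lack when $\gamma\ge2/3$. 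Summing the resulting bound against $|m_3|^{-(3-3\delta)}$ closes the estimate for this term, and the remaining terms (in particular the one in which all the weight falls on $|k_1|^{a_0}$) are handled analogously after the obvious rearrangements.

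The hard part is precisely this final bookkeeping of exponents, and it is where the hypothesis $\gamma+\delta<\tfrac53$ is sharp. The binding configuration is the one in which $k_2$ and $k_3$ (equivalently two of the forms $m_1,m_3$) are simultaneously of a common large size $N$ while the remaining resonance factor stays comparatively small: there the numerator is of order $N^{3\gamma+3\delta}$ and must be absorbed by the $N^{6}$ supplied by the denominator, which is possible exactly when $3\gamma+3\delta<5$. To make this robust I would, in parallel with the algebraic reduction above, perform a dyadic decomposition according to the size $N=2^{j}$ of $\max(|k_1|,|k_2|,|k_3|)$: on each block one bounds the numerator by $N^{3\gamma+3\delta}$, retains the three resonance denominators, and carries out a careful count of the triples at scale $N$ carrying a prescribed small value of the dominant resonance factor, weighted by the reciprocal cubes of the resonance factors. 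The arithmetic is engineered so that each block contributes $\lesssim N^{3\gamma+3\delta-5}$, whence the geometric series $\sum_{j}2^{j(3\gamma+3\delta-5)}$ converges if and only if $\gamma+\delta<\tfrac53$; assembling the finitely many model sums then yields $\sum^{\mathrm{nonres}}K^3\le C(\gamma,\delta,p)<\infty$, as claimed.
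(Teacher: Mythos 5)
Your skeleton --- summing over the resonance forms $m_1=k_1+k_2$, $m_2=k_2+k_3$, $m_3=k_3+k_1$, splitting the numerator by triangle inequalities, and reducing to one-dimensional lattice sums whose exponent count produces the threshold $\gamma+\delta<5/3$ --- is the same general mechanism as the paper's, but the proposal has a genuine gap exactly at what you yourself call ``the final bookkeeping'', and the specific recipe you give breaks down in the extreme parameter ranges. Concretely: (i) your two ``elementary types'' cannot carry the inner sums as stated. The convolution estimate requires both exponents in $(0,1)$, whereas the pair $(3-3p,3p)$ sums to $3$, so no admissible H\"older redistribution is actually exhibited; worse, the hypothesis allows $\gamma>1$, and allows all of the numerator weight to sit on $|k_1|^{a_0}$ with $a_0=3(\gamma+\delta)$ close to $5$, in which case the inner sum contains \emph{growing} factors such as $|m_1|^{3\gamma-3}=|2k-m_2-m_3|^{3\gamma-3}$ or $|k-m_2|^{3(\gamma+\delta)}$ that neither estimate addresses. (ii) The opening move ``the factor $|m_2|^{-3}$ is summable at once'' is not a valid decoupling, since $k=\tfrac12(m_1+m_2+m_3)$ and $k_1=\tfrac12(m_1-m_2+m_3)$ depend on $m_2$. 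In fact the inner sums are unbounded in $(m_2,m_3)$: in the $a_0$-term with $p=1$, taking $m_3=m_2$ gives an inner sum $\gtrsim|m_2|^{3(\gamma+\delta)-3}\to\infty$ for $\gamma+\delta>1$. If one records this growth in the natural lossy way, as a power of $|m_2-m_3|$ of order up to $3(\gamma+\delta)+3p-3$, the outer weights $|m_2|^{-3}|m_3|^{-3}$ cannot absorb it and the iteration diverges already for $\gamma+\delta\ge2/3$, $p$ near $1$. The sum is of course finite, but only because the numerator's growth is coupled to extra decay of the denominators (e.g.\ a hidden factor $|m_2+m_3|^{-3}$ from the bivariate sum); keeping track of that coupling, term by term and with a term-dependent order of summation, is precisely the content of the lemma, and it is what the proposal waves off as ``obvious rearrangements''. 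The dyadic fallback does not repair this: the block bound ``$\lesssim N^{3\gamma+3\delta-5}$'' is the lemma restated in dyadic form, asserted (``the arithmetic is engineered so that\dots'') rather than proved.

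For contrast, the paper avoids all of this with two devices absent from your plan. It first applies H\"older's inequality in the parameter $p$ to the whole sum, replacing the mixed weight $|k_1|^{-(3-3p)}|k|^{-3p}$ by two sums with pure cubic denominators, one over the four forms $k_1,l_1,l_2,l_3$ and one over $k,l_1,l_2,l_3$ (in the paper's notation $l_1=k_1+k_2$, $l_2=k_2+k_3$, $l_3=k_3+k_1$). Then, using that any three of the four forms in each family are linearly independent, it bounds $|k_2|+|k_3|\le C\bigl(\prod_j\sum_{i\ne j}|l_i|\bigr)^{1/4}\le C\sum_j\sum_{i\ne j}\bigl(|l_j||l_i|^3\bigr)^{1/4}$, which distributes the numerator over \emph{all} the forms in one stroke; finally the single leftover form is rewritten as a linear combination of the other three, every term factors into one-dimensional series, and convergence reduces to the single condition $3(\gamma+\delta)-3<2$, with no case analysis in $p$ or in the location of the numerator weight. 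If you want to complete your route you would need, at a minimum, bivariate lattice-sum estimates valid for general exponent pairs with explicit dependence on the distance between centers, together with a term-by-term choice of summation order; adopting the paper's H\"older-plus-geometric-mean reduction is the cleaner fix.
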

\begin{proof}
By (\ref{KKK})
\begin{equation*}
K( k_{1},k_{2},k_{3})^3=\frac{|k_{2}|^{3\gamma}
|k_{3}|^{3\delta}} {|k_{1}|^{3-3p}|k_{1}+k_{2}|^{3}
|k_{2}+k_{3}\|^{3}|k_{3}+k_{1}|^{3}
|k_{1}+k_{2}+k_{3}|^{3p}}\,.
\end{equation*}
We denote
\begin{equation*}
l_{1}=k_{1}+k_{2},\ l_{2}=k_{2}+k_{3}, \ l_{3}=k_{1}+k_{3},
\ l_{0}=k=k_{1}+k_{2}+k_{3}.
\end{equation*}%
Obviously,
\begin{equation*}
k_{1}=\frac{1}{2}\left( l_{3}+l_{1}-l_{2}\right)
,k_{3}=\frac{1}{2}\left( l_{3}-l_{1}+l_{2}\right)
,k_{2}=\frac{1}{2}\left( l_{2}-l_{3}+l_{1}\right)
,k=\frac{1}{2}\left( l_{2}+l_{3}+l_{1}\right)
\end{equation*}%
and setting $k_{1}=l_{4}$, $k=k_1+k_2+k_3=l_0$ we have
\begin{equation*}
K\left( k_{1},k_{2},k_{3}\right)^{3}=
\frac{|k_{2}|^{3\gamma}|k_{3}|^{3\delta}} {\left(
|l_{4}||l_{1}||l_{2}||l_{3}|\right)^{3-3p} \left(
|l_{0}||l_{1}||l_{2}||l_{3}|\right)^{3p}}
\end{equation*}%
and therefore
\begin{equation}\label{prodwithp}
\sum_{k_{1},k_{2},k_{3}}^{\text{nonres}} K(k_1,k_2,k_3)^3
\le \left( \sum_{k_{1},k_{2},k_{3}}^{\text{nonres}}
\frac{|k_{2}|^{3\gamma}|k_{3}|^{3\delta}}
{(|l_{4}||l_{1}||l_{2}||l_{3}|)^{3}}\right)^{1-p} \left(
\sum_{k_{1},k_{2},k_{3}}^{\text{nonres}}
\frac{|k_{2}|^{3\gamma}|k_{3}|^{3\delta}}
{(|l_{0}||l_{1}||l_{2}||l_{3}|)^{3}}\right)^{p}\,.
\end{equation}%
We have four linear functions $l_{1},l_{2},l_{3},k=l_{0}$
defined on the $\left( k_{1},k_{2},k_{3}\right)$ space, any
three of the four functions are linearly independent.
Therefore
\begin{equation*}
|k_{2}| +|k_{3}| \leq C_{1}\sum_{i\neq j}|l_{i}| ,\quad
\text{for} \quad j=0,1,2,3.
\end{equation*}
Similarly, four linear functions
$l_{1},l_{2},l_{3},k_{1}=l_{4}$ are defined on the  $\left(
k_{1},k_{2},k_{3}\right)$ space, any three of them are
linearly independent. Therefore
\begin{equation*}
|k_{2}| +|k_{3}| \leq C_{2}\sum_{i\neq j}|l_{i}|, \quad
\text{for}\quad j=1,2,3,4.
\end{equation*}%
Hence%
\begin{equation*}   \label{k2301}
|k_{2}|+|k_{3}|\leq C_{3} \left(
\prod\limits_{j=0}^{3}\sum_{i\neq j}
|l_{i}|\right)^{1/4},\quad i=0,1,2,3.
\end{equation*}
\begin{equation*} \label{k2311}
|k_{2}| +|k_{3}|\leq C_{3} \left(
\prod\limits_{j=0}^{3}\sum_{i\neq j}
|l_{i}|\right)^{1/4},\quad i=1,2,3,4.
\end{equation*}
Using inequalities of the type $l_0l_1l_2l_3\le
1/2(l_0^2l_1^2+l_2^2l_3^2)
\le1/4(l_0l_1^3+l_0^3l_1+l_2l_3^3+l_2^3l_3)$ we obtain
\begin{equation*}
|k_{2}|+|k_{3}|\leq C_{4}\sum_{j=0}^{3}\sum_{i\neq j}
(|l_{j}||l_{i}|^{3})^{1/4}, \quad | k_{2}|+|k_{3}|\leq
C_{4}\sum_{j=1}^{4}\sum_{i\neq j}
(|l_{j}||l_{i}|^{3})^{1/4}
\end{equation*}%
and
\begin{equation*}
\sum_{k_{1},k_{2},k_{3}}^{\text{nonres}}
\frac{|k_{2}|^{3\gamma}|k_{3}|^{3\delta}}
{(|l_{4}||l_{1}||l_{2}||l_{3}|)^{3}} \leq
C_{5}\sum_{j=1}^{4}\sum_{i\neq j}
\sum_{k_{1},k_{2},k_{3}}^{\text{nonres}} \frac{(|l_{i}||
l_{j}|^{3})^{3(\gamma+\delta)/4}}
{(|l_{4}||l_{1}||l_{2}||l_{3}|)^{3}}\,.
\end{equation*}%
Each term in the above sum can be estimated quite
similarly, we take, for example, $j=2$, $i=1$ (all
remaining combinations are obtained by an obvious
permutation of indices). Expressing below $l_2$ as the
linear combination of $l_1$, $l_3$ and $l_4$ we get
\begin{gather*}
\sum_{k_{1},k_{2},k_{3}}^{\text{nonres}}
\frac{(|l_{1}||l_{2}|^{3})^{3(\gamma+\delta)/4}}
{(|l_{4}||l_{1}||l_{2}||l_{3}|)^{3}}
=\sum_{k_{1},k_{2},k_{3}}^{\text{nonres}}
\frac{|l_{1}|^{3(\gamma+\delta)/4}|l_{2}|^{9(\gamma+\delta)/4}}
{(|l_{4}||l_{1}||l_{2}||l_{3}|)^{3}}=
\sum_{k_{1},k_{2},k_{3}}^{\text{nonres}}
\frac{|l_{1}|^{3(\gamma+\delta)/4}
|l_{2}|^{9(\gamma+\delta)/4-3}}
{|l_{4}|^{3}|l_{1}|^{3}|l_{3}|^{3}} \le
\\
C_{5}^{\prime }\sum_{k_{1},k_{2},k_{3}}^{\text{nonres}}
\frac{|l_{4}|^ {3(\gamma+\delta)/4+9(\gamma+\delta)/4-3}+
|l_{1}|^{3(\gamma+\delta)/4+9(\gamma+\delta)/4-3}
+|l_{3}|^{3(\gamma+\delta)/4+9(\gamma+\delta)/4-3}}
{|l_{4}|^{3}|l_{1}|^{3}|l_{3}|^{3}}\,.
\end{gather*}%
The series converges if
$3(\gamma+\delta)/4+9(\gamma+\delta)/4-3<2$, which is
condition~(\ref{gamdel}). Since the second factor
in~(\ref{prodwithp}) is treated in exactly the same way,
the proof is complete.
\end{proof}

\begin{lemma} \label{L:R310}
 If $0<s\le1$, then
\begin{equation}\label{B30s>0}
\|B_{30}^{(n)}(u,u,v)\|_{\dot{H}^{s}}+
\|B_{30}^{(n)}(u,v,u)\|_{\dot{H}^{s}}+
\|B_{30}^{(n)}(v,u,u)\|_{\dot{H}^{s}}\leq
\frac{C}{n^{s}}\|u\|_{\dot{H}^{0}}^{2}\|v\|_{\dot{H}^{s}}.
\end{equation}%
If $s\le0$ and $p=-s\leq 1$, $\alpha >0$,
 $p+2\alpha <5/3$, 
then
\begin{equation}\label{B30sle0}
\|B_{30}^{(n)}(u,u,v)\|_{\dot{H}^{s}}+
\|B_{30}^{(n)}(u,v,u)\|_{\dot{H}^{s}}+
\|B_{30}^{(n)}(v,u,u)\|_{\dot{H}^{s}}\leq
\frac{C(p,\alpha)}{n^{2\alpha}}
\|u\|_{\dot{H}^{0}}^{2}\|v\|_{\dot{H}^{s}}.
\end{equation}
\end{lemma}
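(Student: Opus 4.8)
The plan is to prove both inequalities by duality, pairing $B_{30}^{(n)}$ with an arbitrary $z\in\dot{H}^{-s}$ and estimating the resulting quadruple sum, in the spirit of the proofs of Lemmas~\ref{L:R30} and~\ref{L:R30neg}. I would introduce the normalized non‑negative sequences $\tilde{u}_k=|u_k|$, $\tilde{v}_k=|k|^s|v_k|$ and $\tilde{z}_k=|z_k|\,|k|^{-s}$, so that $\|\tilde{u}\|_{\dot{H}^0}=\|u\|_{\dot{H}^0}$, $\|\tilde{v}\|_{\dot{H}^0}=\|v\|_{\dot{H}^s}$ and $\|\tilde{z}\|_{\dot{H}^0}=\|z\|_{\dot{H}^{-s}}$. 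The decisive feature is that the gain $n^{-s}$ (resp. $n^{-2\alpha}$) can only be harvested from the projector $\Pi_{-n}$, i.e. from the two restricted slots $k_2,k_3$ on which $|k_2|,|k_3|>n$. Consequently the three summands in~\eqref{B30s>0}/\eqref{B30sle0} differ only in whether the high‑regularity factor $v$ sits in one of these restricted slots or in the unrestricted slot $k_1$, and the proof splits along exactly this distinction.

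For the two terms $B_{30}^{(n)}(u,u,v)$ and $B_{30}^{(n)}(u,v,u)$, in which $v$ occupies a restricted slot, the argument is immediate. Substituting $\tilde{v}$ for the $v$‑factor produces a factor $|k_j|^{-s}$ at that slot, and since $|k_j|>n$ one has $|k_j|^{-s}\le n^{-s}$, which extracts the whole gain at once. What remains for $s>0$ is the purely $\dot{H}^0$ sum $\sum_k|z_k|\,B_3(|u|,|u|,\tilde{v})_k\le\|z\|_{\dot{H}^{-2}}\|B_3(|u|,|u|,\tilde{v})\|_{\dot{H}^{2}}$, bounded through Lemma~\ref{L:B3} (with $s=0$) after noting $\|z\|_{\dot{H}^{-2}}\le\|z\|_{\dot{H}^{-s}}$; for $s\le0$ it is handled by the geometric‑mean estimate described below. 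These two cases thus reproduce the computations of Lemmas~\ref{L:R30} and~\ref{L:R30neg} verbatim, with the $\dot{H}^s$‑norm now carried by $v$ and the two $\dot{H}^0$‑norms by the copies of $u$.

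The genuinely new term is $B_{30}^{(n)}(v,u,u)$, where $v$ sits in the \emph{unrestricted} slot $k_1$, so that the $\dot{H}^s$‑weight must be placed on $k_1$ while $n^{-s}$ is harvested from $k_2,k_3$. For $0<s\le1$ I would write $|z_k|=|k|^s\tilde{z}_k$ and distribute $|k|^s\le|k_1|^s+|k_2+k_3|^s$, the first piece building $\tilde{v}_{k_1}$ and the second softening one convolution factor; the factor $n^{-s}$ is produced by inserting $1\le|k_3|^s/n^s$ and rewriting $|k_3|^s$ through the sum‑variables $k_i+k_j$ by the triangle inequality (as in the proof of Lemma~\ref{L:K3}), which softens at most one further convolution factor. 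Since then at most two of the four denominators $|k_1|,|k_1+k_2|,|k_2+k_3|,|k_3+k_1|$ are softened, each to the harmless exponent $1-s\ge0$, at least two unsoftened factors survive; picking two of them that are linearly independent together with $k$, I would pair $\tilde{z}$ with these via Cauchy--Schwarz and absorb the rest into $\|\tilde{v}\|_{\dot{H}^0}\|\tilde{u}\|_{\dot{H}^0}^2$, the $\tilde{z}$‑sum converging because $\sum|l|^{-2}<\infty$. For $s=-p\le0$ the weight $|k|^{-p}$ now sits in the denominator, so no distribution is needed: harvesting $n^{-2\alpha}$ via $1\le(|k_2||k_3|)^{\alpha}/n^{2\alpha}$ turns the summand into the kernel $K$ of~\eqref{KKK} with $\gamma=\delta=\alpha$, and the six‑fold Hölder (geometric‑mean) splitting of Lemma~\ref{L:R30neg} gives the bound with constant $(\sum K^3)^{1/3}$, finite by Lemma~\ref{L:K3} since $\gamma+\delta=2\alpha<5/3$.

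The main obstacle, and the place where the hypotheses enter, is exactly this asymmetry, together with the negative‑order restricted‑slot terms: when the $\dot{H}^s$‑weight must be harvested from the \emph{same} restricted mode that supplies the $n$‑decay (as in $B_{30}^{(n)}(u,u,v)$, $B_{30}^{(n)}(u,v,u)$ for $s\le0$), the exponent on that mode rises to $p+\alpha$, so the relevant kernel is~\eqref{KKK} with $\{\gamma,\delta\}=\{\alpha,p+\alpha\}$ and Lemma~\ref{L:K3} applies only under $\gamma+\delta=p+2\alpha<5/3$ --- precisely the stated restriction. I expect the only delicate bookkeeping to be the choice of which convolution factor absorbs each surplus power, so that no more than two factors are ever softened (positive case) and the budget $\gamma+\delta$ stays below $5/3$ (negative case); once this is arranged, everything reduces to the routine Cauchy--Schwarz already performed in Lemmas~\ref{L:B3}, \ref{L:R30} and~\ref{L:R30neg}.
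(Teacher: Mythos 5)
Your treatment of the negative case $s\le 0$ coincides with the paper's proof: duality, conversion to the weighted sequences $\tilde v,\tilde z$, harvesting $n^{-2\alpha}$ from the two $\Pi_{-n}$-restricted slots, and reduction to the kernel (\ref{KKK}) of Lemma~\ref{L:K3} --- with $\gamma=\delta=\alpha$ for $B_{30}^{(n)}(v,u,u)$ and $\{\gamma,\delta\}=\{\alpha,p+\alpha\}$ for the terms where $v$ sits in a restricted slot --- followed by the six-fold H\"older splitting of Lemma~\ref{L:R30neg}; these are exactly the paper's kernels $K_3$ and $K_2$, and you identify correctly that the hypothesis $p+2\alpha<5/3$ enters only through the restricted-slot terms. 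For the positive case the paper says only that it "is simpler and is treated similarly to Lemma~\ref{L:R30}"; your route there is a genuine (and cleaner) refinement: extracting $n^{-s}$ at once from the restricted slot and bounding the two easy terms by the $s=0$ computation of Lemma~\ref{L:B3}, together with $\|z\|_{\dot H^{-2}}\le\|z\|_{\dot H^{-s}}$, is a real shortcut, and you rightly single out $B_{30}^{(n)}(v,u,u)$ as the term the symmetric proof of Lemma~\ref{L:R30} does not literally cover, since in that proof the $\dot H^s$-weight is always placed on a restricted slot, never on $k_1$.

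One piece of bookkeeping in that hard term must be tightened. With the decomposition you state --- $|k|^s\le|k_1|^s+|k_2+k_3|^s$ followed by distributing $|k_3|^s$ "through the sum-variables $k_i+k_j$" --- the sub-case in which both surplus powers land on $|k_2+k_3|$ yields the exponent $1-2s$, which is negative for $s>1/2$; the resulting factor $|k_2+k_3|^{2s-1}$ in the numerator cannot be absorbed, because $k_2+k_3=k-k_1$ forces a weight $|k|^{2s-1}$ onto $\tilde z$, which lies only in $\dot H^0$. So the claim that every softened factor keeps the exponent $1-s\ge0$ is not automatic. The fix is the one your own setup suggests: in the sub-case where $|k|^s$ has gone to $|k_2+k_3|^s$ you have written $|v_{k_1}|=\tilde v_{k_1}/|k_1|^s$, so the $k_1$-denominator is $|k_1|^{1+s}$, and you should then distribute $|k_3|^s\le|k_3+k_1|^s+|k_1|^s$, allowing $|k_1|$ (not only the sum-variables) to absorb the surplus. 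In both resulting sub-cases at least two factors that are linearly independent of each other and of $k$ retain exponent $\ge1$, and the final Cauchy--Schwarz closes. With that amendment your proposal is complete and proves the lemma as stated.
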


\begin{proof}
We consider the second case. If $-s=p\leq 1$ we set
$\tilde{v}_{k}=|v_{k}||k|^{s}$, and for $z\in\dot{H}^{-s}$
we set
 $\tilde{z}_{k}=|z_{k}||k|^{-s}=|z_{k}||k|^{p}$.
Note that $B_{30}^{(n)}(u,u,v)=B_{30}^{(n)}(u,v,u)$, so we
consider only $B_{30}^{(n)}(u,u,v)$ and $B_{30}^{(n)}(v,u,u)$.
Using duality we write
\begin{gather*}
|( B_{30}^{(n)}(u,u,v),z)|\leq \\
\sum_{k\in\mathbb{Z}_0}
\sum_{k_{1}+k_{2}+k_{3}=k}^{\text{nonres}} \frac{|
u_{k_{1}}|| \Pi_{-n}u_{k_{2}}|| k_{3}|^{p}
|\Pi_{-n}\tilde{v}_{k_{3}}|\tilde{z}_{k}} {|k_{1}||
k_{1}+k_{2}||k_{2}+k_{3}||k_{3}+k_{1}|
|k_{1}+k_{2}+k_{3}|^{p}}\le
 \\
\frac{1}{n^{2\alpha }}\sum_{k\in\mathbb{Z}_0}
\sum_{k_{1}+k_{2}+k_{3}=k}^{\text{nonres}}
K_{2}\,|u_{k_{1}}||\Pi _{-n}u_{k_{2}}|
|\Pi_{-n}\tilde{v}_{k_{3}}|\tilde{z}_{k}=
\\
\frac{1}{n^{2\alpha }}%
\sum_{k_{1},k_{2},k_{3}}^{\text{nonres}}K_{2}\left\vert
u_{k_{1}}\right\vert
\left\vert \Pi _{-n}u_{k_{2}}\right\vert \left\vert \Pi _{-n}\tilde{v}%
_{k_{3}}\right\vert \tilde{z}_{k_{1}+k_{2}+k_{3}},
\end{gather*}
where
\begin{equation*}
K_{2}( k_{1},k_{2},k_{3})= \frac{| k_{2}|^{\alpha
}|k_{3}|^{p+\alpha}}
{|k_{1}||k_{1}+k_{2}||k_{2}+k_{3}||k_{3}+k_{1}|
|k_{1}+k_{2}+k_{3}|^{p}}\,.
\end{equation*}%
Now we write similarly to (\ref{BB3sum})
\begin{gather*}
|( B_{30}^{(n)}(u,u,v),z)|\leq \frac{1}{n^{2\alpha }}
\sum_{k_{1},k_{2},k_{3}}^{\text{nonres}}
K_{2}|u_{k_{1}}||u_{k_{2}}||\tilde{v}_{k_{3}}|
 \tilde{z}_{k}\le
 \\
  \frac{1}{n^{2\alpha }}\|u\|_{\dot{H}^{0}}^{2}
\|v\|_{\dot{H}^{s}}\|z\|_{\dot{H}^{-s}} \left(
\sum_{k_{1},k_{2},k_{3}}^{\text{nonres}}K_{2}^{3}
\right)^{1/3}
\end{gather*}
Since $0\le p\le1$, $|k_1|\ge|k_1|^{1-p}$ and we can apply
Lemma~\ref{L:K3} with $\gamma=\alpha$ and
$\delta=p+\alpha$, which gives that
\begin{equation*}
\left(
\sum_{k_{1},k_{2},k_{3}}^{\text{nonres}}K_{2}^{3}\right)
^{1/3}\leq C^{\prime }\left( p,\alpha \right) <\infty,
\end{equation*}
provided that $ p+2\alpha<5/3. $

Similarly,
\begin{gather*}
|( B_{30,n}(v,u,u),z)|\le
\\
\frac{1}{n^{2\alpha }}\sum_{k\in\mathbb{Z}_0}
\sum_{k_{1}+k_{2}+k_{3}=k}^{\text{nonres}}
\frac{|k_{1}|^{p}|\tilde{v}_{k_{1}}||k_{2}|^{\alpha }
|\Pi_{-n}u_{k_{2}}||k_{3}|^{\alpha } |\Pi
_{-n}u_{k_{3}}|\tilde{z}_{k}}
{|k_{1}||k_{1}+k_{2}||k_{2}+k_{3}||k_{3}+k_{1}|
|k_{1}+k_{2}+k_{3}|^{p}} \le
\\
\frac{1}{n^{2\alpha }}\|u\|_{\dot{H}^{0}}^{2} \|
v\|_{\dot{H}^{s}}\| z\|_{\dot{H}^{-s}} \left(
\sum_{k_{1},k_{2},k_{3}}^{\text{nonres}}
K_{3}^{3}\right)^{1/3}\,,
\end{gather*}
where
\begin{equation*}
K_{3}\left( k_{1},k_{2},k_{3}\right) =\frac{\left\vert
k_{2}\right\vert ^{\alpha }\left\vert k_{3}\right\vert
^{\alpha }}{\left\vert k_{1}\right\vert ^{1-p}\left\vert
k_{1}+k_{2}\right\vert \left\vert k_{2}+k_{3}\right\vert
\left\vert k_{3}+k_{1}\right\vert \left\vert
k_{1}+k_{2}+k_{3}\right\vert ^{p}}\,.
\end{equation*}
By Lemma~\ref{L:K3} the series converges if $2\alpha<5/3$,
and this condition follows from the previous one since
$p\ge0$. The first case~(\ref{B30s>0}) is simpler and is
treated similarly to Lemma~\ref{L:R30}.
\end{proof}

\begin{lemma}
\label{L:B4} Let $s\geq 0$ and $\varepsilon \in (0,\frac{1}{2})$.
Then the
multi-linear operator $B_{4}$ defined in $(\ref{B4})$ maps $(\dot{H}%
^{s})^{4} $ into $\dot{H}^{s+\varepsilon }$ and satisfies
the estimate
\begin{equation}
\Vert B_{4}(u,v,w,\varphi )\Vert _{\dot{H}^{s+\varepsilon
}}\leq
c_{4}(s,\varepsilon )\Vert u\Vert _{\dot{H}^{s}}\Vert v\Vert _{\dot{H}%
^{s}}\Vert w\Vert _{\dot{H}^{s}}\Vert \varphi \Vert
_{\dot{H}^{s}}. \label{estB4}
\end{equation}
\end{lemma}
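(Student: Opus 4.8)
The plan is to establish the estimate in its dual form, namely to bound
\[
|(B_{4}(u,v,w,\varphi ),z)|\le c_{4}(s,\varepsilon )\|u\|_{\dot H^{s}}\|v\|_{\dot H^{s}}\|w\|_{\dot H^{s}}\|\varphi \|_{\dot H^{s}}\|z\|_{\dot H^{-(s+\varepsilon )}}
\]
for an arbitrary $z\in \dot H^{-(s+\varepsilon )}$, exactly as in Lemmas~\ref{L:B2}--\ref{L:B3}. Since the bound is at fixed $t$, I would first replace each oscillating factor $e^{i\Phi t}$ by its modulus $1$ and pass to absolute values of all Fourier coefficients. The decisive step is to contract the last two arguments: setting $m=k_{3}+k_{4}$ and $P_{m}=\sum_{k_{3}+k_{4}=m}w_{k_{3}}\varphi _{k_{4}}$ (the Fourier coefficients of the pointwise product $w\varphi$), and using $k_{1}+k_{3}+k_{4}=k_{1}+m$, $k_{2}+k_{3}+k_{4}=k_{2}+m$, the operator $B_{4}^{1}$ collapses to the trilinear form
\[
B_{4}^{1}(u,v,w,\varphi )_{k}=\sum_{k_{1}+k_{2}+m=k}\frac{u_{k_{1}}v_{k_{2}}P_{m}}{(k_{1}+k_{2})(k_{1}+m)(k_{2}+m)}.
\]
This is a $B_{3}$-type kernel built from the three pairwise sums $l_{1}=k_{1}+k_{2}$, $l_{2}=k_{1}+m$, $l_{3}=k_{2}+m$, which, unlike the four original frequencies, are three linearly independent forms with $k=(l_{1}+l_{2}+l_{3})/2$.

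Next I would quantify $P$. For $s=0$ the product bound $\|w\varphi \|_{\dot H^{-\sigma }}\le c(\sigma )\|w\|_{\dot H^{0}}\|\varphi \|_{\dot H^{0}}$ holds for every $\sigma >1/2$ (dualize and use $\dot H^{\sigma }\subset L_{\infty }$, cf. Corollary~\ref{C:B2}); I keep $\sigma $ free and will eventually take $\sigma =1-\varepsilon \in (1/2,1)$, the interval $(1/2,1)$ being available precisely because $0<\varepsilon <1/2$. Writing $|P_{m}|=|m|^{\sigma }\widehat P_{m}$ with $\widehat P\in \ell _{2}$ and $\|\widehat P\|_{\ell _{2}}=\|P\|_{\dot H^{-\sigma }}$, and $|z_{k}|=|k|^{\varepsilon }\widetilde z_{k}$ with $\widetilde z\in \ell _{2}$, I apply Cauchy--Schwarz grouping the three genuine $\ell_2$ inputs $|u_{k_{1}}|,|v_{k_{2}}|,\widehat P_{m}$ (which together control all three summation variables, so their joint square-sum factorizes into $\|u\|_{\dot H^{0}}^{2}\|v\|_{\dot H^{0}}^{2}\|\widehat P\|_{\ell _{2}}^{2}$) against the kernel-and-$z$ part. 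This reduces matters to showing $J\le C\|z\|_{\dot H^{-\varepsilon }}^{2}$, where
\[
J=\sum_{k_{1},k_{2},m}\frac{|m|^{2\sigma }|k|^{2\varepsilon }\widetilde z_{k}^{2}}{|l_{1}|^{2}|l_{2}|^{2}|l_{3}|^{2}}=\sum_{k}|k|^{2\varepsilon }\widetilde z_{k}^{2}\,G(k),\qquad G(k)=\sum_{l_{1},l_{2}}\frac{|k-l_{1}|^{2\sigma }}{|l_{1}|^{2}|l_{2}|^{2}|2k-l_{1}-l_{2}|^{2}},
\]
after passing to the independent variables $(l_{1},l_{2},l_{3})$ and using $m=k-l_{1}$ (nonresonance guarantees $|l_{i}|\ge 1$).

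I would then close the estimate by a direct evaluation of $G(k)$: the inner sum over $l_{2}$ is $\lesssim |2k-l_{1}|^{-2}$, and splitting the remaining $l_{1}$-sum into the regions $|l_{1}|\lesssim |k|$, $|l_{1}-k|\lesssim |k|$, $|l_{1}-2k|\lesssim |k|$ and $|l_{1}|\gtrsim |k|$ gives $G(k)\le C|k|^{2\sigma -2}$ for $1/2<\sigma <1$. Taking $\sigma =1-\varepsilon $ yields $|k|^{2\varepsilon }G(k)\le C$, hence $J\le C\|z\|_{\dot H^{-\varepsilon }}^{2}$ and the claimed $s=0$ bound. The term $B_{4}^{2}$ is treated the same way after the same contraction: the numerator $k_{3}+k_{4}=m$ and the extra denominator factor $1/k_{1}$ combine via $|m|/|k_{1}|\le |k_{1}+m|/|k_{1}|+1$ into kernels with at least as many factors, which are even more favorable. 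For general $s\ge 0$ I would distribute the output weight by $|k|^{s}\le C(|k_{1}|^{s}+|k_{2}|^{s}+|m|^{s})$, absorbing $|k_{1}|^{s},|k_{2}|^{s}$ into $u,v$ and the $|m|^{s}$ part into $P$ (via the product estimate in $\dot H^{s}$, using Corollary~\ref{C:B2} for $s>1/2$), after which the residual $\varepsilon$-gain is handled by the identical computation. The main obstacle is structural rather than computational: $B_{4}$ has too few denominator factors for a bare Cauchy--Schwarz, and $k_{3},k_{4}$ enter the kernel only through their sum, so the kernel alone cannot make the four-fold sum converge transverse to the anti-diagonal $k_{3}-k_{4}$. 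The product contraction $w,\varphi \mapsto w\varphi $ is exactly what restores this missing summability, and the sole quantitative tension—needing $\sigma >1/2$ for the product bound but $\sigma \le 1-\varepsilon $ for the kernel sum—is resolved by the hypothesis $\varepsilon <1/2$.
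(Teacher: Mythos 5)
Your route is genuinely different from the paper's, and on the main term it is the more careful one. The paper proves (\ref{estB4}) by distributing $|k|^{\varepsilon}$ over the three denominator factors and then applying Cauchy--Schwarz to the \emph{four-fold} sum, pairing $|u_{k_1}||v_{k_2}||w_{k_3}||\varphi_{k_4}|$ against the kernel times $\tilde z$. Your structural objection is exactly on target there: the kernel of (\ref{B41}) depends on $(k_3,k_4)$ only through $m=k_3+k_4$, so the squared-kernel sum produced by that Cauchy--Schwarz is constant along the direction $k_3-k_4$ and diverges; this is precisely the weak point of the paper's own treatment of $B_4^1$ (and of the second piece of $B_4^2$, which the paper refers back to (\ref{estB41})). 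Your contraction $P_m=\sum_{k_3+k_4=m}|w_{k_3}||\varphi_{k_4}|$ (legitimate once the phases are majorized by $1$, since $\Phi$ itself does not depend on $m$ alone), the bound $\|P\|_{\dot H^{-\sigma}}\le c(\sigma)\|w\|_{\dot H^0}\|\varphi\|_{\dot H^0}$ for $\sigma>1/2$, the three-variable Cauchy--Schwarz in $(k_1,k_2,m)$, and the kernel estimate $G(k)\le C|k|^{2\sigma-2}$ with $\sigma=1-\varepsilon$ give a correct and complete proof of the $B_4^1$ bound at $s=0$. (Two cosmetic points: the term $l_1=2k$ must be handled separately before using $\sum_{l_2}|l_2|^{-2}|2k-l_1-l_2|^{-2}\lesssim|2k-l_1|^{-2}$, and for $0\le s\le 1/2$ Corollary~\ref{C:B2} does not apply --- use instead $|m|^s\le 2^s|k_3|^s|k_4|^s$, valid for nonzero integers, which is also how the paper reduces general $s$ to $s=0$.) Note that the paper does use your contraction device, namely $\sum_{k_3}|w_{k_3}||\varphi_{l_0-k_1-k_2-k_3}|\le\|w\|\|\varphi\|$, but only for one piece of $B_4^2$.

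The genuine gap is in your treatment of $B_4^2$. The split $|m|\le |k_1+m|+|k_1|$ leaves two kernels, $\frac{1}{|l_1||l_2||l_3|}$ and $\frac{1}{|k_1|\,|l_1||l_3|}$, where $l_1=k_1+k_2$, $l_2=k_1+m$, $l_3=k_2+m$. The first is the $B_4^1$ kernel and your computation applies verbatim. But for the second the ``identical computation'' fails: keeping the weight $|m|^{2\sigma}$ on the product and squaring this kernel, and parametrizing by $(k,k_1,l_1)$ so that $m=k-l_1$ and $l_3=k-k_1$, you are led to the factor $\sum_{l_1}|k-l_1|^{2\sigma}|l_1|^{-2}$, which diverges for every $\sigma\ge 1/2$: along $l_1\to\infty$ with $k,k_1$ fixed, $m$ blows up while $l_3$ stays bounded, so this kernel supplies only $|l_1|^{-2}$ of decay against the growth $|m|^{2\sigma}$. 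So this piece is not ``more favorable,'' and the $\sigma$-weight cannot be loaded onto $P$ here. The fix stays inside your framework but changes the bookkeeping: for this piece use the uniform bound $P_m\le\|w\|_{\dot H^0}\|\varphi\|_{\dot H^0}$, split $|k|^{\varepsilon}\le|k_1|^{\varepsilon}+|l_3|^{\varepsilon}$, and close by iterated Cauchy--Schwarz, first summing $\tilde z$ against $|l_3|^{-1+\varepsilon}$ (respectively $|l_3|^{-1}$) and then summing in $(k_1,k_2)$ against $|k_1|^{-1}|l_1|^{-1}$ (respectively $|k_1|^{-1+\varepsilon}|l_1|^{-1}$). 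This is exactly the paper's --- correct --- treatment of that term; with this replacement your proof is complete.
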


\begin{proof}
Since $B_{4}=\frac{1}{2}B_{4}^{1}+B_{4}^{2}$
(see~(\ref{B4sum})), it suffices to estimate $B_{4}^{1}$
and $B_{4}^{2}$. We first consider the case $s=0$. Setting
$\tilde{z}_{k}=|z_{k}|/|k|^{\varepsilon }$, $\varepsilon
<1/2$ and using the inequality
\begin{equation*}
|k_{1}+k_{2}+k_{3}+k_{4}|^{\varepsilon }\leq
2^{-\varepsilon }(|k_{1}+k_{2}|^{\varepsilon
}+|k_{1}+k_{3}+k_{4}|^{\varepsilon
}+|k_{2}+k_{3}+k_{4}|^{\varepsilon }),
\end{equation*}%
we have
\begin{gather*}
\aligned|(B_{4}^{1}(u,v,w,\varphi ),z)|\leq
\!\!\!\sum_{k_{1},k_{2},k_{3},k_{4}}^{\text{nonres}}\frac{%
|u_{k_{1}}||v_{k_{2}}||w_{k_{3}}||\varphi _{k_{4}}|\,\tilde{z}%
_{k_{1}+k_{2}+k_{3}+k_{4}}\,|k_{1}+k_{2}+k_{3}+k_{4}|^{\varepsilon }}{%
|k_{1}+k_{2}||k_{1}+k_{3}+k_{4}||k_{2}+k_{3}+k_{4}|}\leq
\newline
\\
2^{-\varepsilon }\sum_{k_{1},k_{2},k_{3},k_{4}}^{\text{nonres}}{%
|u_{k_{1}}||v_{k_{2}}||w_{k_{3}}||\varphi _{k_{4}}|\,\tilde{z}%
_{k_{1}+k_{2}+k_{3}+k_{4}}}\biggl(\frac{1}{|k_{1}+k_{2}|^{1-\varepsilon
}|k_{1}+k_{3}+k_{4}||k_{2}+k_{3}+k_{4}|}+\newline
\\
+\frac{1}{|k_{1}+k_{2}||k_{1}+k_{3}+k_{4}|^{1-\varepsilon
}|k_{2}+k_{3}+k_{4}|}+\frac{1}{%
|k_{1}+k_{2}||k_{1}+k_{3}+k_{4}||k_{2}+k_{3}+k_{4}|^{1-\varepsilon }}\biggr)%
\,.\endaligned
\end{gather*}%
The three terms so obtained satisfy the same bound, and it
suffices to consider any of them, say, the first:
\begin{gather*}
\sum_{k_{1},k_{2},k_{3},k_{4}}^{\text{nonres}}\frac{%
|u_{k_{1}}||v_{k_{2}}||w_{k_{3}}||\varphi _{k_{4}}|\,\tilde{z}%
_{k_{1}+k_{2}+k_{3}+k_{4}}}{|k_{1}+k_{2}|^{1-\varepsilon
}|k_{1}+k_{3}+k_{4}||k_{2}+k_{3}+k_{4}|}\leq \\
\|u\|\| v\|\|w\|\| \varphi \|\biggl(%
\sum_{k_{1},k_{2},k_{3},k_{4}}^{\text{nonres}}
\frac{\tilde{z}_{k_{1}+k_{2}+k_{3}+k_{4}}^{2}}
{|k_{1}+k_{2}|^{2-2\varepsilon}|k_{1}+k_{3}+k_{4}|^{2}
|k_{2}+k_{3}+k_{4}|^{2}}\biggr)^{1/2}\le
\\
\Vert u\Vert \Vert v\Vert \Vert w\Vert \Vert \varphi \Vert \biggl(%
\sum_{l_1}|l_1|^{-2+2\varepsilon }\sum_{l_2}(l_2+l_1)^{-2}
\sum_{l_3}l_3^{-2}\sum_{l_0}\tilde{z}_{l_0}^{2}\biggr)^{1/2}=
\\
c(1-{\varepsilon })\frac{\pi^2}3\|u\| \| v\| \|w\| \|\varphi \| \|
z\|_{\dot{H}^{-\varepsilon }},
\end{gather*}
where $c(1-{\varepsilon })$ is as in~(\ref{c2s})
 and where we set
\begin{equation*}
\begin{matrix}
k_{1} & + & k_{2} & \phantom{k_3} & \phantom{+} &
\phantom{+} & \phantom{k_4}
& = & l_{1}\\
\phantom{k_1} & \phantom{+} & \phantom{k_2} & \phantom{+}
& k_{3} & %
\phantom{k_3} & \phantom{k_4} & = & l_{2} \\
\phantom{k_1} & \phantom{+} & {k_{2}} & {+} & k_{3} & + &
{k_{4}} & = & l_{3}
\\
{k_{1}} & {+} & {k_{2}} & {+} & k_{3} & + & {k_{4}} & = & l_{0},
\end{matrix}%
\qquad \text{where}\qquad l_{1},l_{2},l_{3},l_{0}\in
\mathbb{Z}_{0}.
\end{equation*}%
Hence,
\begin{equation}  \label{estB41}
|(B_{4}^{1}(u,v,w,\varphi ),z)|\leq c(1-\varepsilon)
2^{-\varepsilon }{\pi ^{2}}\Vert u\Vert \Vert v\Vert \Vert
w\Vert \Vert \varphi \Vert \Vert z\Vert
_{\dot{H}^{-\varepsilon }}\,.
\end{equation}

It remains to estimate $B_{4}^{2}$. Since
$|k_{3}+k_{4}|\leq |k_{1}+k_{3}+k_{4}|+|k_{1}|$, we have
\begin{gather*}
\aligned|(B_{4}^{2}(u,v,w,\varphi ),z)|\leq
\!\!\!\sum_{k_{1},k_{2},k_{3},k_{4}}^{\text{nonres}}\!\!\!\frac{%
|k_{3}+k_{4}||u_{k_{1}}||v_{k_{2}}||w_{k_{3}}||\varphi _{k_{4}}|\,\tilde{z}%
_{k_{1}+k_{2}+k_{3}+k_{4}}\,|k_{1}+k_{2}+k_{3}+k_{4}|^{\varepsilon }}{%
|k_{1}||k_{1}+k_{2}||k_{1}+k_{3}+k_{4}||k_{2}+k_{3}+k_{4}|}\leq
\newline
\\
\sum_{k_{1},k_{2},k_{3},k_{4}}^{\text{nonres}}\frac{%
|u_{k_{1}}||v_{k_{2}}||w_{k_{3}}||\varphi _{k_{4}}|\,\tilde{z}%
_{k_{1}+k_{2}+k_{3}+k_{4}}|k_{1}+k_{2}+k_{3}+k_{4}|^{\varepsilon }}{%
|k_{1}||k_{1}+k_{2}||k_{2}+k_{3}+k_{4}|}+\newline
\\
\sum_{k_{1},k_{2},k_{3},k_{4}}^{\text{nonres}}\frac{%
|u_{k_{1}}||v_{k_{2}}||w_{k_{3}}||\varphi _{k_{4}}|\,\tilde{z}%
_{k_{1}+k_{2}+k_{3}+k_{4}}|k_{1}+k_{2}+k_{3}+k_{4}|^{\varepsilon }}{%
|k_{1}+k_{2}||k_{1}+k_{3}+k_{4}||k_{2}+k_{3}+k_{4}|}\,.\endaligned
\end{gather*}%
As shown above, the second sum is bounded by the right-hand side in~(\ref%
{estB41}). We split the first sum using the inequality $%
|k_{1}+k_{2}+k_{3}+k_{4}|^{\varepsilon }\leq
|k_{1}|^{\varepsilon }+|k_{2}+k_{3}+k_{4}|^{\varepsilon }$.
Then the first sum is bounded by
\begin{equation*}
\sum_{k_{1},k_{2},k_{3},k_{4}}^{\text{nonres}}\frac{%
|u_{k_{1}}||v_{k_{2}}||w_{k_{3}}||\varphi _{k_{4}}|\,\tilde{z}%
_{k_{1}+k_{2}+k_{3}+k_{4}}}{|k_{1}|^{1-\varepsilon
}|k_{1}+k_{2}||k_{2}+k_{3}+k_{4}|}+\sum_{k_{1},k_{2},k_{3},k_{4}}^{\text{%
nonres}}\frac{|u_{k_{1}}||v_{k_{2}}||w_{k_{3}}||\varphi _{k_{4}}|\,\tilde{z}%
_{k_{1}+k_{2}+k_{3}+k_{4}}}{|k_{1}||k_{1}+k_{2}||k_{2}+k_{3}+k_{4}|^{1-%
\varepsilon }}\,.
\end{equation*}%
Both terms are treated in the same way and satisfy the same
bound. Let us consider the second one. Setting
$k_{1}+k_{2}+k_{3}+k_{4}=l_{0}$, so that
$k_{4}=l_0-k_{1}-k_{2}-k_{3}$ and
$k_{2}+k_{3}+k_{4}=l_{0}-k_{1}$ we have
\begin{gather*}
\sum_{k_{1},k_{2},k_{3},k_{4}}^{\text{nonres}}\frac{%
|u_{k_{1}}||v_{k_{2}}||w_{k_{3}}||\varphi _{k_{4}}|\,\tilde{z}%
_{k_{1}+k_{2}+k_{3}+k_{4}}}{|k_{1}||k_{1}+k_{2}||k_{2}+k_{3}+k_{4}|^{1-%
\varepsilon }}=\sum_{k_{1},k_{2},k_{3},l_0}^{\text{nonres}}
\frac{|u_{k_{1}}||v_{k_{2}}||w_{k_{3}}||\varphi _{l_0-k_{1}-k_{2}-k_{3}}|\,\tilde{z}%
_{l}}{|k_{1}||k_{1}+k_{2}||l_{0}-k_{1}|^{1-\varepsilon }}=
\newline
\\
\sum_{k_{1}}\sum_{k_{2}}\frac{|u_{k_{1}}||v_{k_{2}}|}{|k_{1}||k_{1}+k_{2}|}%
\sum_{l_0\neq k_{1}}|l_{0}-k_{1}|^{-1+\varepsilon }\,\tilde{z}%
_{l_{0}}\sum_{k_{3}}|w_{k_{3}}||\varphi
_{l_{0}-k_{1}-k_{2}-k_{3}}|\leq
\newline
\\
\sum_{k_{1},k_2}
\frac{|u_{k_{1}}||v_{k_{2}}|}{|k_{1}||k_{1}+k_{2}|}%
c(1-{\varepsilon })\Vert z\Vert _{\dot{H}^{-\varepsilon
}}\Vert w\Vert \Vert
\varphi \Vert \leq \\
\biggl(\sum_{k_{1}}\sum_{k_{2}}\frac{1}{%
|k_{1}|^{2}|k_{1}+k_{2}|^{2}}\biggr)^{1/2}c(1-{\varepsilon })
\Vert u\Vert \Vert v\Vert \Vert w\Vert \Vert \varphi \Vert
\Vert z\Vert _{\dot{H}^{-\varepsilon }}=
\\ c(1-{\varepsilon })
\frac{\pi ^{2}}{3}\Vert u\Vert \Vert v\Vert \Vert w\Vert
\Vert \varphi \Vert \Vert z\Vert _{\dot{H}^{-\varepsilon}}\,.
\end{gather*}%
Hence,
$$
|(B_{4}^{2}(u,v,w,\varphi ),z)|\leq c(1-{\varepsilon })
\pi^{2}(2^{-\varepsilon }+2/3)\Vert u\Vert \Vert v\Vert \Vert
w\Vert \Vert \varphi \Vert \Vert z\Vert
_{\dot{H}^{-\varepsilon }},
$$
and $B_{4}$ satisfies the estimate
\begin{equation}
\Vert B_{4}(u,v,w,\varphi )\Vert _{\dot{H}^{s+\varepsilon
}}\leq
c_{4}(s,\varepsilon )\Vert u\Vert _{\dot{H}^{s}}\Vert v\Vert _{\dot{H}%
^{s}}\Vert w\Vert _{\dot{H}^{s}}\Vert \varphi \Vert
_{\dot{H}^{s}},\ \ \varepsilon <\frac{1}{2}  \label{B4hom}
\end{equation}%
with $s=0$ and $c_{4}(0,\varepsilon )=c(1-{\varepsilon })
\pi^{2}(3\cdot 2^{-\varepsilon -1}+2/3)$. Finally, the general
case $s\geq 0$ is treated as in Lemma~\ref{L:B3} by using
the inequality
\begin{equation*}
\frac{|k_{1}+k_{2}+k_{3}+k_{4}|}{|k_{1}||k_{2}||k_{3}||k_{4}|}\leq
4,
\end{equation*}%
which proves~(\ref{B4hom}) with $c_{4}(s,\varepsilon
)=4^{s}c_{4}(0,\varepsilon )$.
\end{proof}

\begin{lemma}
\label{L:B41n} Let $-3/2<s<1/2$. Then the nonlinear
operator $B_{40}^{1}$ given by $(\ref{B41n})$ satisfies the
estimate
\begin{equation}\label{B41nest}
\aligned \Vert B_{40}^{1}(u,u,u,v)\Vert _{\dot{H}^{s}}+
\Vert B_{40}^{1}(v,u,u,u)\Vert _{\dot{H}^{s}}&+
\\
\Vert B_{40}^{1}(u,v,u,u)\Vert _{\dot{H}^{s}}+& \Vert
B_{40}^{1}(u,u,v,u)\Vert _{\dot{H}^{s}}\leq
c_8(s)\|u\|_{\dot{H}^{0}}^{3}\|v\|_{\dot{H}^{s}}.
\endaligned
\end{equation}
If $1/2\le s$ and $\theta_0>s-1/2$, then
\begin{equation}\label{B41nestpos}
\aligned \Vert B_{40}^{1}(u,u,u,v)\Vert _{\dot{H}^{s}}+
\Vert B_{40}^{1}(v,u,u,u)\Vert _{\dot{H}^{s}}&+
\\
\Vert B_{40}^{1}(u,v,u,u)\Vert _{\dot{H}^{s}}+& \Vert
B_{40}^{1}(u,u,v,u)\Vert _{\dot{H}^{s}}\leq
c_8(s, \theta_0)\|u\|_{\dot{H}^{\theta_0}}^{3}
\|v\|_{\dot{H}^{s}}.
\endaligned
\end{equation}
\end{lemma}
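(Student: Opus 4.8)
The plan is to argue by duality, exactly as in the proof of Lemma~\ref{L:B4}, and to collapse the four--fold sum defining $B_{40}^{1}$ to a three--fold sum with three \emph{independent} denominators. Fix $z\in\dot{H}^{-s}$ and set $\tilde z_k=|z_k|\,|k|^{-s}$, so that $\|\tilde z\|_{\dot{H}^0}=\|z\|_{\dot{H}^{-s}}$; since the projections $\Pi_{-n}$ only decrease $\ell_2$--norms and the asserted bound carries no factor of $n$, I would immediately discard them and estimate
\[
|(B_{40}^{1}(\cdot),z)|\le \sum_{k_1+k_2+k_3+k_4=k}^{\mathrm{nonres}}
\frac{\prod_{j}|(\cdot)_{k_j}|\,|k|^{s}\,\tilde z_k}
{|k_1+k_2|\,|k_1+k_3+k_4|\,|k_2+k_3+k_4|}.
\]
The essential observation is that $k_3+k_4$ is the only way $k_3$ and $k_4$ enter the three denominators $D_1=|k_1+k_2|$, $D_2=|k_1+k_3+k_4|$, $D_3=|k_2+k_3+k_4|$. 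I would therefore set $\sigma=k_3+k_4$ and absorb the internal splitting of $\sigma$ by the elementary $\ell_\infty$--bound for the discrete convolution, $\sum_{k_3+k_4=\sigma}|a_{k_3}||b_{k_4}|\le\|a\|_{\dot{H}^0}\|b\|_{\dot{H}^0}$ (Cauchy--Schwarz). This reduces the estimate to a three--fold sum over $k_1,k_2,\sigma$ whose denominators $|k_1+k_2|$, $|k_1+\sigma|=|k-k_2|$ and $|k_2+\sigma|=|k-k_1|$ are three linearly independent forms in $(k_1,k_2,\sigma)$.

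By the symmetry of $D_1D_2D_3$ under $1\leftrightarrow2$ and $3\leftrightarrow4$ it suffices to treat two cases: $v$ in the first pair (say $B_{40}^{1}(v,u,u,u)$), where the convolution block is $|u|*|u|$, and $v$ in the second pair (say $B_{40}^{1}(u,u,u,v)$), where the block is $|u|*|v|$ and one must route the $\dot{H}^s$--weight onto the $v$--index \emph{before} applying the convolution bound. In each case I would distribute the Sobolev weight $|k|^{s}$: for $0\le s<1/2$ by subadditivity $|k|^s\le C(|k_1|^s+|k_2|^s+|\sigma|^s)$ (using $|k|\le|k_1|+|k_2|+|\sigma|$ and $s\le1$), sending the unit of weight that lands on the $v$--index into $\|v\|_{\dot{H}^s}$ and absorbing the residual power $|k_i|^{s}$ that lands on a $u$--index (or on $\sigma$) into the denominators. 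The resulting three--fold weight sum is then bounded by $C\|u\|_{\dot{H}^0}^2\|v\|_{\dot{H}^s}\|\tilde z\|_{\dot{H}^0}$ by the same symmetric Cauchy--Schwarz device used in Lemmas~\ref{L:R30} and~\ref{L:R30neg}, namely writing the three $\ell_2$--factors as a product of square roots of their pairwise products and pairing each with one denominator, the convergence of the auxiliary weight series following from the linear independence of $D_1,D_2,D_3$ exactly as in Lemma~\ref{L:K3}. For $s<0$ one instead carries the (now gaining) weight $|k_j|^{-s}$ on the $v$--index, bounds $|k|^{s}\le1$, and uses the extra decay $|k|^{s}$ coming from $z$; the lower restriction $s>-3/2$ is precisely what keeps these residual weights summable against the combined degree of the three denominators.

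The case $s\ge1/2$ is handled by the same scheme, but with the $u$--factors measured in $\dot{H}^{\theta_0}$ rather than $\dot{H}^0$. The point is that after sending one unit of $|k|^s$ to the $v$--index, the leftover weight $|k_i|^{s}$ on a $u$--index can no longer be absorbed into a single denominator power, which only affords exponents strictly below $1/2$ for summability. Writing $|u_{k_i}|=|k_i|^{-\theta_0}\tilde u_{k_i}$ with $\tilde u\in\dot{H}^0$ leaves a net power $|k_i|^{\,s-\theta_0}$, and the hypothesis $\theta_0>s-1/2$ is exactly the condition $s-\theta_0<1/2$ that makes this residual weight summable against the denominators. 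The main obstacle throughout is precisely this weight bookkeeping: one must check that, in every term of the case analysis and for every routing of $|k|^{s}$, the leftover exponents on the denominator side stay strictly below the summability threshold $1/2$; tracking this carefully is what pins down the admissible ranges $-3/2<s<1/2$ and $\theta_0>s-1/2$. Once convergence of the auxiliary series is established, collecting the finitely many terms yields~(\ref{B41nest}) and~(\ref{B41nestpos}).
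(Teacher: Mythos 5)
Your reduction is correct and, for the cases where the claimed estimate actually holds, it is essentially the paper's proof in different packaging. Collapsing $(k_3,k_4)$ into $\sigma=k_3+k_4$ via the convolution bound and then applying Cauchy--Schwarz in the three remaining variables $(k_1,k_2,\sigma)$ produces exactly the paper's kernel sum $\sum K_{41}$: the paper keeps all four summation variables, performs the same Cauchy--Schwarz with $|u_{k_3}|$ retained inside the square root, and then changes variables to $(k_1,k_2,l_0)$, which is the same three--dimensional sum. Its convergence for $p=-s<3/2$ is the Lemma~\ref{L:K3}-type linear-independence argument you invoke. Your treatment of $0\le s<1/2$ and of $s\ge1/2$ also goes through (with some care for the slots inside the convolution block), though the paper settles both cases in one line each by citing Lemma~\ref{L:B4} with exponents $(0,s)$ and $(\theta_0,s-\theta_0)$, $s-\theta_0<1/2$.

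The gap is in the negative-$s$ case when $v$ occupies the third or fourth slot, i.e.\ the terms $B_{40}^{1}(u,u,v,u)$ and $B_{40}^{1}(u,u,u,v)$, and your own reduction makes it visible: $k_3$ and $k_4$ enter the three denominators \emph{only} through $\sigma=k_3+k_4$, so the growing weight $|k_4|^{p}$, $p=-s>0$, carried by $v$ has nowhere to go. Splitting $|k_4|^{p}\le C(|\sigma|^{p}+|k_3|^{p})$ sends $|\sigma|^p$ harmlessly to the denominators but dumps $|k_3|^{p}$ onto $u$, which is only in $\dot{H}^{0}$; no alternative routing exists, since $k_3,k_4$ are individually invisible to $D_1D_2D_3$. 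This is not bookkeeping that ``pins down the admissible range'': for these two slots the estimate is simply false when $s<0$. Indeed, take $t=0$, $u_k=1$ for $|k|\in\{1,N-1\}$ and $v_k=1$ for $|k|=N$ (zero otherwise); the nonresonant term $(k_1,k_2,k_3,k_4)=(1,1,-(N-1),N)$ has all three denominators equal to $2$ and contributes $1/8$ to $B_{40}^{1}(u,u,u,v)_{3}$, the only other contributions to that coefficient being of size $O(N^{-2})$, so the left-hand side of~(\ref{B41nest}) stays bounded below while the right-hand side is $c_8(s)\,8\sqrt{2}\,N^{s}\to0$. (Keeping the projections $\Pi_{-n}$ does not help: shift to $k_1=k_2=n+1$.) You should know that the paper itself shares this defect: its proof treats only $B_{40}^{1}(v,u,u,u)$ in detail and asserts the other three terms are ``treated in the same way,'' which is true for $B_{40}^{1}(u,v,u,u)$ by the $k_1\leftrightarrow k_2$ symmetry of the kernel but fails for the last two slots. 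So what your argument (like the paper's) actually establishes for $-3/2<s<0$ is the estimate for $v$ in the first two slots only.
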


\begin{proof}
We  observe that in the proof below we do not
take advantage of the operators $\Pi_{-n}$
in~(\ref{B41n}) and replace them with identity operators.

We first consider a more difficult case of negative $s$:
$-3/2<s\le0$.
We set $p=-s$. Using duality, we estimate
\begin{gather*}
|( B_{40}^{1}(v,u,u,u),z)|\leq
\\
\sum_{k_{1},k_{2},k_{3},k_{4}}^{\text{nonres}}
\frac{|k_{1}|^{p}|\tilde{v}_{k_{1}}||u_{k_{2}}||u_{k_{3}}|
|u_{k_{4}}| \tilde{z}_{k_{1}+k_{2}+k_{3}+k_{4}}}
{|k_{1}+k_{2}||k_{1}+k_{3}+k_{4}|| k_{2}+k_{3}+k_{4}|
|k_{1}+k_{2}+k_{3}+k_{4}|^{p}}\le
\\
\|\tilde{z}\|_{\dot{H}^{0}} \| u\|_{\dot{H}^{0}}^{2}
\|v\|_{\dot{H}^{s}}\times
\\
\left( \sum_{k_{1},k_{2},k_{3},k_{4}}^{\text{nonres}}%
\frac{|k_{1}|^{2p}|u_{k_{3}}|^{2}}
{|k_{1}+k_{2}|^2|k_{1}+k_{3}+k_{4}|^{2}|k_{2}+k_{3}+k_{4}|^2
|k_{1}+k_{2}+k_{3}+k_{4}|^{2p}}\right)^{1/2}=
\\
\|\tilde{z}\|_{\dot{H}^{0}}\|u\|_{\dot{H}^{0}}^{2}
\|v\|_{\dot{H}^{s}}\left(\sum_{k_1,k_2,k_3,l_0}^{\text{nonres}}
\frac{|k_{1}|^{2p}|u_{k_{3}}|^{2}}
{|k_{1}+k_{2}|^{2}|l_{0}-k_{1}|^{2}|l_{0}-k_{2}|^{2}
|l_{0}|^{2p}}\right) ^{1/2}=
\\
\|\tilde{z}\|_{\dot{H}^{0}}\|u\|_{\dot{H}^{0}}^{3}
\|v\|_{\dot{H}^{s}}\left(\sum_{k_1,k_2,l_0}^{\text{nonres}}
K_{41}\right)^{1/2}\,,
\end{gather*}%
where $\tilde{z}_{k}=|z_{k}||k|^{-s}$,
$\tilde{v}_{k}=|v_{k}||k|^{s}$, $l_0=k_1+k_2+k_3+k_4$ and
where
\begin{equation*}
K_{41}(k_1,k_2,l_0)=\frac{|k_{1}|^{2p}}
{|k_{1}+k_{2}|^{2}|l_{0}-k_{1}|^{2}|
l_{0}-k_{2}|^{2}|l_{0}|^{2p}} \,.
\end{equation*}%
Since any three of the four linear functionals $l_{j}$:
$l_0=k_1+k_2+k_3+k_4$, $l_1=k_1+k_2$, $l_2=l_0-k_1$,
$l_3=l_0-k_2$ are linearly independent over the
3-dimensional space of vectors $( k_{1},k_{2},l_{0})$,
similarly to Lemma~\ref{L:K3} we have
\begin{equation*}
\sum_{k_{1},k_{2},l_0}^{\text{nonres}}K_{41}\leq
C\sum_{i\neq j}\sum_{k_{1},k_{2},l_0}^{\text{nonres}}
\frac{( l_{j}^{3}l_{i})^{p/2}}
{|l_{1}|^{2}|l_{2}|^{2}|l_{3}|^{2}|l_{0}|^{2p}}\,.
\end{equation*}%
All the terms are estimated similarly, we take two typical
examples:
\begin{gather*}
\sum_{k_{1},k_{2},l_0}^{\text{nonres}} \frac{(
l_{0}^{3}l_{1}) ^{p/2}}{|l_{1}|^{2}|l_{2}|^{2}
|l_{3}|^{2}|l_{0}|^{2p}}=\sum_{l_{3}}^{\text{nonres}}
\sum_{l_{2}}^{\text{nonres}}\frac{1}{|l_{2}|^{2}
|l_{3}|^{2}}\sum_{l_{0}}^{\text{nonres}}
\frac{1}{|l_{1}|^{2-p/2}|l_{0}|^{p/2}}\leq
\\
\sum_{l_{3}}^{\text{nonres}}\sum_{l_{2}}^{\text{nonres}}
\frac{1}{|l_{2}|^{2}|l_{3}|^{2}}\left(
\sum_{l_{0}}^{\text{nonres}} \frac{1}{|l_{1}|^{2}}\right)
^{1-p/4}\left( \sum_{l_{1}}^{\text{nonres}}\frac{1}
{|l_{0}|^{2}}\right) ^{p/4}\,,
\end{gather*}%
which is a finite constant for $p\le 4$.  Similarly,
\begin{gather*}
\sum_{k_{1},k_{2},l_0}^{\text{nonres}}\frac{\left( l_{3}^{3}l_{1}\right) ^{p/2}%
}{\left\vert l_{1}\right\vert ^{2}\left\vert
l_{2}\right\vert ^{2}\left\vert
l_{3}\right\vert ^{2}\left\vert l_{0}\right\vert ^{2p}}%
= \sum_{l_{1}}^{\text{nonres}}\sum_{l_{2}}^{\text{nonres}}
\frac{1}{\left\vert l_{1}\right\vert
^{2-p/2}\left\vert l_{2}\right\vert ^{2}}\sum_{l_{3}}^{\text{nonres}}\frac{1%
}{\left\vert l_{3}\right\vert ^{2-3p/2}\left\vert l_{0}
\right\vert ^{2p}}\,.
\end{gather*}%
First, we take $p<4/3$. Then $2-3p/2>0$ and
\begin{equation*}
\sum_{l_{3}}^{\text{nonres}}\frac{1}
{|l_{3}|^{2-3p/2}|l_{0}|^{2p}}\leq \left(
\sum_{l_{3}}^{\text{nonres}}\frac{1}
{|l_{3}|^{(2-3p/2)/(1-q)}}\right)^{1-q}
\left(\sum_{l_{3}}^{\text{nonres}}\frac{1}
{|l_{0}|^{2p/q}}\right)^{q}
\end{equation*}%
with $2p/q>1$, $0\leq q\leq 1$, $2-3p/2>1-q$.
 Since $p<4/3$\ such a $q$ exists and the series converges.

If $p\geq 4/3$ and $p<3/2$, then we express $l_3$ as the
unique linear combination of $l_0$, $l_1$, $l_2$ (namely,
$l_3=2l_0-l_1-l_2$) and obtain
\begin{equation*}
\sum_{k_{1},k_{2},l_0}^{\text{nonres}}\frac {(
l_{3}^{3}l_{1}) ^{p/2}}
{|l_{1}|^{2}|l_{2}|^{2}|l_{3}|^{2}|l_{0}|^{2p}}
=\sum_{l_0,l_1,l_2}^{\text{nonres}}\frac {|l_{3}|^{3p/2-2}}
{|l_{1}|^{2-p/2}|l_{2}|^{2}|l_{0}|^{2p}}\leq
C\sum_{l_0,l_1,l_2}^{\text{nonres}}
\frac{|l_{1}|^{3p/2-2}+|l_{2}|^{3p/2-2}+|l_{0}|^{3p/2-2}}
{|l_1|^{2-p/2}|l_{2}|^{2}|l_{0}|^{2p}}.
\end{equation*}%
The series converges if $2-p/2+2-3p/2>1$, $-3p/2+4>1$,
$2+p/2>1$, that is, if $3>2p$. The three remaining terms
in~(\ref{B41nest}) are treated in the same way. The proof
of~(\ref{B41nest}) for the case of negative $s$ is complete.

For the case of positive $s$ we observe that
Lemma~\ref{L:B4} gives the estimate
\begin{equation*}
\|B_{40}^1(u,v,w,\varphi )\|_{\dot{H}^{s}}\leq
c_{4}(0,s )\|u\|_{\dot{H}^{0}}\|v\|_{\dot{H}^{0}}
\|w\|_{\dot{H}^{0}}\|\varphi \|_{\dot{H}^{0}},
\quad 0\le s<1/2,
\end{equation*}
which implies~(\ref{B41nest}) for the remaining
interval $0\le s<1/2$.

For $s\ge 1/2$ we again use Lemma~\ref{L:B4}
to see that
\begin{equation*}
\|B_{40}^1(u,v,w,\varphi )\|_{\dot{H}^{s}}\leq
c_{4}(\theta_0,s-\theta_0 )
\|u\|_{\dot{H}^{\theta_0}}\|v\|_{\dot{H}^{\theta_0}}
\|w\|_{\dot{H}^{\theta_0}}\|\varphi \|_{\dot{H}^{\theta_0}},
\quad s-\theta_0<1/2,
\end{equation*}
which gives~(\ref{B41nestpos}).
\end{proof}

\begin{lemma}\label{L:B42n}
 Let $-3/2<s<1/2$. Then the nonlinear
operator $B_{40}^{2}$ given by $(\ref{B42n})$ satisfies the
estimate
\begin{equation}\label{B42nest}
\aligned \Vert B_{40}^{2}(u,u,u,v)\Vert _{\dot{H}^{s}}+
\Vert B_{40}^{2}(v,u,u,u)\Vert _{\dot{H}^{s}}&+
\\
\Vert B_{40}^{2}(u,v,u,u)\Vert _{\dot{H}^{s}}+& \Vert
B_{40}^{2}(u,u,v,u)\Vert _{\dot{H}^{s}}\leq
c_8(s)\|u\|_{\dot{H}^{0}}^{3}\|v\|_{\dot{H}^{s}}.
\endaligned
\end{equation}
If $1/2\le s$ and $\theta_0>s-1/2$, then
\begin{equation}\label{B42nestpos}
\aligned \Vert B_{40}^{2}(u,u,u,v)\Vert _{\dot{H}^{s}}+
\Vert B_{40}^{2}(v,u,u,u)\Vert _{\dot{H}^{s}}&+
\\
\Vert B_{40}^{2}(u,v,u,u)\Vert _{\dot{H}^{s}}+& \Vert
B_{40}^{2}(u,u,v,u)\Vert _{\dot{H}^{s}}\leq
c_8(s, \theta_0)\|u\|_{\dot{H}^{\theta_0}}^{3}
\|v\|_{\dot{H}^{s}}.
\endaligned
\end{equation}
\end{lemma}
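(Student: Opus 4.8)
The plan is to follow the proof of Lemma~\ref{L:B41n} almost verbatim, isolating the only genuinely new feature of $B_{40}^{2}$ relative to $B_{40}^{1}$: the extra numerator factor $(k_{3}+k_{4})$ together with the extra denominator factor $1/k_{1}$ in the symbol~(\ref{B42n}). As in that proof I would first drop the projections $\Pi_{-n}$, using $|\Pi_{-n}(\cdot)|\le|\cdot|$ termwise (this only deletes terms from the absolute-value estimates), replace the unimodular exponentials $e^{i\Phi t}$ by unity, and argue by duality: for $-3/2<s\le0$ it suffices to bound $|(B_{40}^{2}(\cdots),z)|$ by the right-hand side of~(\ref{B42nest}) times $\|z\|_{\dot H^{-s}}$ for arbitrary $z\in\dot H^{-s}$.

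The key device is the elementary inequality $|k_{3}+k_{4}|\le|k_{1}+k_{3}+k_{4}|+|k_{1}|$, exactly the splitting used in the proof of Lemma~\ref{L:B4} for $B_{4}^{2}$. Substituting it into the dual pairing breaks the sum into two pieces. In the first piece the factor $|k_{1}+k_{3}+k_{4}|$ cancels the corresponding denominator factor, leaving a symbol of the shape $\big(|k_{1}|\,|k_{1}+k_{2}|\,|k_{2}+k_{3}+k_{4}|\big)^{-1}$; in the second piece the factor $|k_{1}|$ cancels the $1/|k_{1}|$, leaving precisely the absolute-value symbol underlying $B_{40}^{1}$. Hence the second piece is dominated directly by Lemma~\ref{L:B41n}, applied with the rough factor $v$ in the appropriate one of the four slots, and requires no new work.

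The real content is the first piece. Here I would repeat the Cauchy--Schwarz step of Lemma~\ref{L:B41n}: writing $p=-s$, $\tilde v_{k}=|v_{k}||k|^{s}$, $\tilde z_{k}=|z_{k}||k|^{p}$, and $l_{0}=k_{1}+k_{2}+k_{3}+k_{4}$, I pair $\tilde v$, two of the $u$'s and $\tilde z$ to produce the factor $\|v\|_{\dot H^{s}}\|u\|_{\dot H^{0}}^{2}\|z\|_{\dot H^{-s}}$, keep one $|u|^{2}$ inside, and reduce to a symbol sum that is then estimated by the linear-independence bookkeeping of Lemma~\ref{L:K3} (expressing the surviving numerator monomial through the forms $l_{1}=k_{1}+k_{2}$, $l_{3}=l_{0}-k_{1}$, $l_{0}$, any suitable subset of which is independent, and splitting products $l_{i}^{3}l_{j}$ by AM--GM). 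When $v$ sits in the first slot the reduced symbol $|k_{1}|^{2p-2}\big(|k_{1}+k_{2}|^{2}|l_{0}-k_{1}|^{2}|l_{0}|^{2p}\big)^{-1}$ is $k_{3}$-independent and the sum collapses to a convergent two-variable sum (its large-$k_{1}$ tail behaving like $|k_{1}|^{2p-4}$, summable exactly for $p<3/2$); the placements of $v$ in slots $3$ or $4$ keep a genuine multivariable sum and require the full Lemma~\ref{L:K3} technique. This convergence check is the main obstacle: because the cancellation has removed one difference factor from the denominator, the margin is tighter than for $B_{40}^{1}$, and one must verify that the compensating $1/|k_{1}|$ suffices on exactly the stated interval, pinning the threshold at $s>-3/2$.

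Finally, the positive-regularity cases are immediate from Lemma~\ref{L:B4}, exactly as in Lemma~\ref{L:B41n}. For $0\le s<1/2$ I apply it with base space $\dot H^{0}$ and smoothing $\varepsilon=s$, obtaining~(\ref{B42nest}) on $[0,1/2)$ via $\|v\|_{\dot H^{0}}\le\|v\|_{\dot H^{s}}$; for $s\ge1/2$ I apply it with base space $\dot H^{\theta_{0}}$ and smoothing $s-\theta_{0}<1/2$, together with the embedding $\|v\|_{\dot H^{\theta_{0}}}\le\|v\|_{\dot H^{s}}$, to obtain~(\ref{B42nestpos}). Since $B_{40}^{2}$ is $B_{4}^{2}$ with the extra projections, which only delete terms in the absolute-value estimates proving Lemma~\ref{L:B4}, these bounds transfer verbatim, completing all ranges of $s$.
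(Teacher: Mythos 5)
Your proposal is correct and takes essentially the same approach as the paper, whose entire proof of this lemma is the single sentence ``Similar to the previous lemma.'' The adaptation you spell out --- splitting $|k_{3}+k_{4}|\le |k_{1}+k_{3}+k_{4}|+|k_{1}|$ exactly as the paper itself does for $B_{4}^{2}$ inside the proof of Lemma~\ref{L:B4}, so that one piece reduces to the $B_{40}^{1}$ bound of Lemma~\ref{L:B41n} and the other to a companion symbol sum handled by the same Cauchy--Schwarz and Lemma~\ref{L:K3} counting (with the threshold $p<3/2$, i.e.\ $s>-3/2$, emerging as you compute), plus Lemma~\ref{L:B4} for the range $s\ge 0$ --- is precisely the intended adaptation, supplied in more detail than the paper gives.
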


\begin{proof}
Similar to the previous lemma.
\end{proof}

\begin{lemma}
\label{L:Ares} Let $s\geq 0$. Then the nonlinear operator
$A_{\mathrm{res}}$ defined in $(\ref{Ares})$ maps
$\dot{H}^{s}$ into $\dot{H}^{s+1}$ and satisfies the
estimate
\begin{equation} \label{estAres}
\Vert A_{\mathrm{res}}(v)\Vert _{\dot{H}^{s+1}}\leq
c_{5}(s)\Vert v\Vert _{\dot{H}^{s}}.
\end{equation}
\end{lemma}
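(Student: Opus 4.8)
The operator $A_{\mathrm{res}}$ is, unlike the genuinely multilinear operators treated earlier in this appendix, essentially diagonal in the Fourier basis: each component $A_{\mathrm{res}}(v)_k$ is $v_k/k$ times the scalar $\|v\|_{L_2}^2-|v_k|^2$. The plan is therefore to estimate it componentwise and sum directly in $\ell_2$, with no recourse to a duality argument. Recalling from~(\ref{Ares}) that
\[
A_{\mathrm{res}}(v)_k=\frac{v_k}{k}\bigl(\|v\|_{L_2}^2-|v_k|^2\bigr),\qquad k\in\mathbb{Z}_0,
\]
the first observation is that the factor $1/k$ is exactly what furnishes the claimed gain of one derivative: applying the $\dot H^{s+1}$ weight yields
\[
|k|^{s+1}\,|A_{\mathrm{res}}(v)_k|=|k|^{s}\,|v_k|\,\bigl|\|v\|_{L_2}^2-|v_k|^2\bigr|.
\]

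Next I would control the scalar multiplier uniformly in $k$. Since $\|v\|_{L_2}^2=\|v\|_{\dot H^0}^2=\sum_{j\in\mathbb{Z}_0}|v_j|^2$ contains the term $|v_k|^2$ among nonnegative summands, we have $0\le\|v\|_{L_2}^2-|v_k|^2\le\|v\|_{L_2}^2$, and hence $|k|^{s+1}|A_{\mathrm{res}}(v)_k|\le\|v\|_{\dot H^0}^2\,|k|^{s}|v_k|$. Squaring and summing over $k\in\mathbb{Z}_0$ then gives at once
\[
\|A_{\mathrm{res}}(v)\|_{\dot H^{s+1}}^2\le\|v\|_{\dot H^0}^4\sum_{k\in\mathbb{Z}_0}|k|^{2s}|v_k|^2=\|v\|_{\dot H^0}^4\,\|v\|_{\dot H^s}^2,
\]
i.e.\ $\|A_{\mathrm{res}}(v)\|_{\dot H^{s+1}}\le\|v\|_{\dot H^0}^2\,\|v\|_{\dot H^s}$ for every $s\ge0$. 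Note that here the hypothesis $s\ge0$ enters only through $|k|^{2s}\ge0$; the gain of a derivative is produced entirely by the explicit $1/k$ and not by any cancellation in the exponential (which is in fact absent from $A_{\mathrm{res}}$).

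This lemma presents no real obstacle — it is the most elementary estimate in the section — so the only point requiring a word of care is the reconciliation of the cubic bound just derived with the linear form stated in~(\ref{estAres}). This is settled by the conservation of energy: the operator $A_{\mathrm{res}}$ is always applied to solutions along which $\|v\|_{L_2}=\|v^0\|_{L_2}$ is constant (cf.\ Remark~\ref{R:conservation} and~(\ref{R3resconserv})), so the factor $\|v\|_{\dot H^0}^2$ is a fixed number that may be absorbed into $c_5(s)$, yielding~(\ref{estAres}). If instead one insists on a constant depending on $s$ alone, one may invoke $|k|^{2s}\ge1$ for $s\ge0$, giving $\|v\|_{\dot H^0}\le\|v\|_{\dot H^s}$ and the equivalent manifestly cubic estimate $\|A_{\mathrm{res}}(v)\|_{\dot H^{s+1}}\le\|v\|_{\dot H^s}^3$.
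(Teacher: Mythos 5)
Your proof is correct and follows essentially the same route as the paper's own (very brief) argument: the paper likewise reads the derivative gain off the explicit factor $1/k$, bounds the scalar multiplier by $\Vert v\Vert_{\dot H^0}^2$ so that $c_5(s)\le \Vert v\Vert_{\dot H^0}^2$, and notes that under energy conservation this becomes the fixed constant $\Vert v^0\Vert_{\dot H^0}^2$. Your explicit reconciliation of the cubic bound with the linear form of~(\ref{estAres}) is exactly the point the paper settles by the same appeal to Remark~\ref{R:conservation}.
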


\begin{proof}
Clearly, (\ref{Ares}) implies (\ref{estAres})
with $c_5(s)\le\|v\|_{\dot{H}^0}^2$.
If energy is conserved, then
$c_{5}(s)\leq \Vert v^{0}\Vert_{\dot{H}^0}^{2}$.
\end{proof}

\begin{lemma}
\label{L:R3} Let $s>1/2$. Then the trilinear operator $R_{3}$ defined in $(%
\ref{R3})$ maps $(\dot{H}^{s})^{3}$ into $\dot{H}^{s}$ and
satisfies the estimate
\begin{equation}
\Vert R_{3}(u,v,w)\Vert _{\dot{H}^{s}}\leq c_{6}(s)\Vert u\Vert _{\dot{H}%
^{s}}\Vert v\Vert _{\dot{H}^{s}}\Vert w\Vert
_{\dot{H}^{s}}.  \label{estR3}
\end{equation}
\end{lemma}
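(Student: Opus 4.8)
The plan is to argue by duality combined with the Banach algebra property of $H^{s}$ for $s>1/2$ established in Corollary~\ref{C:B2}. The guiding observation is that the oscillatory phase in~(\ref{R3}) has modulus one, so it is irrelevant for an absolute-value estimate, and the only structural feature of $R_{3}$ is the single factor $1/k_{1}$, which amounts to a gain of one derivative in the first argument. Thus $R_{3}$ is essentially a product of three functions, one of which is smoothed, and the estimate should follow from the algebra property exactly in the range $s>1/2$.

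First I would bound $|e^{i3(k_{1}+k_{2})(k_{2}+k_{3})(k_{3}+k_{1})t}|\le 1$ to obtain, for every $k\in\mathbb{Z}_{0}$,
\[
|R_{3}(u,v,w)_{k}|\le \sum_{k_{1}+k_{2}+k_{3}=k}\frac{|u_{k_{1}}|}{|k_{1}|}\,|v_{k_{2}}|\,|w_{k_{3}}|.
\]
Then I would introduce the mean-zero functions $U,V,W$ whose Fourier coefficients are the nonnegative sequences $U_{k}=|u_{k}|/|k|$, $V_{k}=|v_{k}|$, $W_{k}=|w_{k}|$ for $k\in\mathbb{Z}_{0}$, with $U_{0}=V_{0}=W_{0}=0$ (recall $u,v,w$ have zero mean). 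The right-hand side above is precisely the $k$-th Fourier coefficient of the product $U(x)V(x)W(x)$, so $|R_{3}(u,v,w)_{k}|\le (UVW)_{k}$ term by term, which gives $\|R_{3}(u,v,w)\|_{\dot{H}^{s}}\le \|UVW\|_{\dot{H}^{s}}\le \|UVW\|_{H^{s}}$.

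Next I would apply Corollary~\ref{C:B2} twice to the triple product,
\[
\|UVW\|_{H^{s}}\le K_{1}(s)\,\|U\|_{H^{s}}\,\|VW\|_{H^{s}}\le K_{1}(s)^{2}\,\|U\|_{H^{s}}\,\|V\|_{H^{s}}\,\|W\|_{H^{s}},
\]
which is valid precisely because $s>1/2$. Since $U,V,W$ have zero mean, their $H^{s}$ and $\dot{H}^{s}$ norms coincide; moreover $\|V\|_{\dot{H}^{s}}=\|v\|_{\dot{H}^{s}}$ and $\|W\|_{\dot{H}^{s}}=\|w\|_{\dot{H}^{s}}$, while the factor $1/|k|$ in the coefficients of $U$ yields $\|U\|_{\dot{H}^{s}}^{2}=\sum_{k\in\mathbb{Z}_{0}}|k|^{2s-2}|u_{k}|^{2}=\|u\|_{\dot{H}^{s-1}}^{2}\le\|u\|_{\dot{H}^{s}}^{2}$, using $|k|\ge 1$ for $k\in\mathbb{Z}_{0}$. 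Collecting these bounds gives~(\ref{estR3}) with $c_{6}(s)=K_{1}(s)^{2}$.

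The only point demanding care, rather than a genuine obstacle, is the asymmetry created by the factor $1/k_{1}$: one must attribute the associated gain of regularity to the first argument, which is harmless since it only improves the bound ($\dot{H}^{s-1}\supset\dot{H}^{s}$). I expect the hypothesis $s>1/2$ to enter, and to be sharp for this argument, exactly through the algebra estimate of Corollary~\ref{C:B2}. A fully parallel direct computation by duality would reach the same conclusion: replacing the exponentials by unity, setting $\tilde{u}_{k}=|k|^{s}|u_{k}|$, $\tilde{v}_{k}=|k|^{s}|v_{k}|$, $\tilde{w}_{k}=|k|^{s}|w_{k}|$, $\tilde{z}_{k}=|k|^{-s}|z_{k}|$, and splitting $|k_{1}+k_{2}+k_{3}|^{s}\le C_{s}(|k_{1}|^{s}+|k_{2}|^{s}+|k_{3}|^{s})$, one is left with sums of the type $\sum|k|^{-2s}$ and $\sum|k|^{-2}$ whose convergence again forces $s>1/2$.
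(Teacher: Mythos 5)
Your proof is correct, and it takes a genuinely different route from the paper's. The paper proves (\ref{estR3}) by a direct duality computation on the trilinear form: it discards the unimodular exponentials, sets $\tilde{u}_k=|k|^s|u_k|$, $\tilde{v}_k=|k|^s|v_k|$, $\tilde{w}_k=|k|^s|w_k|$, $\tilde{z}_k=|k|^{-s}|z_k|$, splits $|k_1+k_2+k_3|^s$ among $|k_1|^s$, $|k_2|^s$, $|k_3|^s$ (explicitly taking no advantage of the factor $|k_1|^{-1}$), and closes each of the three resulting sums by Cauchy--Schwarz against convergent series of the type $\sum_{j\in\mathbb{Z}_0}|j|^{-2s}$, which is exactly where $s>1/2$ enters; this yields an explicit constant $c_6(s)$. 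You instead majorize coefficientwise, $|R_3(u,v,w)_k|\le\sum_{k_1+k_2+k_3=k}|k_1|^{-1}|u_{k_1}||v_{k_2}||w_{k_3}|$, recognize the right-hand side as the $k$-th Fourier coefficient of the product $UVW$ of auxiliary mean-zero functions with nonnegative coefficients, and apply the algebra property of Corollary~\ref{C:B2} twice, so that $s>1/2$ enters only through that corollary. The two arguments are close relatives --- the paper's proof of Corollary~\ref{C:B2} is the same duality computation in bilinear form, and your closing sketch of a ``fully parallel direct computation'' is essentially the paper's proof verbatim --- but your main argument is more modular: it packages the convolution combinatorics once, inside the algebra property, and treats the smoothing factor $1/|k_1|$ as a harmless gain via $\|U\|_{H^s}=\|u\|_{\dot{H}^{s-1}}\le\|u\|_{\dot{H}^s}$. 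What the paper's hands-on computation buys is a template that carries over to the other appendix lemmas (Lemmas~\ref{L:minR3}, \ref{L:B3}, \ref{L:B4}), where the weights must be distributed among denominators such as $|k_1+k_2|$, $|k_2+k_3|$, $|k_3+k_1|$, and a bare algebra property would not suffice. Two small points you should make explicit in a final write-up: the termwise domination passes to the $\dot{H}^s$ norm because that norm depends only on the moduli of the Fourier coefficients, and the triple-convolution identity for the coefficients of $UVW$ is legitimate because for $s>1/2$ the coefficient sequences of $U$, $V$, $W$ lie in $\ell^1$, so all series converge absolutely; both are immediate, and your constant $c_6(s)=K_1(s)^2$ is perfectly acceptable.
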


\begin{proof}
By (\ref{R3})
\begin{equation}\label{R31}
|R_{3}(u,v,w)_{k}|\le \sum_{k_{1}+k_{2}+k_{3}=k}
\frac{|u_{k_{1}}v_{k_{2}}w_{k_{3}}|}{|k_{1}|}.
\end{equation}
As before, the time dependent exponentials in the
definition of $R_{3}$ do
not play a role. Setting $\tilde{u}_{k}=|u_{k}||k|^{s}$, $\tilde{v}%
_{k}=|v_{k}||k|^{s}$, $\tilde{w}_{k}=|w_{k}||k|^{s}$, $\tilde{z}%
_{k}=|z_{k}||k|^{-s}$ and arguing by duality we have
(taking no advantage of the factor $|k_1|^{-1}$ below)
\begin{gather*}
\aligned|(R_{3}(u,v,w),z)|\leq \sum_{k_{1},k_{2},k_{3}}\frac{\tilde{u}%
_{k_{1}}\tilde{v}_{k_{2}}\tilde{w}_{k_{3}}\,\tilde{z}_{k_{1}+k_{2}+k_{3}}{%
|k_{1}+k_{2}+k_{3}|^{s}}}{|k_{1}|^{s+1}|k_{2}|^{s}|k_{3}|^{s}}\leq
\newline
\\
3^{s-1}\sum_{k_{1},k_{2},k_{3}}\left\vert {\tilde{u}_{k_{1}}\tilde{v}_{k_{2}}%
\tilde{w}_{k_{3}}\,\tilde{z}_{k_{1}+k_{2}+k_{3}}}\right\vert \biggl(\frac{1}{%
|k_{1}|^{s}|k_{2}|^{s}}+\frac{1}{|k_{1}|^{s}|k_{3}|^{s}}+\frac{1}{%
|k_{2}|^{s}|k_{3}|^{s}}\biggr)\leq \newline
\\
3^{s}c(s)^{2}\Vert u\Vert _{\dot{H}^{s}}\Vert v\Vert
_{\dot{H}^{s}}\Vert w\Vert _{\dot{H}^{s}}\Vert z\Vert
_{\dot{H}^{-s}}\,.\endaligned
\end{gather*}%
where $c(s)=(\sum_{j\in \mathbb{Z}_{0}}j^{-(2s+2)})^{1/2}<\infty $ for $%
s>1/2 $. This proves~(\ref{estR3}) with
$c_{6}(s)=3^{s}c(s)^{2}$.

We finally observe that $R_{3}$ is a Lipschitz map from $\dot{H}^{s}$ to $%
\dot{H}^{s}$:
\begin{equation}\label{LipR3}
\|R_{3}(u_{1},u_{1},u_{1})-R_{3}(u_{2},u_{2},u_{2})\|
_{\dot{H}^{s}}\leq
10c_{6}(s)(\|u_{1}\|_{\dot{H}^{s}}^{2}+\|u_{2}\|_{\dot{H}^{s}}^{2})
\|u_{1}-u_{2}\|_{\dot{H}^{s}}.
\end{equation}
\end{proof}

\begin{lemma}
\label{L:minR3} Let $S>1/2$ and $\beta <1/2$.
 Then the trilinear operator
$R_{3}$ defined in $(\ref{R3})$ satisfies the estimate
\begin{equation}
\Vert R_{3}(u,v,w)\Vert _{\dot{H}^{-S}}\leq c_{6}^{\prime
}(S,\beta )\Vert
u\Vert _{\dot{H}^{-\beta }}\Vert v\Vert _{\dot{H}^{0}}\Vert w\Vert _{\dot{H}%
^{0}}.  \label{estR31}
\end{equation}
\end{lemma}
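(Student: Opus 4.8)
The plan is to follow the duality scheme of Lemma~\ref{L:R3}, but to use the smoothing factor $1/|k_1|$ more efficiently so that only the single argument $u$ is measured in a negative norm. Since the time-dependent exponentials in~(\ref{R3}) have modulus one they play no role, exactly as in~(\ref{R31}), and it suffices to bound
\[
|(R_{3}(u,v,w),z)|\le\sum_{k\in\mathbb{Z}_0}\sum_{k_1+k_2+k_3=k}
\frac{|u_{k_1}||v_{k_2}||w_{k_3}||z_k|}{|k_1|}
\]
for an arbitrary $z\in\dot{H}^{S}$. I would set $\tilde{u}_k=|u_k||k|^{-\beta}$, $\tilde{v}_k=|v_k|$, $\tilde{w}_k=|w_k|$ and $\tilde{z}_k=|z_k||k|^{S}$, so that $\|\tilde{u}\|_{\dot{H}^0}=\|u\|_{\dot{H}^{-\beta}}$, $\|\tilde{v}\|_{\dot{H}^0}=\|v\|_{\dot{H}^0}$, $\|\tilde{w}\|_{\dot{H}^0}=\|w\|_{\dot{H}^0}$ and $\|\tilde{z}\|_{\dot{H}^0}=\|z\|_{\dot{H}^{S}}$. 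Writing $|u_{k_1}|=\tilde{u}_{k_1}|k_1|^{\beta}$ and $|z_k|=\tilde{z}_k|k|^{-S}$, the right-hand side becomes
\[
\sum_{k_1,k_2,k_3}
\frac{\tilde{u}_{k_1}\tilde{v}_{k_2}\tilde{w}_{k_3}\,\tilde{z}_{k_1+k_2+k_3}}
{|k_1|^{1-\beta}\,|k_1+k_2+k_3|^{S}}.
\]

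The crucial step, and the one that makes the negative norm on $u$ affordable, is to sum the ``middle'' index first. I would replace $k_3$ by the output index $k=k_1+k_2+k_3$, so that the sum over $k_2$ produces the convolution $(\tilde{v}*\tilde{w})_{k-k_1}=\sum_{k_2}\tilde{v}_{k_2}\tilde{w}_{k-k_1-k_2}$, which by the Cauchy--Schwarz inequality is bounded \emph{uniformly} in $k-k_1$ by $\|\tilde{v}\|_{\dot{H}^0}\|\tilde{w}\|_{\dot{H}^0}=\|v\|_{\dot{H}^0}\|w\|_{\dot{H}^0}$ (recall that $\tilde{v}_0=\tilde{w}_0=0$, so the convolution involves only nonzero indices). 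After pulling this factor out, the remaining double sum runs over the now \emph{independent} indices $k_1$ and $k$, and therefore factorizes:
\[
\sum_{k_1,k}\frac{\tilde{u}_{k_1}\tilde{z}_k}{|k_1|^{1-\beta}|k|^{S}}
=\left(\sum_{k_1\in\mathbb{Z}_0}\frac{\tilde{u}_{k_1}}{|k_1|^{1-\beta}}\right)
\left(\sum_{k\in\mathbb{Z}_0}\frac{\tilde{z}_k}{|k|^{S}}\right).
\]

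Each of these two one-dimensional sums is then estimated by a final application of Cauchy--Schwarz, namely $\sum_{k_1}\tilde{u}_{k_1}|k_1|^{-(1-\beta)}\le c(1-\beta)\|\tilde{u}\|_{\dot{H}^0}$ and $\sum_{k}\tilde{z}_k|k|^{-S}\le c(S)\|\tilde{z}\|_{\dot{H}^0}$, with $c(\cdot)$ as in~(\ref{c2s}). The two convergence conditions are precisely the hypotheses of the lemma: $\sum_{k_1}|k_1|^{-2(1-\beta)}<\infty$ holds exactly because $\beta<1/2$ (this is where the factor $1/|k_1|$ is indispensable, turning the otherwise divergent weight $|k_1|^{\beta}$ into the summable $|k_1|^{-(1-\beta)}$), while $\sum_{k}|k|^{-2S}<\infty$ holds exactly because $S>1/2$. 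Collecting the constants yields~(\ref{estR31}) with $c_6'(S,\beta)=c(1-\beta)\,c(S)$. I do not anticipate a genuine obstacle here; the only point that requires care is the observation that, once the convolution in $v$ and $w$ is absorbed into a uniform bound, the residual sum decouples into an independent product over $k_1$ and $k$, which is what allows the two borderline weights to be treated separately rather than forcing the negative regularity onto more than one factor.
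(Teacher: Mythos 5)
Your proof is correct, and it shares its skeleton with the paper's own argument: both proceed by duality, discard the unimodular exponentials, and ultimately rest on exactly the same two one-dimensional estimates, namely $\sum_{k_1}|u_{k_1}|/|k_1|\le c(1-\beta)\|u\|_{\dot{H}^{-\beta}}$ (your sum $\sum_{k_1}\tilde{u}_{k_1}|k_1|^{-(1-\beta)}$ is literally this quantity) and $\sum_{k}|z_k|\le c(S)\|z\|_{\dot{H}^{S}}$ (your sum $\sum_k\tilde{z}_k|k|^{-S}$; this is the paper's estimate~(\ref{zk})), which is where $\beta<1/2$ and $S>1/2$ enter in both proofs. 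Where you genuinely differ is the mechanism for extracting $\|v\|_{\dot{H}^0}\|w\|_{\dot{H}^0}$. The paper applies Cauchy--Schwarz to the whole trilinear sum, splitting it symmetrically into a factor carrying $|v_{k_2}|^2$ and a factor carrying $|w_{k_3}|^2$, each still weighted by $|u_{k_1}||z_{k_1+k_2+k_3}|/|k_1|$; each factor then decouples into a product of three independent one-index sums, because the remaining free summation index sweeps $z$ over all of $\mathbb{Z}_0$. You instead pass to the variables $(k_1,k_2,k)$, bound the inner convolution by the uniform estimate $\sup_j(\tilde{v}*\tilde{w})_j\le\|v\|_{\dot{H}^0}\|w\|_{\dot{H}^0}$ (the elementary $\ell_2*\ell_2\subset\ell_\infty$ bound, legitimate here since $\tilde v_0=\tilde w_0=0$ and all terms are nonnegative), and then factor the residual double sum over the now independent indices $k_1$ and $k$. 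Both mechanisms are elementary and produce the same constant $c(1-\beta)\,c(S)$; your version avoids the square-root bookkeeping of the symmetric splitting, while the paper's version never needs the change of variables $k_3\mapsto k$. Either way the argument is complete, and the two hypotheses are consumed exactly where you say they are.
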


\begin{proof}
\ Arguing by duality and using (\ref{R31}) and (\ref{zk})
we obtain
\begin{gather*}
|(R_{3}(u,v,w),z)|\leq \sum_{k_{1},k_{2},k_{3}}
\frac{{u}_{k_{1}}{v}_{k_{2}}{w}_{k_{3}}
{z}_{k_{1}+k_{2}+k_{3}}}{|k_{1}|} \leq
\newline
\\
\left( \sum_{k_{1},k_{2},k_{3}}\frac{\left\vert
u_{k_{1}}\right\vert \left\vert v_{k_{2}}\right\vert
^{2}\,\left\vert z_{k_{1}+k_{2}+k_{3}}\right\vert
}{|k_{1}|}\right) ^{1/2}\left(
\sum_{k_{1},k_{2},k_{3}}\frac{\left\vert
u_{k_{1}}\right\vert \left\vert
w_{k_{3}}\right\vert ^{2}\,\left\vert z_{k_{1}+k_{2}+k_{3}}\right\vert }{%
|k_{1}|}\right) ^{1/2}=\newline
\\
\left( \sum_{k_{1}}\frac{\left\vert u_{k_{1}}\right\vert }{|k_{1}|}%
\sum_{k_{2}}\left\vert v_{k_{2}}\right\vert
^{2}\sum_{k}\left\vert z_{k}\right\vert \right)
^{1/2}\left( \sum_{k_{1}}\frac{\left\vert
u_{k_{1}}\right\vert }{|k_{1}|}\sum_{k_{2}}\left\vert
w_{k_{2}}\right\vert ^{2}\sum_{k}\left\vert
z_{k}\right\vert \right) ^{1/2}\le
\\
c_{2}^{\prime }(S)\|v\|_{\dot{H}^{0}}\|w\|_{\dot{H}^{0}}
\|z\|_{\dot{H}^{S}} \left(\sum_{k_{1}}
\frac{|u_{k_{1}}|^{2}}{|k_{1}|^{2\beta }}\right)^{1/2}
\left(\sum_{k_{1}}\frac{1}{|k_{1}|^{2-2\beta
}}\right)^{1/2}\,.
\end{gather*}
This proves~(\ref{estR31}).
\end{proof}

\subsubsection*{Acknowledgments}

A.A.I. would like to thank the warm hospitality of
the Mathematics Department at the University of
California, Irvine, and the Weizmann Institute of
Science where this work was done. The work of
A.V.B. was supported by AFOSR grant
FA9550-04-1-0359.  The work of A.A.I. was supported
in part by the Russian Foundation for Fundamental
Research, grants~no.~09-01-00288 and
no.~08-01-00784, and by the RAS Programme no.1. The
work of E.S.T. was supported in part by the NSF,
grant no.~DMS-0708832, the ISF grant no. 120/6, and
the BSF grant no. 2004271.

\end{document}